\numberwithin{equation}{section}
\theoremstyle{plain}
\theoremstyle{plain}
\newtheorem{theo}{Theorem}[section]
\newtheorem{exam}[theo]{Example}
\newtheorem{lemm}[theo]{Lemma}
\newtheorem{prop}[theo]{Proposition}
\newtheorem{coro}[theo]{Corollary}
\theoremstyle{definition}
\newtheorem{rema}[theo]{Remark}
\newtheorem{examp}[theo]{Example}
\def\Corps{{\mathfrak F}}
\def\F{{F}}
\def\G{\overline{\mathbf{G}}_{x_0} }
\def\I{{\rm I}}
\def\I{{\rm I}}
\def\K{{\EuScript K}}
\def\N{{\overline{\mathbf N}}}
\def\R{{\mathbb R}}
\def\T{{\overline{\mathbf T}}}
\def\V{{\rm V}}
\def\W{{\rm W}}
\def\Y{{\rm Y}}
\def\Z{{\mathbb Z}}
\def\Aa{\mathscr{A}}
\def\Dd{\EuScript{D}}
\def\Ff{\mathscr{F}}
\def\Hh{{\rm H}}
\def\Kk{{\mathcal K}}
\def\Oo{\mathfrak{O}}
\def\Pp{\EuScript{P}}
\def\Uu{\EuScript{U}}
\def\Xx{\mathscr{X}}
\def\@tocline#1#2#3#4#5#6#7{\relax
  \ifnum #1>\c@tocdepth 
  \else
    \par \addpenalty\@secpenalty\addvspace{#2}%
    \begingroup \hyphenpenalty\@M
    \@ifempty{#4}{%
      \@tempdima\csname r@tocindent\number#1\endcsname\relax
    }{%
      \@tempdima#4\relax
    }%
    \parindent\z@ \leftskip#3\relax \advance\leftskip\@tempdima\relax
    \rightskip\@pnumwidth plus4em \parfillskip-\@pnumwidth
    #5\leavevmode\hskip-\@tempdima
      \ifcase #1
       \or\or \hskip 1em \or \hskip 2em \else \hskip 3em \fi%
      #6\nobreak\relax
    \dotfill\hbox to\@pnumwidth{\@tocpagenum{#7}}\par
    \nobreak
    \endgroup
  \fi}
\def\d{{\rm d}}
\def\i{{\rm i}}
\def\m{{\mathfrak m}}
\def\lp{\langle}
\def\rp{\rangle}
\def\>{\geqslant}
\def\<{\leqslant}
\def\Hom{{\rm Hom}}
\def\End{{\rm End}}
\def\GL{{\rm GL}}
\def\Ker{{\rm Ker}}
\def\id{{\rm id}}
\def\ind{{\rm ind}}
\def\1{{\bf 1}}
\def\Iw{{\rm I'}}
\def\Tp{{\rm T}}
\def\Gp{ { \rm G}}
\newcommand\cal{\mathcal}
\def\val{\operatorname{val}}
\def\Root{\Phi}
\def\Coroot{\check\Phi}
\def\coroot{\check\alpha}
\def\root{{\alpha}}
\def\beq{\begin{equation}}
\def\eeq{\end{equation}}
\def\barr{\begin{eqnarray}}
\def\earr{\end{eqnarray}}
\def\de{\begin{equation*}}
\def\fe{\end{equation*}}
\def\dt{\begin{eqnarray*}}
\def\ft{\end{eqnarray*}}
\newcommand{\Ext}{\operatorname{Ext}}
\newcommand{\cX}{\underline{\underline{\mathbf{X}}}}
\newcommand{\cV}{\underline{\underline{\mathbf{V}}}}
\def\chain{f}
\def\pr{{\rm pr}}
\title{Pro-p Iwahori-Hecke algebras are Gorenstein}
\date{July 12, 2012}
\author{Rachel Ollivier  and Peter Schneider}
\thanks{RO was partially supported by the NSF grant DMS-1201376 and  the project TheHoPad ANR-2011-BS01-005}
\address{Columbia University, Mathematics, 2990 Broadway, New York, NY 10027, USA}
\email{ollivier@math.columbia.edu}
\urladdr{http://www.math.columbia.edu/~ollivier}
\address{ Universit\"at M\"unster,  Mathematisches Institut,  Einsteinstr. 62, 48291 M\"unster, Germany}
\email{pschnei@uni-muenster.de}
\urladdr{http://www.uni-muenster.de/math/u/schneider/}
\subjclass[2010]{20C08, 16E65, 16E10}
\begin{document}

\def\smfbyname{}
\maketitle

\begin{abstract}
Let $\mathfrak F$ be a locally compact nonarchimedean field with residue characteristic $p$  and     $\Gp$ the group of $\mathfrak{F}$-rational points of a connected split reductive group   over $\mathfrak{F}$. For $k$  an arbitrary field, we study  the homological properties  of the Iwahori-Hecke $k$-algebra  $\Hh'$  and of the pro-$p$ Iwahori-Hecke $k$-algebra $\Hh$ of $\Gp$. We prove that both of these algebras are Gorenstein rings with self-injective dimension bounded above by the rank of $\Gp$.  If $\Gp$ is semisimple, we
also show that this upper bound is sharp, that both $\Hh$ and $\Hh'$ are Auslander-Gorenstein   and that there is a duality functor on the finitely generated modules of $\Hh$ (respectively $\Hh'$). We obtain the analogous  Gorenstein and Auslander-Gorenstein properties
for the graded rings associated to $\Hh$ and $\Hh'$.

When $k$ has characteristic $p$, we prove that  in \enquote{most} cases  $\Hh$ and $\Hh'$ have infinite global dimension. In particular, we deduce that the category of smooth $k$-representations of $\Gp={\rm PGL}_2(\mathbb Q_p)$ generated by their invariant vectors under the pro-$p$-Iwahori subgroup has infinite global dimension (at least if $k$ is algebraically closed).

\end{abstract}

\setcounter{tocdepth}{1}

\tableofcontents

\medskip

\section*{Introduction}

\subsection{Framework}

The affine Hecke algebra with parameter $\mathbf q$ attached to an affine Coxeter system $(W, S)$ is ubiquitous in representation theory: it is the $\Z[\mathbf q]$-algebra $\cal H$ with basis $(t_w)_w$ indexed by the elements  $w\in W$ with the braid relations
\begin{equation*}
     t_{w} t_{w'}= t_{ww'} \qquad\text{if $\ell(ww')=\ell(w)+\ell(w')$},
\end{equation*}
where $\ell$ is the length function of
$(W,S)$, and the quadratic relations
\begin{equation*}
    t_{s}^2=({\mathbf q}-1)t_s +\mathbf q \qquad\text{for $s\in  S$}.
\end{equation*}
It has been extensively studied (\cite{Lu}, \cite{KL} for example) over $\Z[\mathbf q^{\pm 1}]$,
that is to say when the parameter  is  an invertible indeterminate.

In the representation theory of $p$-adic reductive groups, specializations of the  affine Hecke algebra appear naturally  as follows. Let $\mathfrak F$ be a locally compact nonarchimedean field with residue characteristic $p$ and residue field with $q$ elements,   and  let    $\Gp$ be the group of $\mathfrak{F}$-rational points of a connected split reductive group  $\mathbf G$ over $\mathfrak{F}$. Fix an Iwahori subgroup $\I'$ of $\Gp$ and consider the (extended) affine Coxeter system corresponding to the choice of  an apartment in the semisimple building of $\Gp$ containing the chamber fixed by $\I'$.  An  arbitrary field  $k$ is a $\Z[\mathbf q]$-module via the specialization of $\mathbf q$ to the image of the integer $q$ in the field $k$. The Iwahori-Hecke $k$-algebra of $\Gp$  is  then isomorphic to (the extended version of) $\cal H\otimes_{\Z[\mathbf q]} k$.

This article studies the homological properties  of the Iwahori-Hecke $k$-algebra  $\Hh'$ of $\Gp$, as well as the properties  of an  extension of $\Hh'$ : the  Hecke $k$-algebra $\Hh$ of the pro-$p$-Sylow  subgroup $\I$ of $\I'$.  We are motivated by the case when $k$ has characteristic $p$ where very little is known, since the parameter $\mathbf q$ specializes to $0$. Nevertheless, our methods and most of our results are valid over an arbitrary field $k$.

The case when $k$ is the field of complex numbers appears most often in the literature.
The representation theory of complex affine Hecke algebras  is closely related to the  complex representation theory of $\Gp$. The blocks of the category  of smooth complex representations of $\Gp$ (\cite{BD}) are called Bernstein components. By the program of Bushnell and Kutzko (\cite{BK}), they should be  parametrized by  certain pairs $(\rm U, \rho)$  called \emph{types}, where  $\rho$ is an irreducible smooth representation of an open compact   subgroup $\rm U$ of $\Gp$. For various groups $\Gp$ and various types, it is known that the corresponding block is equivalent to a category of modules over a tensor product of complex affine Hecke algebras (see also \cite{Hei}).

For example, the trivial representation of $\I'$ is a type and the corresponding block is equivalent to the category of modules over  the complex Iwahori-Hecke algebra of $\Gp$ (\cite{Bo}). Since each  Bernstein component has finite global dimension (by a result by Bernstein), the complex Iwahori-Hecke algebra also has finite global dimension.
Furthermore, the  irreducible representations of the complex Iwahori-Hecke algebra are well understood  (\cite{KL}, \cite{Rog}).
 To classify these representations,  it is crucial  that the  algebra  has a presentation due to Bernstein, which can be seen as an analog of the loop presentation for affine Lie algebras. In \cite{KL},  this presentation is the key for  the geometric realization of the  affine Hecke $\mathbb C[\mathbf q^{\pm 1}]$-algebra and  its regular representation in terms  of the equivariant $K$-theory of the Steinberg  variety of triples, with respect to the natural action of ${\mathbf G}\times \mathbb C^*$ (\cite{CG}, \cite{KL}). In their description, the parameter $\mathbf q$ appears naturally as the generator of the Grothendieck group of $\mathbb C^*$.

If $\mathbf q$ is not invertible the affine Hecke algebra has been studied in \cite{Vign}  (and \cite{Vig}). But it does not have a Bernstein presentation and has no known geometric realization. Hence, the Iwahori-Hecke algebra $\Hh'$ over a field $k$ with characteristic $p$  is much less understood than in the complex case.
However, it appears naturally in the context of the mod $p$ representation theory
of $\Gp$, or rather, the pro-$p$ Iwahori-Hecke algebra $\Hh$ does: there is a faithful functor from a full subcategory of the smooth $k$-representations of $\Gp$  (containing all irreducibles) into the category of $\Hh$-modules.   Since it is not in general an equivalence of categories (\cite{Inv}), the link between Hecke modules and representations in characteristic $p$ is more subtle  than in the complex case, and  is not yet fully understood. Nevertheless, it has motivated the study of the representations of the $k$-algebra $\Hh$ which,  in the case when $\mathbf G$ is the general linear group,   has revealed a numerical coincidence between  certain irreducible mod $p$ Galois representations and a family of irreducible Hecke modules called \emph{supersingular} and characterized by the fact that they are annihilated by certain central Hecke operators (this was conjectured in \cite{Vig} and proved in \cite{Oparab}). The meaning and consequences of this coincidence, which suggests that the $\Hh$-modules in characteristic $p$ carry some number theoretic information, still have to be explored. More generally, the recent developments about mod $p$ and $p$-adic Langlands correspondence (\cite{Pas}) for ${\rm GL}_2(\mathbb Q_p)$ show that, as opposed to the complex case, the focus on irreducible representations of $\Gp$ is not going to be enough  to grasp the depth of the phenomena. Thus, on the side of the Hecke modules, one also needs to understand the geometry and homological properties of $\Hh$. The current article  is a first step in this direction.

\medskip

\subsection{\label{results}Results}

Until mentioned otherwise, $k$ is a field with arbitrary characteristic and $\Hh$ and $\Hh'$ are respectively the pro-$p$ Iwahori-Hecke $k$-algebra and the Iwahori-Hecke $k$-algebra of the split $p$-adic reductive group $\Gp$ (notations in Section \ref{sec:Hecke}). In the semisimple building $\mathscr X$ of $\Gp$, we fix an apartment $\Aa$  containing the chamber $C$ fixed by the chosen Iwahori subgroup $\I'$. We denote by $d$ the semisimple rank of $\Gp$.
In this introduction, we describe the results  for $\Hh$. They are equally valid for $\Hh'$ as proved throughout the article.

\subsubsection{}

After some preliminaries  and notations (Sections \ref{sec:gene} and \ref{sec:Hecke}), we describe in Section \ref{sec:univ} the main steps for defining  a natural resolution of $\Hh$ as an $(\Hh,\Hh)$-bimodule which is a  key ingredient in the rest of the article.  To help the flow,  some  details  are postponed to the technical Section \ref{BTtheory}. The result is the following (Theorem \ref{theo:freeresolution}): there is   a natural exact sequence of $(\Hh,\Hh)$-bimodules of the form
\begin{equation}\label{f:H-H-bimod-intro}
    0 \longrightarrow \bigoplus_{F \in \mathscr{F}_d} \Hh (\epsilon_F) \otimes_{\Hh_F^\dagger} \Hh \longrightarrow \ldots \longrightarrow \bigoplus_{F \in \mathscr{F}_0} \Hh (\epsilon_F) \otimes_{\Hh_F^\dagger} \Hh \longrightarrow \Hh \longrightarrow 0 \
\end{equation}
which yields a free resolution of $\Hh$ as a left and as a right $\Hh$-module. The sets $\Ff_i$  are made of certain facets of dimension $i$ contained in the closure $\overline C$ of $C$. For such a facet $F$, the algebra $\Hh_F^\dagger$ is the pro-$p$ Iwahori-Hecke algebra of the stabilizer of $F$ in $\Gp$: it is a subalgebra of $\Hh$ so that $\Hh$ can be seen as a left and right $\Hh_F^\dagger$-module. Note that if $\Gp$ is semisimple, then $\Hh_F^\dagger$ is finite dimensional as a vector space.
(In the above resolution, the right action of
$\Hh_F^\dagger$ on $\Hh$ is twisted by a certain orientation character $\epsilon_F$, hence  the notation  $\Hh (\epsilon_F)$).

As explained in Section \ref{sec:gene}, this resolution ensures that certain homological properties of $\Hh$ are controlled by the \enquote{small} pro-$p$ Iwahori-Hecke algebras
$\Hh_F^\dagger$. In Section \ref{sec:Frobenius}, we establish that each $\Hh_F^\dagger$ is left and right noetherian and has  self-injective dimension equal to the rank $r$ of the center  of $\Gp$.  It implies the following result (Theorem \ref{theo:Gorenstein}):

\begin{theo}  $\Hh$ is a Gorenstein ring of self-injective dimension bounded above by the rank $d+r$ of the group $\Gp$. The same statement is valid for the Iwahori-Hecke algebra $\Hh'$.
\end{theo}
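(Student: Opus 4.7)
The strategy is to use the bimodule resolution (\ref{f:H-H-bimod-intro}) to reduce the injective dimension question for $\Hh$ to two separate pieces of data: the length $d$ of that resolution, and the self-injective dimension $r$ of each small algebra $\Hh_F^\dagger$ established in Section \ref{sec:Frobenius}. The underlying mechanism is a Shapiro-type reduction along each constituent of the resolution.

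Concretely, let $M$ be an arbitrary left $\Hh$-module, and write $P_i = \bigoplus_{F\in\Ff_i} \Hh(\epsilon_F)\otimes_{\Hh_F^\dagger}\Hh$ for the $i$-th term of the bimodule resolution $P_\bullet\to\Hh$, which has length $d$. Since each $P_i$ is free, in particular flat, as a \emph{right} $\Hh$-module, tensoring $P_\bullet\to\Hh$ on the right by $M$ over $\Hh$ produces a (non-projective) resolution of $M$ of length $d$:
\[
0\to P_d\otimes_{\Hh}M\to\cdots\to P_0\otimes_{\Hh}M\to M\to 0,
\]
whose $i$-th term is $\bigoplus_{F\in\Ff_i}\Hh(\epsilon_F)\otimes_{\Hh_F^\dagger}M$. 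The Cartan--Eilenberg hypercohomology spectral sequence attached to this resolution and the functor $\Hom_{\Hh}(-,\Hh)$,
\[
E_1^{p,q}=\Ext^q_{\Hh}\bigl(P_p\otimes_{\Hh}M,\Hh\bigr)\Longrightarrow\Ext^{p+q}_{\Hh}(M,\Hh),
\]
will be concentrated in the strip $0\leq p\leq d$ and will govern the computation.

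Next, I would control each $E_1^{p,q}$ via a change-of-rings identity. A structural input, used already in the construction of $P_\bullet$ in Section \ref{sec:univ}, is that $\Hh$ is free over each $\Hh_F^\dagger$ on both sides. This two-sided freeness makes induction $\Hh\otimes_{\Hh_F^\dagger}-$ exact, while its right adjoint, restriction, is always exact, so induction also preserves projectives. Shapiro's lemma then yields
\[
\Ext^q_{\Hh}\bigl(\Hh(\epsilon_F)\otimes_{\Hh_F^\dagger}M,\Hh\bigr)\;\cong\; \Ext^q_{\Hh_F^\dagger}\bigl(M^{\epsilon_F},\Hh\bigr),
\]
where $M^{\epsilon_F}$ denotes $M$ with its induced $\Hh_F^\dagger$-structure suitably twisted by $\epsilon_F$, and $\Hh$ is restricted to a left $\Hh_F^\dagger$-module. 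Since $\Hh_F^\dagger$ is noetherian Gorenstein of self-injective dimension $r$, and since $\Hh$ is free—hence a direct sum of copies of $\Hh_F^\dagger$—as a left $\Hh_F^\dagger$-module, the $\Hh_F^\dagger$-injective dimension of $\Hh$ is at most $r$, so the Ext above vanishes for $q>r$.

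Combining the two vanishings forces $E_1^{p,q}=0$ outside the rectangle $0\leq p\leq d$, $0\leq q\leq r$, whence $\Ext^n_{\Hh}(M,\Hh)=0$ for all $n>d+r$. As $M$ was arbitrary, the left self-injective dimension of $\Hh$ is at most $d+r$; the symmetric argument, applied to the same bimodule resolution but now tensored on the left by an arbitrary right $\Hh$-module, gives the matching bound on the right. The same scheme transfers verbatim to $\Hh'$ via its analogous bimodule resolution and the corresponding finiteness result on its small subalgebras. The main technical obstacle is really housed upstream: the two-sided freeness of $\Hh$ over each $\Hh_F^\dagger$, as it simultaneously controls the exactness of induction and the finiteness of the $\Hh_F^\dagger$-injective dimension of $\Hh$; the orientation characters $\epsilon_F$ enter only as harmless twists and do not affect the numerical bound.
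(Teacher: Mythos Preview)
Your proposal is correct and follows essentially the same route as the paper: the paper packages your spectral-sequence-plus-Shapiro argument as the abstract Proposition~\ref{injdim} (together with Lemma~\ref{induced}), and then invokes it after verifying its hypotheses via the resolution \eqref{f:H-H-bimod}, the freeness in Proposition~\ref{prop:free}, and the noetherianity and self-injective dimension $r$ of $\Hh_F^\dagger$ in Proposition~\ref{F-injdim}. The only stylistic difference is that the paper isolates the general homological mechanism once in Section~\ref{sec:gene}, whereas you unfold it in place; the actual inputs, the change-of-rings step, and the spectral sequence bound are identical.
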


We now explain the idea behind the resolution \eqref{f:H-H-bimod-intro}.   The cornerstone of our argument is the following construction  in \cite{SS}.
For an arbitrary $k$-representation $\mathbf V$ of $\Gp$ generated by its $\I$-invariant subspace,
consider the attached $\Gp$-equivariant homological coefficient system of level zero $\cV$ on the building  $\mathscr{X}$ and  the associated augmented chain complex $C_c^{or} (\mathscr{X}_{(\bullet)}, \cV)\to\mathbf V$.
If $k=\mathbb C$, it yields a resolution for $\mathbf V$ (\emph{loc.cit}).
This fails in general if  $k$ has characteristic $p$ (Remark \ref{rema:about-exactness}). However, we are able to prove, independently of the characteristic of $k$, that passing to the $\I$-invariants vectors in  $C_c^{or} (\mathscr{X}_{(\bullet)}, \cV)\to\mathbf V$ always yields an exact resolution  $(C_c^{or} (\mathscr{X}_{(\bullet)}, \cV))^\I\to\mathbf V^\I$ of $\mathbf V^\I$ as a left $\Hh$-module.
This is done by proving that  the latter complex    is isomorphic to another complex   $C_c^{or}(\mathscr{A}_{(\bullet)}, \cV^\I) \to \mathbf V^\I$ associated to a certain homological coefficient system $\cV^\I$ on  the apartment $\Aa$ (\ref{subsec:univreso-modules}).
This observation was inspired by a similar statement in \cite{Bro}  valid for complex representations.
It remains to show that the new complex is exact, and this is obtained by a classical argument of contractibility of certain subcomplexes of $\Aa$, together with the crucial fact that the coefficient system $\cV^\I$ on $\Aa$ has additional \enquote{local constancy} properties (Theorem  \ref{I-resolutionV}).

Let $\mathbf X$ denote the universal representation of $\Gp$ compactly induced from the trivial character of $\I$ with values in $k$ (it is naturally a right $\Hh$-module). In the case where $\mathbf V=\mathbf X$, we have obtained an exact resolution $(C_c^{or} (\mathscr{X}_{(\bullet)}, \cX))^\I\to\mathbf X^\I$ of $\Hh=\mathbf X^\I$ as an $(\Hh, \Hh)$-bimodule: it is the resolution  \eqref{f:H-H-bimod-intro}. Note that, if the field $k$ has characteristic $0$, then for $\Hh'$ such a result is already contained (with a different proof) in \cite{OpSo}.

\subsubsection{}

Suppose that $\Gp$ is semisimple, meaning $r=0$. Then  by tensoring \eqref{f:H-H-bimod-intro} with any left $\Hh$-module $\m$, we obtain  an exact resolution
\begin{equation}\label{gpr}
Gpr_\bullet(\mathfrak{m}) \longrightarrow \mathfrak{m}
\end{equation}
of $\m$ by Gorenstein projective modules (Lemma \ref{Gorproj-res}), which can be used to compute the right $\Hh$-modules $\Ext_\Hh^i (\mathfrak{m},\Hh)$.

In the process of  proving that the upper bound $d$  in the previous theorem is sharp, we first investigated the case of the two following $\Hh$-modules: the trivial character $\chi_{triv}$ and the sign character $\chi_{sign}$ (in the setting of the complex Iwahori-Hecke algebra and complex representations of $\Gp$, these characters correspond respectively to the trivial and the Steinberg representation of $\Gp$). We computed  that, for $\chi\in\{\chi_{triv}, \chi_{sign}\}$, we have
$\Ext_\Hh^d(\chi, \Hh)=\chi\circ \iota_C $  where $\iota_C$ is an involutive automorphism of $\Hh$ which,  in the  particular case where $\Gp$ is simply connected,  coincides with the canonical involution  exchanging  $\chi_{triv}$ and  $\chi_{sign}$.
This proves that the self-injective dimension of $\Gp$ is equal to $d$. Moreover, it suggests that (under the assumption that $\Gp$ is semisimple) there is a duality functor on finitely generated $\Hh$-modules. This is part of the following result (see \ref{subsec:duality}).

\begin{theo}
Suppose that $\Gp$ is semisimple.
\begin{itemize}
\item[i.] For any left $\Hh$-module of finite length, there is a natural isomorphism of right $\Hh$-modules $$\Ext_\Hh^d(\mathfrak{m},\Hh) \cong \mathfrak{m}^d$$ where $\m^d:=\Hom_k(\iota_C^\ast\mathfrak{m},k)$ and the action of $\Hh$ on $\iota_C^\ast\mathfrak{m}$ is through the automorphism $\iota_C$.
 \item[ii.] In particular, $\Hh$ has injective dimension equal to $d$.
 \item[iii.] $\Hh$ is Auslander-Gorenstein. In particular, we have $\Ext_\Hh^i(\mathfrak{m},\Hh) = 0$ for any $i < d$ and any $\Hh$-module of finite length $\mathfrak{m}$.
\end{itemize}
\end{theo}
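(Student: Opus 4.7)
The strategy is to dualize the Gorenstein-projective resolution $Gpr_\bullet(\m)\to\m$ of \eqref{gpr} (obtained by tensoring the bimodule resolution \eqref{f:H-H-bimod-intro} with $\m$, and having length $d$ since $r=0$) and to recognize the result as a twisted resolution of $\m^d$. Concretely, I would apply $\Hom_\Hh(-,\Hh)$ to $Gpr_\bullet(\m)$ term-by-term: by tensor-hom adjunction each term $\Hom_\Hh(\Hh(\epsilon_F)\otimes_{\Hh_F^\dagger}\m,\Hh)$ becomes $\Hom_{\Hh_F^\dagger}(\m,\Hh)$ with the $\Hh_F^\dagger$-action on $\Hh$ twisted appropriately by $\epsilon_F^{-1}$. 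Since Section \ref{sec:Frobenius} shows that $\Hh_F^\dagger$ is a finite-dimensional Frobenius algebra (with some Nakayama automorphism $\nu_F$), and since the Mackey-type analysis in Section \ref{BTtheory} makes $\Hh$ a free right $\Hh_F^\dagger$-module, each such Hom-term is canonically identified with the $k$-linear dual $\Hom_k(\m,k)$ tensored over $\Hh_F^\dagger$ (through $\nu_F\cdot\epsilon_F^{-1}$) with $\Hh$.

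The crucial compatibility I would then verify is that the various local automorphisms $\nu_F\cdot\epsilon_F^{-1}$ all assemble into the restriction of a single involutive algebra automorphism of $\Hh$, namely the $\iota_C$ appearing in the statement. This is precisely what the pre-computations $\Ext_\Hh^d(\chi,\Hh)=\chi\circ\iota_C$ for $\chi\in\{\chi_{triv},\chi_{sign}\}$ (described just before the statement) are suggesting, and checking it for all facets should reduce to a verification on generators of $\Hh_F^\dagger$. Granted this identification, the dualized cochain complex $\Hom_\Hh(Gpr_\bullet(\m),\Hh)$ becomes isomorphic to $\Hom_k(Gpr_{d-\bullet}(\iota_C^\ast\m),k)$. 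The latter, being the $k$-linear dual of an exact resolution of $\iota_C^\ast\m$, is acyclic except in cohomological degree $d$ where it computes $\Hom_k(\iota_C^\ast\m,k)=\m^d$. Part (i) follows immediately, and (ii) follows because for any nonzero finite-length $\m$ (e.g.\ a simple module) one has $\m^d\ne 0$, so that the injective dimension of $\Hh$ equals $d$, the upper bound having been provided by the earlier Gorenstein theorem.

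For part (iii), the grade vanishing $\Ext_\Hh^i(\m,\Hh)=0$ for $i<d$ when $\m$ has finite length is already contained in the identification obtained above. The full Auslander-Gorenstein property concerns arbitrary submodules of $\Ext_\Hh^i(M,\Hh)$ for $M$ finitely generated; for this I plan to use that each $\Hh_F^\dagger$, being finite-dimensional self-injective, is trivially Auslander-Gorenstein, and to transport this property globally via \eqref{f:H-H-bimod-intro} through the spectral sequence of the associated bicomplex, comparing local and global grades. The main obstacle I anticipate is the compatibility step just described: the Nakayama automorphism of a Frobenius algebra is only canonical up to inner automorphism, so it requires careful bookkeeping to show that the local data $(\nu_F,\epsilon_F)$ glue coherently across all facets $F$ into a single global involution $\iota_C$ of $\Hh$, matching the $\iota_C$ already pinned down on the $\chi_{triv}/\chi_{sign}$ line.
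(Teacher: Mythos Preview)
Your plan for part (i) contains a genuine gap. You propose to identify the dualized complex $\Hom_\Hh(Gpr_\bullet(\m),\Hh)$ with $\Hom_k(Gpr_{d-\bullet}(\iota_C^\ast\m),k)$, but the two complexes cannot be isomorphic even termwise: each term of the first is of the form $\Hom_k(\m,k)\otimes_{\Hh_F^\dagger}\Hh$, an \emph{induced} module from a finite-dimensional $\Hh_F^\dagger$-module and hence of countable $k$-dimension, while each term of the second is $\Hom_k\big(\Hh(\epsilon_F)\otimes_{\Hh_F^\dagger}\iota_C^\ast\m,\,k\big)$, the full $k$-linear dual of a countably-infinite-dimensional space and hence of uncountable $k$-dimension. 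Induction and coinduction from $\Hh_F^\dagger$ to $\Hh$ do not agree because $\Hh$ has infinite rank over $\Hh_F^\dagger$. (A version of your idea with the correct target --- the right-module Gorenstein-projective resolution of $\m^d$ --- is plausible at the level of terms, but then one must match differentials at \emph{every} level, not just near the top; the paper in fact obtains this only \emph{a posteriori}, see the paragraph after Corollary~\ref{vanishing}.)

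The paper proceeds differently and more economically. For (i) it computes only at the top: it builds an explicit augmentation $\Hom_\Hh(Gpr_d(\m),\Hh)\to\m^d$ using the Frobenius structure of $\Hh_C^\dagger=k[\tilde\Omega]$ alone, and then checks exactness at degrees $d$ and $d-1$ by a direct computation of the image of the last $\partial^\ast$ (Lemma~\ref{dual-image}, Proposition~\ref{augment}); this yields Corollary~\ref{duality} and, by specializing to $\chi_{triv}$, part (ii). For (iii) the paper does not attempt a spectral-sequence transport of the Auslander condition from the $\Hh_F^\dagger$ (which would face its own difficulties, since the condition concerns arbitrary submodules of Ext groups). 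Instead it observes that $\Hh$ is a noetherian PI affine $k$-algebra, deduces from (i)--(ii) that $\Hh$ is injectively smooth in the sense of Stafford--Zhang, and invokes \cite[Thm.~3.10, Lemma~4.3]{SZ} as a black box (Theorem~\ref{AG}). The vanishing $\Ext_\Hh^i(\m,\Hh)=0$ for $i<d$ is then obtained from the Auslander condition applied to $\m=\Ext_\Hh^d(\m^d,\Hh)$ (Corollary~\ref{vanishing}), rather than the other way around.
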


We conclude Section \ref{subsec:comparison} by noticing that in the case of the sign and trivial characters, the resolution \eqref{gpr} is in fact a resolution by projective $\Hh$-modules, under certain hypotheses on $k$ which are satisfied for example if $k$ has characteristic  $0$ or $p$: these characters have projective dimension $d$  (Proposition \ref{prop:equal-to-d}).

\subsubsection{}

Without any assumption on $\Gp$, we
study in Section \ref{subsec:projdim} the global dimension of $\Hh$ (and $\Hh'$) when $k$ has characteristic $p$. We prove that in \enquote{most} cases, $\Hh$ and $\Hh'$ both admit a simple module with infinite projective dimension. But, for example, if $\Gp={\rm PGL}_2(\mathbb Q_p)$ the algebra $\Hh$ has infinite global dimension whereas $\Hh'$, for $p\neq 2$, has global dimension $1$. As an application, using \cite{Inv}, we obtain that the category of smooth $k$-representations of $\Gp={\rm PGL}_2(\mathbb Q_p)$   generated by their $\I$-invariant vectors has infinite global dimension, at least if $k$ is algebraically closed.

\subsubsection{}
In Section \ref{sec:graded} we introduce the graded ring $gr_\bullet \Hh$ associated to a natural filtration on $\Hh$. The graded ring similarly associated to $\Hh'$  in the case when $\Gp$ is simply connected is known as the affine nil Coxeter algebra. It is investigated in the theory of affine Grassmannians and noncommutative symmetric functions (compare, for example, \cite{BBPZ,KM,TLa}).
We prove the following.

\begin{theo}\phantomsection
\begin{itemize}
\item[i.] $gr_\bullet \Hh$ is a Gorenstein ring of self-injective dimension bounded above by the rank of the group $\Gp$.
\item[ii.] If furthermore  $\Gp$ is semisimple, then $gr_\bullet \Hh$ is Auslander-Gorenstein.
\end{itemize}
\end{theo}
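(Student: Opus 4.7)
The plan is to recycle the argument for $\Hh$ from Sections \ref{sec:univ} and \ref{sec:Frobenius}, systematically passing to the associated graded object at each step. Let $F_\bullet\Hh$ denote the length filtration $F_n\Hh=\bigoplus_{\ell(w)\le n}k\tau_w$, so that $gr_\bullet\Hh$ inherits a basis $\{\bar\tau_w\}$ with $\bar\tau_w\bar\tau_{w'}=\bar\tau_{ww'}$ if $\ell(ww')=\ell(w)+\ell(w')$ and zero otherwise. First I would verify that this filtration restricts to a \emph{strict} filtration on each of the subalgebras $\Hh_F^\dagger$ appearing in the resolution \eqref{f:H-H-bimod-intro}, i.e.\ $(F_n\Hh)\cap\Hh_F^\dagger = F_n\Hh_F^\dagger$, so that $gr_\bullet\Hh_F^\dagger$ embeds as a graded subalgebra of $gr_\bullet\Hh$. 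This should reduce to the compatibility of the length function on the parahoric Weyl-type group attached to $F$ with the length function on the whole affine Weyl group.

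Next, I would upgrade the bimodule resolution \eqref{f:H-H-bimod-intro} to a filtered resolution by equipping each term $\Hh(\epsilon_F)\otimes_{\Hh_F^\dagger}\Hh$ with the tensor-product filtration and checking that the differentials, as constructed in Section \ref{sec:univ}, are strictly filtered. Exactness of the associated graded complex would then yield the analogous resolution
\[
0\longrightarrow\bigoplus_{F\in\Ff_d}gr_\bullet\Hh(\epsilon_F)\otimes_{gr_\bullet\Hh_F^\dagger}gr_\bullet\Hh\longrightarrow\ldots\longrightarrow gr_\bullet\Hh\longrightarrow 0
\]
of $(gr_\bullet\Hh,gr_\bullet\Hh)$-bimodules, free on each side. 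Combined with the assertion that each $gr_\bullet\Hh_F^\dagger$ is noetherian and self-injective of dimension $r$---which I would extract from Section \ref{sec:Frobenius} by observing that the explicit bilinear form witnessing the self-injectivity of $\Hh_F^\dagger$ is homogeneous with respect to the length grading, so that it induces a Nakayama-type pairing on $gr_\bullet\Hh_F^\dagger$---the argument of Theorem \ref{theo:Gorenstein} would transfer verbatim to yield part (i).

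For part (ii), assume $\Gp$ is semisimple, so $r=0$ and each $\Hh_F^\dagger$ is finite-dimensional (hence Frobenius). The involution $\iota_C$ preserves length and therefore descends to an involutive automorphism of $gr_\bullet\Hh$, so the duality functor $\m\mapsto \m^d:=\Hom_k(\iota_C^\ast\m,k)$ makes sense on graded modules. The same $\Ext$-computation carried out in \ref{subsec:duality}, applied to the graded resolution above and to the graded Frobenius pairings on $gr_\bullet\Hh_F^\dagger$, then gives both the duality $\Ext^d_{gr_\bullet\Hh}(\m,gr_\bullet\Hh)\cong\m^d$ for finitely generated graded modules of finite length and the Auslander condition for $gr_\bullet\Hh$.

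The main obstacle I anticipate is step two: securing strictness of the filtration on the differentials of \eqref{f:H-H-bimod-intro}. This is a concrete but delicate combinatorial check on how the orientation characters $\epsilon_F$ interact with the length-graded pieces, and on the possible cancellations that the $\tau$-coefficients of the boundary maps can produce. Once strictness is established, all the homological algebra of Sections \ref{sec:Frobenius} and \ref{subsec:duality} runs in the graded category without change, and both (i) and (ii) follow.
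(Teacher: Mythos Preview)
Your plan for (i) is essentially the paper's: the paper equips \eqref{f:H-H-bimod} with the tensor-product filtration, proves it is strict-exact (Theorem~\ref{strict-exact}) by returning to the apartment complex and filtering the coefficient system $\cX^\I$ chamber by chamber, and then passes to the associated graded resolution (Corollary~\ref{gr-exact}). The Frobenius-extension argument for $gr_\bullet \Hh_F^\dagger$ (Lemma~\ref{gr-F-injdim}) is indeed formally identical to the ungraded one, as you anticipate.

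There is, however, one genuine omission: noetherianity of $gr_\bullet\Hh$. This does \emph{not} follow from noetherianity of $\Hh$, nor from the resolution, and without it you cannot conclude that $gr_\bullet\Hh$ is Gorenstein. The paper handles this separately (Proposition~\ref{gr-Gor}.i) by exhibiting the commutative graded subalgebra $\mathbb{A}=\sum_{x\in\tilde\Lambda}k\bar\tau_x$, showing via Gordon's lemma on Weyl-chamber semigroups that $\mathbb{A}$ is a finitely generated $k$-algebra, and then checking that $gr_\bullet\Hh$ is finitely generated as an $\mathbb{A}$-module.

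For (ii) the paper takes a different and shorter route than you propose. Rather than reproducing the duality computation of \S\ref{subsec:duality} in the graded category, it observes that $gr_\bullet\Hh$ is a nonnegatively graded, fully bounded noetherian ring with $gr_0\Hh=k[\tilde\Omega]$ artinian (since $\Omega$ is finite when $\Gp$ is semisimple), and then invokes a criterion of Stafford--Zhang \cite[paragraph after Cor.~6.3]{SZ} which yields Auslander--Gorenstein directly from finite self-injective dimension under these graded hypotheses. Your route through duality could in principle be made to work, but note that even in the ungraded case (Theorem~\ref{AG}) the Auslander condition is not a formal consequence of the duality isomorphism: it passes through the notion of ``injectively smooth'' and still requires the Stafford--Zhang machinery together with the PI/affine property---which again needs the module-finiteness over $\mathbb{A}$ that you have not supplied.
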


\bigskip

\bigskip

\section{Some general algebra\label{sec:gene}}

Let $k$ be any field and $A$ be some $k$-algebra.

First we assume that $A$  is a $(A_0, A_0)$-bimodule over a $k$-algebra $A_0$,
where the left, resp.\ right, $A_0$-module structure of $A$ is given by   an algebra homomorphism $\lambda : A_0 \rightarrow A$, resp.\ $\rho : A_0 \rightarrow A$.
We consider the following three properties:
\begin{enumerate}
  \item[1.] $A_0$ is left and right noetherian.
  \item[2.] $A_0$ has finite injective dimension $r$ as a left and as a right module over itself.
  \item[3.] $A$ is free as a left as well as a right $A_0$-module.
\end{enumerate}
In the following a left, resp.\ right, $A$-module will be viewed as an $A_0$-module via the map $\lambda$, resp.\ $\rho$.

\begin{lemm}\phantomsection\label{induced}
\begin{itemize}
\item[i.] Assuming 1.--3., $A$ as a left and as a right $A_0$-module has injective dimension $\leq r$.
\item[ii.] Assuming 3., let $N$ be a left $A$-module which as an $A_0$-module has injective dimension $\leq r$. For any left $A_0$-module $M$ we have $\Ext_A^i (A \otimes_{A_0} M,N) = 0$ for any $i > r$.
\item[iii.] Assume 3., $\lambda = \rho$, and that $A_0$ is a direct summand of $A$ as an $(A_0,A_0)$-bimodule. If the $A_0$-module $M$ has infinite projective dimension then the $A$-module $A \otimes_{A_0} M$ has infinite projective dimension as well.
\end{itemize}
\end{lemm}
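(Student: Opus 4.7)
The plan is to treat each part by a direct reduction to standard facts of homological algebra over $A_0$, using hypothesis 3 throughout as the mechanism that transfers information between $A$ and $A_0$.

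For (i) the plan is to exploit noetherianity: over a left (resp.\ right) noetherian ring, arbitrary direct sums of injective modules remain injective, so the injective dimension bound $\leq r$ is preserved under direct sums. By hypothesis 3, $A$ as a left $A_0$-module is a direct sum of copies of $A_0$; picking an injective resolution of $A_0$ of length $\leq r$ (which exists by hypothesis 2) and summing componentwise gives an injective resolution of $A$ of length $\leq r$ over $A_0$. The argument is symmetric on the right.

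For (ii) the plan is tensor--hom adjunction combined with the exactness and projective-preservation of $A \otimes_{A_0} -$. Choose a projective resolution $P_\bullet \to M$ of $M$ over $A_0$. Since $A$ is free, hence flat, as a right $A_0$-module, $A \otimes_{A_0} P_\bullet \to A \otimes_{A_0} M$ remains exact, and $A \otimes_{A_0} A_0 = A$ is free over $A$, so the complex is a projective resolution of $A \otimes_{A_0} M$ in the category of left $A$-modules. The adjunction isomorphism $\Hom_A(A \otimes_{A_0} P_\bullet, N) \cong \Hom_{A_0}(P_\bullet, N)$ then identifies $\Ext_A^i(A \otimes_{A_0} M,N)$ with $\Ext_{A_0}^i(M,N)$, which vanishes for $i > r$ by the injective-dimension hypothesis on $N$. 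The only point that needs attention is that the left $A_0$-module structure on $N$ produced by the adjunction is the one coming from the right $A_0$-action on $A$, i.e.\ via $\rho$; this is the structure for which the injective-dimension bound is being invoked.

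For (iii) the plan is a contrapositive argument via descent of projectivity. Suppose $A \otimes_{A_0} M$ has finite projective dimension as a left $A$-module. Hypothesis 3 implies that every projective left $A$-module is projective over $A_0$: a direct summand of a free $A$-module is a direct summand of a free $A_0$-module, since $A$ itself is $A_0$-free. Consequently a finite projective $A$-resolution of $A \otimes_{A_0} M$ becomes a finite projective $A_0$-resolution, so $A \otimes_{A_0} M$ has finite projective dimension over $A_0$. The assumed bimodule splitting $A = A_0 \oplus C$ gives a direct-sum decomposition $A \otimes_{A_0} M \cong M \oplus (C \otimes_{A_0} M)$ as left $A_0$-modules (using $\lambda = \rho$), making $M$ a direct summand and hence of finite projective dimension over $A_0$, contrary to hypothesis. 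I expect no serious obstacle; the only subtlety to watch is precisely the bookkeeping of $\lambda$ versus $\rho$ in part (ii), since the adjunction produces the $\rho$-structure while the stated convention uses $\lambda$.
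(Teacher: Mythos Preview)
Your proofs of (i) and (ii) are essentially identical to the paper's: the paper also writes $A = \bigoplus A_0$ and sums an injective resolution of $A_0$ for (i), and for (ii) tensors a projective resolution of $M$ with $A$ and applies the adjunction $\Hom_A(A \otimes_{A_0} P_\bullet, N) \cong \Hom_{A_0}(P_\bullet, N)$ to get $\Ext_A^i(A \otimes_{A_0} M, N) = \Ext_{A_0}^i(M, N)$. Your remark about the $\lambda$/$\rho$ bookkeeping in the adjunction is a genuine subtlety the paper passes over silently.

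For (iii), your argument is correct but genuinely different from the paper's. The paper stays on the $\Ext$ side: for any $A_0$-module $M'$, part (ii) gives $\Ext_A^i(A \otimes_{A_0} M, A \otimes_{A_0} M') = \Ext_{A_0}^i(M, A \otimes_{A_0} M')$, and the bimodule splitting $A = A_0 \oplus C$ makes $\Ext_{A_0}^i(M, M')$ a direct summand of the right-hand side; hence nonvanishing of $\Ext_{A_0}^i(M, M')$ for arbitrarily large $i$ forces nonvanishing of $\Ext_A^i(A \otimes_{A_0} M, -)$. You instead restrict a finite projective $A$-resolution of $A \otimes_{A_0} M$ down to $A_0$ (using that projective $A$-modules are $A_0$-projective by hypothesis 3), and then split off $M$ via $A \otimes_{A_0} M \cong M \oplus (C \otimes_{A_0} M)$. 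Your route is slightly more elementary in that it avoids re-invoking part (ii), while the paper's route has the virtue of reusing the $\Ext$ identity already established and making transparent exactly which $\Ext$ groups witness the infinite dimension.
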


\begin{proof}
i. We write $A = \oplus_{i \in I} A_0$, and we choose an injective resolution $0 \rightarrow A_0 \rightarrow E_0 \rightarrow \ldots E_r \rightarrow 0$ of $A_0$ as an $A_0$-module. Since over a noetherian ring arbitrary direct sums of injective modules are injective it follows that $0 \rightarrow A \rightarrow \oplus_{i \in I} E_0 \rightarrow \ldots \rightarrow \oplus_{i \in I} E_r \rightarrow 0$ is an injective resolution of $A$ as an $A_0$-module.

ii. We choose a projective resolution $P_\bullet \longrightarrow M$ of the $A_0$-module $M$. Since $A$ is free as an $A_0$-module by assumption it follows that $A \otimes_{A_0} P_\bullet \longrightarrow A \otimes_{A_0} M$ is a projective resolution of the $A$-module $A \otimes_{A_0} M$. We compute
\begin{equation*}
    \Ext_A^i (A \otimes_{A_0} M,N) = h^i(\Hom_A(A \otimes_{A_0} P_\bullet, N)) = h^i(\Hom_{A_0}(P_\bullet, N)) = \Ext_{A_0}^i (M,N) = 0
\end{equation*}
for $i > r$.

iii. Let $M'$ be any $A_0$-module. As above we have $\Ext^i_A(A \otimes_{A_0} M, A \otimes_{A_0} M') = \Ext^i_{A_0}( M, A \otimes_{A_0} M')$. The third assumption implies that $\Ext^i_{A_0}(M,M')$ is a direct summand of $\Ext^i_{A_0}( M, A \otimes_{A_0} M')$.
\end{proof}

Let us now assume that we have an integer $r \geq 0$ and an exact sequence of $(A,A)$-bimodules
\begin{equation}\label{f:resol}
     0\longrightarrow B_d \longrightarrow \ldots \longrightarrow B_0 \longrightarrow A \longrightarrow 0
\end{equation}
where each term $B_j$ is isomorphic to a finite direct sum of bimodules of the form $A \otimes_{A_0} A$ with $A_0$ as above (but varying with $B_j$ and the respective direct summand). We say that \eqref{f:resol} satisfies one of the properties 1., 2., or 3. if each of the occurring bimodules has this property.

\begin{prop}\phantomsection{\label{injdim}}
\begin{itemize}
\item[i.] Assuming that \eqref{f:resol} satisfies 1.--3., the injective dimension of $A$ as a left and as a right $A$-module is $\leq d+r$.
\item[ii.] Assuming that \eqref{f:resol} satisfies 3., let $N$ be a left $A$-module such that, for all the algebras $A_0$ appearing in the bimodules of  \eqref{f:resol}, the injective dimension of $N$ as an $A_0$-module is $\leq r$. Then $N$ has injective dimension $\leq d+r$.
\item[iii.] Assuming that \eqref{f:resol} satisfies 3., let $M$ be a left $A$-module such that, for all the algebras $A_0$ appearing in the bimodules of \eqref{f:resol}, the projective dimension of $M$ as an $A_0$-module is $\leq r$. Then $M$ has projective dimension $\leq d+r$.
\end{itemize}
\end{prop}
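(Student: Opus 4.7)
The plan is to reduce all three parts of the proposition to the single-bimodule statements established in Lemma~\ref{induced}, by tensoring the bimodule resolution \eqref{f:resol} on the right with an arbitrary left $A$-module. The key preliminary observation is that, under hypothesis~3., each bimodule $A \otimes_{A_0} A$ occurring in \eqref{f:resol} is free as a right $A$-module: writing $A \cong \bigoplus_I A_0$ as a right $A_0$-module yields $A \otimes_{A_0} A \cong \bigoplus_I A$ as a right $A$-module. In particular every $B_j$ is right-$A$-flat, so tensoring \eqref{f:resol} over $A$ with any left $A$-module $M$ produces an exact sequence of left $A$-modules
\begin{equation*}
 0 \longrightarrow Y_d \longrightarrow \ldots \longrightarrow Y_0 \longrightarrow M \longrightarrow 0
\end{equation*}
in which each $Y_j$ is a finite direct sum of modules of the form $A \otimes_{A_0} M$.

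For part (iii), I argue that if $M$ has projective dimension $\leq r$ as an $A_0$-module for every $A_0$ appearing in \eqref{f:resol}, then a length-$r$ projective $A_0$-resolution $P_\bullet \to M$ remains exact after applying $A \otimes_{A_0} -$ (this functor is exact because $A$ is right-$A_0$-free), and the result is a length-$r$ projective $A$-resolution of $A \otimes_{A_0} M$, since $A \otimes_{A_0} P$ is a direct summand of a free left $A$-module whenever $P$ is a direct summand of a free $A_0$-module. Hence each $Y_j$ has projective $A$-dimension $\leq r$, and the standard horseshoe/total-complex procedure applied to the length-$d$ resolution $Y_\bullet \to M$ then produces a projective $A$-resolution of $M$ of length $\leq d+r$.

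For part (ii), Lemma~\ref{induced}(ii) applied summand-by-summand gives $\Ext_A^i(Y_j, N) = 0$ for all $i > r$ and all $j$. Decomposing $Y_\bullet \to M$ into short exact sequences $0 \to K_j \to Y_j \to K_{j-1} \to 0$ (with $K_{-1} := M$ and $K_{d-1} = Y_d$) and iterating the connecting homomorphism in the long exact Ext sequence $d$ times shows that $\Ext_A^n(M,N)$ embeds into $\Ext_A^{n-d}(Y_d, N) = 0$ for every $n > d+r$, so $N$ has injective $A$-dimension at most $d+r$. Part (i) is then obtained by specializing (ii) to $N = A$: Lemma~\ref{induced}(i) supplies the required bound $r$ on the injective $A_0$-dimension of $A$, and this is precisely the step that consumes hypotheses~1. and~2. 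The right-module assertion of (i) follows from the symmetric argument, since condition~3. equally makes each $B_j$ free as a left $A$-module, so that everything above dualizes by tensoring on the left with a right $A$-module.

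I expect no serious obstacle; the main bookkeeping tasks are the iterated dimension shift in part~(ii) and the total-complex construction in part~(iii), which are routine but must be set up so that exactness is preserved at each stage.
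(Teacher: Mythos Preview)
Your proposal is correct and follows essentially the same approach as the paper: tensor the resolution \eqref{f:resol} on the right with $M$, invoke Lemma~\ref{induced} termwise, and then assemble. The only cosmetic difference is that for part~(ii) the paper packages the iterated connecting maps into the hyper-Ext spectral sequence $E_1^{s,t} = \Ext_A^t(B_s \otimes_A M, N) \Rightarrow \Ext_A^{s+t}(M,N)$, whereas you do the equivalent dimension shift by hand via the short exact sequences in the kernels; parts~(i) and~(iii) are handled identically.
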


\begin{proof}
By Lemma \ref{induced}.i the first assertion is a special case of the second one. To prove ii. we let $M$ be an arbitrary left $A$-module. By the property 3. the sequence \eqref{f:resol} is an exact sequence of free right $A$-modules. Hence tensoring by $M$ gives the exact sequence of left $A$-modules
\begin{equation*}
     0\longrightarrow B_d \otimes_A M \longrightarrow \ldots \longrightarrow B_0 \otimes_A M \longrightarrow M \longrightarrow 0 \ .
\end{equation*}
Viewing this as a quasi-isomorphism between the complexes $B_d \otimes_A M \longrightarrow \ldots \longrightarrow B_0 \otimes_A M$ and $M$ the corresponding hyper-Ext spectral sequence is a first quadrant spectral sequence of the form
\begin{equation*}
    E_1^{s,t} = \Ext_A^t (B_s \otimes_A M ,N) \Longrightarrow \Ext_A^{s+t} (M,N) \ .
\end{equation*}
Each $A$-module $B_j \otimes_A M$ is a finite direct sum of $A$-modules of the form $A \otimes_{A_0} A \otimes_A M = A \otimes_{A_0} M$. It therefore follows from Lemma \ref{induced}.ii that
\begin{equation*}
    \Ext_A^t (B_s \otimes_A M,N) = 0 \qquad\text{for any $t > r$ and $0 \leq s \leq d$}.
\end{equation*}
Inserting this information into the spectral sequence gives $\Ext_A^i(M,N) = 0$ for any $i > d+r$. iii. By assumption and the property 3. each term $B_i \otimes_A M$ in the above exact sequence has a projective resolution of length $\leq r$. Passing to the total complex in the corresponding double complex leads to a projective resolution of length $\leq d+r$ of the $A$-module $M$.
\end{proof}

Obvious statements analogous to Proposition \ref{injdim}.ii. and iii. hold for right $A$-modules $N$ and $M$, respectively.

\section{Pro-$p$ Iwahori-Hecke algebras\label{sec:Hecke}}

We fix a locally compact nonarchimedean field $\mathfrak{F}$ (of arbitrary characteristic) with ring of integers $\mathfrak{O}$ and prime element $\pi$. We choose the valuation $\val_{\mathfrak{F}}$ on   $\mathfrak{F}$  normalized by $\val_{\mathfrak{F}}(\pi)=1$. Let $\Gp := \mathbf{G}(\mathfrak{F})$ be the group of $\mathfrak{F}$-rational points of a connected reductive group $\mathbf{G}$ over $\mathfrak{F}$ which we always assume to be $\mathfrak{F}$-split.
The residue field $\mathfrak{O}/\pi \mathfrak{O}$ of $\mathfrak{F}$ is $\mathbb{F}_q$ for some power $q = p^f$ of the residue characteristic $p$.
Let $k$ denote an arbitrary field.
Let $\mathscr{X}$, resp.\ $\mathscr{X}^1$, be the semisimple, resp.\ enlarged (\cite[4.2.16]{BT2}), building of $\Gp$. There is the canonical projection map $\pr : \mathscr{X}^1 \longrightarrow \mathscr{X}$. We fix a chamber $C$ in $\mathscr{X}$ as well as a hyperspecial vertex $x_0$ of $C$. The stabilizer of $x_0$ in $\Gp$ contains a good maximal compact subgroup $\K$ of $\Gp$. The pointwise stabilizer $\I' \subseteq \K$ of $\pr^{-1}(C)$ is an Iwahori subgroup. In fact, let $\mathbf{G}_{x_0}$ and $\mathbf{G}_C$ denote the Bruhat-Tits group schemes over $\mathfrak{O}$ whose $\mathfrak{O}$-valued points are $\K$ and $\I'$, respectively. (Note that $\mathbf{G}_{x_0}(\Corps) = \mathbf{G}_C(\Corps) = \Gp$). Their reductions over the residue field $\mathbb{F}_q$ are denoted by $\overline{\mathbf{G}}_{x_0}$ and $\overline{\mathbf{G}}_C$. By \cite{Tit} 3.4.2, 3.7, and 3.8 we have:
\begin{itemize}
  \item[--] $\overline{\mathbf{G}}_{x_0}$ is connected reductive and $\mathbb{F}_q$-split.
  \item[--] The natural homomorphism $\overline{\mathbf{G}}_C \longrightarrow \overline{\mathbf{G}}_{x_0}$ has a connected unipotent kernel and maps the connected component $\overline{\mathbf{G}}_C^\circ$ onto a Borel subgroup $\overline{\mathbf{B}}$ of $\overline{\mathbf{G}}_{x_0}$. Hence $\overline{\mathbf{G}}_C / \overline{\mathbf{G}}_C^\circ$ is isomorphic to a subgroup of $\mathrm{Norm}_{\overline{\mathbf{G}}_{x_0}}(\overline{\mathbf{B}})
      /\overline{\mathbf{B}}$. But $\mathrm{Norm}_{\overline{\mathbf{G}}_{x_0}}(\overline{\mathbf{B}}) = \overline{\mathbf{B}}$ and therefore $\overline{\mathbf{G}}_C = \overline{\mathbf{G}}_C^\circ$.
 \end{itemize}

It implies ${\mathbf{G}}_C^\circ(\Oo)={\mathbf{G}}_C(\Oo) = \I'$ and ${\mathbf{G}}_{x_0}^\circ(\Oo)={\mathbf{G}}_{x_0}(\Oo) = \Kk$.

Let $\overline{\mathbf{N}}$ denote the unipotent radical of $\overline{\mathbf{B}}$ and $\overline{\mathbf{T}}$ its Levi subgroup. We put
\begin{equation*}
    \K_1 := \Ker \big(\mathbf{G}_{x_0}(\mathfrak O) \xrightarrow{\; \pr \;} \overline{\mathbf{G}}_{x_0} (\mathbb{F}_q) \big) \quad\textrm{and}\quad \I := \{g \in \K : \pr(g) \in \overline{\mathbf{N}}(\mathbb{F}_q) \}
\end{equation*}
and obtain the chain
\begin{equation*}
    \K_1 \subseteq \I \subseteq \I' \subseteq \K
\end{equation*}
of compact open subgroups in $\Gp$ such that
\begin{equation*}
    \K/\K_1 = \G := \overline{\mathbf{G}}_{x_0} (\mathbb{F}_q) \supseteq \I/\K_1 = \N := \overline{\mathbf{N}}(\mathbb{F}_q) \ .
\end{equation*}
The subgroup $\I$ is pro-$p$ and is called the pro-$p$-Iwahori subgroup. It is a maximal  pro-$p$-subgroup in $\Kk$. The quotient $\Iw/\I$ identifies with $ \overline{\mathbf{T}}(\mathbb{F}_q)$.

The compact induction $\mathbf{X} := \ind_\I^\Gp(1)$ of the trivial $\I$-representation over $k$ is a smooth representation of $\Gp$ in a $k$-vector space. The pro-$p$ Iwahori-Hecke algebra is defined to be $\Hh := \End_{k[\Gp]}(\mathbf{X})^{\mathrm{op}}$. It is a $k$-algebra. We often will identify $\Hh$, as a right $\Hh$-module, via the map
\begin{align*}
    \Hh & \xrightarrow{\; \cong \;} \ind_\I^\Gp(1)^\I \subseteq \mathbf{X} \\
    h & \longmapsto (\mathrm{char}_\I) h
\end{align*}
(where $\mathrm{char}_\I \in \ind_\I^\Gp(1)$ denotes the characteristic function of $\I$) with the submodule $\ind_\I^\Gp(1)^\I$ of $\I$-fixed vectors in $\ind_\I^\Gp(1)$. If we replace $\I$ by $\I'$ in the latter, we define the Iwahori-Hecke algebra $\Hh'$: it identifies
with the submodule $\ind_{\I'}^\Gp(1)^{\I'}$ of $\I'$-fixed vectors in the compact induction $\mathbf X':=\ind_{\I'}^\Gp(1)$.

We recall that, for any smooth $\Gp$-representation $V$, the subspace $V^\I = \Hom_{k[\Gp]}(\ind_\I^\Gp(1),V)$ of $\I$-fixed vectors in $V$ naturally is a left $\Hh$-module. Likewise $V^{\I'}$ is an $\Hh'$-module.

\section{The universal resolution\label{sec:univ}}

\subsection{\label{subsec:univreso-representations}}

We consider  the semisimple Bruhat-Tits building $\mathscr{X}$ of $\Gp$. We recall (cf.\ \cite{SS} I.1-2 for a brief overview) that $\mathscr{X}$ is (the topological realization of) a $\Gp$-equivariant polysimplicial complex of dimension equal to the semisimple rank $d$ of $\Gp$. The (open) polysimplices are called facets and the $d$-dimensional, resp.\ zero dimensional, facets chambers, resp.\ vertices. Associated with each facet $F$ is, in a $\Gp$-equivariant way, a smooth affine $\mathfrak{O}$-group scheme $\mathbf{G}_F$ whose general fiber is $\mathbf{G}$ and such that $\mathbf{G}_F(\mathfrak{O})$ is the pointwise stabilizer in $\Gp$ of $\pr^{-1}(F)$ (denoted by $\mathscr{G}_{\pr^{-1}(F)}$ in \cite[3.4.1]{Tit}). Its connected component is denoted by $\mathbf{G}_F^\circ$ so that the reduction $\overline{\mathbf{G}}_F^\circ$ over $\mathbb{F}_q$ is a connected smooth algebraic group. The subgroup $\mathbf{G}_F^\circ(\mathfrak{O})$ of $\Gp$ is compact open. Let
\begin{equation*}
    \I_F := \{ g \in \mathbf{G}_F^\circ(\mathfrak{O}) :( g \mod \pi) \in\ \textrm{unipotent radical of $\overline{\mathbf{G}}_F^\circ$} \}.
\end{equation*}

The $\I_F$ are compact open pro-$p$ subgroups in $\Gp$ which satisfy $\I_C = \I$, $\I_{x_0} = \K_1$,
\begin{equation}\label{pr1}
    g\I_F g^{-1} = \I_{gF} \qquad\textrm{for any $g \in \Gp$},
\end{equation}
and
\begin{equation}\label{pr2}
    \I_{F'} \subseteq \I_F \qquad\textrm{whenever $F' \subseteq \overline{F}$}.
\end{equation}

Let  $\Pp_F^\dagger$ denote the stabilizer in $\Gp$ of the facet $F$. For $g \in \Pp_F^\dagger$, set  $\epsilon_F(g) = +1$, resp.\ $-1$, if $g$ preserves, resp.\ reverses, a given orientation of $F$.
Note that if $F$ is contained in the closure $\overline{C}$ of $C$, then $\I_F \subseteq \I \subseteq \I' \subseteq \Pp_F^\dagger$ with $\I_F$ being normal in $\Pp_F^\dagger$.

\begin{lemm}\label{I-orient}
Let $F'$ be any facet of $\mathscr{X}$. The elements in $\I' \cap \Pp_{F'}^\dagger$ fix $F'$  pointwise; in particular, we have $\epsilon_{F'}| (\I' \cap \Pp_{F'}^\dagger) = 1$.
\end{lemm}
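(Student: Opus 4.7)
The plan is to use a retraction argument in the semisimple Bruhat--Tits building $\mathscr{X}$. By the apartment axioms for affine buildings (\cite{BT2}) one can find an apartment $\mathcal{A}$ containing both the base chamber $C$ and the given facet $F'$. Let $\rho = \rho_{\mathcal{A},C} : \mathscr{X} \to \mathcal{A}$ denote the retraction centered at $C$ onto $\mathcal{A}$: it is the identity on $\mathcal{A}$, and for any other apartment $\mathcal{A}'$ containing $C$ its restriction to $\mathcal{A}'$ is the unique polysimplicial isomorphism $\mathcal{A}' \xrightarrow{\sim} \mathcal{A}$ that is the identity on $C$.

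The key point I would check is that $\rho$ is invariant under the Iwahori subgroup $\I'$, i.e.\ $\rho(ux) = \rho(x)$ for every $u \in \I'$ and every $x \in \mathscr{X}$. Indeed, choose an apartment $\mathcal{A}'$ containing both $C$ and $x$; then $u\mathcal{A}'$ contains $uC = C$ and $ux$. Left multiplication by $u$ is a polysimplicial isomorphism $\mathcal{A}' \to u\mathcal{A}'$ fixing $C$ pointwise (since $u \in \I'$), so composing it with $\rho|_{u\mathcal{A}'}$ produces a polysimplicial isomorphism $\mathcal{A}' \xrightarrow{\sim} \mathcal{A}$ fixing $C$ pointwise. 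By the uniqueness characterising $\rho|_{\mathcal{A}'}$, this composite must equal $\rho|_{\mathcal{A}'}$, and evaluation at $x$ yields the claim.

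With this invariance in hand the lemma is immediate. Let $g \in \I' \cap \Pp_{F'}^\dagger$ and $x \in F'$. Since $g$ stabilizes $F'$ set-wise and $F' \subset \mathcal{A}$, the point $gx$ lies in $F' \subset \mathcal{A}$, so $\rho(gx) = gx$. On the other hand the invariance of $\rho$ combined with $x \in \mathcal{A}$ gives $\rho(gx) = \rho(x) = x$. Hence $gx = x$, proving that every element of $\I' \cap \Pp_{F'}^\dagger$ acts as the identity on $F'$. The ``in particular'' assertion is then obvious, since an element acting trivially on $F'$ preserves any chosen orientation, so $\epsilon_{F'}(g) = 1$. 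The only non-formal input is the existence of a common apartment for $C$ and an arbitrary facet $F'$, which is one of the foundational axioms of Bruhat--Tits theory and hence poses no real obstacle.
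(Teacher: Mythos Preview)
Your proof is correct and takes a genuinely different route from the paper's. The paper argues directly with geodesics: choosing an apartment $A$ containing $C$ and $F'$, and points $y\in C$, $z\in F'$, it observes that the two segments $[y\,z]$ and $[y\,h(z)]=h([y\,z])$ both lie in $A$ and agree on an initial interval inside the open chamber $C$ (because $h\in\I'$ fixes $C$ pointwise), hence coincide, giving $h(z)=z$. You instead invoke the retraction $\rho_{\mathcal A,C}$ and prove its $\I'$-invariance from the uniqueness characterisation of $\rho$ on apartments through $C$; then $\rho(gx)=gx$ (since $gx\in F'\subset\mathcal A$) and $\rho(gx)=\rho(x)=x$ force $gx=x$. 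Both arguments rest on the same foundational input (a common apartment for $C$ and $F'$). The geodesic argument is slightly more elementary, needing only the affine structure of a single apartment, while your retraction argument is more combinatorial and yields as a byproduct the reusable fact that $\rho_{\mathcal A,C}$ is constant on $\I'$-orbits, which is often handy elsewhere in building theory.
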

\begin{proof}
Any $h \in \I' \cap \Pp_{F'}^\dagger$  stabilizes the facet $F'$ and fixes pointwise the chamber $C$. We pick points $y \in C$ and $z \in F'$, and we choose an apartment $A$ of $\mathscr{X}$ which contains $C$ and $F'$. Then $A$ also contains the geodesics $[y z]$ and $[y h(z)] = h([y z])$. Since $C$ is open in $A$ both geodesics meet $C$ in small (half open) intervals and therefore necessarily are equal. It follows that $h(z) = z$. This shows that $h$ fixes $F'$ pointwise.
\end{proof}

Let $\mathbf V$ be a smooth $k$-representation of $\Gp$.
Properties \eqref{pr1} and \eqref{pr2} imply that the family $\{\mathbf{V}^{\I_F}\}_F$ of subspaces of $\I_F$-fixed vectors in $\mathbf{V}$ forms a $\Gp$-equivariant coefficient system $\cV$ on $\mathscr{X}$.
The associated (cf. \cite{SS} II.2) augmented oriented chain complex
\begin{equation}\label{f:chain-complexV}
    0 \longrightarrow C_c^{or} (\mathscr{X}_{(d)}, \cV) \xrightarrow{\;\partial\;} \ldots \xrightarrow{\;\partial\;} C_c^{or} (\mathscr{X}_{(0)}, \cV) \xrightarrow{\;\epsilon\;} \mathbf{V} \longrightarrow 0
\end{equation}
is a complex of $\Gp$-representations.

\begin{rema}\label{rema:about-exactness}

1.  The complex \eqref{f:chain-complexV} is exact when $\mathbf V$ is one of the universal modules $\mathbf X$ and $\mathbf X'$. This is the ``universal'' part of the proof of \cite{SS} Thm.\ II.3.1 which works for completely arbitrary coefficient rings (instead of the field of complex numbers):  the arguments in loc.\ cit. Step 2 with $T := \Gp/\I$, resp.\ $T := \Gp/\I'$, and Step 3 with a special vertex of the chamber $g_0C$ go through literally.


2. If $k$ is the field of complex coefficients and $\mathbf V$ is a representation generated by its $\I$-invariant subspace, then
\eqref{f:chain-complexV} is exact. This is the level zero case of \cite[Theorem II.3.1]{SS}.


3. Suppose that $k$ has characteristic $p$ and  $\Gp={\rm GL}_2({\mathbb Q}_p)$.
Let  $\mathbf V$ be a representation of $\Gp$ generated by its $\I$-invariant subspace and with scalar action of the center.
Then \eqref{f:chain-complexV} is exact if and only if $\mathbf V^{\Kk_1}$ is equal  to the sub-$\Kk$-representation of $\mathbf V$ generated by $\mathbf V^{\I}$. It is a corollary of the main result of \cite{Inv}
 and of the exactness of the complex in the universal case as explained in \cite[6.3]{OS}.
For example, \eqref{f:chain-complexV} is exact if $\mathbf V$ is the trivial representation of $\Gp$, but it is not exact in general: if $\mathbf V$ is the supersingular representation of $\Gp$ with trivial central character described in \cite[Theorem 8.6]{BreuilNY}, then
$\mathbf V^{\K_1}$ is a  representation of $\Kk$ which is not semisimple whereas its socle is the
sub-$\Kk$-representation  generated by the $2$-dimensional space $\mathbf V^{\I}$.

\end{rema}

If we pass in \eqref{f:chain-complexV} to the $\I$-invariant vectors we obtain the complex of left $\Hh$-modules
\begin{equation}\label{f:I-complexV}
    0 \longrightarrow C_c^{or} (\mathscr{X}_{(d)}, \cV)^\I \xrightarrow{\;\partial\;} \ldots \xrightarrow{\;\partial\;} C_c^{or} (\mathscr{X}_{(0)}, \cV)^\I \xrightarrow{\;\epsilon\;} \mathbf V^\I \longrightarrow 0 \ .
\end{equation}
Surprisingly we will be able to establish the exactness of this complex in complete generality.

\subsection{\label{subsec:univreso-modules}}

In  Section \ref{subsec:apartment}, we will recall the definition of the standard apartment $\mathscr A$, which in particular, contains the chamber $C$. To  the smooth $k$-representation $\mathbf V$ of $\Gp$,
we associate the coefficient system  on $\mathscr{A}$  denoted by $\cV^\I$ and defined by
\begin{equation*}
    F\longmapsto \mathbf{V}^{\I_F (\I \cap \Pp_F^\dagger)} \:\textrm{ for any facet $F$ in $\mathscr A$}
\end{equation*}
with transition maps
\begin{align*}
t_{F'}^F    :\:\:\mathbf{V}^{\I_F (\I \cap \Pp_F^\dagger)} & \longrightarrow \mathbf{V}^{\I_{F'} (\I \cap \Pp_{F'}^\dagger)} \qquad\qquad\qquad\text{whenever $F' \subseteq \overline{F}$} \ . \\
    x & \longmapsto \sum_{g \in (\I \cap \Pp_{F'}^\dagger)/(\I \cap \Pp_F^\dagger)} gx\, .
\end{align*}
We consider the associated augmented oriented chain complex
\begin{equation}\label{f:apart-complexV}
    0 \longrightarrow C_c^{or} (\mathscr{A}_{(d)}, \cV^\I) \xrightarrow{\;\partial\;} \ldots \xrightarrow{\;\partial\;} C_c^{or} (\mathscr{A}_{(0)}, \cV^\I) \xrightarrow{\;\epsilon_\mathscr{A}\;} \mathbf V^\I \longrightarrow 0
\end{equation}
where the augmentation is given by
\begin{align*}
    \epsilon_\mathscr{A} : C_c^{or} (\mathscr{A}_{(0)}, \cV^\I) & \longrightarrow \mathbf V^\I \\
    \chain & \longmapsto \sum_{x \in \mathscr{A}_0} \sum_{g \in \I/(\I \cap \Pp_x^\dagger)} g\,\chain(x) \ .
\end{align*}
The following result is inspired by \cite{Bro}.

\begin{prop}\label{prop:isocomplex}
The complex \eqref{f:I-complexV} is isomorphic to the complex \eqref{f:apart-complexV}.
\end{prop}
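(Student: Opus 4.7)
The plan is to construct an isomorphism by restricting $\I$-invariant chains on $\mathscr{X}$ to chains on $\mathscr{A}$. The first task is to establish the combinatorial input that every facet of $\mathscr{X}$ is $\I$-conjugate to a \emph{unique} facet of $\mathscr{A}$. The analogous statement for $\I'$ is classical and follows from the Iwahori-Bruhat decomposition $\Gp = \bigsqcup_{w \in \widetilde{W}} \I' w \I'$ together with $\I' \subseteq \Pp_F^\dagger$ for $F \in \overline{C}$; to upgrade from $\I'$ to $\I$, I use that $\I' = \I \cdot \mathbf{T}(\mathfrak{O})$ (reflecting $\overline{\mathbf{B}} = \overline{\mathbf{T}} \cdot \overline{\mathbf{N}}$) and that $\mathbf{T}(\mathfrak{O})$ fixes $\mathscr{A}$ pointwise, so $\mathbf{T}(\mathfrak{O}) \subseteq \Pp_F^\dagger$ for every $F \in \mathscr{A}$, forcing $\I \backslash \Gp / \Pp_F^\dagger = \I' \backslash \Gp / \Pp_F^\dagger$.

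With this in hand I define $\Phi_i : C_c^{or}(\mathscr{X}_{(i)}, \cV)^\I \to C_c^{or}(\mathscr{A}_{(i)}, \cV^\I)$ by $\omega \mapsto \omega|_\mathscr{A}$. The image lands in the smaller invariants $\mathbf{V}^{\I_F(\I \cap \Pp_F^\dagger)}$ because for $h \in \I \cap \Pp_F^\dagger$, $\I$-invariance gives $\omega(F, c) = \omega(hF, hc) = h \cdot \omega(F, c)$, using that $h$ stabilizes $F$ by definition and that $hc = c$ by Lemma \ref{I-orient}. The inverse map extends a chain $\eta$ on $\mathscr{A}$ to an $\I$-invariant chain $\omega'$ on $\mathscr{X}$ via $\omega'(gF, gc) := g \cdot \eta(F, c)$ for $g \in \I$ and $(F,c)$ in $\mathscr{A}$; well-definedness reduces to checking that if $g_1 F = g_2 F$ then $g_2^{-1}g_1 \in \I \cap \Pp_F^\dagger$ acts trivially on $\eta(F, c) \in \mathbf{V}^{\I_F(\I \cap \Pp_F^\dagger)}$, which is built into the definition of $\cV^\I$.

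It remains to check that $\Phi_\bullet$ commutes with the boundary and the augmentation. Fix $F' \in \mathscr{A}_{(i-1)}$; the $\mathscr{X}$-boundary is $(\partial \omega)(F', c') = \sum_{F' \subset \overline{F}} \omega(F, c_F)$ with the induced orientations $c_F$. Grouping the sum by $\I$-orbit and invoking the first step, the $i$-facets $F$ of $\mathscr{X}$ with $F' \subset \overline F$ lying in the orbit of a given $F_0 \in \mathscr{A}_{(i)}$ are in bijection with $(\I \cap \Pp_{F'}^\dagger)/(\I \cap \Pp_{F_0}^\dagger)$; the denominator comes out right because Lemma \ref{I-orient} forces $\I \cap \Pp_{F_0}^\dagger \subseteq \Pp_{F'}^\dagger$ (any such element fixes $\overline{F_0}$, hence $F'$, pointwise), so the stabilizer of $F_0$ inside $\I \cap \Pp_{F'}^\dagger$ is exactly $\I \cap \Pp_{F_0}^\dagger$. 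Applying $\I$-invariance of $\omega$ rewrites the sum as $\sum_{F_0 \supset F'} t_{F'}^{F_0}(\omega(F_0, c_{F_0}))$, which is precisely the $\mathscr{A}$-boundary by definition of $\cV^\I$. The augmentation compatibility is entirely parallel: the $\I$-orbit of a vertex $x \in \mathscr{A}$ contributes a sum over $\I/(\I \cap \Pp_x^\dagger)$ that matches $\epsilon_{\mathscr{A}}$.

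The main obstacle will be the careful bookkeeping of orbit representatives and induced orientations in the boundary compatibility; the algebraic content is modest, reducing entirely to the orbit bijection from step one and the pointwise fixing property of Lemma \ref{I-orient}.
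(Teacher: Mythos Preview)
Your proposal is correct and follows essentially the same approach as the paper: both define the isomorphism by restricting $\I$-invariant chains to $\mathscr{A}$, use the Bruhat decomposition together with Lemma~\ref{I-orient} to establish that every facet of $\mathscr{X}$ is $\I$-conjugate to a unique facet of $\mathscr{A}$, and then verify compatibility with the differentials by grouping the boundary sum according to $\I$-orbits, arriving at exactly the transition maps $t_{F'}^{F}$. The only minor difference is that for uniqueness the paper argues directly (since $\I$ fixes $C$ pointwise, the geodesic argument behind Lemma~\ref{I-orient} shows no two facets of $\mathscr{A}$ can be $\I$-equivalent), whereas you reduce to the $\I'$-case via $\I' = \I \cdot \Tp^0$ and $\Tp^0 \subseteq \Pp_F^\dagger$; both routes are equally short.
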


\begin{proof}
We are going to check that restricting oriented chains gives isomorphisms
\begin{equation}\label{eq:restriction}
    C_c^{or} (\mathscr{X}_{(i)}, \cV)^\I \xrightarrow{\;\cong\;} C_c^{or} (\mathscr{A}_{(i)}, \cV^\I) \textrm{ for any } i\in \{0, ..., d\},
\end{equation}
which are compatible with the differential maps. By the definition of the action of $\I$ on the chains, the value of an oriented $\I$-invariant $i$-chain at an oriented facet $(F,c)\in\Aa_{(i)}$ lies in  $ \mathbf{V}^{\I_F (\I \cap \Pp_F^\dagger)}$. Therefore
\eqref{eq:restriction} is well defined. It is bijective by the Bruhat decomposition which will be recalled in Section \ref{subsec:bruhat} and implies that  any  chamber  in $\mathscr{X}$ has an $\I$-conjugate that belongs to the standard apartment $\Aa$. Since $\I$ fixes $C$ pointwise no two different points in $\mathscr{A}$ can be equivalent under the $\I$-action. Hence for any facet $F'$ in $\mathscr{X}$ there is a unique facet $F$ in $\mathscr{A}$ such that $F' = g F$ for some $g \in \I$. Lastly, by Lemma \ref{I-orient} this remains true for oriented facets.


Now we check the compatibility of the isomorphisms \eqref{eq:restriction} with the differentials. Suppose that $i\geq 1$. Consider an oriented $\I$-invariant $i$-chain $\chain\in     C_c^{or} (\mathscr{X}_{(i)}, \cV)^\I$. The image of $\chain$ by the differential $\partial$ in the complex \eqref{f:I-complexV} is an $\I$-invariant $(i-1)$-chain determined by
its  values  at $(F',c')\in \Aa_{(i-1)}$ and
$$
\partial (\chain)(F', c')= \sum_{ F\in \Aa_{i}}\sum_{\stackrel{ \: F'\subset g\overline F}{g \in \I/(\I\cap \Pp_F^\dagger)}} \chain (gF, c )
$$
where $c$ is such that it induces the orientation $c'$  as in \cite[II.1]{SS}. Since $\chain$ is $\I$-invariant and $g$ does not change the orientation (Lemma \ref{I-orient}), we have
$$
\chain (g F, c )=\chain (g (F, c) ) = g(\chain(F, c)).
$$
Moreover, by uniqueness of the facet in the $\I$-orbit of $\F'$ belonging to $\Aa$, the $g$'s in the previous sum belong to $ \I\cap\Pp_{F'}^\dagger$. Therefore
\begin{equation}\label{f:diff}
\partial (\chain)(F', c')= \sum_{ F\in \Aa_{i}, F'\subset \overline F }\:\:\sum_{g \in  (\I\cap \Pp_{F'}^\dagger) /(\I\cap \Pp_F^\dagger) } g (\chain (F, c ))= \sum_{ F\in \Aa_{i}, F'\subset \overline F } t_{F'}^F(\chain (F, c )).
\end{equation}
where $\Aa_{i}$ denotes the set of $i$-dimensional facets in $\Aa$ and where $c$ induces the orientation $c'$. Likewise, it gets easily checked that the composition of the augmentation map $\epsilon$ in the complex \eqref{f:I-complex} with
the isomorphism \eqref{eq:restriction} for $i=0$ yields the augmentation map $\epsilon_\Aa$ announced in the proposition.
\end{proof}

\begin{theo}\label{I-resolutionV}
The complexes \eqref{f:I-complexV} and \eqref{f:apart-complexV} are exact.
\end{theo}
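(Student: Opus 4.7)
My plan is to use Proposition \ref{prop:isocomplex} to reduce the question to the exactness of the apartment complex \eqref{f:apart-complexV}, since the complexes are canonically isomorphic. The advantage is that $\mathscr A$ is an affine space, hence contractible, and carries a coefficient system $\cV^\I$ whose transition maps are easier to analyze than those of $\cV$ on all of $\mathscr X$.

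The first and most delicate step is to establish a \emph{local constancy} property for $\cV^\I$. Using the Bruhat--Tits root subgroup decomposition of $\I$ relative to the apartment $\mathscr A$ (whose details are deferred to Section \ref{BTtheory}), I would describe, for each facet $F$ of $\mathscr A$, the subgroup $\I \cap \Pp_F^\dagger$ in terms of root subgroups attached to $C$. The goal is to show that along any ray in $\mathscr A$ emanating from the base vertex $x_0$ of $C$, the groups $\I_F (\I \cap \Pp_F^\dagger)$ stabilize outside a bounded region, so that for facets $F' \subseteq \overline F$ lying in the appropriate position with respect to $x_0$ the transition map $t_{F'}^F$ becomes an honest identification of the fixed subspaces (modulo the multiplicity factor built into the summation defining $t_{F'}^F$).

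Next, I would exhaust $\mathscr A$ by an increasing family of compact convex polyhedral subcomplexes $\Omega_n$, each star-shaped with respect to $x_0$, so that $C_c^{or}(\mathscr A_{(\bullet)}, \cV^\I)$ is the filtered colimit of its subcomplexes of chains supported on the $\Omega_n$. Since filtered colimits preserve exactness, it suffices, given any cycle supported on some $\Omega_n$, to produce a bounding chain supported on some $\Omega_N$ with $N \geq n$. I would do this by constructing a contracting homotopy for the augmented complex on each star-shaped $\Omega_n$ via a cone-over-$x_0$ procedure: to an oriented facet $(F,c)$ in $\Omega_n$ one associates the cone $[x_0, F]$, which is either a facet of one higher dimension lying in some larger $\Omega_N$ or collapses to a lower-dimensional facet already handled. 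The local constancy from the first step is precisely what is required to lift this purely geometric cone operation to the level of the coefficient system $\cV^\I$: the fixed subspaces along the cone are identified in a coherent way with those at the original facet.

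The main obstacle is the first step: unpacking the combinatorial structure of $\I \cap \Pp_F^\dagger$ and $\I_F$ using root subgroups to get the stabilization statement in the desired form. Once this local constancy is in place, the rest is the classical contractibility argument for an augmented chain complex of a locally constant coefficient system on a contractible polysimplicial complex, and the colimit argument then yields the exactness of \eqref{f:apart-complexV}, hence of \eqref{f:I-complexV} as well via Proposition \ref{prop:isocomplex}.
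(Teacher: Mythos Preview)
Your overall strategy---reduce to the apartment via Proposition \ref{prop:isocomplex}, establish a local constancy property for $\cV^\I$, then exploit contractibility---matches the paper's. But the specific implementation you sketch has a real gap in two linked places.

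First, the local constancy you describe is oriented the wrong way. The groups $\I_F(\I\cap\Pp_F^\dagger)$ do \emph{not} stabilize along Euclidean rays from the vertex $x_0$: each time a ray crosses a wall the relevant pro-$p$ group changes. What the paper proves (Proposition \ref{prop:closest}, deferred to Section \ref{BTtheory}) is that $\I_F(\I\cap\Pp_F^\dagger)=\I_{C(F)}$, where $C(F)$ is the unique chamber containing $F$ in its closure that is closest to $C$ in \emph{gallery distance}. The correct local constancy is then: the transition map $t_{F'}^{F}$ is the identity whenever $C(F')=C(F)$. This is a statement about gallery combinatorics relative to the base chamber $C$, not about Euclidean geometry relative to the base vertex $x_0$.

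Second, and relatedly, the cone-over-$x_0$ homotopy does not make sense as written: for a facet $F$ not adjacent to $x_0$, the set $[x_0,F]$ is a union of many facets (it crosses every wall between $x_0$ and $F$), not ``a facet of one higher dimension.'' So there is no way to lift the classical cone operator to the chain complex with coefficients. The paper avoids this by filtering $\mathscr A$ as $\mathscr A(n)=\{F:C(F)\text{ has gallery distance }\le n\text{ from }C\}$ and arguing on the relative complexes $\mathscr A(n)/\mathscr A(n-1)$. By Lemma \ref{disjoint} this relative piece splits over chambers $D$ at distance exactly $n$, and on each $\overline D\setminus\mathscr A(n-1)$ the coefficient system is genuinely constant (equal to $\mathbf V^{\I_D}$) by the local constancy above. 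One then compares with the constant coefficient system on the contractible set $\overline D\cup\mathscr A(n-1)$ (Proposition \ref{prop:contractible}), and the relative complex is exact. No explicit homotopy is needed.

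In short: replace ``rays from $x_0$'' and ``cone over $x_0$'' by ``gallery distance from $C$'' and ``relative complex for $\mathscr A(n)/\mathscr A(n-1)$,'' and your outline becomes the paper's proof.
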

\begin{proof}
By Proposition \ref{prop:isocomplex}, it suffices to show that \eqref{f:apart-complexV}  is exact.
Let $F$ be a facet  in the standard apartment. By Proposition \ref{prop:closest}, there is a unique chamber $C(F)$ which contains $F$ in its closure and
which is closest to the chamber $C$ in the sense of  the \emph{gallery distance}. Furthermore, it satisfies
\begin{equation}\label{f:C(F)}
    \I_F (\I \cap \Pp_F^\dagger) = \I_{C(F)}\ \textrm{ and }\   \I \cap \Pp_F^\dagger = \I \cap \Pp_{C(F)}^\dagger\ .
\end{equation}
This implies that the transition map
\begin{equation}\label{f:local}
    t_{F}^{C(F)}\::\mathbf{V}^{\I_{C(F)}} \xrightarrow{\; = \;}\mathbf{V}^{\I_F(\I \cap \Pp_F^\dagger)}
\end{equation}
from the chamber $C(F)$ to the facet $F$ is the identity map.

In order to use this to show the exactness of \eqref{f:apart-complexV} we introduce, for any $n \geq 0$, the subcomplex $\mathscr{A}(n)$ of all facets $F$ in $\mathscr{A}$ such that $C(F)$ is of distance $\leq n$ from $C$ (or equivalently, of all facets in  the closure of the chambers at distance $\leq n$ from $C$). We have $\mathscr{A}(0) = \overline{C}$. By \eqref{f:local}, the restriction $\cV^\I | \mathscr{A}(0)$ is the constant coefficient system with value $\mathbf V^\I$:  for any $F'\subset \overline F\subset \overline C$, we have indeed $C(F)=C(F')=C$ and $t_{F'}^F\circ t_{F}^C=t_{F'}^C$ where both
$ t_{F}^C$ and  $t_{F'}^C$ are the identity map on $\mathbf V^\I$, so that $t_{F'}^F$ is  also the identity map on $\mathbf V^\I$. Hence the augmented complex
\begin{equation*}
    0 \longrightarrow C_c^{or} (\mathscr{A}(0)_{(d)}, \cV^\I) \xrightarrow{\;\partial\;} \ldots \xrightarrow{\;\partial\;} C_c^{or} (\mathscr{A}(0)_{(0)}, \cV^\I) \xrightarrow{\;\epsilon_\mathscr{A}\;} \mathbf V^\I \longrightarrow 0
\end{equation*}
is exact since $\overline{C}$ is contractible. On the other hand we obviously have
\begin{equation*}
    C_c^{or} (\mathscr{A}_{(i)}, \cV^\I) = \bigcup_{n \geq 0} C_c^{or} (\mathscr{A}(n)_{(i)}, \cV^\I) \ .
\end{equation*}
Hence for the exactness of \eqref{f:apart-complexV} it suffices to show that, for any $n \geq 1$, the relative complex
\begin{equation*}
    0 \rightarrow C_c^{or} (\mathscr{A}(n)_{(d)}, \cV^\I) / C_c^{or} (\mathscr{A}(n-1)_{(d)}, \cV^\I) \xrightarrow{\;\partial\;} \ldots \xrightarrow{\;\partial\;} C_c^{or} (\mathscr{A}(n)_{(0)}, \cV^\I) / C_c^{or} (\mathscr{A}(n-1)_{(0)}, \cV^\I) \rightarrow 0
\end{equation*}
is exact. If $Ch(n)$ denotes the set of all chambers in $\mathscr A$ of distance $n$ from $C$, then Lemma \ref{disjoint} ensures that, for $n\geq 1$, we have the disjoint decomposition
\begin{equation*}
    \mathscr{A}(n) =  \mathscr{A}(n-1)\sqcup \bigsqcup_{D \in Ch(n)}  \overline D \setminus \mathscr{A}(n-1).
\end{equation*}
Setting $\sigma\overline{D} := \overline{D} \cup  \mathscr{A}(n-1)$,  the above relative complex decomposes into the direct sum over $D \in Ch(n)$ of the relative complexes
\begin{equation}\label{Dcomplex}
   0 \rightarrow C_c^{or} (\sigma \overline{D}_{(d)}, \cV^{\I}) / C_c^{or} (\mathscr{A}(n-1)_{(d)}, \cV^{\I}) \xrightarrow{\;\partial\;} \ldots \xrightarrow{\;\partial\;} C_c^{or} (\sigma\overline{D}_{(0)}, \cV^{\I}) / C_c^{or} (\mathscr{A}(n-1)_{(0)}, \cV^{\I}) \rightarrow 0 \ .
\end{equation}
On any facet in each $\overline D \setminus \mathscr{A}(n-1)$ the coefficient system $\cV^\I$ has, by \eqref{f:local}, the constant value $\mathbf{V}^{\I_D}$, and by the same argument as in the case $n=0$, the transition maps $t_{F'}^F$ for two facets $F'\subset \overline F\subset \overline{D} \setminus \mathscr{A}(n-1)$ are the identity on  $\mathbf{V}^{\I_D}$. Consider the constant coefficient  system $\mathbf{V}^{\I_D}$ on $\sigma\overline{D} = \overline{D} \cup  \mathscr{A}(n-1)$ and for any $i\in \{0, ..., d\}$
the map
$$
C_c^{or} (\sigma \overline{D}_{(i)}, \mathbf{V}^{\I_D})\rightarrow C_c^{or} (\sigma \overline{D}_{(i)}, \cV^{\I}) / C_c^{or} (\mathscr{A}(n-1)_{(i)}, \cV^{\I})
$$
defined the following way: for an oriented $i$-chain $f\in C_c^{or} (\sigma \overline{D}_{(i)}, \mathbf{V}^{\I_D})$, its restriction to the facets in $\overline D\backslash \mathscr{A}(n-1)$ can be seen, by extension by zero, as an element of $C_c^{or} (\sigma \overline{D}_{(i)}, \cV^{\I})$ and we consider the image  of this restriction in the quotient on the right hand side. This defines a bijective map
$$
C_c^{or} (\sigma \overline{D}_{(i)}, \mathbf{V}^{\I_D})/ C_c^{or} (\mathscr{A}(n-1)_{(i)}, \mathbf V^{\I_D})\longrightarrow C_c^{or} (\sigma \overline{D}_{(i)}, \cV^{\I}) / C_c^{or} (\mathscr{A}(n-1)_{(i)}, \cV^{\I})
$$
that commutes with the differentials, so that the complex \eqref{Dcomplex} is isomorphic to
\begin{equation*}
   0 \rightarrow C_c^{or} (\sigma \overline{D}_{(d)}, \mathbf{V}^{\I_D}) / C_c^{or} (\mathscr{A}(n-1)_{(d)}, \mathbf{V}^{\I_D}) \xrightarrow{\;\partial\;} \ldots \xrightarrow{\;\partial\;} C_c^{or} (\sigma\overline{D}_{(0)}, \mathbf{V}^{\I_D}) / C_c^{or} (\mathscr{A}(n-1)_{(0)}, \mathbf{V}^{\I_D}) \rightarrow 0 \ .
\end{equation*}
It is exact since $\mathscr{A}(n-1)$ and  $\sigma\overline{D}$ are contractible by Proposition \ref{prop:contractible}.
\end{proof}

\subsection{\label{subsec:bimodules}}

In the case where $\mathbf V= \mathbf X$ is the universal representation $\mathbf X$, the subspaces $\{\mathbf{X}^{\I_F}\}_F$ are right $\Hh$-invariant so that we have a coefficient system of right $\Hh$-modules and the associated spaces of chains on $\mathscr X$ are $(\Gp,\Hh)$-bimodules. The associated augmented oriented chain complex
\begin{equation}\label{f:chain-complex}
    0 \longrightarrow C_c^{or} (\mathscr{X}_{(d)}, \cX) \xrightarrow{\;\partial\;} \ldots \xrightarrow{\;\partial\;} C_c^{or} (\mathscr{X}_{(0)}, \cX) \xrightarrow{\;\epsilon\;} \mathbf{X} \longrightarrow 0
\end{equation}
therefore is a complex of $(\Gp,\Hh)$-bimodules.

\begin{rema}\label{augmentation-split}
The augmentation $C_c^{or} (\mathscr{X}_{(0)}, \cX) \xrightarrow{\;\epsilon\;} \mathbf{X}$ has a $\Gp$-equivariant section.
\end{rema}
\begin{proof}
By Frobenius reciprocity it suffices to find an $\I$-invariant chain in $C_c^{or} (\mathscr{X}_{(0)}, \cX)$ which is mapped by the augmentation $\epsilon$  to $\mathrm{char}_\I$. We may take the chain supported on the vertex $x_0$ with value $\mathrm{char}_\I$.
\end{proof}

As in Section \ref{subsec:univreso-representations} for the complex \eqref{f:chain-complexV}, we pass in \eqref{f:chain-complex} to the $\I$-invariant vectors. We now obtain the complex of $(\Hh,\Hh)$-bimodules
\begin{equation}\label{f:I-complex}
    0 \longrightarrow C_c^{or} (\mathscr{X}_{(d)}, \cX)^\I \xrightarrow{\;\partial\;} \ldots \xrightarrow{\;\partial\;} C_c^{or} (\mathscr{X}_{(0)}, \cX)^\I \xrightarrow{\;\epsilon\;} \Hh \longrightarrow 0 \ .
\end{equation}

In this setting, Theorem \ref{I-resolutionV} gives the following.

\begin{theo}\label{I-resolution}
The complex  \eqref{f:I-complex} yields an exact resolution of the  $(\Hh,\Hh)$-bimodule $\Hh$.
\end{theo}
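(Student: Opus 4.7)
The plan is to specialize Theorem \ref{I-resolutionV} to the universal representation $\mathbf V = \mathbf X$. Under the identification $\mathbf X^\I \cong \Hh$ recorded in Section \ref{sec:Hecke} (via $h \mapsto (\mathrm{char}_\I)h$, which corresponds to the right regular action of $\Hh$ on itself), the complex \eqref{f:I-complexV} instantiated at $\mathbf V = \mathbf X$ becomes exactly \eqref{f:I-complex}. Exactness as a sequence of left $\Hh$-modules is therefore an immediate consequence of Theorem \ref{I-resolutionV}.

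The only point not directly recorded by that theorem is the compatibility with the right $\Hh$-action. For this I would observe that, because $\Hh = \End_{k[\Gp]}(\mathbf X)^{\mathrm{op}}$, the right $\Hh$-action on $\mathbf X$ is by $\Gp$-equivariant endomorphisms; it therefore stabilises each fixed subspace $\mathbf X^{\I_F}$ and commutes with every transition map of the coefficient system $\cX$, so that $\cX$ is in fact a coefficient system of right $\Hh$-modules. This was already pointed out in Section \ref{subsec:bimodules}, where the chain complex \eqref{f:chain-complex} was identified as a complex of $(\Gp,\Hh)$-bimodules. The functor of $\I$-invariants on the left commutes with the right $\Hh$-action (since $\I \subseteq \Gp$ and $\Gp$ commutes with $\Hh$ on the right), so each differential and the augmentation of \eqref{f:I-complex} are right $\Hh$-linear, and the induced right action on $\mathbf X^\I$ matches the right regular action on $\Hh$ under the above identification.

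There is no substantive obstacle to overcome at this stage: all of the real work sits inside Theorem \ref{I-resolutionV}, whose proof passed via Proposition \ref{prop:isocomplex} to the apartment complex \eqref{f:apart-complexV} and established exactness there by filtering $\mathscr A$ by gallery distance from $C$, exploiting the local constancy of $\cV^\I$ on the closure of each chamber at distance $n$ together with contractibility of the relevant subcomplexes. The present theorem is simply that exactness statement repackaged for the universal choice $\mathbf V = \mathbf X$, augmented by the automatic bimodule structure just described.
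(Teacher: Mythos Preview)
Your proposal is correct and matches the paper's approach exactly: the paper's entire proof is the single observation that Theorem \ref{I-resolutionV} specialized to $\mathbf V = \mathbf X$ gives the result, the bimodule structure having already been set up in Section \ref{subsec:bimodules}. Your additional remarks on why the right $\Hh$-action is respected are accurate and simply make explicit what the paper takes for granted from that setup.
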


We now identify the structure of $(\Hh, \Hh)$-bimodule of the terms in the complex \eqref{f:I-complex}.

\subsubsection{\label{subsubsec:epsilonF}}

Let $F$ be a facet contained in $\overline C$. Extending functions on $\Pp_F^\dagger$ by zero to $\Gp$ induces a $\Pp_F^\dagger$-equivariant embedding
\begin{equation}\label{XF+}
    \mathbf{X}_F^\dagger := \ind^{\Pp_F^\dagger}_\I(1) \hookrightarrow \mathbf{X} \ .
\end{equation}
We introduce the $k$-algebra
\begin{equation*}
    \Hh_F^\dagger := \End_{k[\Pp_F^\dagger]}(\mathbf{X}_F^\dagger)^{\mathrm{op}} = \ind^{\Pp_F^\dagger}_\I(1)^\I \ .
\end{equation*}
It is naturally a subalgebra of $\Hh$ via the extension by zero  embedding $\ind^{\Pp_F^\dagger}_\I(1)^\I \hookrightarrow \ind^{\Gp}_\I(1)$. Alternatively, in terms of the endomorphism rings and using that $\ind^{\Gp}_\I(1) = \ind^{\Gp}_{\Pp_F^\dagger}(\ind^{\Pp_F^\dagger}_\I(1))$ by the transitivity of induction, the inclusion $\Hh_F^\dagger \hookrightarrow \Hh$ is given by $h \mapsto \ind^{\Gp}_{\Pp_F^\dagger}(h)$.

We are  going to consider $(\Pp_F^\dagger, \Hh_F^\dagger)$-bimodules and their twists by the character $\epsilon_F$, defined in Section \ref{subsec:univreso-representations}, as follows.

\begin{itemize}
\item For any $(\Pp_F^\dagger, \Hh_F^\dagger)$-bimodule $\m$ we denote by $\epsilon_F\otimes \m$ the space $\m$ endowed with the structure of a  $(\Pp_F^\dagger, \Hh_F^\dagger)$-bimodule where the action of $\Pp_F^\dagger$ is twisted by the character $\epsilon_F$.

\item By viewing $\Hh_F^\dagger$ as the space of $\I$-bi-invariant functions on $\Pp_F^\dagger$ we see that the product of functions defines an involution $j_F: h \mapsto \epsilon_F h$ of the vector space $\Hh_F^\dagger$. The computation
\begin{equation*}
    ((\epsilon_F h_1) \cdot (\epsilon_F h_2)) (g') = \sum_{g \in \Pp_F^\dagger / \I} \epsilon_F(g)h_1(g) \epsilon_F(g^{-1}g')h_2(g^{-1}g') = (\epsilon_F(h_1 \cdot h_2))(g')
\end{equation*}
    shows that $j_F$ in fact is an automorphism of the algebra $\Hh_F^\dagger$. For any left, resp.\ right, $ \Hh_F^\dagger$-module $\m$ we denote by $(\epsilon_F)\m$, resp.\ $\m (\epsilon_F)$,  the space $\m$ endowed with the structure of a left, resp.\ right, $\Hh_F^\dagger$-module where the action of $\Hh_F^\dagger$ is composed  with $j_F$. Of course, with $\m$ also $\m (\epsilon_F)$ is a $(\Pp_F^\dagger, \Hh_F^\dagger)$-bimodule.

\end{itemize}

\begin{lemm}
The $(\Pp_F, \Hh_F^\dagger)$-bimodules $\epsilon_F \otimes \mathbf{X}_F^\dagger$ and
$\mathbf{X}_F^\dagger (\epsilon_F)$ are isomorphic.
\label{lemma:twist}
\end{lemm}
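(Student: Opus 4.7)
The plan is to exhibit an explicit isomorphism by pointwise multiplication by $\epsilon_F$. Recall that $\mathbf{X}_F^\dagger = \ind^{\Pp_F^\dagger}_\I(1)$ is realised as the space of compactly supported, left $\I$-invariant $k$-valued functions on $\Pp_F^\dagger$, with $\Pp_F^\dagger$ acting by right translation and $\Hh_F^\dagger$ acting by convolution on the right. Since the character $\epsilon_F$ is well-defined on $\Pp_F^\dagger$ and takes values in $\{\pm 1\} \subseteq k^\times$, I would set
\[
    \Phi : \mathbf{X}_F^\dagger \longrightarrow \mathbf{X}_F^\dagger, \qquad \Phi(f) := \epsilon_F \cdot f \quad \text{(pointwise product of functions)}.
\]
The first thing to verify is that $\Phi(f)$ still lies in $\mathbf{X}_F^\dagger$, i.e.\ that $\epsilon_F f$ is left $\I$-invariant. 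This is where Lemma \ref{I-orient} is used: since $F \subseteq \overline C$ we have $\I \subseteq \I' \cap \Pp_F^\dagger$, so $\epsilon_F|\I \equiv 1$, and left $\I$-invariance of $\epsilon_F f$ follows from that of $f$.

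Next I would check that $\Phi$ intertwines the two left $\Pp_F^\dagger$-actions. The source $\epsilon_F \otimes \mathbf{X}_F^\dagger$ carries the twisted action $g \cdot_{\mathrm{tw}} f := \epsilon_F(g)(g \cdot f)$, while the target carries the usual action. Evaluating at $x \in \Pp_F^\dagger$ and using that $\epsilon_F$ is a character,
\[
    \Phi(g \cdot_{\mathrm{tw}} f)(x) = \epsilon_F(g)\epsilon_F(x)f(xg) = \epsilon_F(xg)f(xg) = (g \cdot \Phi(f))(x),
\]
so $\Phi$ is $\Pp_F^\dagger$-equivariant. For the right $\Hh_F^\dagger$-actions, the target $\mathbf{X}_F^\dagger(\epsilon_F)$ carries the twisted action $f \cdot_{\mathrm{tw}} h := f \cdot j_F(h) = f \cdot (\epsilon_F h)$, and the convolution formula $(f \cdot h)(x) = \sum_{g \in \I \backslash \Pp_F^\dagger} f(g) h(xg^{-1})$ together with multiplicativity of $\epsilon_F$ gives
\[
    \bigl((\epsilon_F f)\cdot (\epsilon_F h)\bigr)(x) = \sum_{g} \epsilon_F(g)\epsilon_F(xg^{-1})\,f(g)h(xg^{-1}) = \epsilon_F(x)\sum_{g} f(g)h(xg^{-1}) = \bigl(\epsilon_F(f\cdot h)\bigr)(x),
\]
which is exactly $\Phi(f \cdot h) = \Phi(f) \cdot_{\mathrm{tw}} h$.

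Finally, $\Phi$ is visibly a $k$-linear bijection, since $\epsilon_F^2 = 1$ forces $\Phi \circ \Phi = \mathrm{id}$. So $\Phi$ is the desired isomorphism of $(\Pp_F^\dagger, \Hh_F^\dagger)$-bimodules. I do not anticipate a genuine obstacle here: the content of the lemma is really the bookkeeping identity $\epsilon_F(g)\epsilon_F(xg^{-1}) = \epsilon_F(x)$, which is just the character property of $\epsilon_F$, together with the normalisation $\epsilon_F|\I = 1$ supplied by Lemma \ref{I-orient}. The only mild subtlety worth flagging is making sure that the twist on the right is recorded as composition with $j_F$ rather than with $\epsilon_F$ viewed as a scalar, since it is the algebra automorphism $j_F$ (not the mere multiplication by the character) that produces a well-defined module structure.
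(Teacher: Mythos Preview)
Your proposal is correct and takes exactly the same approach as the paper: the isomorphism is pointwise multiplication by $\epsilon_F$, and the paper's proof consists of the single sentence ``One easily checks that the map $f \longmapsto \epsilon_F f$ yields the required isomorphism.'' You have simply spelled out the verifications that the paper leaves implicit, including the use of $\epsilon_F|\I = 1$ (Lemma \ref{I-orient}) for well-definedness and the character identity $\epsilon_F(g)\epsilon_F(xg^{-1}) = \epsilon_F(x)$ for the bimodule compatibility, the latter being precisely parallel to the paper's own computation showing $j_F$ is an algebra automorphism.
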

\begin{proof}
One easily checks that the map
\begin{align*}
    \epsilon_F \otimes \ind^{\Pp_F^\dagger}_\I(1) & \longrightarrow \ind^{\Pp_F^\dagger}_\I(1) (\epsilon_F) \\
    f & \longmapsto \epsilon_F f
\end{align*}
yields the required isomorphism.
\end{proof}

\begin{lemm}\label{F-map}
The map
\begin{align}\label{f:F-map}
    \mathbf{X}_F ^\dagger\otimes_{\Hh_F^\dagger} \Hh & \longrightarrow \mathbf{X}^{\I_F} \\
    f \otimes h & \longmapsto h(f) \nonumber
\end{align}
is a well defined homomorphism of $(\Pp_F^\dagger, \Hh)$-bimodules.
\end{lemm}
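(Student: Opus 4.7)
The plan is to verify, in order, (a) that $h(f) \in \mathbf{X}^{\I_F}$ for $f \in \mathbf{X}_F^\dagger$ and $h \in \Hh$, (b) that the assignment $(f,h) \mapsto h(f)$ is $\Hh_F^\dagger$-balanced on the tensor product, and (c) that it is $(\Pp_F^\dagger, \Hh)$-equivariant. The unifying idea is to view everything inside $\mathbf{X}$ via the embedding \eqref{XF+} and the inclusion $\Hh_F^\dagger \hookrightarrow \Hh$ from \S\ref{subsubsec:epsilonF}; the latter is designed via transitivity of induction so that the right action of $h_0 \in \Hh_F^\dagger$ on $f \in \mathbf{X}_F^\dagger$ agrees with its action when $f$ is regarded as an element of $\mathbf{X}$.

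The only substantive step is (a): showing $\mathbf{X}_F^\dagger \subseteq \mathbf{X}^{\I_F}$. Let $f \in \mathbf{X}_F^\dagger$, $g \in \I_F$, and $x \in \Gp$. If $x \notin \Pp_F^\dagger$ then $xg \notin \Pp_F^\dagger$ either, since $g \in \I_F \subseteq \Pp_F^\dagger$; hence $f(x) = f(xg) = 0$. If $x \in \Pp_F^\dagger$, the normality of $\I_F$ in $\Pp_F^\dagger$ noted just before Lemma \ref{I-orient} gives $xgx^{-1} \in \I_F \subseteq \I$, and the left $\I$-invariance of $f$ yields $f(xg) = f((xgx^{-1})x) = f(x)$. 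Thus $f$ is $\I_F$-fixed as a vector of $\mathbf{X}$; since any $h \in \Hh$ commutes with the left $\Gp$-action on $\mathbf{X}$, it preserves $\mathbf{X}^{\I_F}$, and so $h(f) \in \mathbf{X}^{\I_F}$.

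For (b), the key identity is $f \cdot h_0 = h_0(f)$ inside $\mathbf{X}$, which comes from the compatibility of the embeddings noted above. One then computes
\[
(f \cdot h_0) \otimes h \;\longmapsto\; h(f \cdot h_0) = h(h_0(f)) = (h_0 \cdot h)(f),
\]
where the last step uses that the product in $\Hh = \End_{k[\Gp]}(\mathbf{X})^{\mathrm{op}}$ is the opposite of composition of endomorphisms; this matches the image of $f \otimes (h_0 \cdot h)$. Part (c) is then almost formal: right $\Hh$-linearity is $f \otimes (h \cdot h') \mapsto (h \cdot h')(f) = h'(h(f))$, which is exactly the right $\Hh$-action on $h(f) \in \mathbf{X}^{\I_F}$, and left $\Pp_F^\dagger$-equivariance follows from the $\Gp$-equivariance of each endomorphism $h$: for $\gamma \in \Pp_F^\dagger$ one has $h(\gamma f) = \gamma h(f)$. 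The only delicate point throughout is the bookkeeping for the opposite-algebra convention, so this is really a formality rather than a genuine obstacle.
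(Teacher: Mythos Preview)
Your proof is correct and follows essentially the same approach as the paper's: both check $\Hh_F^\dagger$-balancedness via the compatibility $h(h_0(f)) = (h_0 \cdot h)(f)$ coming from the embedding $\Hh_F^\dagger \hookrightarrow \Hh$, and both obtain the $\Pp_F^\dagger$-equivariance from the $\Gp$-equivariance of $h$. The only cosmetic difference is that you verify $\mathbf{X}_F^\dagger \subseteq \mathbf{X}^{\I_F}$ directly at the outset, whereas the paper deduces that the image lies in $\mathbf{X}^{\I_F}$ a posteriori from $\Pp_F^\dagger$-equivariance together with the fact that $\I_F$ (being normal in $\Pp_F^\dagger$ and contained in $\I$) acts trivially on the domain; the underlying computation is the same.
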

\begin{proof} Denote by $i_F$ the map in question.
Since
\begin{equation*}
    i_F(h_0(f) \otimes h) = h(h_0(f)) = (h \circ \ind^{\Gp}_{\Pp_F^\dagger}(h_0))(f) = i_F(f \otimes  \ind^{\Gp}_{\Pp_F^\dagger}(h_0)h)
\end{equation*}
the map $i_F$ is well defined as a map into $\mathbf{X}$. Since any $h \in \Hh$ is a $\Gp$-equivariant endomorphism of $\mathbf{X}$ we have
\begin{equation*}
    \upsilon_F(g(f \otimes h)) = i_F(gf \otimes h) = h(gf) = gh(f) = g  i_F(f \otimes h) \ .
\end{equation*}
This shows the $\Pp_F^\dagger$-equivariance of the map. But $i_F$ being normal in $\Pp_F^\dagger$ acts trivially on the left hand side. Hence it also shows that the image of $i_F$ is contained in $\mathbf{X}^{\I_F}$. The $\Hh$-equivariance of $i_F$ is obvious.
\end{proof}

\begin{exam}
$\I = \I_C$ is a normal subgroup of $\Pp_C^\dagger$. Hence $\Hh_C^\dagger = k[\Pp_C^\dagger/\I]^{\mathrm{op}} \cong k[\Pp_C^\dagger/\I]$ is the group algebra of the discrete group $\Pp_C^\dagger/\I$, and $\mathbf{X}_C ^\dagger= k[\Pp_C^\dagger/\I]$ as a right module over itself. In particular, the map $i_C$ is an isomorphism for trivial reasons.
\end{exam}

In Proposition \ref{prop:free}   we will show that $\Hh$ is free as a left as well as a right $\Hh_F^\dagger$-module. This yields the following result.

\begin{prop}\label{H-H}
$\ind_{\Pp_F^\dagger}^\Gp(\epsilon_F \otimes \mathbf{X}_F^\dagger \otimes_{\Hh_F^\dagger} \Hh)^\I = \Hh (\epsilon_F) \otimes_{\Hh_F^\dagger} \Hh$ as $(\Hh,\Hh)$-bimodules. In particular, it is free as a left as well as a right $\Hh$-module.
\end{prop}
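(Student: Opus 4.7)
The plan is to reduce the assertion to the transitivity formula $\ind_{\Pp_F^\dagger}^\Gp \mathbf{X}_F^\dagger = \mathbf{X}$ together with the standard identification $\mathbf{X}^\I \cong \Hh$, while using the (forward-referenced) Proposition \ref{prop:free} to replace the tensor product $-\otimes_{\Hh_F^\dagger}\Hh$ by a direct sum wherever needed.

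First I would invoke Lemma \ref{lemma:twist} to rewrite the module appearing on the left as $\ind_{\Pp_F^\dagger}^\Gp\bigl(\mathbf{X}_F^\dagger(\epsilon_F)\otimes_{\Hh_F^\dagger}\Hh\bigr)^\I$. Fix a basis $\{h_\alpha\}_\alpha$ of $\Hh$ as a left $\Hh_F^\dagger$-module (Proposition \ref{prop:free}); then
\begin{equation*}
\mathbf{X}_F^\dagger(\epsilon_F)\otimes_{\Hh_F^\dagger}\Hh \;\cong\; \bigoplus_\alpha \mathbf{X}_F^\dagger(\epsilon_F)
\end{equation*}
as $\Pp_F^\dagger$-representations. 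Since $\Pp_F^\dagger$ is open in $\Gp$, the compact induction functor $\ind_{\Pp_F^\dagger}^\Gp$ commutes with arbitrary direct sums; the same is true for the smooth $\I$-invariants functor. Hence
\begin{equation*}
\ind_{\Pp_F^\dagger}^\Gp\bigl(\mathbf{X}_F^\dagger(\epsilon_F)\otimes_{\Hh_F^\dagger}\Hh\bigr)^\I \;\cong\; \bigl(\ind_{\Pp_F^\dagger}^\Gp \mathbf{X}_F^\dagger(\epsilon_F)\bigr)^\I \otimes_{\Hh_F^\dagger}\Hh.
\end{equation*}

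It remains to identify $\bigl(\ind_{\Pp_F^\dagger}^\Gp \mathbf{X}_F^\dagger(\epsilon_F)\bigr)^\I$ with $\Hh(\epsilon_F)$ as an $(\Hh,\Hh_F^\dagger)$-bimodule. Transitivity of compact induction gives $\ind_{\Pp_F^\dagger}^\Gp \mathbf{X}_F^\dagger = \ind_\I^\Gp(1) = \mathbf{X}$ as $\Gp$-representations; and by the very definition of the embedding $\Hh_F^\dagger\hookrightarrow\Hh$ recalled in Section \ref{subsubsec:epsilonF} (namely $h\mapsto\ind_{\Pp_F^\dagger}^\Gp(h)$), this is an isomorphism of $(\Gp,\Hh_F^\dagger)$-bimodules. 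Consequently the twist of the right $\Hh_F^\dagger$-action on $\mathbf{X}_F^\dagger$ by the involution $j_F$ is transported, through induction, to the corresponding twist of the right $\Hh_F^\dagger$-action on $\mathbf{X}$. Passing to $\I$-invariants and applying the standard identification $\mathbf{X}^\I\cong\Hh$ of right $\Hh$-modules then produces $\Hh(\epsilon_F)$, delivering the asserted equality of $(\Hh,\Hh)$-bimodules.

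For the freeness statement, I would observe that $\Hh(\epsilon_F)$ is unchanged as a left $\Hh$-module and as a left $\Hh_F^\dagger$-module; only its right $\Hh_F^\dagger$-structure has been twisted. Choosing a left $\Hh_F^\dagger$-basis $\{g_\beta\}$ of $\Hh$ (Proposition \ref{prop:free} once more) gives a left $\Hh$-module isomorphism $\Hh(\epsilon_F)\otimes_{\Hh_F^\dagger}\Hh \cong \bigoplus_\beta \Hh$; the symmetric argument using a right $\Hh_F^\dagger$-basis of the second factor yields freeness on the right. The main technical point to watch is the bookkeeping of the twists---verifying that $\ind_{\Pp_F^\dagger}^\Gp$ carries the action of $j_F$ on the right $\Hh_F^\dagger$-structure of $\mathbf{X}_F^\dagger$ to the analogous twist on $\mathbf{X}$---but this is immediate from the fact that the embedding $\Hh_F^\dagger\hookrightarrow\Hh$ is itself given by applying $\ind_{\Pp_F^\dagger}^\Gp$ to $\Hh_F^\dagger$-endomorphisms.
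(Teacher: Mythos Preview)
Your proposal is correct and follows essentially the same approach as the paper: use Lemma~\ref{lemma:twist} to pass from $\epsilon_F\otimes\mathbf{X}_F^\dagger$ to $\mathbf{X}_F^\dagger(\epsilon_F)$, use the freeness of $\Hh$ over $\Hh_F^\dagger$ (Proposition~\ref{prop:free}) to pull $-\otimes_{\Hh_F^\dagger}\Hh$ through both $\ind_{\Pp_F^\dagger}^\Gp$ and $(\,.\,)^\I$, and then identify $\ind_{\Pp_F^\dagger}^\Gp\mathbf{X}_F^\dagger=\mathbf{X}$ by transitivity of induction. One small slip: in your freeness argument for the right $\Hh$-module structure you should decompose the \emph{first} factor $\Hh(\epsilon_F)$ as a free right $\Hh_F^\dagger$-module (cf.\ Remark~\ref{rema:libre}), not the second factor.
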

\begin{proof}
Since the functor $\ind_{\Pp_F^\dagger}^\Gp$ commutes with arbitrary direct sums, the fact that $\Hh$ is free as a left $\Hh_F^\dagger$-module implies the first identity in the chain of isomorphisms of $(\Gp, \Hh)$-bimodules
\begin{align*}
    \ind_{\Pp_F^\dagger}^\Gp(\epsilon_F \otimes \mathbf{X}_F^\dagger \otimes_{\Hh_F^\dagger} \Hh) & = \ind_{\Pp_F^\dagger}^\Gp(\epsilon_F \otimes \mathbf{X}_F^\dagger) \otimes_{\Hh_F^\dagger} \Hh \\
    & = \ind_{\Pp_F^\dagger}^\Gp( \mathbf{X}_F^\dagger (\epsilon_F) ) \otimes_{\Hh_F^\dagger} \Hh \\
&= \ind_{\Pp_F^\dagger}^\Gp( \mathbf{X}_F^\dagger) (\epsilon_F) \otimes_{\Hh_F^\dagger} \Hh \\
&= \mathbf{X} (\epsilon_F) \otimes_{\Hh_F^\dagger} \Hh
\end{align*}
where the second one is given by Lemma \ref{lemma:twist}.
Passing to the $\I$-invariants on the right hand side commutes with the tensor product since $\Hh$ is free over $\Hh_F^\dagger$. It gives the  announced  isomorphism.  Since $\Hh$ (resp. $\Hh (\epsilon_F)$)  is a free left (resp. right) $\Hh_F^\dagger$-module (see Remark \ref{rema:libre}), we deduce that $\Hh (\epsilon_F)\otimes_{\Hh_F^\dagger} \Hh$ is free as a left as well as a right $\Hh$-module.
\end{proof}

\begin{lemm}\phantomsection\label{lemma:isom}
\begin{itemize}
\item[i.] The map \eqref{f:F-map} is an isomorphism.
\item[ii.] $\ind_{\Pp_F^\dagger}^\Gp(\epsilon_F \otimes \mathbf{X}^{\I_F})^\I = \Hh (\epsilon_F) \otimes_{\Hh_F^\dagger} \Hh$ as $(\Hh,\Hh)$-bimodules. In particular, it is free as a left as well as a right $\Hh$-module.
\end{itemize}
\end{lemm}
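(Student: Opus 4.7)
The plan is to derive part (ii) quickly from part (i) combined with Proposition \ref{H-H}, so that the substance of the proof lies in establishing the isomorphism in part (i). Once (i) is known, twisting the isomorphism $\mathbf{X}_F^\dagger \otimes_{\Hh_F^\dagger} \Hh \cong \mathbf{X}^{\I_F}$ by $\epsilon_F$ on the $\Pp_F^\dagger$-factor, then applying $\ind_{\Pp_F^\dagger}^{\Gp}(-)$ and passing to $\I$-invariants will yield
\begin{equation*}
\ind_{\Pp_F^\dagger}^{\Gp}(\epsilon_F \otimes \mathbf{X}^{\I_F})^\I \;\cong\; \ind_{\Pp_F^\dagger}^{\Gp}\bigl(\epsilon_F \otimes \mathbf{X}_F^\dagger \otimes_{\Hh_F^\dagger} \Hh\bigr)^\I \;=\; \Hh(\epsilon_F) \otimes_{\Hh_F^\dagger} \Hh,
\end{equation*}
where the final equality is exactly Proposition \ref{H-H}; the freeness of the target over $\Hh$ on either side is already asserted there, and so part (ii) will be complete.

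For part (i), Lemma \ref{F-map} provides a well-defined $(\Pp_F^\dagger, \Hh)$-bimodule homomorphism with image in $\mathbf{X}^{\I_F}$, so only bijectivity remains. My plan is to produce compatible decompositions of source and target. On the source side, Proposition \ref{prop:free} furnishes a basis $\{\tau_\beta\}$ of $\Hh$ as a left $\Hh_F^\dagger$-module, arising via Bruhat-type considerations from representatives $\{g_\beta\}$ of $\Pp_F^\dagger \backslash \Gp / \I$. This gives
\begin{equation*}
\mathbf{X}_F^\dagger \otimes_{\Hh_F^\dagger} \Hh \;=\; \bigoplus_\beta \mathbf{X}_F^\dagger \otimes \tau_\beta,
\end{equation*}
and the map of \eqref{f:F-map} sends each summand into $\tau_\beta(\mathbf{X}_F^\dagger) \subseteq \mathbf{X}^{\I_F}$. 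On the target side, the transitivity of compact induction gives $\mathbf{X} = \ind_{\Pp_F^\dagger}^{\Gp}(\mathbf{X}_F^\dagger)$; since $\I_F$ is normal in $\Pp_F^\dagger$, a Mackey-type calculation produces a decomposition of $\mathbf{X}^{\I_F}$ indexed by the same set of double cosets. One then checks summand by summand that each piece $\mathbf{X}_F^\dagger \otimes \tau_\beta \to \tau_\beta(\mathbf{X}_F^\dagger)$ is a bijection of $\Pp_F^\dagger$-modules.

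The main obstacle will be choosing the basis $\{\tau_\beta\}$ and the coset representatives $\{g_\beta\}$ consistently, so that the source-side decomposition aligns with the Mackey decomposition of the target term by term. This rests on the explicit combinatorics of the Iwahori--Weyl group and the length function that underpin Proposition \ref{prop:free}; once this matching is fixed, the verification of each summand-wise bijection reduces to a short calculation with the action of the characteristic function $T_{g_\beta}$ on the generators of $\mathbf{X}_F^\dagger$.
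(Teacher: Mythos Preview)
Your plan is correct and is essentially the paper's approach: (ii) follows from (i) and Proposition~\ref{H-H}, and (i) is proved in Section~\ref{subsec:bijproof} by decomposing source and target along $\Pp_F^\dagger \backslash \Gp / \I$ and matching summand by summand. One cosmetic difference: the paper first passes to the parahoric level via Lemma~\ref{eta}, proving the analogue for $\mathbf{X}_F \otimes_{\Hh_F} \Hh \to \mathbf{X}^{\I_F}$ in Proposition~\ref{F-mapBis}, and then recovers \eqref{f:F-map} by tensoring up.

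Where you should adjust your expectations: the ``main obstacle'' is not choosing the representatives consistently---Proposition~\ref{prop:free} and Lemma~\ref{DF+} already hand you $g_\beta = \hat{\tilde{d}}^{-1}$ for $d \in \Dd_F^\dagger$---but rather the summand-wise \emph{surjectivity}, which comes down to the identity $\I_F \hat{\tilde{d}}^{-1} \I = \I \hat{\tilde{d}}^{-1} \I$. This is not length-function combinatorics; it needs the structural decomposition $\I = \Uu_F^0 \I_F$ of the pro-$p$ Iwahori (Lemma~\ref{IC}) together with the root-subgroup criterion $d\, \Uu_F^0\, d^{-1} \subseteq \Uu_C$ that characterizes $\Dd_F$ (see~\eqref{rootsubgroup}). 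Injectivity of each $\tau_{\tilde{d}^{-1}} \colon \mathbf{X}_F \to \mathbf{X}$ also requires a small argument rather than being purely formal: in characteristic $p$ a nonzero kernel would have nonzero $\I$-invariants, contradicting injectivity on $\Hh_F$ via the braid relations and \eqref{additive0}; in characteristic $\neq p$ one instead uses that $\tau_{\tilde{d}^{-1}}$ is invertible in $\Hh$ (Remark~\ref{rema:invertible}).
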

\begin{proof}
The first assertion will be shown in Section \ref{subsec:bijproof}. The second assertion then follows from the first and Proposition \ref{H-H}.
\end{proof}

\subsubsection{\label{subsubsec:iso}}

Let  $i\in \{0, ..., d\}$ and let $\mathscr{X}_i$ denote the set of $i$-dimensional facets. Let $F$ be one of them. Choose an orientation $(F,c)$ for $F$. There is a natural $(\Pp_F^\dagger,\Hh)$-equivariant map
$$
\epsilon_F \otimes \mathbf{X}^{\I_F}\rightarrow C_c^{or}(\mathscr{X}_{(i)}, \cX)
$$
that sends an element $x\in \mathbf{X}^{\I_F}$ onto the only $i$-chain with support $\{F\}$ and value $x$ at $(F,c)$. It induces an injective $(\Gp,\Hh)$-equivariant map
$$
\ind_{\Pp_F^\dagger}^\Gp(\epsilon_F \otimes \mathbf{X}^{\I_F})\rightarrow C_c^{or}(\mathscr{X}_{(i)}, \cX)
$$
the image of which is the space of $i$-chains supported on the $\Gp$-conjugates of $F$.

The group $\Gp$ acts on the set $\mathscr{X}_i$ of $i$-dimensional facets with finitely many orbits. Accordingly, the $(\Gp,\Hh)$-bimodule $C_c^{or}(\mathscr{X}_{(i)}, \cX)$ of oriented $i$-chains breaks up into a finite direct sum of $(\Gp,\Hh)$-bimodules of the form $\ind_{\Pp_F^\dagger}^\Gp(\epsilon_F \otimes \mathbf{X}^{\I_F})$. Since $\Gp$ acts transitively on the chambers we may assume that $F \subseteq \overline{C}$ and apply the results of Section {\ref{subsubsec:epsilonF}}.

We fix a (finite) set of representatives $\mathscr{F}_i$ for the $\Gp$-orbits in $\mathscr{X}_i$ such that every member of every set $\mathscr{F}_i$ is contained in $\overline{C}$. Theorem\ \ref{I-resolution} and Lemma \ref{lemma:isom}.ii  together imply  the following theorem.

\begin{theo}\label{theo:freeresolution}
The complex \eqref{f:I-complex} is an exact sequence of $(\Hh,\Hh)$-bimodules of the form
\begin{equation}\label{f:H-H-bimod}
    0 \longrightarrow \bigoplus_{F \in \mathscr{F}_d} \Hh (\epsilon_F) \otimes_{\Hh_F^\dagger} \Hh \longrightarrow \ldots \longrightarrow \bigoplus_{F \in \mathscr{F}_0} \Hh (\epsilon_F) \otimes_{\Hh_F^\dagger} \Hh \longrightarrow \Hh \longrightarrow 0 \ .
\end{equation}
It yields a free resolution of $\Hh$ as a left as well as a right $\Hh$-module.
\end{theo}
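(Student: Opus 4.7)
The plan is to assemble the theorem directly from the preceding results: exactness comes from Theorem \ref{I-resolution}, while the identification of each term with $\bigoplus_{F \in \mathscr{F}_i} \Hh (\epsilon_F) \otimes_{\Hh_F^\dagger} \Hh$ comes from the $\Gp$-orbit decomposition of $i$-facets together with Lemma \ref{lemma:isom}.ii.

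First, recall from Theorem \ref{I-resolution} that the complex \eqref{f:I-complex} is already known to be an exact resolution of $\Hh$ as an $(\Hh,\Hh)$-bimodule; this is the content proved via Theorem \ref{I-resolutionV} by reducing to the apartment complex \eqref{f:apart-complexV} and exploiting the contractibility arguments. So the remaining task is purely an identification of the terms $C_c^{or}(\mathscr{X}_{(i)},\cX)^\I$ in the appropriate $(\Hh,\Hh)$-bimodule form.

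Next, for each $i \in \{0,\ldots,d\}$ I would invoke the decomposition discussed in Section \ref{subsubsec:iso}. The group $\Gp$ acts on $\mathscr{X}_i$ with only finitely many orbits (indexed by $\mathscr{F}_i$, which can be chosen inside $\overline{C}$ since $\Gp$ acts transitively on chambers and every facet lies in some chamber). Fixing orientations and using the $\Pp_F^\dagger$-equivariant map $\epsilon_F \otimes \mathbf{X}^{\I_F} \to C_c^{or}(\mathscr{X}_{(i)},\cX)$ sending $x$ to the chain supported at $(F,c)$ with value $x$, one obtains a $(\Gp,\Hh)$-bimodule decomposition
\begin{equation*}
    C_c^{or}(\mathscr{X}_{(i)},\cX) \;\cong\; \bigoplus_{F \in \mathscr{F}_i} \ind_{\Pp_F^\dagger}^{\Gp}(\epsilon_F \otimes \mathbf{X}^{\I_F}).
\end{equation*}

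Then I would pass to $\I$-invariants on each summand. Since taking $\I$-invariants commutes with finite direct sums, Lemma \ref{lemma:isom}.ii gives
\begin{equation*}
    C_c^{or}(\mathscr{X}_{(i)},\cX)^\I \;\cong\; \bigoplus_{F \in \mathscr{F}_i} \Hh(\epsilon_F) \otimes_{\Hh_F^\dagger} \Hh
\end{equation*}
as $(\Hh,\Hh)$-bimodules. Inserting this identification into the complex \eqref{f:I-complex} and using the exactness from Theorem \ref{I-resolution} produces the announced exact sequence \eqref{f:H-H-bimod}. Finally, the freeness statement of Lemma \ref{lemma:isom}.ii (based on Proposition \ref{H-H} and the fact, recorded in Remark \ref{rema:libre}, that $\Hh$ is free as a left and as a right $\Hh_F^\dagger$-module) shows that each bimodule $\Hh(\epsilon_F)\otimes_{\Hh_F^\dagger}\Hh$ is free as a left $\Hh$-module and as a right $\Hh$-module; hence \eqref{f:H-H-bimod} is a free resolution on both sides.

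The main obstacle is not in this theorem itself, which is a clean assembly, but in the inputs it relies on: Theorem \ref{I-resolution} (i.e.\ the exactness of the $\I$-invariant chain complex on $\mathscr{X}$, handled by the reduction to the apartment and the contractibility arguments in Proposition \ref{prop:contractible} and Lemma \ref{disjoint}) and Lemma \ref{lemma:isom}.i (the isomorphism $\mathbf{X}_F^\dagger \otimes_{\Hh_F^\dagger}\Hh \xrightarrow{\cong} \mathbf{X}^{\I_F}$, deferred to Section \ref{subsec:bijproof}, which itself depends on the freeness of $\Hh$ over $\Hh_F^\dagger$). Once those are in hand, the present theorem is essentially a bookkeeping statement.
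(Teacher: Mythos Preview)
Your proposal is correct and follows exactly the same approach as the paper, which simply states that Theorem~\ref{I-resolution} and Lemma~\ref{lemma:isom}.ii together imply the result. Your write-up spells out the orbit decomposition and the passage to $\I$-invariants in more detail than the paper does, but the logic is identical.
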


\begin{rema}\label{rema:sym}
There is a natural isomorphism of $(\Hh,\Hh)$-bimodules $\Hh (\epsilon_F) \otimes_{\Hh_F^\dagger} \Hh \simeq \Hh \otimes_{\Hh_F^\dagger}  (\epsilon_F) \Hh$.
\end{rema}

\subsection{\label{subsec:ourtheorem}}

We denote by $r$ the rank of the center of $\Gp$. The number $d^1 := d + r$ is the rank of the group $\Gp$.

\begin{theo}\label{theo:Gorenstein}
The injective dimension of $\Hh$  as a left as well as a right $\Hh$-module  is bounded above by the rank of the group $\Gp$.
\end{theo}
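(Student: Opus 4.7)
The proof is essentially a direct application of the machinery set up in Section \ref{sec:gene} to the bimodule resolution just constructed. My plan is to feed the exact sequence \eqref{f:H-H-bimod} from Theorem \ref{theo:freeresolution} into Proposition \ref{injdim}.i with $A = \Hh$, $d$ equal to the semisimple rank, and with the subalgebras $A_0$ running through the $\Hh_F^\dagger$ for $F \in \bigcup_i \mathscr{F}_i$. Since this resolution has length $d$, the conclusion of the proposition will give an injective dimension bound of $d + r = d^1$, which is exactly the rank of $\Gp$.

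Concretely, I first need to observe that each term $\Hh(\epsilon_F) \otimes_{\Hh_F^\dagger} \Hh$ has the form $A \otimes_{A_0} A$ needed in the hypothesis of Proposition \ref{injdim}. The left-hand copy is $\Hh(\epsilon_F)$ rather than $\Hh$, but via Remark \ref{rema:sym} it is isomorphic (as an $(\Hh,\Hh)$-bimodule) to $\Hh \otimes_{\Hh_F^\dagger} (\epsilon_F)\Hh$, so switching the twist to the other side puts the bimodule in the exact form required; alternatively one notes that the twist $\epsilon_F$ is absorbed by the algebra automorphism $j_F$ of $\Hh_F^\dagger$ and therefore does not change the underlying bimodule structure relative to the subalgebra embedding $\lambda = \rho$.

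Next I verify properties 1., 2., and 3. of Section \ref{sec:gene} term by term. Properties 1.\ and 2.\ (that $\Hh_F^\dagger$ is left and right noetherian, with self-injective dimension equal to $r$) are exactly the statements about the small Hecke algebras $\Hh_F^\dagger$ announced in the introduction and established in Section \ref{sec:Frobenius}, so I simply cite those results. Property 3.\ (that $\Hh$ is free as a left and as a right $\Hh_F^\dagger$-module) is Proposition \ref{prop:free}, already invoked in the proof of Proposition \ref{H-H}. With all three properties in hand for each $\Hh_F^\dagger$ appearing in \eqref{f:H-H-bimod}, Proposition \ref{injdim}.i applies and gives the injective dimension bound $d + r$ for $\Hh$ as a left $\Hh$-module; applying the parenthetical right-module analogue of Proposition \ref{injdim}.ii--iii noted just after its proof, the same bound holds on the right.

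The proof of this particular theorem is therefore short and formal, with no real obstacle: all the substantive work has been packaged into (a) the construction of the bimodule resolution \eqref{f:H-H-bimod}, which rests on Theorem \ref{I-resolutionV} and the identifications in Section \ref{subsubsec:epsilonF}, and (b) the homological analysis of the finite-type algebras $\Hh_F^\dagger$ carried out in Section \ref{sec:Frobenius}. The main point to be careful about in writing the proof is simply to check that the twist by $\epsilon_F$ is benign for the purpose of invoking Proposition \ref{injdim}, which it is because $j_F$ is an algebra automorphism of $\Hh_F^\dagger$ and so preserves noetherianity and injective dimension.
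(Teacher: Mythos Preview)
Your proposal is correct and follows essentially the same route as the paper: the paper's proof simply cites Propositions \ref{prop:free} and \ref{F-injdim} (together with Remark \ref{rema:libre} for the freeness of the twisted module $\Hh(\epsilon_F)$) to verify hypotheses 1.--3.\ of Section \ref{sec:gene} for the resolution \eqref{f:H-H-bimod}, and then invokes Proposition \ref{injdim}. Your more explicit discussion of why the $\epsilon_F$-twist is harmless (via the automorphism $j_F$ or Remark \ref{rema:sym}) is exactly the content of Remark \ref{rema:libre}, so nothing is missing or different in substance.
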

\begin{proof}
In Propositions \ref{prop:free}, \ref{F-injdim}  (see also Remark \ref{rema:libre}) we will verify the assumptions of Section \ref{sec:gene} for the complexes \eqref{f:H-H-bimod}. Hence Proposition \ref{injdim} applies.
\end{proof}

There is a natural action of the finite Weyl group $\W_0$ (see Section \ref{subsec:apartment}) on the finite torus $ \overline{\mathbf{T}}(\mathbb{F}_q)$
and on the groups of its $k$-characters. Denote by $\Gamma$ the set of orbits under this latter action.
Suppose  that the cardinality of $\T(\mathbb F_q)$, or equivalently $q-1$, is invertible in $k$. Then to any  $\gamma\in \Gamma$ one can associate  a  central idempotent $\varepsilon _\gamma\in \Hh$ as in \cite[Corollary 4]{Vig}. In particular, in the case where $\gamma=1$ is the orbit of the trivial character, then the algebra $\Hh \varepsilon _{ 1}$ with unit $ \varepsilon _{1}$ identifies with the Iwahori-Hecke algebra $\Hh'$ of $\Gp$.

\begin{coro} \label{coro:Iwahori}
Suppose  that $q-1$ is invertible in $k$. Then the injective dimension of $\Hh\varepsilon _\gamma$ as a left as well as a right $\Hh\varepsilon _\gamma$-module is $\leq d^1$. In particular,   the injective dimension of $\Hh'$ as a left as well as a right $\Hh'$-module is $\leq d^1$.
\end{coro}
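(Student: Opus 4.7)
The plan is to deduce this corollary directly from Theorem \ref{theo:Gorenstein} via a general observation about central idempotents: if $R$ is a ring of finite injective dimension $\leq n$ (on either side) and $e \in R$ is a central idempotent, then the ring $eR$ has injective dimension $\leq n$ over itself. Since, under the assumption that $q-1$ is invertible in $k$, the elements $\varepsilon_\gamma$ are central idempotents of $\Hh$ (by construction, cf.\ \cite[Corollary 4]{Vig}), the statement for $\Hh\varepsilon_\gamma$, and in particular for $\Hh' = \Hh\varepsilon_1$, will follow at once from Theorem \ref{theo:Gorenstein}.

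To prove the general observation, I would first note that any $\Hh\varepsilon_\gamma$-module $M$ may be regarded as an $\Hh$-module by letting $1 - \varepsilon_\gamma$ act as zero; equivalently, it is a module for the quotient ring $\Hh \to \Hh/\Hh(1-\varepsilon_\gamma) \cong \Hh\varepsilon_\gamma$. For two such modules $M$ and $N$, an abelian group homomorphism between them is $\Hh\varepsilon_\gamma$-linear if and only if it is $\Hh$-linear, because on both sides $1-\varepsilon_\gamma$ acts trivially; hence $\Hom_{\Hh\varepsilon_\gamma}(M,N) = \Hom_\Hh(M,N)$. Next, any free $\Hh\varepsilon_\gamma$-module is of the form $(\Hh\varepsilon_\gamma)^{(I)} = \varepsilon_\gamma \Hh^{(I)}$, which is a direct summand of the free $\Hh$-module $\Hh^{(I)}$ (the complementary summand being $(1-\varepsilon_\gamma)\Hh^{(I)}$), so projective $\Hh\varepsilon_\gamma$-modules are projective as $\Hh$-modules. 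Combining these two observations, a projective $\Hh\varepsilon_\gamma$-resolution $P_\bullet \to M$ is also a projective $\Hh$-resolution, and therefore
\begin{equation*}
\Ext^i_{\Hh\varepsilon_\gamma}(M, N) = h^i(\Hom_{\Hh\varepsilon_\gamma}(P_\bullet, N)) = h^i(\Hom_{\Hh}(P_\bullet, N)) = \Ext^i_{\Hh}(M, N)
\end{equation*}
for every $\Hh\varepsilon_\gamma$-module $N$.

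Now take $N = \Hh\varepsilon_\gamma$, which is a direct summand of $\Hh$ as an $(\Hh,\Hh)$-bimodule via the decomposition $\Hh = \Hh\varepsilon_\gamma \oplus \Hh(1-\varepsilon_\gamma)$. Then $\Ext^i_{\Hh\varepsilon_\gamma}(M, \Hh\varepsilon_\gamma) = \Ext^i_{\Hh}(M, \Hh\varepsilon_\gamma)$ is a direct summand of $\Ext^i_{\Hh}(M, \Hh)$, which vanishes for $i > d^1$ by Theorem \ref{theo:Gorenstein}. This shows that $\Hh\varepsilon_\gamma$ has left self-injective dimension $\leq d^1$, and the right-sided version is identical with the roles of left and right modules swapped. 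The particular case $\gamma = 1$ yields the claim for $\Hh'$.

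No essential obstacle is expected: once the general idempotent principle is in hand, it is a direct transport of the Gorenstein bound through the central decomposition of $\Hh$. The only point worth being careful about is the automatic equality $\Hom_{\Hh\varepsilon_\gamma} = \Hom_\Hh$ on the category of $\Hh\varepsilon_\gamma$-modules, which is what allows the Ext computation to descend to the quotient ring without any assumption beyond centrality of the idempotent.
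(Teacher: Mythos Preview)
Your proposal is correct and follows exactly the intended route: the paper states this corollary without proof immediately after Theorem \ref{theo:Gorenstein}, relying implicitly on the standard fact that self-injective dimension passes to direct factors cut out by central idempotents. Your argument spells out precisely this mechanism.
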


In fact, in the case of $\Hh'$ the restriction on the characteristic of $k$ in the above Corollary \ref{coro:Iwahori} is unnecessary as we will explain in the next section.

In Corollary \ref{characters}.ii we will see that the above upper bounds on the self-injective dimensions are sharp if the group $\Gp$ is semisimple.

\subsection{About the Iwahori-Hecke algebra}\label{subsec:Iwahori}

We apply the arguments of \ref{subsec:bimodules} to
the representation $\mathbf X'$ defined in Section \ref{sec:Hecke}. In this  case, the subspaces $\{(\mathbf{X'})^{\I_F}\}_F$ are right $\Hh'$-invariant.   Applying Theorem \ref{I-resolutionV} to $\mathbf{V} = \mathbf{X}'$ we obtain the exact sequence of $(\Hh, \Hh')$-bimodules and $\I'$-representations
\begin{equation}\label{f:chain-complex-iwahori}
    0 \longrightarrow C_c^{or} (\mathscr{X}_{(d)}, \cX')^\I \xrightarrow{\;\partial\;} \ldots \xrightarrow{\;\partial\;} C_c^{or} (\mathscr{X}_{(0)}, \cX')^\I \xrightarrow{\;\epsilon\;} (\mathbf{X}')^\I \longrightarrow 0 \ .
\end{equation}

\begin{lemm}\label{lemma:iwahori-1}
Each term in \eqref{f:chain-complex-iwahori} is $\I'$-invariant.
\end{lemm}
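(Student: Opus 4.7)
The plan is to reduce the claim to the general principle that taking invariants under a normal subgroup preserves the ambient group action.

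The key observation is that $\I$ is a normal subgroup of $\I'$. This is explicitly noted in Section \ref{sec:Hecke}, where the quotient $\I'/\I$ is identified with the finite group $\overline{\mathbf{T}}(\mathbb{F}_q)$; the existence of this group-theoretic quotient forces normality. Consequently, for any $k$-vector space $W$ carrying an action of a subgroup $H$ of $\Gp$ containing $\I$ as a normal subgroup, the subspace $W^\I$ is $H$-stable: given $g \in H$, $w \in W^\I$ and $u \in \I$, we have $u(gw) = g(g^{-1}ug)w = gw$, because $g^{-1}ug \in \I$ and $w$ is $\I$-fixed.

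Now each term $C_c^{or}(\mathscr{X}_{(i)}, \cX')$ is a smooth $k$-representation of $\Gp$ (this is how the complex \eqref{f:chain-complexV} was constructed in Section \ref{subsec:univreso-representations} for $\mathbf V = \mathbf X'$), so it is in particular a representation of $\I' \subseteq \Gp$ with $\I$ normal in $\I'$. Applying the principle above with $H = \I'$ shows that $C_c^{or}(\mathscr{X}_{(i)}, \cX')^\I$ is $\I'$-invariant, for every $i \in \{0, \ldots, d\}$. There is no obstacle here; the content of the lemma is essentially the bookkeeping remark that normality of $\I$ in $\I'$ promotes the natural $\Hh$-module structure on the $\I$-invariants to a compatible $\I'$-action which will subsequently be needed in order to pass to $\I'$-invariants and produce a resolution of the $(\Hh',\Hh')$-bimodule $\Hh'$.
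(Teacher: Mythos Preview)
Your argument establishes only that the subspace $C_c^{or}(\mathscr{X}_{(i)},\cX')^{\I}$ is \emph{stable} under the $\I'$-action, not that every element in it is \emph{fixed} by $\I'$. These are very different statements: normality of $\I$ in $\I'$ indeed gives an $\I'$-action on the $\I$-invariants of any $\I'$-representation, but it certainly does not force that action to be trivial. What the lemma actually asserts (and what the paper proves) is the equality
\[
C_c^{or}(\mathscr{X}_{(i)},\cX')^{\I} \;=\; C_c^{or}(\mathscr{X}_{(i)},\cX')^{\I'},
\]
so that the exact complex \eqref{f:chain-complex-iwahori}, obtained from Theorem~\ref{I-resolutionV}, \emph{is} already the complex \eqref{f:I-complex-iwahori} of $\I'$-invariants. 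Your final paragraph suggests instead taking further $\I'$-invariants afterwards; but then you would need a separate argument that passing to $(\I'/\I)$-invariants preserves exactness, and since $|\I'/\I|=|\overline{\mathbf T}(\mathbb F_q)|$ need not be invertible in an arbitrary field $k$, this is not automatic.

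The missing idea is that the claim is special to $\mathbf V=\mathbf X'=\ind_{\I'}^{\Gp}(1)$ and to the particular shape of the coefficient system. The paper uses that $\I'=\Tp^0\I$ with $\Tp^0$ fixing the apartment $\Aa$ pointwise and normalising each $\I_D$ for $D$ a chamber in $\Aa$; together with the right $\I'$-invariance built into $\mathbf X'$ this yields $(\mathbf X')^{\I_D}=(\mathbf X')^{\Tp^0\I_D}$ for every such $D$. One then checks directly, using Proposition~\ref{prop:closest} to reduce to chambers in $\Aa$, that any $\I$-fixed oriented chain is automatically fixed by each $t\in\Tp^0$. None of this follows from the general normality observation you invoke.
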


\begin{proof}
Recall that $\Tp^0$ fixes the apartment $\Aa$ pointwise (see \ref{subsec:apartment}), that $\Tp^0$ normalizes $\I$, and that $\I' = \Tp^0 \I$.
For any chamber $D$ in $\Aa$, one easily checks that
$\I_D \backslash \Gp/ \I'= \Tp^0\I_D \backslash \Gp/ \I'$ (see Remark \ref{rema:transitivity} below for example). This implies $(\mathbf X')^{\I_D}=(\mathbf X')^{\Tp^0\I_D}$. For $D=C$ we, in particular, obtain $(\mathbf X')^{\I}=(\mathbf X')^{\I'}$.

Now let $f \in C_c^{or} (\mathscr{X}_{(i)}, \cX')^\I$, for any $0 \leq i \leq d$, be any oriented $i$-chain which is fixed by $\I$. We have to show that $f$ also is fixed by any $t \in \Tp^0$. Let $(F,c) \in \mathscr{X}_{(i)}$ be arbitrary and pick an element $g \in \I$ such that $(F_0,c_0) := g^{-1}(F,c) \in \mathscr{A}_{(i)}$. We have
\begin{equation*}
    t(F,c) = tg(F_0,c_0) = tgt^{-1}(F_0,c_0) \qquad\text{with $tgt^{-1} \in \I$}
\end{equation*}
and hence
\begin{equation*}
    f(t(F,c)) = f(tgt^{-1}(F_0,c_0)) = tgt^{-1} f((F_0,c_0)) = tgt^{-1} f(g^{-1}(F,c)) = tgt^{-1}g^{-1} f((F,c)) \ .
\end{equation*}
On the other hand, using Proposition \ref{prop:closest}, we obtain
\begin{equation*}
    g^{-1} f((F,c)) = f((F_0,c_0)) \in (\mathbf{X'})^{\I_{F_0}(\I \cap \Pp_{F_0}^\dagger)} = (\mathbf{X'})^{\I_{C(F_0)}} = (\mathbf{X'})^{\Tp^0 \I_{C(F_0)}} \ .
\end{equation*}
It follows that $gt^{-1}g^{-1} f((F,c)) = f((F,c))$ and therefore that $f(t(F,c)) = t f((F,c))$.
\end{proof}

We therefore have the exact resolution of $\Hh'$ as an   $(\Hh',\Hh')$-bimodule given by
\begin{equation}\label{f:I-complex-iwahori}
    0 \longrightarrow C_c^{or} (\mathscr{X}_{(d)}, \cX')^{\I'} \xrightarrow{\;\partial\;} \ldots \xrightarrow{\;\partial\;} C_c^{or} (\mathscr{X}_{(0)}, \cX')^{\I'} \xrightarrow{\;\epsilon\;} \Hh' \longrightarrow 0 \ .
\end{equation}
To further compute the $(\Hh',\Hh')$-bimodules in this resolution we define, analogously as in \ref{subsubsec:epsilonF}, the representation $\mathbf{X}'^{\dagger}_F := \ind^{\Pp_F^\dagger}_{\I'}(1) \hookrightarrow \mathbf{X'} \ $ and the corresponding
subalgebra
\begin{equation*}
    {\Hh}'^{\dagger}_F:= \End_{k[\Pp_F^\dagger]}(\mathbf{X}'^\dagger_F)^{\mathrm{op}} = \ind^{\Pp_F^\dagger}_{\I'}(1)^{\I'}
\end{equation*}
of $\Hh'$ which, by a similar argument as in Lemma \ref{lemma:iwahori-1}, coincides as a vector space with $\ind^{\Pp_F^\dagger}_{\I'}(1)^{\I}.$ In  Section \ref{subsec:bijproof}, we give the necessary arguments to adapt the proof of Lemma \ref{lemma:isom} to  $\mathbf X'$. In particular, the analog of \eqref{f:F-map} is the isomorphism of $(\Pp_F^\dagger, \Hh')$-bimodules
\begin{equation}\label{f:F-map-iwahori}
\mathbf{X}'^\dagger_F\otimes_{\Hh'^\dagger_F} \Hh'  \xrightarrow{\;\cong\;} (\mathbf{X}')^{\I_F},\:\:
    f \otimes h  \longmapsto h(f).
\end{equation}
and the argument of Proposition \ref{H-H} goes through: we obtain that
\begin{equation*}
    \ind_{\Pp_F^\dagger}^\Gp(\epsilon_F \otimes (\mathbf{X'})^{\I_F})^{\I'} = \Hh' (\epsilon_F) \otimes_{\Hh'^\dagger_F} \Hh'
\end{equation*}
as $(\Hh',\Hh')$-bimodules, and it is free as a left as well as a right $\Hh'$-module. Here
$\Hh' (\epsilon_F)$ denotes the space $\Hh'$ endowed with the structure of $(\Hh',\Hh'^\dagger_F)$-bimodule where the action of $\Hh'^\dagger_F$  on the right is composed  with $ h \mapsto \epsilon_F h$  (it is a well-defined involutive automorphism of $\Hh'^\dagger_F$
by Lemma \ref{I-orient}). Therefore (see \ref{subsubsec:iso}), the complex \eqref{f:I-complex-iwahori} is an exact sequence of $(\Hh',\Hh')$-bimodules of the form
\begin{equation}\label{f:H'-H'-bimod}
    0 \longrightarrow \bigoplus_{F \in \mathscr{F}_d} \Hh '(\epsilon_F) \otimes_{\Hh'^\dagger_F} \Hh' \longrightarrow \ldots \longrightarrow \bigoplus_{F \in \mathscr{F}_0} \Hh' (\epsilon_F) \otimes_{\Hh'^\dagger_F} \Hh' \longrightarrow \Hh '\longrightarrow 0 \ .
\end{equation}
It yields a free resolution of $\Hh'$ as a left as well as a right $\Hh'$-module. Together with Proposition \ref{F-injdim} we then deduce the analog of Theorem \ref{theo:Gorenstein}.

\begin{theo}\label{theo:Gorenstein-iwahori}
The injective dimension of $\Hh'$  as a left as well as a right $\Hh'$-module is bounded above by the rank of the group $\Gp$.
\end{theo}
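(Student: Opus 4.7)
The plan is to apply Proposition \ref{injdim} directly to the bimodule resolution \eqref{f:H'-H'-bimod}, taking $A=\Hh'$ and, for each facet $F$ appearing in the resolution, $A_0=\Hh'^\dagger_F$. To do this I need to verify the three hypotheses of Section \ref{sec:gene} for each such pair: $\Hh'^\dagger_F$ is left and right noetherian; it has finite self-injective dimension bounded by $r$; and $\Hh'$ is free both as a left and as a right $\Hh'^\dagger_F$-module. Once these are checked, Proposition \ref{injdim}.i immediately yields the stated bound $d+r=d^1$ on the self-injective dimension of $\Hh'$ on either side.

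The freeness hypothesis is already in hand: the argument given above for $\Hh'$ (mirroring Proposition \ref{H-H} and Remark \ref{rema:libre} in the pro-$p$ case) shows that $\mathbf{X}'\cong\ind_{\Pp_F^\dagger}^{\Gp}(\mathbf{X}'^\dagger_F)$ decomposes as a direct sum of copies of $\mathbf{X}'^\dagger_F$ indexed by a set of representatives of $\Pp_F^\dagger\backslash\Gp/\I'$, and passage to $\I'$-invariants translates this into a free decomposition of $\Hh'$ over $\Hh'^\dagger_F$ on the left. The symmetric argument on the right handles the right-module structure, and the twist by $\epsilon_F$ is absorbed because $h\mapsto\epsilon_F h$ is an algebra automorphism of $\Hh'^\dagger_F$.

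The noetherianness and self-injective dimension bound for $\Hh'^\dagger_F$ are exactly the Iwahori analog of what is needed for $\Hh_F^\dagger$ in the proof of Theorem \ref{theo:Gorenstein}, and will be established uniformly in Proposition \ref{F-injdim} of Section \ref{sec:Frobenius}. The main work sits here: one must show that $\Hh'^\dagger_F$ (which, as a vector space, coincides with $\ind_{\I'}^{\Pp_F^\dagger}(1)^{\I'}$) is an iterated extension of a Frobenius-type finite-dimensional algebra arising from the finite reductive quotient at $F$, by the group algebra of the free abelian group of rank $r$ coming from the center of $\Gp$. The Frobenius property of the finite-dimensional piece is the delicate point, and is the one place where the Iwahori and pro-$p$ cases require genuinely separate verification rather than a formal transport from one to the other.

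With freeness, noetherianness, and the self-injective dimension bound all in place for each $\Hh'^\dagger_F$ that appears in \eqref{f:H'-H'-bimod}, Proposition \ref{injdim} applies and furnishes the desired estimate. Note that this proof avoids the idempotent decomposition used in Corollary \ref{coro:Iwahori}, and therefore requires no invertibility hypothesis on $q-1$ in $k$, which is the reason the restriction of that corollary is unnecessary in the present statement.
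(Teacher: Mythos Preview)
Your approach is essentially the paper's: apply Proposition \ref{injdim} to the resolution \eqref{f:H'-H'-bimod}, with the noetherianness and self-injective dimension $r$ of each $\Hh'^\dagger_F$ supplied by Proposition \ref{F-injdim}. One small correction on the freeness step: the paper does not extract freeness of $\Hh'$ over $\Hh'^\dagger_F$ from a decomposition of $\mathbf{X}'$ as you sketch (that direction is circular, since Proposition \ref{H-H} \emph{uses} freeness rather than proving it); instead freeness is established combinatorially via the braid relations and the coset representatives $\Dd_F^\dagger$, as recorded in Remark \ref{rema:iwahori-3} (the Iwahori analog of Proposition \ref{prop:free}).
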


\section{Exercises in Bruhat-Tits theory\label{BTtheory}}

\subsection{\label{subsec:apartment}}

We follow \cite[I.1]{SS}. Fix a maximal $\mathfrak F$-split torus $\Tp$ in $\Gp$. Consider the associated root data
$(\Root, X^*({\Tp}), \Coroot, X_*({\Tp}))$ where  $X^*({\Tp})$ and $X_*({\Tp})$ denote respectively the set of algebraic characters and cocharacters
of $\Tp$, and similarly, let $X^*({\rm Z})$ and $X_*({\rm Z})$ denote respectively the set of algebraic characters and cocharacters
of the connected center ${\rm Z}$ of $\Gp$. We note that this root system is reduced by our assumption that the group $\mathbf{G}$ is $\mathfrak F$-split. Denote by
\begin{equation*}
    \lp \, .\,,.\, \rp:X_*({\Tp})\times X^*({\Tp})\rightarrow \Z
\end{equation*}
the natural perfect pairing. Its $\R$-linear extension is also denoted by
$\lp \, .\,,.\, \rp$.
The vector space
\begin{equation*}
    \mathbb R\otimes _{\mathbb Z}(X_*({\Tp})/X_*({\rm Z})) \ ,\ \text{resp.}\ \mathbb R\otimes _{\mathbb Z} X_*({\Tp}) \ ,
\end{equation*}
considered as an affine space on itself identifies with the standard apartment $\mathscr{A}$, resp.\ $\mathscr{A}^1$ of the building $\mathscr{X}$, resp.\ $\mathscr{X}^1$. Any  root $\root$ takes value zero on $X_*({\rm Z})$ so that $\root$ defines a function on $\mathscr{A}$ which we denote  by $x\mapsto  \root(x) $. For any subset $Y$ of $\mathscr{A}$, we write $\root(Y)\geq 0$ if $\root$ takes nonnegative values on $Y$. To $\root$ is also associated a coroot $\coroot\in\Coroot$ such that $\lp\coroot, \alpha   \rp=2$ and a reflection on  $\mathscr{A}$ defined by
\begin{equation*}
    s_\root: x\mapsto x- \root( x)\coroot \mod X_*({\rm Z})\otimes_{\mathbb Z}\mathbb R\ .
\end{equation*}

The subgroup of the transformations of $\mathscr{A}$  generated by these reflections identifies with the finite Weyl group $\W_0$, defined to be the quotient  by $\Tp$ of its normalizer  $N_\Gp(\Tp)$ in $\Gp$.

To an element $g\in \Tp$ corresponds a vector $\nu(g)\in \mathbb R\otimes _{\mathbb Z}X_*({\Tp})$ defined by
\begin{equation*}
    \lp\nu(g),\, \chi\rp =-\val_{\mathfrak F}(\chi(g))  \qquad \textrm{for any } \chi\in X^*(\Tp).
\end{equation*}
The kernel of $\nu$ is the maximal compact subgroup of $\Tp$. The quotient of $\Tp$ by the kernel of $\nu$ is a free abelian group $\Lambda$ with rank equal to $\rm dim(\Tp)$, and $\nu$ induces an isomorphism $\Lambda \cong X_*(\Tp)$. The group $\Lambda$ acts by translation on $\Aa$ via $\nu$.
The extended Weyl group $\W$ is defined to be the quotient of $N_\Gp(\Tp)$ by the kernel of $\nu$. The actions of $\W_0$ and $\Lambda$ combine into an action of ${\W}$ on $\Aa$ as recalled in \cite[page 102]{SS} and on $\mathscr{A}^1$ as well. For simplicity we choose the hyperspecial vertex $x_0$ of the building to be the zero point in $\mathscr A$. The extended Weyl group $\W$  is the semi-direct product of ${\W}_0\ltimes\Lambda$ (\cite[1.9]{Tit}): $\W_0$ identifies with the subgroup of $\W$ that fixes any point of $\mathscr{A}^1$ that lifts $x_0$.

\subsection{Root subgroups}

We now recall the definition of the affine roots and the properties of the  associated root subgroups.
To a root $\root$ is attached a unipotent subgroup $\Uu_\root$ of $\Gp$ such that for any $u\in \Uu_\root-\{1\}$, the intersection $\Uu_{-\root}u \Uu_{-\root}\cap N_\Gp(\Tp)$ consists in only one element called $m_\root(u)$. The image of $m_\root(u)$ in $  \W_0$ is $s_\root$ and its translation part has the form $-\mathfrak h_\root(u)\,.\,\coroot$ for some real number $\mathfrak h_\root(u)$: the image in $\W$ of this element $m_\root(u)$ is the reflection at the affine hyperplane $\{x\in \mathscr{A}, \: \root(x)=-\mathfrak h_\root(u)\}$. Denote by $\Gamma_\root$ the discrete unbounded subset  of $\R$ given by $\{\mathfrak h_\root(u), \: u\in \Uu_\root-\{1\}\}$. Since our group $\mathbf{G}$ is $\mathfrak F$-split we, in fact, have $\Gamma_\root = \mathbb{Z}$ for any $\root$. The affine functions
\begin{equation*}
    (\root, \mathfrak h):=\root(\,.\,)+\mathfrak h,\:\: \root\in\Root, \: \mathfrak h\in \Gamma_\root
\end{equation*}
are called the affine roots. We identify an element $\root$ of $\Root$ with the affine root $(\root,0)$ so that the set of affine roots $\Root_{aff}$ contains $\Root$. From the definition of  $m_\root(u)$, it appears that the action of $\W_0$  on $\Root$ extends into an action of $\W$ on the set $\Root_{aff}$ of affine roots. Explicitly, if $w=w_0  t_\lambda \in \W$ is the composition of the translation by $\lambda\in  \Lambda$ with $w_0\in\W_0$, then the action of $w$ on the affine root $(\root,\mathfrak h_\root(u))$ with $u\in \Uu_\root$ is
\begin{equation*}
    (w_0(\root),  \mathfrak h_\root (u) + (\val_{\mathfrak F} \circ \root)(\lambda) ) = (w_0(\root),  \mathfrak h_\root (u) - \langle \nu(\lambda), \root\rangle)
\end{equation*}
and we can check that $\mathfrak h_\root (u)+ (\val_{\mathfrak F} \circ \root)(\lambda) =\mathfrak h_{w_0(\root)}(w uw^{-1})$
so that the latter element is indeed an affine root.

Define a filtration of $\Uu_\root$, $\root\in \Root$ by
\begin{equation*}
    \Uu_{\root , r}:=\{u\in \Uu_\root-\{1\}, \: \mathfrak h_\root(u)\geq r\}\cup\{1\}\textrm{  for } r\in \R \ .
\end{equation*}
Set $\Uu_{\root, \infty}=\{1\}$.

By abuse of notation we write throughout the paper $wUw^{-1}$, for some $w \in \W$ and some subgroup $U \subseteq \Gp$, whenever the result of this conjugation is independent of the choice of a representative of $w$ in $N_\Gp(\Tp)$.

\begin{rema}\label{inclus}
Let $(\root, \mathfrak h)$ be an affine root and put $\Uu_{(\root, \mathfrak h)} := \Uu_{\root, \mathfrak h}$.
  \begin{enumerate}
  \item For any $w\in \W$, we have $w \Uu_{(\root, \mathfrak h)}w^{-1}= \Uu_{w(\root, \mathfrak h)}$.
  \item For $r\in\R$ a real number,  $\mathfrak h\geq r$ is equivalent to $\Uu_{(\root, \mathfrak h)}\subset \Uu_{\root, r}$.

  \end{enumerate}
\end{rema}

For any non empty subset $\Y\subset \Aa$, define
\begin{align*}
    f_\Y : \Root & \longrightarrow \R\cup \{\infty\} \\ \root & \longmapsto  -\inf_{x\in \Y} \root(x) \ .
\end{align*}
and the subgroup of $\Gp$
\begin{equation}\label{defiU}
\Uu_\Y= \; < \Uu_{\root, f_\Y(\root)}, \:\: \root\in \Root>
\end{equation}
generated by  all $\Uu_{\root, f_\Y(\root)}$ for $ \root\in \Root$.

Choosing a chamber $C$  such that $\overline C$ contains $x_0$ in the standard apartment amounts to choosing the subset $\Root^+$ of $\Root$ of the roots $\root$ such that $\root(C)\geq 0$.  These roots are then called the positive roots. Denote by $\Pi $ a basis for $\Root^+$.
The set of positive affine roots ${\Root_{aff}^+}$ is defined to be the set of  affine roots taking nonnegative values on the standard chamber $C$. An affine root $(\root, \mathfrak h)$ is  called negative if $(-\root, -\mathfrak h)$ is positive.

\begin{lemm}\phantomsection\label{UC}
\begin{itemize}
\item[i.] We have
  \begin{equation*}
    \Uu_{\root, f_C(\root)} =
    \begin{cases}
    \Uu_{\root,0} & \textrm{if $\ \root \in \Phi^+$}, \\
    \Uu_{\root,1} & \textrm{if $\ \root \in \Phi^-$}.
    \end{cases}
  \end{equation*}
\item[ii.] The group $\Uu_C$ is generated by all $\Uu_{a}$ for $a\in \Root^+_{aff}$.
\end{itemize}
\end{lemm}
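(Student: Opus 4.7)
The plan is to reduce everything to the elementary observation that the alcove $C$ is cut out by the simple finite roots (from below) and by the highest root $\theta$ (from above), together with the fact that the filtration on $\Uu_\root$ is discrete with jumps at integers, since $\Gamma_\root = \ZZ$.

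First I would compute $f_C(\root)$ explicitly for the two cases of part (i). For $\root \in \Root^+$, one has $\root(x) \geq 0$ on $\overline{C}$ by the very definition of the positive system, and moreover $\root(x_0) = 0$ because $x_0$ has been chosen as the origin of $\mathscr{A}$; since $x_0 \in \overline{C}$, this forces $\inf_{x \in C}\root(x) = 0$, hence $f_C(\root) = 0$ and the claim is immediate. For $\root \in \Root^-$, write $\root = -\beta$ with $\beta \in \Root^+$. Because $\theta - \beta$ is a nonnegative $\ZZ$-linear combination of simple positive roots and every simple root is $\geq 0$ on $\overline{C}$, we get $\beta \leq \theta$ on $\overline{C}$; combined with the defining inequality $\theta \leq 1$ for the fundamental alcove, this yields $0 \leq \beta \leq 1$ on $\overline{C}$, so $f_C(\root) = \sup_{x \in C}\beta(x) \in (0,1]$. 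Since $\Gamma_\root = \ZZ$, the values $\mathfrak{h}_\root(u)$ are integers, and the condition $\mathfrak{h}_\root(u) \geq f_C(\root)$ is equivalent to $\mathfrak{h}_\root(u) \geq 1$. This gives $\Uu_{\root, f_C(\root)} = \Uu_{\root, 1}$, which settles part (i).

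For part (ii) I would unpack the definition \eqref{defiU} and use (i) to rewrite $\Uu_C$ as the subgroup generated by $\Uu_{\root,0}$ for $\root \in \Root^+$ and $\Uu_{\root,1}$ for $\root \in \Root^-$. It then suffices to match each of these generating subgroups with $\Uu_a$ for a suitable positive affine root $a$. Unwinding the definition of $\Root_{aff}^+$ (affine roots nonnegative on $C$), one finds that the positive affine roots are exactly the $(\root, n)$ with $\root \in \Root^+$, $n \geq 0$, together with the $(\root, n)$ with $\root \in \Root^-$, $n \geq 1$; in both cases the minimal such $n$ gives the largest $\Uu_{\root, n}$ in the decreasing filtration, namely $\Uu_{\root,0}$ respectively $\Uu_{\root,1}$. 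Therefore each generator of $\Uu_C$ from (i) coincides with $\Uu_a$ for some $a \in \Root^+_{aff}$, and conversely every $\Uu_a$ with $a \in \Root^+_{aff}$ is contained in one of these generators; the two subgroups of $\Gp$ are thus the same.

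The only slightly subtle point is the control $\beta \leq \theta \leq 1$ on $\overline{C}$ used to force $f_C(\root) \leq 1$ for negative $\root$; this is where the specific shape of the fundamental alcove and the choice of $x_0$ as origin enter. Everything else is bookkeeping with the discreteness of $\Gamma_\root$ and the monotonicity of the filtration $\Uu_{\root, r}$.
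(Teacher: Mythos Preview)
Your argument is correct. The paper's proof takes a slightly more uniform route: rather than computing or bounding $f_C(\root)$ case by case, it fixes $\mathfrak{h}_0$ as the minimal integer with $(\root,\mathfrak{h}_0)\in\Phi^+_{aff}$ and observes directly that $u\in\Uu_{\root,f_C(\root)}\setminus\{1\}$ if and only if $\root+\mathfrak{h}_\root(u)\geq 0$ on $C$, i.e.\ if and only if $(\root,\mathfrak{h}_\root(u))$ is a positive affine root, i.e.\ if and only if $\mathfrak{h}_\root(u)\geq\mathfrak{h}_0$. This sidesteps the highest-root inequality $\beta\leq\theta\leq 1$ on $\overline{C}$ entirely. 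Your explicit computation has the compensating advantage of making the identification $\mathfrak{h}_0\in\{0,1\}$ transparent, which the paper simply asserts.

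One small point: your appeal to ``the highest root $\theta$'' and ``the defining inequality $\theta\leq 1$ for the fundamental alcove'' tacitly assumes $\Phi$ is irreducible. In the general (polysimplicial) setting of the paper, take $\theta$ to be the highest root of the irreducible component containing $\beta$; the inequality $\theta\leq 1$ on $\overline{C}$ still holds (the alcove is a product of simplices) and the rest of your argument goes through unchanged.
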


\begin{proof}
Let $\root\in \Root$ and let $\mathfrak h_0\in  \{0,1\} \subseteq\Gamma_\root$ be the minimal element in $\Gamma_\root = \mathbb{Z}$ such that $(\root,\mathfrak h_0)$ is a positive affine root. We prove that $\Uu_{\root, f_C(\root)} = \Uu_{\root, \mathfrak h_0}$.
Since  $\root+\mathfrak h_0$ takes nonnegative values on $C$, we have $f_C(\root)\leq  \mathfrak h_0$ and
$\Uu_{\root, \mathfrak h_0}$ is contained in $\Uu_{\root, f_C(\root)}$.
Now let $u\in \Uu_{\root, f_C(\root)} -\{1\}$. We have $\mathfrak h_\root(u)\geq  f_C(\root)$ and it implies that
$\root+\mathfrak h_\root(u)$ takes nonnegative values on $C$ so that
$\mathfrak h_\root(u)\geq \mathfrak h_0$ and $u\in \Uu_{\root,\mathfrak h_0}$.
\end{proof}



\subsection{}

The finite Weyl group $\W_0$ is a Coxeter system generated by  the set $S := \{s_\root : \root \in \Pi\}$ of reflections associated to the simple roots $\Pi$. It is endowed with a length function denoted by $\ell$.
This length extends to $\W$ in such a way that the length of an element $w\in \W$ is the cardinality of $\{ A\in\Root^+_{aff},\: w(A)\in {\Root_{aff}^-}\}$  (see \cite[1]{Lu}). For any affine root $(\root, \mathfrak h)$, we have in $\W$ the reflection at the affine hyperplane $\root(\, .\,) = - \mathfrak{h}$ given by
\begin{equation*}
    s_{(\root, \mathfrak h)} := \textrm{image in $\W$ of $m_\root(u)$},
\end{equation*}
where $u \in \Uu_\root$ is such that $\mathfrak{h} = \mathfrak{h}_\root(u)$. The affine Weyl group is defined as the subgroup
\begin{equation*}
\W_{aff} := \; < s_A , \:\: A \in \Phi_{aff}>
\end{equation*}
of $\W$.
There is  a partial order on $\Root$ given by $\root\leq \beta$ if and only if $\beta -\root$ is a linear combination with (integral) nonnegative coefficients of elements in $\Pi$. Let
\begin{equation*}
    \Phi^{min} := \{\root \in \Phi : \root\ \textrm{is minimal for $\leq$}\}
\end{equation*}
and
\begin{equation*}
    \Pi_{aff} := \Pi \cup \{(\root,1) : \alpha \in \Phi^{min}\} \subseteq \Phi^+_{aff} \ \textrm{and}\   S_{aff} := \{s_A : A \in \Pi_{aff}\} \ .
\end{equation*}
The length satisfies (\cite[1]{Lu}) the following formula, for every $A \in \Pi_{aff}$ and $w\in \W$:
\begin{equation}\label{add}
   \ell(w s_A)=
   \begin{cases}
       \ell(w)+1 & \textrm{ if }w (A)\in {\Root_{aff}^+},\\  \ell(w)-1 & \textrm{ if }w (A)\in {\Root_{aff}^-}.
    \end{cases}
\end{equation}
The pair $(\W_{aff}, S_{aff})$ is a Coxeter system (\cite[V.3.2 Thm.\ 1(i)]{Bki-LA} or \cite[Satz 2.2.16]{Born}), and the length function $\ell$ restricted to $\W_{aff}$ coincides with the length function of this Coxeter system (\cite[Cor.\ 2.2.12]{Born}). We moreover have (\cite[Satz 2.3.1, Lemma 2.3.2]{Born} or \cite[1.5]{Lu}):
\begin{itemize}
  \item[--] $\W_{aff}$ is a normal subgroup of $\W$.
  \item[--] $\Omega := \{w \in \W : \ell(w) = 0\}$ is an abelian subgroup of $\W$.
  \item[--] $\W$ is the semi-direct product $\W = \Omega \ltimes \W_{aff}$.
  \item[--] The length $\ell$ is constant on the double cosets $\Omega w \Omega$ for $w \in \W$. In particular $\Omega$ normalizes $S_{aff}$ and $\Omega C = C$.
\end{itemize}
One easily deduces that
\begin{equation}\label{f:subadd}
    \ell(vw) \leq \ell(v) + \ell(w) \qquad\textrm{for any $v, w \in \W$}.
\end{equation}

We fix a facet $F$ contained in  $\overline C$. Attached to it is the subset $\Pi_F$ of the affine roots in $\Pi_{aff}$ taking value zero on $F$, or equivalently the subset $S_F$ of those reflections in $S_{aff}$ fixing $F$ pointwise. We let
\begin{equation*}
    \Phi_F := \{ (\root,\mathfrak{h}) \in \Phi_{aff} : (\root,\mathfrak{h})|F = 0\},\ \Root_F^+ := \Root_F \cap \Root^+_{aff},\ \Root_F^- : = \Root_F \cap \Root^-_{aff},
\end{equation*}
and
\begin{equation*}
    \W_F := \ \textrm{subgroup of $\W_{aff}$ generated by all $s_{(\root,\mathfrak{h})}$ such that $(\root,\mathfrak{h})|F = 0$}.
\end{equation*}
The pair $(\W_F, S_F)$ is a Coxeter system with $\W_F$ being the pointwise stabilizer in $\W$ of $\pr^{-1}(F)$ (\cite[V.3.3, IV.1.8 Thm.\ 2(i)]{Bki-LA}), the restriction $\ell | \W_F$ coincides with its length function (\cite[IV.1.8 Cor.\ 4]{Bki-LA}), and $\W_F$ is finite (\cite[V.3.6 Prop.\ 4]{Bki-LA}).

\begin{rema}\label{facet}
The closure $\overline{F}$ of a facet $F$ consists exactly of the points of $\overline C$ that are fixed by the reflections in $S_F$ (\cite[V.3.3 Proposition  1]{Bki-LA}).
\end{rema}

\begin{rema}\label{rema:transitivity}
Recall that $\W_{aff}$ acts simply transitively on the chambers of the standard apartment $\Aa$ (\cite[V.3.2 Th\'eor\`eme 1]{Bki-LA}).
Moreover, for any facet $F'$ in $\Aa$, there is a unique facet $F$ contained in $\overline C$ which is $\W_{aff}$-conjugate to $F'$  (\cite[1.3.5]{BT1}).
\end{rema}

\begin{lemm}\label{F-basis}
Any $A \in \Phi^+_F$ has a decomposition $A = \sum_{B \in \Pi_F} m_B B$ with uniquely determined nonnegative integers $m_B$.
\end{lemm}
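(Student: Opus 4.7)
My plan is to split the argument into existence and uniqueness, the latter being the delicate point.

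For \emph{existence}: by the general theory of the affine Coxeter system $(\W_{aff}, S_{aff})$ (cf.\ \cite[V.3]{Bki-LA}), every positive affine root is a nonnegative integer combination of the elements of $\Pi_{aff}$. One proves this by an induction on the length of $s_A$, using formula \eqref{add} exactly as in the finite Coxeter case: if $A\notin\Pi_{aff}$, some $B\in\Pi_{aff}$ yields $s_B(A)\in\Phi^+_{aff}$ of strictly smaller level, and one induct. Given $A\in\Phi^+_F$, write $A = \sum_{B\in\Pi_{aff}} m_B B$ with $m_B\in\mathbb{Z}_{\geq 0}$ and pick any $x\in F$. Then $A(x)=0$ while each $B(x)\geq 0$, since $F\subseteq \overline{C}$ and every $B\in \Pi_{aff}\subseteq \Phi^+_{aff}$. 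Each summand $m_B B(x)$ therefore vanishes, so $m_B>0$ forces $B(x)=0$ for every $x\in F$, i.e.\ $B\in\Pi_F$. Hence the decomposition of $A$ is already supported on $\Pi_F$.

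The hard part is \emph{uniqueness}, which amounts to showing that $\Pi_F$ is $\mathbb{R}$-linearly independent as affine functions on $\mathscr{A}$. Note that $\Pi_{aff}$ itself is in general not linearly independent: for each irreducible component $j$ of $\Phi$, with highest root $\tilde\alpha^{(j)} = \sum_{i\in I_j} n_i^{(j)} \alpha_i$ and $\alpha_0^{(j)} = -\tilde\alpha^{(j)}$, the affine identity
\[
e_j \;:=\; \sum_{i\in I_j} n_i^{(j)} \alpha_i \;+\; (\alpha_0^{(j)},1) \;=\; \mathbf{1}
\]
holds (linear part zero, constant $1$). These $r$ equal expressions for the constant $\mathbf{1}$ yield an $(r-1)$-dimensional kernel of the evaluation map $\mathbb{R}^{\Pi_{aff}} \to (\text{affine functions on } \mathscr{A})$, spanned by the differences $e_j-e_k$. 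Any linear relation among $\Pi_F$ extends by zero to a relation on $\Pi_{aff}$, hence lies in the span of these $e_j-e_k$. A nonzero $e_j$-contribution would require both all $\alpha_i$ ($i\in I_j$) and $(\alpha_0^{(j)},1)$ to belong to $\Pi_F$; but the former forces $\tilde\alpha^{(j)}|_F\equiv 0$ while the latter forces $\tilde\alpha^{(j)}|_F\equiv 1$, a contradiction. So the only relation on $\Pi_F$ is trivial, and uniqueness of the integers $m_B$ follows.

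The main obstacle is precisely this uniqueness in the non-irreducible case: when $\Phi$ has several simple factors, $\Pi_{aff}$ is itself dependent, so one cannot just quote a ``simple system is a basis'' statement. The incompatibility between $\tilde\alpha^{(j)}|_F\equiv 0$ and $\tilde\alpha^{(j)}|_F\equiv 1$ is what saves the day; by contrast, existence and the positivity-on-$F$ argument are essentially forced once the standard nonnegative decomposition for $\Phi^+_{aff}$ in terms of $\Pi_{aff}$ is in hand.
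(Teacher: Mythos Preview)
Your proof is correct, but it takes a genuinely different route from the paper's.

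The paper begins by reducing to the case where $\Phi$ is irreducible (using that the apartment, the facet $F$, and the sets $\Phi_F$, $\Pi_F$ all decompose as products over the irreducible factors). In that case $|\Pi_{aff}| = \dim\mathscr{A}+1$, so $\Pi_{aff}$ is a basis of the space of affine functions on $\mathscr{A}$ and uniqueness is immediate from the cited result \cite[Satz~2.2.8]{Born}. The vanishing of the coefficients $m_B$ for $B\notin\Pi_F$ is then deduced by evaluating at the vertices of $\overline{F}$ (each $B\in\Pi_{aff}\setminus\Pi_F$ is the unique element of $\Pi_{aff}$ not vanishing at some vertex $y$ of $\overline{F}$).

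You avoid the reduction entirely. For existence you use the same nonnegative decomposition over $\Pi_{aff}$, but then argue directly that evaluation at a point of the open facet $F$ kills every term, forcing $m_B=0$ whenever $B\notin\Pi_F$; this is slicker than the paper's vertex argument. For uniqueness you confront head-on the failure of linear independence of $\Pi_{aff}$ in the reducible case: you compute the kernel of $\mathbb{R}^{\Pi_{aff}}\to\{\text{affine functions}\}$ explicitly as the span of the differences $e_j-e_k$, note that the $e_j$ have disjoint supports, and observe that no $e_j$ can be supported entirely inside $\Pi_F$ because of the incompatibility $\tilde\alpha^{(j)}|_F=0$ versus $\tilde\alpha^{(j)}|_F=1$. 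This is a clean self-contained argument that does not rely on passing to irreducible factors or on the external reference. The paper's approach is shorter once one accepts the reduction, while yours makes the obstruction to non-uniqueness completely explicit.
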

\begin{proof}
By decomposition into irreducible components we may assume that $\Phi$ is irreducible so that the chamber $C$ is a simplex and the cardinality of $\Pi_{aff}$ is equal to $\dim \mathscr{A} + 1$. By \cite[Satz 2.2.8]{Born} the positive affine root $A$ has a unique decomposition $A = \sum_{B \in \Pi_{aff}} m_B B$ with integers $m_B \geq 0$. We have
\begin{equation*}
    (\sum_{B \in \Pi_{aff} \setminus \Pi_F} m_B B)| \mathscr{F} = 0 \ .
\end{equation*}
If $F = \{y\}$ is a vertex then $\Pi_{aff} \setminus \Pi_F$ consists of a single element $B_y$. It follows that $m_{B_y} B_y(y) = 0$. But $B_y(y) \neq 0$ and hence $m_{B_y} = 0$. In general we have
\begin{equation*}
    \Pi_{aff} \setminus \Pi_F = \{B_y : y\ \textrm{is a vertex of $\overline{F}$}\}.
\end{equation*}
But if $y$ is a vertex of $\overline{F}$ then $A \in \Phi^+_{\{y\}}$ and therefore $m_{B_y} = 0$.
\end{proof}

The map
\begin{align*}
    \Phi_{aff} & \longrightarrow \Phi \\
    (\root, \mathfrak{h}) & \longmapsto \root
\end{align*}
is surjective and equivariant with respect to the projection map $\W \twoheadrightarrow \W_0$. Its restriction $\Phi_F \rightarrow \Phi$ clearly is injective; let $\Pi'_F \subseteq \Phi'_F$ denote the image  of $\Pi_F \subseteq \Phi_F$, respectively, in $\Phi$. Since $\W_F$ is finite and $\ker(\W \twoheadrightarrow \W_0)$ is torsionfree the restriction $\W_F \rightarrow \W_0$ is injective as well; let $S'_F \subseteq \W'_F$ denote the image of $S_F \subseteq \W_F$, respectively, in $\W_0$. Obviously $S'_F$ is a generating set of $\W'_F$. The subgroup $\W_F$ leaves the subset $\Phi_F$ invariant. Hence $\W'_F$ leaves $\Phi'_F$ invariant. It follows that $\Phi'_F$ is a subroot system of $\Phi$ with Weyl group $\W'_F$ (see \cite[1.9]{Tit}) and, using Lemma \ref{F-basis}, that $\Pi'_F$ is a basis of the root system $\Phi'_F$. For example, if $F= \{x_0\}$, then $\W_{\{x_0\}}= \W_0$ and $\Root_F=\Root$. If $F= C$, then $\W_C=\{1\}$ and $\Root_C=\emptyset$. We have the following result (compare with \cite[Propositions 2.1, 2.5, 2.7]{Oparab}).


\begin{prop}\phantomsection\label{representants}
\begin{itemize}
\item[i.] The set $\Dd_F$ of elements $d\in \W$ satisfying
\begin{equation*}
    d(\Root_F^+)\subset {\Phi}_{aff}^+
\end{equation*}
is a system of representatives of the left cosets $\W/ \W_F$. It satisfies
\begin{equation}\label{additive0}
\ell( dw_F)=\ell(d)+\ell(w_F)
\end{equation}
for any $w_F\in \W_F$ and $d\in \Dd_F$. In particular, $d$ is the unique element with minimal length in $d\W_F$.
\item[ii.] For  $s\in S_F$ and $d\in \Dd_F$, we are in one of the following situations:
\begin{itemize}
\item[--] $\ell(s d)=\ell(d)-1$ in which case $sd\in \Dd_F$.
\item[--] $\ell(sd)=\ell(d)+1$ in which case either $sd\in \Dd_F$ or $sd\in d\W_F$.
\end{itemize}
\end{itemize}
\end{prop}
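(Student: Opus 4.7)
The plan is to treat Proposition \ref{representants} as the standard statement on distinguished coset representatives for a parabolic subgroup of a Coxeter group, extended to $\W = \Omega \ltimes \W_{aff}$. The key facts I will rely on are: $(\W_{aff}, S_{aff})$ is a Coxeter system; $(\W_F, S_F)$ is a Coxeter subgroup whose length is the restriction of $\ell$ (recorded just before the proposition); $\W_F$ preserves $\Root_F$ and contains the reflection along every $A \in \Root_F$ (immediate from its definition); and $\Omega$ permutes $\Root_{aff}^+$, since its elements have length zero, so one can absorb $\Omega$-translates into $d$ without affecting any argument.

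For (i), I would first prove existence of the decomposition $w = dw_F$ by choosing $d$ of minimal length in the coset $w\W_F$ and verifying $d \in \Dd_F$. If some $A \in \Root_F^+$ satisfied $d(A) \in \Root_{aff}^-$, Lemma \ref{F-basis} would write $A = \sum_{B \in \Pi_F} m_B B$ with $m_B \in \mathbb{Z}_{\geq 0}$; evaluating $d(A) = \sum_B m_B d(B)$ at a point of $C$ forces some $B \in \Pi_F$ to satisfy $d(B) \in \Root_{aff}^-$, so \eqref{add} gives $\ell(ds_B) = \ell(d) - 1$ with $ds_B \in d\W_F$, contradicting minimality. For the length formula \eqref{additive0} I would induct on $\ell(w_F)$: writing a reduced factorization $w_F = w_F' s_\beta$ with $\beta \in \Pi_F$, the element $w_F'(\beta)$ is positive by \eqref{add} applied in $\W_F$ and lies in $\Root_F$ because $\W_F$ preserves $\Root_F$; consequently $w_F'(\beta) \in \Root_F^+$, and since $d \in \Dd_F$ this gives $dw_F'(\beta) \in \Root_{aff}^+$, closing the induction via \eqref{add}. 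Uniqueness follows at once: if $d\W_F = d'\W_F$ with $d, d' \in \Dd_F$ and $d' = d w_F$, applying the length formula in both directions forces $\ell(w_F) = 0$, hence $w_F = 1$ in the Coxeter group $\W_F$.

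For (ii), let $\alpha \in \Pi_F$ be the affine simple root of $s$. The standard reflection identity $\ell(sd) = \ell(d) \pm 1$, with $\ell(sd) = \ell(d) + 1$ iff $d^{-1}(\alpha) \in \Root_{aff}^+$ (deduced from \eqref{add} by inversion), produces three regimes. In the decreasing case $\ell(sd) = \ell(d) - 1$ we have $d^{-1}(\alpha) \in \Root_{aff}^-$, so no $A \in \Root_F^+$ can satisfy $d(A) = \alpha$; since $s = s_\alpha$ permutes $\Root_{aff}^+ \setminus \{\alpha\}$, this yields $sd(A) \in \Root_{aff}^+$ for every $A \in \Root_F^+$, so $sd \in \Dd_F$. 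In the increasing case $\ell(sd) = \ell(d) + 1$, if $d^{-1}(\alpha) \in \Root_F$ then $d^{-1}sd = s_{d^{-1}(\alpha)} \in \W_F$, whence $sd \in d\W_F$; if instead $d^{-1}(\alpha) \notin \Root_F$, then $d^{-1}(\alpha) \notin \Root_F^+$, and the argument of the decreasing case again gives $sd \in \Dd_F$.

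I expect the main obstacle to be the bookkeeping in the second subcase of (ii), which crucially uses that $\W_F$ contains every reflection $s_B$ with $B \in \Root_F$ and not only those with $B \in \Pi_F$; this is built into the definition of $\W_F$ adopted in the paper. All other manipulations are routine consequences of \eqref{add} and Lemma \ref{F-basis} for the Coxeter pair $(\W_{aff}, S_{aff})$, and the passage from $\W_{aff}$ to $\W$ introduces no difficulty because $\Omega$ preserves both positivity of affine roots and length.
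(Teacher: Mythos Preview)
Your argument is correct and follows essentially the same route as the paper: both proofs rest on the characterization of $\Dd_F$ via \eqref{add} and Lemma~\ref{F-basis}, and for (ii) both pivot on the fact that $s_\alpha$ permutes $\Root_{aff}^+\setminus\{\alpha\}$. The only difference is organizational---you argue existence/length-additivity/uniqueness in that order (starting from a minimal-length coset representative), while the paper does uniqueness first and then an induction on $\ell(w)$ for existence together with the length formula---but the underlying ideas are identical.
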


\begin{proof}
i. First check that the cosets $d\, \W_F$  are pairwise disjoint for $d \in \Dd_F$. Let $ d_1, d_2\in\Dd_F$  such that $d_1\, \W_F = d_2\, \W_F$. In particular, $d _1^{-1}d_2$ is an element of $\W_F$, and we suppose that it is nontrivial. Then there exists an affine root $A \in \Pi_F \subseteq \Phi^+_F$ such that $\ell( d _1^{-1}d_2 s_A )=\ell(d _1^{-1}d_2)-1$. By \eqref{add} we obtain $d _1^{-1}d_2(A)\in {{\Root_F^-}}$ and hence $d_2 A \in d _1(\Root_F^-)\subset \Root^-$ which contradicts the fact that $d_2 \in \Dd_F$.

For $w\in \W$, we  now prove by induction on the length of $w$  that there exists  a (obviously unique) $(d, w_F)\in \Dd_F \times  \W_F$
such that $  w=dw_F $ and that it satisfies $\:\ell(d w_F )=\ell(d)+\ell(w_F )$.

Applying \eqref{add} and Lemma \ref{F-basis}, first note that an element $w\in \W$ belongs to $\Dd_F$ if and only if  for any affine root $A \in \Pi_F$
we have  $\ell(w s_A)=\ell(w)+1$. In particular, any element with length $0$  belongs to $\Dd_F$. Suppose now that $\ell(w)>0$ and that it does not belong to $\Dd_F$. Then there is an $A \in \Pi_F$ such that $\ell(w s_A)=\ell(w)-1$ and, by induction, we can write $w s_A  =d w_F $ with $(d, w_F)\in \Dd_F \times  \W_F$ and $\ell(dw _F)=\ell(d)+\ell(w_F )$.
We have $w=d w_Fs_A$ and $w_Fs_A\in  \W_F$.
It remains to check that $\ell(w)=\ell(d)+\ell(w_Fs_A)$ which amounts to proving that
$\ell(w_F s_A)=\ell(w_F)+1$. The latter is true because otherwise we would have $\ell(w_Fs_A)<\ell(w_F)$ which, using \eqref{f:subadd}, leads to the contradiction  $\ell(w)\leq \ell(d)+\ell(w_Fs_A)<\ell(d)+\ell(w_F)=\ell(w s_A)=\ell(w)-1$.

We have proved the assertions of i., remarking for the last one that $1$ is the only element with length $0$ in $\W_F$.

ii. Let $d\in \Dd_F$ and $s = s_A$ with $A \in \Pi_F$. From \eqref{add} we know that $\ell(s_A d) = \ell(d) \pm 1$. If $s_A d \not\in \Dd_F$, then there is a $B \in \Root_F^+$ such that $s_Ad(B) \in \Phi^-_{aff}$ while $d(B) \in \Phi_{aff}^+$. Since $s_A( \Phi_{aff}^+ \setminus \{A\}) = \Phi_{aff}^+ \setminus \{A\}$ (cf. \cite[1.4]{Lu} or \cite[Lemma 2.2.9]{Born}) we must have $d(B) = A$. In particular, $d^{-1}(A) \in \Phi_{aff}^+$ and  hence $\ell(s_Ad)= \ell(d)+1$ by \eqref{add}. Furthermore, $B = d^{-1}(A) \in \Root_F$ implies that $d^{-1}s d = s_B \in \W_F$ by definition of $\W_F$.
\end{proof}

Let $(\root,\mathfrak{h}) \in \Root_F^+$. Then $f_F(\root)=\mathfrak{h}$ so that the group
\begin{equation*}
    \Uu_F^0 = \; <\Uu_{\root,\mathfrak{h}}, \:\: (\root, \mathfrak{h}) \in \Root_F^+>
\end{equation*}
generated by all $\Uu_{\root,\mathfrak{h}}$, $ (\root, \mathfrak{h}) \in \Root_F^+$, is contained in $\Uu_F$. If $F=C$ then $\Uu_C^0$ is trivial. \begin{lemm}
Let $d\in  \W$. The condition $d(\Root_F^+)\subset  {\Phi}_{aff}^+$ of Proposition \ref{representants} is equivalent to
\begin{equation} \label{rootsubgroup}
     d\: \Uu_F^0 \:d^{-1}\subset \Uu_C \ .
\end{equation}
\end{lemm}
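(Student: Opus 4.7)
The plan is to prove both implications by reducing to single affine root subgroups, using the conjugation and filtration rules of Remark \ref{inclus}.

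For the forward direction, I would assume $d(\Phi_F^+) \subseteq \Phi_{aff}^+$ and argue on generators. For each $(\alpha,\mathfrak{h}) \in \Phi_F^+$, Remark \ref{inclus}(1) gives $d\,\Uu_{(\alpha,\mathfrak{h})}\,d^{-1} = \Uu_{d(\alpha,\mathfrak{h})}$, and this subgroup lies in $\Uu_C$ by Lemma \ref{UC}(ii), since $d(\alpha,\mathfrak{h})$ is a positive affine root. As $\Uu_F^0$ is generated by the family $\{\Uu_{(\alpha,\mathfrak{h})} : (\alpha,\mathfrak{h}) \in \Phi_F^+\}$, the inclusion \eqref{rootsubgroup} follows immediately.

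For the converse, the essential input is the standard Bruhat--Tits identity
\[
\Uu_\beta \cap \Uu_C \;=\; \Uu_{\beta,\,f_C(\beta)} \qquad\text{for every } \beta \in \Phi,
\]
a consequence of the product decomposition of $\Uu_C$ over the positive affine roots (equivalently, of the axioms of a valuation of the root datum). Granting this, I take an arbitrary $(\alpha,\mathfrak{h}) \in \Phi_F^+$ and set $(\beta,k) := d(\alpha,\mathfrak{h})$. By Remark \ref{inclus}(1), $\Uu_{\beta,k} = d\,\Uu_{(\alpha,\mathfrak{h})}\,d^{-1}$, which by hypothesis sits in $\Uu_C$. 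Intersecting with $\Uu_\beta$ yields $\Uu_{\beta,k} \subseteq \Uu_{\beta,\,f_C(\beta)}$, and Remark \ref{inclus}(2) then forces $k \geq f_C(\beta)$. But by the very definition of $f_C$ this is precisely the condition that $(\beta,k) \in \Phi_{aff}^+$, which proves $d(\Phi_F^+) \subseteq \Phi_{aff}^+$.

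The only nontrivial ingredient of the plan is the identity $\Uu_\beta \cap \Uu_C = \Uu_{\beta,\,f_C(\beta)}$, which is not proved in the excerpt but is a basic structural result of Bruhat--Tits theory and can be invoked by citation. Once this is in hand, both directions are a direct unwinding of definitions together with the two parts of Remark \ref{inclus}; in particular, no length or gallery arguments, and no appeal to the finer structure of $\W_F$, are needed for this step.
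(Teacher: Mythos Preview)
Your proof is correct and follows essentially the same approach as the paper: the forward direction is identical, and for the converse both arguments conjugate a single generator $\Uu_{(\alpha,\mathfrak{h})}$ into $\Uu_C$, use the identity $\Uu_\beta \cap \Uu_C = \Uu_{\beta,f_C(\beta)}$, and then apply Remark~\ref{inclus}(2). The paper handles that key identity exactly as you suggest---by citation (to \cite[I.1, property~3 on p.~103]{SS} together with Lemma~\ref{UC}.i)---so your ``only nontrivial ingredient'' is precisely what the paper also imports; the only cosmetic difference is that the paper writes out the explicit decomposition $d = w_0 t_\lambda$ and the formula for $d(\alpha,\mathfrak{h})$ before invoking it.
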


\begin{proof}
Let $d\in \W$ such that $d(\Root_F^+)\subset {\Phi}_{aff}^+$. By Remark \ref{inclus} (1), we have $d \Uu_A d^{-1}=\Uu_{d(A)}$ for any affine root $A$, so that  Lemma \ref{UC}.ii implies $d\: \Uu_F^0 \:d^{-1}\subset \Uu_C$. Now suppose that $d\in \W$ satisfies $d\: \Uu_F^0 \:d^{-1}\subset \Uu_C$ and let $(\root,\mathfrak{h}) \in \Root_F^+$. Denote by $d= w_0 t_\lambda$ the decomposition of $d$ in the semi-direct product $\W=  \W_0.\Lambda$. Recall that
\begin{equation*}
    d(\root, \mathfrak{h})=(w_0(\root), \mathfrak{h} + (\val_{\mathfrak F} \circ \root)(\lambda)) \ .
\end{equation*}
Denote by $\mathfrak h_0 \in \{0, 1\}$ the minimal element in $\Gamma_{w_0(\root)}$ such that
$(w_0(\root), \mathfrak h_0) \in \Root_{aff}^+$.
We have $d \Uu_{\root, \mathfrak{h}} d^{-1}=\Uu_{d(\root, \mathfrak{h})}\subset \Uu_C\cap \Uu_{w_0(\root)}$. The latter is equal to $ \Uu_{w_0(\root), \mathfrak h_0}$ after \cite[I.1, property 3 on p.\ 103]{SS} and Lemma \ref{UC}.i. By Remark \ref{inclus} (2), it implies that $\mathfrak{h} + (\val_{\mathfrak F} \circ \root)(\lambda) \geq \mathfrak h_0$ and $d(\root,\mathfrak{h})\in \Root_{aff}^+$.
\end{proof}

\subsection{\label{subsec:inclusion}}

In Section \ref{sec:univ} we introduced the pro-$p$-subgroup $\I_F$  of ${\mathbf G}_F^\circ(\mathfrak O)$. It is the group denoted by ${\rm R}_F$ in \cite{SS}.
In the following we abbreviate $\Tp^0 := \ker (\nu)$ and we let $\Tp^1$ denote the pro-$p$ Sylow subgroup of $\Tp^0$. Then ${\mathbf G}_F^\circ(\mathfrak O) = \Uu_F \Tp^0$ (\cite[5.2.1, 5.2.4]{BT2}).

\begin{lemm} \label{IC}
$\Uu_C\;\subseteq\; \I_C =\; < \Uu_F^0, \I_F>\;=\Uu_F^0\I_F$.
\end{lemm}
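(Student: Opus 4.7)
The plan is to prove in sequence the three parts of the statement: $\Uu_C \subseteq \I_C$, then $\I_C = \langle \Uu_F^0, \I_F\rangle$, and finally that this subgroup coincides with $\Uu_F^0 \, \I_F$. For the first inclusion, by Lemma~\ref{UC}(ii) it suffices to verify $\Uu_a \subseteq \I_C = \I$ for each generator $\Uu_a = \Uu_{\root,\mathfrak h}$ with $a \in \Root^+_{aff}$. Using Lemma~\ref{UC}(i): when $\mathfrak h = 0$ and $\root \in \Root^+$ the group $\Uu_{\root,0}$ lies in $\K$ and reduces modulo $\pi$ into the root subgroup of $\overline{\mathbf G}_{x_0}$ for $\root$, which is contained in $\overline{\mathbf N}(\mathbb F_q)$; when $\mathfrak h \geq 1$, $\Uu_{\root,\mathfrak h} \subseteq \K_1 \subseteq \I$.

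For the second equality, the inclusion $\langle \Uu_F^0,\I_F\rangle \subseteq \I_C$ is immediate: $\Uu_F^0 \subseteq \Uu_C \subseteq \I_C$ by the first step, and $\I_F \subseteq \I_C$ by property~\eqref{pr2} applied to the pair $F \subseteq \overline C$. The reverse inclusion I would obtain by reducing modulo $\pi$ at $F$. Since $\I_C$ fixes $F \subseteq \overline C$ pointwise, it is contained in the parahoric $\mathbf G_F^\circ(\mathfrak O)$, and the kernel of the reduction $\mathbf G_F^\circ(\mathfrak O) \twoheadrightarrow \overline{\mathbf G}_F^\circ(\mathbb F_q)$ lies in $\I_F$ because $\I_F$ is by definition the preimage of the unipotent radical $\overline{\mathbf R}_F$ of $\overline{\mathbf G}_F^\circ$. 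The key structural input I need is that inside $\overline{\mathbf G}_F^\circ(\mathbb F_q)$ the image of $\I_C$ equals the image of $\Uu_F^0$ times $\overline{\mathbf R}_F(\mathbb F_q)$. Granted this, for each $g \in \I_C$ I pick $u \in \Uu_F^0$ having the same image as $g$ in the reductive quotient $\overline{\mathbf G}_F^\circ / \overline{\mathbf R}_F$; then $u^{-1}g$ reduces into $\overline{\mathbf R}_F(\mathbb F_q)$, so $u^{-1}g \in \I_F$, and $g = u \cdot (u^{-1}g) \in \Uu_F^0 \,\I_F$. This argument actually yields the set equality $\I_C = \Uu_F^0 \, \I_F$, from which the last equality of the statement follows: the right-hand side is then a subgroup, equal to $\I_C$, containing both $\Uu_F^0$ and $\I_F$, so it coincides with $\langle \Uu_F^0,\I_F\rangle$.

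The hard part will be the structural claim above. In the reductive quotient $\overline{\mathbf G}_F^\circ / \overline{\mathbf R}_F$, which has root system $\Root'_F$, one needs to identify the image of $\I_C$ as the unipotent radical of the Borel determined by $C$: its positive roots are exactly the $\root \in \Root'_F$ satisfying $(\root, -\root(x_F)) \in \Root_F^+$ (for any $x_F \in F$), and the corresponding root subgroups are precisely the images of the generators $\Uu_a$ of $\Uu_F^0$ for $a \in \Root_F^+$. This rests on the Bruhat-Tits description of the parahoric $\mathbf G_F^\circ$ together with the fact that $\I_C$, being pro-$p$ and reducing onto a $p$-Sylow of $\overline{\mathbf G}_F^\circ(\mathbb F_q)$, maps onto the unipotent radical of a Borel of $\overline{\mathbf G}_F^\circ$.
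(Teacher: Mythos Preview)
Your proposal is essentially correct but takes a genuinely different route from the paper. The paper's proof is purely computational: it invokes the product decompositions
\[
\prod_{\root\in\Root^-}\Uu_{\root,f_C(\root)}\times\Tp^1\times\prod_{\root\in\Root^+}\Uu_{\root,f_C(\root)}\xrightarrow{\sim}\I_C,\qquad
\prod_{\root\in\Root^-}\Uu_{\root,f^*_F(\root)}\times\Tp^1\times\prod_{\root\in\Root^+}\Uu_{\root,f^*_F(\root)}\xrightarrow{\sim}\I_F
\]
from \cite[Prop.~I.2.2]{SS} (where $f^*_F(\root)=f_F(\root)+$ if $\root|F$ is constant and $f^*_F(\root)=f_F(\root)$ otherwise), reads off $\Uu_C\subseteq\I_C$ immediately, and then checks by an elementary case analysis on whether $\root|F$ is constant and on the sign of $(\root,f_F(\root))$ that each factor $\Uu_{\root,f_C(\root)}$ of $\I_C$ lies either in $\Uu_F^0$ or in $\I_F$. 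Since $\I_F$ is normal in $\I_C$, this gives $\I_C\subseteq\Uu_F^0\I_F$ and all three equalities at once.

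Your approach instead passes through the reduction map $\mathbf G_F^\circ(\mathfrak O)\to\overline{\mathbf G}_F^\circ(\mathbb F_q)$ and the structure of the finite reductive quotient, identifying the image of $\I_C$ as the unipotent radical of the Borel determined by $C$ and matching it with the image of $\Uu_F^0$. This is conceptually cleaner and explains \emph{why} the equality holds, but your ``hard part'' is only sketched: you invoke that $\I_C$ is a maximal pro-$p$ subgroup of $\mathbf G_F^\circ(\mathfrak O)$ (a fact the paper states only later, in the proof of Proposition~\ref{prop:closest}) and that the generators $\Uu_a$, $a\in\Root_F^+$, of $\Uu_F^0$ surject onto the positive root groups of the reductive quotient $\overline{\mathbf G}_F^\circ/\overline{\mathbf R}_F$. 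Both facts are true and standard in Bruhat--Tits theory, but to make your argument airtight at this point in the paper you would need to supply references (e.g.\ \cite[4.6.3--4.6.4, 4.6.12]{BT2}) rather than leave it at ``rests on the Bruhat--Tits description''. The paper's direct case check avoids this dependency entirely.
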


\begin{proof}
For any real number $r$ let $r+$ denote the smallest integer $> r$. We also define the function
\begin{equation*}
    f^*_F(\root) :=
    \begin{cases}
    f_F(\root)+ & \textrm{if $\root | F$ is constant}, \\
    f_F(\root) & \textrm{otherwise}.
    \end{cases}
\end{equation*}
After \cite[Proposition I.2.2]{SS}, the product map induces the following  bijections
\begin{gather*}
     \prod_{\root\in \Root^-} \Uu_{\root, f_C(\root)}\times  \Tp^1 \times \prod_{\root\in \Root^+} \Uu_{\root, f_C(\root)}\overset{\sim}\longrightarrow \I_C \\
     \prod_{\root\in \Root^-} \Uu_{\root, f^*_F(\root)} \times
     \Tp^1 \times \prod_{\root\in \Root^+} \Uu_{\root, f^*_F(\root)} \overset{\sim}\longrightarrow \I_F
\end{gather*}
where the products on the left hand side are ordered in some arbitrarily chosen way. We immediately see that $\Uu_C\;\subseteq\; \I_C$ and hence that $\Uu^0_F\;\subseteq\; \Uu_F\;\subseteq\; \Uu_C\;\subseteq\; \I_C$. Using Lemma \ref{UC}.i the first bijection becomes
\begin{equation*}
    \prod_{\root\in \Root^-} \Uu_{\root,1} \times  \Tp^1 \times \prod_{\root\in \Root^+} \Uu_{\root,0} \overset{\sim}\longrightarrow \I_C \ .
\end{equation*}
Since moreover $\I_F$ is normal in $\I_C$ it remains to show that $\I_C \subseteq \; < \Uu_F^0, \I_F >$. As $f_C(\root) \geq f_F(\root)$ we have $\Uu_{\root, f_C(\root)}\subseteq \Uu_{\root, f^*_F(\root)}$ whenever $\root | F$ is not constant. Suppose therefore that $\root | F$ has the constant value $r$. If $r$ is not an integer then $f_C(\root) \geq f_F(\root)+$ and again $\Uu_{\root, f_C(\root)}\subseteq \Uu_{\root, f^*_F(\root)}$. Suppose now in addition that $r$ is an integer. Then $(\root, -r) = (\root, f_F(\root)) \in \Phi_F$. If this affine root is positive then $\Uu_{\root, f_C(\root)}\subseteq \Uu_{\root, f_F(\root)} \subseteq \Uu_F^0$. If it is negative then $f_F(\root) \leq 0$, resp.\ $\leq -1$, if $\root \in \Phi^-$, resp.\ $\root \in \Phi^+$; hence $f^*_F(\root) \leq 1$, resp.\ $\leq 0$, if $\root \in \Phi^-$, resp.\ $\root \in \Phi^+$.
\end{proof}

\subsection{Parahoric subgroups and Bruhat decompositions\label{subsec:bruhat}}

There is an action of the group $\Gp$ on the building $\Xx$ recalled  in \cite[page 104]{SS} that extends the action of $N_\Gp(\rm T)$
on the standard apartment described in section \ref{subsec:apartment}. This action is compatible  \emph{via} the projection $\rm{pr}$ with the action of $\Gp$ on the enlarged building $\mathscr{X}^1$ described in \cite[4.2.16]{BT2}.
Recall that $\Pp_F^\dagger$ denotes the stabilizer in $\Gp$ of a facet $F$, and denote by $\Pp_{\pr^{-1}(F)} := \mathbf{G}_F(\mathfrak{O})$ the pointwise stabilizer in $\Gp$ of $\pr^{-1}(F)$. We have
\begin{equation*}
    \Uu_F \subseteq  {\mathbf G}_F^\circ(\mathfrak O) \subseteq \Pp_{\pr^{-1}(F)} \subseteq\Pp^\dagger_F
\end{equation*}
with $\Uu_F$ being normal in $\Pp^\dagger_F$. The pro-unipotent radical $\I_F$ of ${\mathbf G}_F^\circ(\mathfrak O)$ is normal  in $\Pp^\dagger_F$. Recall that we set $\Iw:= {\mathbf G}_C(\mathfrak O) = {\mathbf G}_C^\circ(\mathfrak O)$ (see Section \ref{sec:Hecke})  and
$\I:= \I_C$.

A parahoric subgroup of $\Gp$, by definition, is a  subgroup of the form ${\mathbf G}_F^\circ(\mathfrak O)$ for a facet $F$. It contains  an Iwahori subgroup, that is to say a $\Gp$-conjugate of $\Iw={\mathbf G}_C^\circ(\mathfrak O)$ (\cite[3.4.3]{Tit})

We recall the following Bruhat decompositions:
\begin{itemize}
\item  We have the decomposition $\Gp = \Iw N_\Gp(\rm T)\Iw$ and two cosets $ \Iw n_1\Iw$ and $ \Iw n_2\Iw$  are equal if and only if $n_1$ and $n_2$ have the same projection in $\W$.
In other words, let $\{\hat w\}_{w\in \W}$ denote a system of representatives in $N_\Gp(\rm T)$ of the elements in $\W$: it provides a system of representatives of the double cosets of $\Gp$ modulo $\Iw$. This follows from \cite[3.3.1]{Tit} since in our situation we have $\I' = \Pp_{\pr^{-1}(C)}$. The Bruhat decomposition implies that for any chamber $D$ in $\mathscr X$, there is a $\Iw$-conjugate of $D$ that belongs to the standard apartment. The same holds for the $\I$-conjugates of $D$, since
$\Iw N_\Gp({\rm T})\Iw = \I N_\Gp({\rm T})\Iw$.
\item The subgroup $\Gp_{aff}$ of $\Gp$ generated by all parahoric subgroups  is the disjoint union of the double cosets ${\Iw} \hat w {\Iw}$ where $w$ runs over the affine Weyl group $\W_{aff}$. {This follows from \cite[5.2.12]{BT2} since $\I' = {\mathbf G}_C^\circ(\mathfrak O) \subseteq \Gp_{aff}$.} \end{itemize}

We again fix a facet  $F$ contained in $\overline C$. Denote by $\Omega_F$ the subgroup of the elements in $\Omega$ stabilizing $F$.
Using Remark \ref{facet}, we have
$$\Omega_F=\{ w\in \Omega, \: w S_F w^{-1}= S_F\}$$ so that
 $ \W_F$   is normalized by $\Omega_F$. Denote by $ \W_F^\dagger$ the subgroup of $\W$
generated by  $ \W_F$ and  $\Omega_F$. It is a semi-direct product of these two subgroups. Remark that $\W_C^\dagger=\Omega$ and
$\W_F^\dagger\cap \W_{aff}=\W_F$.

\begin{lemm} \label{bruhat-finite}
The stabilizer
 $\Pp_F^\dagger$  of $F$ is the distinct union of the double cosets ${\Iw} \hat w {\Iw}$ for all $ w$ in  $ \W_F^\dagger$. In particular, $\W_F^\dagger$ is the stabilizer of $F$ in $\W$. The  parahoric subgroup ${\mathbf G}_F^\circ(\mathfrak O)$
is the distinct union of the double cosets ${\Iw} \hat w {\Iw}$ for all $w$ in   $ \W_F$.
\end{lemm}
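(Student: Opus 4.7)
The plan is to deduce both statements from the global Bruhat decomposition $\Gp = \bigsqcup_{w \in \W} \I' \hat w \I'$ together with the key identification $\W_F^\dagger = \mathrm{Stab}_{\W}(F)$, and then to obtain the parahoric statement by intersecting with $\Gp_{aff}$.

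First I would establish that $\W_F^\dagger$ equals the stabilizer of $F$ in $\W$. The inclusion $\W_F^\dagger \subseteq \mathrm{Stab}_\W(F)$ is immediate: $\W_F$ fixes $F$ pointwise (being the pointwise stabilizer of $\pr^{-1}(F)$ in $\W$), and $\Omega_F$ was defined to stabilize $F$. For the reverse inclusion, take $w \in \W$ with $wF = F$ and write $w = \omega w_1$ with $\omega \in \Omega$ and $w_1 \in \W_{aff}$. Since $\omega(\overline C)=\overline C$, we have $w_1(F) = \omega^{-1}(F) \subseteq \overline C$. By Remark \ref{rema:transitivity}, $F$ is the unique facet in $\overline C$ lying in its $\W_{aff}$-orbit, so $w_1(F) = F$ and hence $\omega^{-1}(F) = F$ as well. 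An element of $\W_{aff}$ stabilizing a facet in fact fixes it pointwise (its restriction to $F$ is an affine isometry of finite order of the affine hull of $F$, hence has a fixed point, and combined with simple transitivity on chambers this forces pointwise fixing); by definition of $\W_F$ as the group generated by reflections at the hyperplanes through $F$, this gives $w_1 \in \W_F$, and therefore $\omega = w w_1^{-1} \in \Omega_F$. Thus $w \in \Omega_F \W_F = \W_F^\dagger$.

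Next, since $F \subseteq \overline C$, the Iwahori $\I'$ (which fixes $C$ pointwise and hence, as in the proof of Lemma \ref{I-orient}, fixes $F$ pointwise) is contained in $\Pp_F^\dagger$. Therefore $\Pp_F^\dagger$ is a union of $(\I',\I')$-double cosets in the decomposition $\Gp = \bigsqcup_{w \in \W} \I' \hat w \I'$. A coset $\I' \hat w \I'$ is contained in $\Pp_F^\dagger$ exactly when $\hat w \in \Pp_F^\dagger$, equivalently when $w \in \mathrm{Stab}_\W(F)$, which by the first step equals $\W_F^\dagger$. This proves the first assertion, and the disjointness is inherited from the disjointness of the global Bruhat decomposition.

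For the parahoric statement I would apply the same reasoning to the Bruhat decomposition $\Gp_{aff} = \bigsqcup_{w \in \W_{aff}} \I' \hat w \I'$. Since $\I' \subseteq \mathbf{G}_F^\circ(\mathfrak{O}) \subseteq \Pp_F^\dagger \cap \Gp_{aff}$, the parahoric $\mathbf{G}_F^\circ(\mathfrak{O})$ is a union of such double cosets indexed by a subset of $\W_F^\dagger \cap \W_{aff} = \W_F$ (the equality is the one displayed just before the lemma). Conversely, since $\W_F$ is generated by the reflections $s_A$ for $A \in \Pi_F$, it suffices to show that each such reflection admits a representative in $\mathbf{G}_F^\circ(\mathfrak{O})$; but if $A = (\root,\mathfrak{h}) \in \Phi_F$, then both $\Uu_{\root,\mathfrak{h}}$ and $\Uu_{-\root,-\mathfrak{h}}$ lie in $\mathbf{G}_F^\circ(\mathfrak{O})$ (they fix $F$ pointwise by the very definition of the filtration), and for $u \in \Uu_{\root,\mathfrak{h}} \setminus \{1\}$ the element $m_\root(u) \in \Uu_{-\root} u \Uu_{-\root}$ represents $s_A$ and lies in the parahoric.

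The main obstacle is the very last step: showing that the reflections in $\W_F$ genuinely lift to the \emph{connected} parahoric $\mathbf{G}_F^\circ(\mathfrak{O})$, not merely to the stabilizer $\Pp_F^\dagger$. This is where one has to invoke the Bruhat-Tits construction of $\mathbf{G}_F^\circ$ and the fact that its $\mathfrak{O}$-points contain all the affine root subgroups $\Uu_A$ with $A \in \Phi_F$; the rest of the argument is then a fairly mechanical combination of the two Bruhat decompositions with the structural description of $\W_F^\dagger$.
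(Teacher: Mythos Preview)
Your first assertion is handled essentially as in the paper: both arguments reduce to showing $\mathrm{Stab}_\W(F)=\W_F^\dagger$ by writing $w=\omega w_1$ (or $w_{aff}\omega$), using the uniqueness statement of Remark~\ref{rema:transitivity} to force $\omega\in\Omega_F$ and $w_1F=F$, and then concluding $w_1\in\W_F$. The paper simply cites \cite[V.3.3 Prop.~1]{Bki-LA} for this last step; your parenthetical justification (finite order, fixed point, simple transitivity) is correct in spirit but a bit compressed---the finiteness of the order is not immediate and is most cleanly obtained by observing that the setwise stabilizer of $F$ in $\W_{aff}$ injects into the permutation group of the finitely many chambers having $F$ in their closure.

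For the parahoric statement your route is \emph{genuinely different} from the paper's. The paper passes to the finite reductive quotient $\mathbf{G}_F^\circ(\mathfrak O)/\I_F$, identifies $\I'/\I_F$ as a Borel subgroup, and compares two descriptions of its Weyl group (invoking \cite[4.6.4, 4.6.7, 4.6.33, 1.1.12]{BT2} and \cite[3.5.1, 1.9]{Tit}). You instead show directly that each reflection $s_A$, $A=(\alpha,\mathfrak h)\in\Pi_F$, lifts into $\mathbf{G}_F^\circ(\mathfrak O)$ via the element $m_\alpha(u)$. This is more elementary and avoids the structure theory of the special fibre. Your closing concern---that $m_\alpha(u)$ lands in the \emph{connected} parahoric rather than merely in $\Pp_F^\dagger$---is the right thing to flag, but it is easily settled in the split case: the explicit element $n_s=\varphi_\alpha\bigl(\begin{smallmatrix}0&\pi^{\mathfrak h}\\-\pi^{-\mathfrak h}&0\end{smallmatrix}\bigr)$ of \S\ref{subsec:defining-relations} factors as a product in $\Uu_{-\alpha,-\mathfrak h}\,\Uu_{\alpha,\mathfrak h}\,\Uu_{-\alpha,-\mathfrak h}\subseteq\Uu_F\subseteq\mathbf{G}_F^\circ(\mathfrak O)$. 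So your approach goes through; the trade-off is that the paper's argument, while heavier in citations, makes transparent the identification of $\W_F$ with the Weyl group of the reductive quotient, a fact used elsewhere.
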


\begin{proof}
Both  $\Pp_F^\dagger$ and ${\mathbf G}_F^\circ(\mathfrak O)$ contain $\I'$ because $F$ is  contained in $\overline C$.
The group $\Pp_F^\dagger$ is the reunion of the double cosets ${\Iw} \hat w{\Iw}$ such that $w\in \W$ stabilizes $F$. An element $w$ in $\W$ can be written $w=w_{aff}\omega $ with $\omega\in \Omega$ and $w\in \W_{aff}$. Suppose that $w$ stabilizes $F$.
Since $\omega F$ is  contained in $\overline C$ and is $\W_{aff}$-conjugate to $F$, we have
$\omega\in \Omega_F$ and $w_{aff} F=F$ by Remark \ref{rema:transitivity}.  By \cite[V.3.3 Proposition 1]{Bki-LA}, it implies that $w_{aff}\in \W_F$ which finishes the  proof of the first assertion.

For the second part we first of all note that $\Tp^0 \subseteq {\mathbf G}_F^\circ(\mathfrak O)$ by \cite[4.6.4(ii)]{BT2}. Moreover, \cite[4.6.7(iii)]{BT2} implies that the reduction map ${\mathbf G}_F^\circ(\mathfrak O) \longrightarrow \overline{\mathbf G}_F^\circ(\mathbb{F}_q)$ is surjective. We now deduce from \cite[4.6.33 and 1.1.12]{BT2} that $\I'/\I_F$ is a Borel subgroup of the finite reductive group ${\mathbf G}_F^\circ(\mathfrak O) /\I_F$. By the first part of our assertion we have the decomposition ${\mathbf G}_F^\circ(\mathfrak O) = \coprod_w \I' \hat{w} \I'$ with $w$ running over $({\mathbf G}_F^\circ(\mathfrak O) \cap N_\Gp (\Tp)) /\Tp^0$. This means, by the Bruhat decomposition of ${\mathbf G}_F^\circ(\mathfrak O) /\I_F$ with respect to its Borel subgroup $\I'/\I_F$, that $({\mathbf G}_F^\circ(\mathfrak O) \cap N_\Gp (\Tp)) /\Tp^0$ maps isomorphically onto the Weyl group of ${\mathbf G}_F^\circ(\mathfrak O) /\I_F$ with respect to the torus $\Tp^0 \I_F /\I_F$. Since the elements in ${\mathbf G}_F^\circ(\mathfrak O)$ stabilize $\pr^{-1}(F)$ pointwise we have $({\mathbf G}_F^\circ(\mathfrak O) \cap N_\Gp (\Tp)) /\Tp^0 \subseteq \W_F$. But it follows from \cite[3.5.1 and 1.9]{Tit} that $\W_F$ maps isomorphically onto the very same Weyl group. Hence we must have $({\mathbf G}_F^\circ(\mathfrak O) \cap N_\Gp (\Tp)) /\Tp^0 = \W_F$.
\end{proof}

We deduce from the latter the following.

\begin{lemm} \label{inters} The intersection of $\Pp_F^\dagger$ with the subgroup  $\Gp_{aff}$ of $\Gp$ generated by all parahoric subgroups is equal to ${\mathbf G}_F^\circ(\mathfrak O)$.
\end{lemm}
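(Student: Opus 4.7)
The plan is to deduce this directly from the three Bruhat decompositions already established: the global one for $\Gp$ indexed by $\W$, the one for $\Pp_F^\dagger$ indexed by $\W_F^\dagger$ (Lemma \ref{bruhat-finite}), and the one for $\Gp_{aff}$ indexed by $\W_{aff}$. The key combinatorial input is the identity $\W_F^\dagger \cap \W_{aff} = \W_F$, which was recorded right before Lemma \ref{bruhat-finite}.

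First I would dispose of the easy inclusion. Since $F \subseteq \overline{C}$, the parahoric ${\mathbf G}_F^\circ(\mathfrak O)$ contains $\I'$ and, by definition of a parahoric, sits inside $\Gp_{aff}$. On the other hand, its elements stabilize $F$ pointwise, hence lie in $\Pp_F^\dagger$. This gives ${\mathbf G}_F^\circ(\mathfrak O) \subseteq \Pp_F^\dagger \cap \Gp_{aff}$.

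For the reverse inclusion, let $g \in \Pp_F^\dagger \cap \Gp_{aff}$. By the first Bruhat decomposition of Section \ref{subsec:bruhat}, there is a unique $w \in \W$ with $g \in \I' \hat{w}\I'$. Applying Lemma \ref{bruhat-finite} to $\Pp_F^\dagger$ forces $w \in \W_F^\dagger$, while applying the Bruhat decomposition of $\Gp_{aff}$ forces $w \in \W_{aff}$. Using $\W_F^\dagger \cap \W_{aff} = \W_F$ we conclude $w \in \W_F$, and then the second assertion of Lemma \ref{bruhat-finite} (the Bruhat decomposition of ${\mathbf G}_F^\circ(\mathfrak O)$) gives $g \in {\mathbf G}_F^\circ(\mathfrak O)$, as required.

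There is essentially no obstacle here beyond making sure that the uniqueness in the global Bruhat decomposition is correctly invoked to match the indexing element $w$ across the three decompositions; the substance of the lemma has already been absorbed into Lemma \ref{bruhat-finite}.
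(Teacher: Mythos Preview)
Your proof is correct and is precisely the deduction the paper has in mind: the paper simply writes ``We deduce from the latter the following'' after Lemma \ref{bruhat-finite}, and you have spelled out that deduction, using the disjointness of the global Bruhat decomposition together with the two decompositions of Lemma \ref{bruhat-finite}, the Bruhat decomposition of $\Gp_{aff}$, and the identity $\W_F^\dagger \cap \W_{aff} = \W_F$.
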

\noindent

We now describe systems of representatives for the double cosets $\I'\backslash\Gp/{\mathbf G}_F^\circ(\mathfrak O)$ and $\I'\backslash\Gp/\Pp_F^\dagger$.


\begin{lemm} \label{stable} The set $\Dd_F$ is stable by right multiplication by an element in $\Omega_F$.
\end{lemm}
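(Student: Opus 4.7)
The plan is to use the characterization of $\Dd_F$ in terms of the action on affine roots, together with two facts about elements of $\Omega_F$: that they have length zero, and that they stabilize $F$. Concretely, given $d \in \Dd_F$ and $\omega \in \Omega_F$, I want to show that $d\omega \in \Dd_F$, i.e.\ that $(d\omega)(\Root_F^+) \subseteq \Root_{aff}^+$.

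First I would establish that $\omega$ preserves the set $\Root_F^+$. Since $\omega \in \Omega_F$ stabilizes $F$, it permutes the affine roots vanishing on $F$; that is, $\omega(\Root_F) = \Root_F$. On the other hand, since $\ell(\omega) = 0$, the definition of length as the cardinality of $\{A \in \Root_{aff}^+ : \omega(A) \in \Root_{aff}^-\}$ forces $\omega(\Root_{aff}^+) = \Root_{aff}^+$. Combining, $\omega(\Root_F^+) = \omega(\Root_F \cap \Root_{aff}^+) = \Root_F \cap \Root_{aff}^+ = \Root_F^+$.

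Then the conclusion is immediate:
\begin{equation*}
(d\omega)(\Root_F^+) = d\bigl(\omega(\Root_F^+)\bigr) = d(\Root_F^+) \subseteq \Root_{aff}^+,
\end{equation*}
where the last inclusion is the defining property of $d \in \Dd_F$. Hence $d\omega \in \Dd_F$, which is the required stability.

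There is no real obstacle here; the only slightly delicate point is the stability $\omega(\Root_F) = \Root_F$, which is really just Remark \ref{facet} combined with the characterization $\Omega_F = \{w \in \Omega : wS_Fw^{-1} = S_F\}$ recalled just above. All subsequent manipulations are formal.
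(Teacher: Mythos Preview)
Your proof is correct and follows essentially the same approach as the paper: both arguments establish that $\omega(\Root_F^+) = \Root_F^+$ by combining $\omega(\Root_F) = \Root_F$ (from $\omega$ stabilizing $F$) with the fact that $\omega$ preserves $\Root_{aff}^+$ (from $\ell(\omega)=0$), and then conclude via $d(\Root_F^+) \subset \Root_{aff}^+$. The only cosmetic difference is that the paper phrases the positivity step as $\omega \in \Dd_{x_0}$, whereas you invoke the length-zero definition directly; these are equivalent.
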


\begin{proof} Let $\omega\in \Omega_F$. In particular it satisfies $\omega(\Phi_F)= \Phi_F$. Moreover, it is the minimal length element in $\omega \W_0$: in other words, $\omega\in \Dd_{x_0}$ so that $\omega(\Phi^+)\subset \Phi_{aff}^+$.
It proves that $\omega(\Phi_F^+)=\Phi_{aff}^+\cap \Phi_F=\Phi_F^+$.
\end{proof}

Denote by $\Dd_F^\dagger$ a system of representatives of the orbits in $\Dd_F$ under the right action of $\Omega_F$.

\begin{lemm}\phantomsection\label{DF+}
\begin{itemize}
\item[i.] The set   $\Dd_F^\dagger$ is a system of representatives of the left cosets $\W/ \W_F^\dagger$. It satisfies
$\ell(dw_F)=\ell(d)+\ell(w_F)$ for any $d\in \Dd_F^\dagger$, $w_F\in  \W_F^\dagger$.
\item[ii.] The sets $\{\hat d\,\}_{d\in \Dd_F}$ and $\{\hat d\,\}_{d\in \Dd^\dagger_F}$  are  respective  systems
of representatives  of the double cosets  $\I'\backslash\Gp/{\mathbf G}_F^\circ(\mathfrak O)$ and $\I'\backslash\Gp/\Pp_F^\dagger$.
\end{itemize}
\end{lemm}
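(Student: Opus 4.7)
The plan is to deduce both parts from the already-established coset description in Proposition \ref{representants}, combined with the semidirect product structure $\W_F^\dagger = \W_F \rtimes \Omega_F$ and the Bruhat decompositions of $\Gp$ and ${\mathbf G}_F^\circ(\mathfrak O)$ from Lemma \ref{bruhat-finite}.

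For part (i), I would first show that $\W = \bigsqcup_{d \in \Dd_F^\dagger} d\,\W_F^\dagger$. Given $w \in \W$, Proposition \ref{representants}.i produces unique $(d, w_F) \in \Dd_F \times \W_F$ with $w = dw_F$, and Lemma \ref{stable} lets me further write $d = d_0\omega$ with $d_0 \in \Dd_F^\dagger$ and $\omega \in \Omega_F$, giving $w = d_0(\omega w_F) \in d_0 \W_F^\dagger$. For uniqueness, if $d_1 w_1' = d_2 w_2'$ with $d_i \in \Dd_F^\dagger$ and $w_i' = \omega_i v_i \in \Omega_F \W_F$, then $(d_i\omega_i, v_i)$ is the pair in $\Dd_F \times \W_F$ attached to both sides, so Proposition \ref{representants}.i forces $d_1\omega_1 = d_2\omega_2$ and $v_1 = v_2$, whence $d_1^{-1}d_2 \in \Omega_F$; the definition of $\Dd_F^\dagger$ as a set of $\Omega_F$-orbit representatives then gives $d_1 = d_2$. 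For the length formula, write $w_F \in \W_F^\dagger$ as $\omega v$ with $\omega \in \Omega_F$, $v \in \W_F$; then $dw_F = (d\omega)v$ with $d\omega \in \Dd_F$ (Lemma \ref{stable}), so Proposition \ref{representants}.i yields $\ell(dw_F) = \ell(d\omega) + \ell(v)$. Because $\ell$ is constant on $\Omega$-double-cosets (as $\ell|\W_{aff}$ is the length of the Coxeter system and $\Omega$ consists of length-zero elements normalizing $S_{aff}$), we get $\ell(d\omega) = \ell(d)$ and $\ell(w_F) = \ell(\omega v) = \ell(v)$, hence $\ell(dw_F) = \ell(d) + \ell(w_F)$.

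For part (ii), I would combine the Bruhat decomposition $\Gp = \bigsqcup_{w \in \W}\I'\hat{w}\,\I'$ recalled in Section \ref{subsec:bruhat} with the analogous decompositions ${\mathbf G}_F^\circ(\mathfrak O) = \bigsqcup_{w \in \W_F}\I'\hat{w}\,\I'$ and $\Pp_F^\dagger = \bigsqcup_{w \in \W_F^\dagger}\I'\hat{w}\,\I'$ from Lemma \ref{bruhat-finite}. Since the double coset $\I' \hat{v}\,\I'$ depends only on the image $v \in \W$, one has $\I'\hat{d}\,{\mathbf G}_F^\circ(\mathfrak O) = \bigsqcup_{w_F \in \W_F}\I'\widehat{dw_F}\,\I'$, and likewise $\I'\hat{d}\,\Pp_F^\dagger = \bigsqcup_{w_F \in \W_F^\dagger}\I'\widehat{dw_F}\,\I'$. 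The decomposition $\W = \bigsqcup_{d\in\Dd_F} d\W_F$ of Proposition \ref{representants}.i then gives the first assertion by reassembling the Bruhat cells, and the decomposition $\W = \bigsqcup_{d\in\Dd_F^\dagger} d\W_F^\dagger$ proved in (i) gives the second.

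No step looks genuinely delicate: the only mild care is in part (i), where one must argue both that the uniqueness of the $\W/\W_F$-decomposition descends correctly under the $\Omega_F$-action and that the length formula $\ell(dw_F) = \ell(d) + \ell(w_F)$ extends across the $\Omega_F$-factor. Both follow from the fact that $\Omega$ consists of length-zero elements normalizing $S_{aff}$, which is already recorded in the excerpt.
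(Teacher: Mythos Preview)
Your argument is correct and follows essentially the same route as the paper: part (i) is deduced from Proposition \ref{representants} together with Lemma \ref{stable} and the length-zero property of $\Omega$, and part (ii) from the Bruhat decomposition and Lemma \ref{bruhat-finite}. One small point of exposition in part (ii): the equality $\I'\hat d\,{\mathbf G}_F^\circ(\mathfrak O)=\bigsqcup_{w_F\in\W_F}\I'\widehat{dw_F}\,\I'$ does not follow merely from the fact that $\I'\hat v\,\I'$ depends only on $v\in\W$; you need the standard multiplication rule $\I'\hat w_1\I'\hat w_2\I'=\I'\widehat{w_1w_2}\,\I'$ whenever $\ell(w_1w_2)=\ell(w_1)+\ell(w_2)$ (the paper invokes \cite[Thm.\ 3.6]{Cartier}), applied with the length additivity you just proved in (i).
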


\begin{proof}  (1) That $\W$ is the reunion of the $d\W_F^\dagger$ for $d \in \Dd_F^\dagger$ is clear from Prop.\ \ref{representants} because $\Omega_F\subset \W_F^\dagger$.
Let $d_1, d_2\in \Dd_F^\dagger$. If $d_2\in  d_1\W_F^\dagger$, then there is $\omega\in \Omega_F$ and $w_F\in  \W_F$ such that $d_2=d_1\omega w_F$. But $d_1\omega\in \Dd_F$ by Lemma \ref{stable}, so that $d_2= d_1\omega$ by Prop.\ \ref{representants} and $\omega=1$ by definition of $\Dd_F^\dagger$.
The length equality is clear again by Prop.\ \ref{representants}. (2) Let $w_1, w_2\in \W$ such that $\ell(w_1)+\ell(w_2)=\ell(w_1 w_2)$. Then $\Iw \hat w_1 \Iw\hat w_2\Iw = \Iw \hat w_1 \hat w_2\Iw$ (for example \cite[Thm.\ 3.6]{Cartier}). Together with Lemma \ref{bruhat-finite}, this remark finishes the proof.
\end{proof}

\subsection{\label{subsec:distance}}

We recall the geometric interpretation of the set $\Dd_F^\dagger$. The \emph{gallery distance} $\d(D, D')$ between two chambers $D$ and $D'$ of the standard apartment is defined to be the minimal length of a gallery connecting  $D$ and $D'$. By \cite[2.3.10]{BT1} and keeping in mind that $\W_{aff}$ acts simply transitively on the chambers of $\Aa$ we have, for any $w, w'\in \W_{aff}$:
\begin{equation}\label{eq:distance}
     \d(wC, w'C)= \ell(w^{-1}w').
\end{equation}

\begin{prop}\label{prop:closest}
Let $F$ be a  facet $F\subset\overline C$ and $d\in\Dd_F^\dagger$.
\begin{itemize}
\item[i.] Among the chambers of $\Aa$    containing $dF$ in their closure, the chamber
 $C(dF):= dC$ is the unique one  which is closest to the chamber $C$.
\item[ii.] It satisfies $\I_{dF} (\I \cap \Pp_{dF}^\dagger) = \I_{C(dF)}$
and  $\I \cap \Pp_{dF}^\dagger = \I \cap \Pp_{C(dF)}^\dagger$.
\end{itemize}
\end{prop}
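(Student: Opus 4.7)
The plan is to handle (i) and (ii) separately, with (ii) split into its two equalities.

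For part (i), I will first parametrize the chambers of $\mathscr{A}$ whose closure contains $dF$. Any such chamber has the form $vC$ with $v \in \W_{aff}$, by simple transitivity of $\W_{aff}$ on chambers of $\mathscr A$. Writing $v^{-1}d = \omega u$ in $\W = \Omega \ltimes \W_{aff}$, the condition $dF \subset \overline{vC}$ becomes $uF \subset \omega^{-1}\overline{C} = \overline{C}$, and the uniqueness in Remark~\ref{rema:transitivity} forces $uF = F$, i.e.\ $u \in \W_F$; the further requirement $\omega u F = F$ then gives $\omega \in \Omega_F$. Since $\Omega_F$ fixes $C$, these chambers are parametrized bijectively by $w \in \W_F$ as $dwC$. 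Using \eqref{eq:distance}, extended to $\W$ because conjugation by $\Omega$ preserves lengths on $\W_{aff}$, the gallery distance from $C$ to $dwC$ equals $\ell(dw)$, and Lemma~\ref{DF+}.i gives $\ell(dw) = \ell(d) + \ell(w) \geq \ell(d)$ with equality iff $w = 1$. Hence $C(dF) = dC$ is the unique minimizer.

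For the first equality in part (ii), the inclusion $\I \cap \Pp_{dC}^\dagger \subseteq \I \cap \Pp_{dF}^\dagger$ is immediate from Lemma~\ref{I-orient}: an element of the left-hand side fixes $dC$, hence $\overline{dC}$, pointwise, so in particular fixes $dF$. For the reverse, I take $g \in \I \cap \Pp_{dF}^\dagger$; then by Lemma~\ref{I-orient} $g$ fixes $dF$ pointwise, and since $g \in \I \subseteq \I'$ it also fixes $C$ pointwise. I will invoke the standard Bruhat--Tits fact that the pointwise stabilizer in $\Gp$ of a subset $Y \subseteq \mathscr{A}$ depends only on its enclosure, the intersection of the half-spaces $\{a \geq 0\}$ for affine roots $a$ satisfying $a \geq 0$ on $Y$. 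The heart of the matter is then to check that the enclosure of $C \cup dF$ contains $\overline{dC}$: if $a \in \Phi^+_{aff}$ satisfies $a \geq 0$ on $dF$ but $a < 0$ on $dC$, then $dF \subset \overline{dC}$ forces $a \equiv 0$ on $dF$, whence $d^{-1}a \in \Phi^-_F$ and $-a = d(-d^{-1}a) \in d(\Phi^+_F) \cap \Phi^-_{aff}$, contradicting $d \in \Dd_F$ via Proposition~\ref{representants}.i. This yields $g \in \Pp_{dC}^\dagger$.

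For the second equality $\I_{dF}(\I \cap \Pp_{dF}^\dagger) = \I_{dC}$, granted the first equality, the inclusion $\subseteq$ follows from $\I_{dF} \subseteq \I_{dC}$ (by \eqref{pr2}, since $dF \subset \overline{dC}$) together with $\I \cap \Pp_{dC}^\dagger \subseteq \I_{dC}$; the latter holds because any such element is pro-$p$ and lies in $\mathbf{G}_{dC}^\circ(\mathfrak{O}) = \mathbf{G}_{dC}(\mathfrak{O})$, hence in the unique normal pro-$p$ Sylow $\I_{dC}$. For the reverse inclusion $\I_{dC} \subseteq \I_{dF}(\I \cap \Pp_{dC}^\dagger)$, I plan to use the root-subgroup product decompositions of $\I_{dC}$, $\I_{dF}$, and $\I$ from the proof of Lemma~\ref{IC}, showing that each root-filtration factor of $\I_{dC}$ either already lies in $\I_{dF}$ or is a $\Uu_{\alpha, h}$ with $(\alpha, h) \in \Phi^+_{aff}$, hence belongs to $\I \cap \I_{dC} = \I \cap \Pp_{dC}^\dagger$. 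The main obstacle I anticipate is tracking root-by-root the comparison between $f_{dC}(\alpha)$ and $f^*_{dF}(\alpha)$, with the key cases resolved again by the defining condition $d(\Phi^+_F) \subseteq \Phi^+_{aff}$ of $\Dd_F$.
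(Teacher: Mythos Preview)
Your argument for (i) is correct and matches the paper's: both parametrize the chambers through $dF$ by $d\W_F^\dagger$, then invoke the length additivity of Lemma~\ref{DF+}.i.

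For (ii) your route diverges from the paper's, though both ultimately rest on the single input $d(\Phi_F^+)\subset\Phi_{aff}^+$. The paper conjugates by $\hat d^{-1}$ and proves the \emph{product} identity first: it observes that $\hat d^{-1}\I\hat d\cap\Pp_F^\dagger$ is a pro-$p$ subgroup of $\mathbf G_F^\circ(\mathfrak O)$ (via Lemma~\ref{inters}), so $\langle\I_F,\hat d^{-1}\I\hat d\cap\Pp_F^\dagger\rangle$ is pro-$p$; since $\I$ is a \emph{maximal} pro-$p$ subgroup there, it suffices to check this group contains $\I=\I_F\Uu_F^0$ (Lemma~\ref{IC}), which reduces to $\Uu_F^0\subset\hat d^{-1}\I\hat d$, i.e.\ exactly \eqref{rootsubgroup}. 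The stabilizer identity then falls out: once $\hat d^{-1}\I\hat d\cap\Pp_F^\dagger\subset\I$, it lies in $\Pp_C^\dagger$ automatically. You reverse the order, proving the stabilizer identity first via the enclosure $cl(C\cup dF)\supset\overline{dC}$, then the product identity by a direct root-group decomposition of $\I_{dC}$; your key step (the root factors not in $\I_{dF}$ correspond to affine roots in $d(\Phi_F^+)\subset\Phi_{aff}^+$, hence lie in $\Uu_C\subset\I$) is precisely Lemma~\ref{IC} and \eqref{rootsubgroup} transported to $dC$. Two points to tighten: your ``standard Bruhat--Tits fact'' that the pointwise stabilizer of $Y\subset\mathscr A$ depends only on $cl(Y)$ is true but not established in the paper---it follows from the convexity of $\mathrm{Fix}(g)\subset\mathscr X$ together with the fact that convex closed subcomplexes of $\mathscr A$ are intersections of half-apartments (or cite \cite[\S7.4]{BT1}); alternatively, the one-line building-theoretic argument ``$g$ fixes $C$ and $dF$, hence fixes $\mathrm{proj}_{dF}(C)=dC$'' avoids this entirely. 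Second, when assembling the root factors into $\I_{dF}(\I\cap\Pp_{dC}^\dagger)$ you need this product to be a group, which holds because $\I_{dF}\trianglelefteq\Pp_{dF}^\dagger\supset\I_{dC}$. The paper's pro-$p$ maximality trick is slicker and entirely self-contained; your approach is more geometric and separates the two identities cleanly.
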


\begin{proof}
i. A chamber of $\Aa$ has the form $w C$ for some $w\in \W$  and we can multiply $w$ on the right by an element in $\Omega$  so as to have $w^{-1} d\in \W_{aff}$.  Suppose that $dF\subset w\overline C$. Then $w^{-1}dF=F$ by Remark \ref{rema:transitivity}. Therefore $d\in w \W_F^{\dagger}$ after Lemma \ref{bruhat-finite}. But $d$ is, up to right multiplication by an element in $\Omega_F\subset \Omega$, the unique element with minimal length in $d \W_F^\dagger$. So $\d(wC, C)>\d(dC, C)$ if $wC\neq dC$.

ii. It is immediate from Lemma \ref{I-orient} that $\I \cap \Pp_{C(dF)}^\dagger \subset \I \cap \Pp_{dF}^\dagger$. For the second point of the proposition we therefore need to prove that
the subgroup of $\Pp_F^\dagger$ generated by $\I_F \cup (\hat d^{-1}\I \hat d \cap \Pp_F^\dagger)$ is equal to $\I$ and that $\hat d^{-1}\I \hat d \cap \Pp_F^\dagger \subset \hat d^{-1}\I \hat d \cap \Pp_C^\dagger$.

The intersection $\hat d^{-1}\I \hat d \cap \Pp_F^\dagger $ is a pro-$p$-subgroup of  $\Pp_F^\dagger$ contained in $\Gp_{aff}$ because $\hat d^{-1}\I \hat d$ is contained in a parahoric subgroup. Thus $\hat d^{-1}\I \hat d \cap \Pp_F^\dagger $
is a pro-$p$-subgroup of the parahoric subgroup ${\mathbf G}_F^\circ(\mathfrak O)$ by Lemma \ref{inters}, and  it is contained in a  pro-$p$-Sylow subgroup of the latter (\cite[I.1.4 Prop.\ 4]{Ser}).
But any pro-$p$-Sylow subgroup of ${\mathbf G}_F^\circ(\mathfrak O)$ contains its unipotent pro-radical $\I_F$. Therefore the subgroup of $\Pp_F^\dagger$ generated by $\I_F \cup (\hat d^{-1}\I \hat d \cap \Pp_F^\dagger)$ is a pro-$p$-subgroup of ${\mathbf G}_F^\circ(\mathfrak O)$. To prove that it is equal to $\I$, it is therefore enough to prove that it contains $\I$ (because $\I$ is a maximal pro-$p$-subgroup of ${\mathbf G}_F^\circ(\mathfrak O)$), and by the identity in Lemma \ref{IC}, that it contains $\Uu_F^0$.

But $\Uu_F^0$ is contained in $\hat d^{-1}\Uu_C \hat d\,\cap \, \Pp_F^\dagger$ by \eqref{rootsubgroup}, hence it is contained in $\hat d^{-1}\I \hat d\,\cap \, \Pp_F^\dagger$ by the inclusion in Lemma \ref{IC}. It proves the first equality, from which we deduce
that $\hat d^{-1}\I \hat d \cap \Pp_F^\dagger\subset \I$ so that $\hat d^{-1}\I \hat d \cap \Pp_F^\dagger\subset \hat d^{-1}\I \hat d\cap \I\subset \hat d^{-1}\I \hat d\cap \Pp_C^\dagger$.
\end{proof}

\begin{rema}\label{rema:Omega}
We deduce from i. in Proposition \ref{prop:closest} that if $d F$ is contained in $\overline C$ for $d\in \Dd_F^\dagger$, then $d\in \W_C^\dagger=\Omega$.
\end{rema}

Let $n\geq 0$ and  $Ch(n)$ be the set of the chambers in $\Aa$ at distance $n$ from $C$.
Denote by $\Aa(n)$  the set  of the facets in $\Aa$  contained in the closure of the chambers at distance $\leq n$.

\begin{lemm}\label{disjoint}
For $n\geq 1$, the set $\Aa(n)$  is the disjoint union
\begin{equation*}
    \mathscr{A}(n) =  \mathscr{A}(n-1)\sqcup\bigsqcup_{D \in Ch(n)} {\overline D} \setminus \mathscr{A}(n-1)
\end{equation*}
\end{lemm}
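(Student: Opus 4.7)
The plan is as follows. First, the set-theoretic equality $\Aa(n) = \Aa(n-1) \cup \bigcup_{D \in Ch(n)} \overline{D}$ holds by the very definition of $\Aa(n)$ as the union over all chambers at distance $\leq n$ from $C$ of their closures. Rewriting the right-hand side as $\Aa(n-1) \cup \bigcup_{D \in Ch(n)} (\overline{D} \setminus \Aa(n-1))$ clearly preserves the union, and the piece $\Aa(n-1)$ is automatically disjoint from each $\overline{D} \setminus \Aa(n-1)$. So the real content of the statement is the pairwise disjointness of the sets $\overline{D} \setminus \Aa(n-1)$ for $D$ varying in $Ch(n)$: given distinct $D, D' \in Ch(n)$, one has to show that any facet $F$ contained in $\overline{D} \cap \overline{D'}$ already lies in $\Aa(n-1)$.

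For this I would invoke Proposition \ref{prop:closest}, which attaches to every facet $F$ of $\Aa$ a \emph{unique} chamber $C(F)$ of $\Aa$ whose closure contains $F$ and which is closest to $C$ in the gallery distance. Suppose $F \subset \overline{D} \cap \overline{D'}$ with $D, D' \in Ch(n)$ distinct. Both $D$ and $D'$ contain $F$ in their closure and sit at gallery distance $n$ from $C$, so the minimal such distance $\d(C(F), C)$ is at most $n$. If the equality $\d(C(F), C) = n$ held, the uniqueness in Proposition \ref{prop:closest} would force $D = C(F) = D'$, contradicting $D \neq D'$. Hence $\d(C(F), C) \leq n-1$, which gives $F \subset \overline{C(F)} \subset \Aa(n-1)$, as required. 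There is essentially no obstacle beyond invoking the uniqueness statement in Proposition \ref{prop:closest}; the argument is a short pigeonhole on the nearest chamber to $C$ containing $F$.
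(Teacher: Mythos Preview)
Your proof is correct and follows essentially the same approach as the paper's. The paper also reduces to showing that a facet $\tilde F$ in $\overline D\cap\overline{D'}$ lies in $\Aa(n-1)$, writes $\tilde F=dF$ with $d\in\Dd_F^\dagger$, and argues that $\ell(d)<n$; rather than quoting the uniqueness clause of Proposition~\ref{prop:closest} directly as you do, it unwinds that clause by decomposing $w,w'\in d\W_F^\dagger$ and observing that $\ell(d)=n$ would force $w,w'$ to differ only by elements of $\Omega$, contradicting $D\neq D'$.
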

\begin{proof}
Let $D$ and $D'$ be  two distincts chambers in $\Aa$ at distance $n$ from $C$ and    $\tilde F$  a facet contained in  both $\overline D$ and $\overline D'$. We have to prove that $\tilde F$ is contained in the closure of a chamber at distance $<n$.

Fix  $F\subset \overline C$  a facet in the closure of the standard chamber such that $\tilde F$ is $\W$-conjugate to $F$. There is a unique  $d\in \Dd_{F}^\dagger$ such that $\tilde F= d F$. We shall prove that $\ell(d)<n$ which gives the required result by \eqref{eq:distance}. Let $w, w'\in \W$, unique up to right multiplication by an element in $\Omega$, such that $D=w C$ and $D'=w'C'$ respectively. We have
$\ell(w)=\ell(w')=n$ by hypothesis. By the same argument as  in the proof of the previous proposition, we can choose $w$ and $w'$ such that $w, w'\in d \W_F^\dagger$. In particular, there are two elements $w_0, w_0'\in \W_{F}^\dagger$ such that
$w= d w_0$, $w'= dw_0'$ and $\ell(w_0)=\ell(w_0')= n-\ell(d)$. If $\ell(d)=n$, then $\ell(w_0)=\ell(w_0')=0$ and $w_0, w_0'\in \Omega$ which contradicts the fact that $D\neq D'$. Therefore
$\ell(d)<n$.
\end{proof}

\begin{prop}\label{prop:contractible}
Let $n\geq 1$ and $D \in Ch(n)$. The simplicial subcomplexes $\Aa(n-1)$ and $\overline D\cup\Aa(n-1)$ of $\Aa$  are contractible for the structure of affine Euclidean space on $\Aa$.
\end{prop}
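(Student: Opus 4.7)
The plan is to show that both $\Aa(n-1)$ and $\overline D \cup \Aa(n-1)$ are star-shaped with respect to a chosen interior point $p_0$ of the chamber $C$. Since any non-empty star-shaped subset of a Euclidean affine space is contractible (via the straight-line homotopy $(y,s) \mapsto (1-s) y + s p_0$), this will conclude the proof. Note that $p_0 \in \overline C \subseteq \Aa(n-1)$, so $p_0$ lies in both subcomplexes under consideration.

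The key technical ingredient is a monotonicity principle along rays from $p_0$. Consider $y \in \Aa$ lying in the interior of some chamber $D_y$, and parameterize the segment by $x(t) = (1-t) p_0 + t y$ for $t \in [0,1]$. For $t$ outside the finitely many wall-crossing times, $x(t)$ lies in the interior of a unique chamber, which I denote $D(t)$. I claim $\d(C, D(t))$ is non-decreasing in $t$ and jumps by exactly $+1$ at each wall crossing. Indeed, any wall $H$ crossed by the segment separates $p_0$ from $y$; since $p_0$ lies in the open chamber $C$, the chamber $C$ itself lies strictly on the $p_0$-side of $H$. Just before the crossing, $D(t)$ lies on the $p_0$-side of $H$ (the same side as $C$), and just after on the opposite side. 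Hence the wall $H$ enters the set of walls separating $C$ from $D(t)$, and no other wall changes status. Since gallery distance equals the number of separating walls (cf.\ \eqref{eq:distance}), the claim follows.

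Applying this to $\Aa(n-1)$: if $\d(C, D_y) \leq n-1$, then $\d(C, D(t)) \leq n-1$ throughout, so $[p_0, y] \subseteq \Aa(n-1)$. For a general point $y \in \Aa(n-1)$, pick a chamber $D_y$ at distance $\leq n-1$ from $C$ with $y \in \overline{D_y}$ and a sequence $y_k$ in the interior of $D_y$ converging to $y$; the inclusion $[p_0, y_k] \subseteq \Aa(n-1)$ passes to the limit because $\Aa(n-1)$ is a finite union of closed chambers, hence closed. For $\overline D \cup \Aa(n-1)$ with $D \in Ch(n)$: if $y$ lies in the interior of $D$, then $\d(C, D(t)) \leq n$, and if this bound is attained at some $t_0 < 1$, monotonicity forbids further wall crossings on $[t_0, 1]$, so $D(t)$ is constantly equal to the terminal chamber $D$ on this interval; hence $[p_0, y] \subseteq \Aa(n-1) \cup \overline D$. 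For arbitrary $y \in \overline D \cup \Aa(n-1)$, either $y \in \Aa(n-1)$ and the previous case applies, or $y \in \overline D \setminus \Aa(n-1)$, in which case Lemma \ref{disjoint} ensures that $D$ is the only chamber in $Ch(n)$ whose closure contains $y$; approximating $y$ by points in the interior of $D$ and passing to the limit (using closedness of $\overline D \cup \Aa(n-1)$) completes the argument.

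The main point requiring care is the boundary analysis for $\overline D \cup \Aa(n-1)$, where Lemma \ref{disjoint} is essential: it prevents the terminal chamber of a segment $[p_0, y]$ with $y \in \overline D \setminus \Aa(n-1)$ from being a distance-$n$ chamber other than $D$. Everything else reduces to routine wall-arrangement bookkeeping organized by the monotonicity sublemma.
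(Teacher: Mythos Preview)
Your proof is correct and follows the same overall strategy as the paper---showing that each subcomplex is star-shaped with respect to (a point of) $\overline{C}$---but you verify star-shapedness by a different mechanism. The paper argues via the \emph{enclos} $cl(C,D')$ of \cite[2.4]{BT1}: for any chamber $D'$ in the subcomplex $\mathscr{C}$, the chambers of any minimal gallery from $C$ to $D'$ have distances $0,1,\ldots,\d(C,D')$ from $C$, so all lie in $\mathscr{C}$; since $cl(C,D')$ is convex and contains $\overline{C}\cup\overline{D'}$, the segment from any point of $\overline{C}$ to any point of $\overline{D'}$ stays in $\mathscr{C}$. Your route instead tracks wall-crossings along a segment from an interior point $p_0\in C$ and uses that each crossed wall separates $C$ from the terminal chamber, so the gallery distance is monotone. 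Both arguments are short; the paper's has the advantage of citing a ready-made convexity result, while yours is entirely self-contained and makes the geometric picture explicit.

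Two small remarks. First, your claim that the distance ``jumps by exactly $+1$ at each wall crossing'' presupposes the segment is generic (i.e., avoids codimension-$\geq 2$ facets); this need not hold, but it doesn't matter, since simultaneous crossings still only \emph{add} separating walls, preserving monotonicity and the total count of $n$. Second, your appeal to Lemma~\ref{disjoint} for the boundary case $y\in\overline{D}\setminus\Aa(n-1)$ is unnecessary: once you approximate $y$ by interior points of $D$ and use closedness of $\overline{D}\cup\Aa(n-1)$, the conclusion follows regardless of whether other distance-$n$ chambers touch $y$.
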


\begin{proof}
First recall that  for any two points $x, y\in \Aa$ there is a unique geodesic
$[x,y]$  in $\Aa$  connecting them. It is simply the segment $\{tx+(1-t)y, \: t\in [0, 1]\}$ (\cite[2.5.4 (ii) and 2.5.13]{BT1}).

Let $D$ be a chamber in $\Aa$. We denote by $cl(C,D)$ the union of the closures of the chambers in $\Aa$ that belong to a minimal gallery between $C$ and $D$. This set is called the \emph{enclos} of $C$ and $D$ in \cite[2.4]{BT1}. It is  convex by \cite[Proposition 2.4.5]{BT1}.

Now let $\mathscr{C}$ denote any of the two subcomplexes in the assertion. Obviously $\overline{C} \subseteq \mathscr{C}$. Let $D$ be any chamber in $\mathscr{C}$ and let $(C = C_0, C_1, ..., C_\ell = D)$ be any minimal gallery. Since $\d(C,C_i) = i$ for any $0 \leq i \leq \ell$ we see that all chambers $C_0, \ldots, C_\ell$ belong to $\mathscr{C}$. It follows that the whole enclos $cl(C, D)$ is contained in $\mathscr{C}$. This ensures that $\mathscr{C}$ is star-like and hence contractible.
\end{proof}

\subsection{\label{pro-p}}

We denote by $\tilde\W$ the quotient of $N_\Gp(\Tp)$   by $\Tp^1$ (notation in section \ref{subsec:inclusion}) and obtain the exact sequence
$0\rightarrow \Tp^0/\Tp^1 \rightarrow \tilde\W\rightarrow \W\rightarrow 0$. It is convenient in the following to fix a lift $\tilde{v} \in \tilde\W$ of any $v \in \W$ as well as a lift $\hat{w} \in N_\Gp(\Tp)$ for any $w \in \tilde\W$. By \cite[Theorem 1]{Vig} the group $\Gp$ is the disjoint union of the double cosets $\I\hat w \I$ for all $w\in \tilde \W$. The length function $\ell$ on $\W$ pulls back to a length function $\ell$ on $\tilde\W$  (\cite[Proposition 1]{Vig}).

Fix a facet $F$  contained in $\overline C$.
Define $\tilde{ \W}_F$, resp.\  $\tilde{ \W}_F^\dagger$, to be the preimage in $\tilde \W$ of $\W_F$, resp.\ $ \W_F^\dagger$.
Their  lifts provide respective systems of representatives of the double cosets in $\I\backslash {\mathbf G}_F^\circ(\mathfrak O)/\I$
and  $\I\backslash \Pp_F^\dagger/\I$. We also let $\tilde{\Omega}_F \subseteq \tilde{\W}$ denote the preimage of $\Omega_F$.

\begin{rema}\phantomsection\label{repr-I}
\begin{itemize}
\item[1.] Note that the set $\{\tilde{d}\}_{d \in \Dd_F}$ is a system of representatives of the left cosets in $\tilde\W/  \tilde{ \W}_F$ and  that $\{\hat{\tilde{d}}\}_{d \in \Dd_F}$ is a system of representatives of the double cosets in $\I\backslash \Gp/{\mathbf G}_F^\circ(\mathfrak O)$. Similarly, $\Dd_F^\dagger$  provides a system of representatives of the double cosets in $\I\backslash \Gp/\Pp_F^\dagger$.

\item[2.] Together with Remark \ref{rema:transitivity} and the Bruhat decomposition for $\Gp_{aff}$, this decomposition of $\Gp$ into double cosets modulo $\I$ and $\Pp_F^\dagger$ implies that any facet in $\mathscr X$ is $\I$-conjugate to a unique facet in $\Aa$.
\end{itemize}
\end{rema}

\subsection{\label{subsec:defining-relations}}

A basis for $\Hh$ (resp. $\Hh'$) is given by the characteristic functions of the double cosets $\I \backslash \Gp/\I$ (resp. $\I' \backslash \Gp/\I'$). For $w\in \tilde\W$ (resp. $w\in \W$), denote by $\tau_w$ (resp. $\tau'_w$) the corresponding characteristic function.

The defining relations of $\Hh'$ are the braid relations (\cite[2.1 and 3.2 (a)]{Lu})
\begin{equation}\label{braid-iwahori}
\tau'_{w} \tau'_{w'} = \tau'_{ww'} \qquad\textrm{ for $w,w'\in \W$ such that } \ \ell(ww')=\ell(w)+\ell(w')
\end{equation}
together with the quadratic relations
\begin{equation}\label{quadratic-iwahori}
(\tau'_s)^2= (q-1)\tau'_s+q
\end{equation}
for all $s\in S_{aff}$ (\cite[3.2]{Lu}).

The defining relations of $\Hh$ are the
braid relations (see \cite[Theorem 1]{Vig}) \begin{equation}\label{braid}
\tau_{w} \tau_{w'} = \tau_{ww'} \qquad\textrm{ for $w,w'\in \tilde \W$ such that } \ \ell(ww')=\ell(w)+\ell(w')
\end{equation}
together with a modified form of the
quadratic relations \eqref{quadratic} which we introduce below after some preliminaries. As part of a Chevalley basis we have (cf.\ \cite[3.2]{BT2}, \cite[II.1.3]{Jan}, \cite{Ste}), for any root $\alpha \in \Phi$, a homomorphism $\varphi_\alpha : {\rm SL}_2 \longrightarrow \mathbf{G}_{x_0}$ of $\mathfrak{O}$-group schemes which restricts to isomorphisms
\begin{equation*}
    \{ \begin{pmatrix}
    1 & \ast \\ 0 & 1
    \end{pmatrix}
    \}
    \xrightarrow{\;\cong\;} \Uu_\alpha
    \qquad\text{and}\qquad
    \{ \begin{pmatrix}
    1 & 0 \\ \ast & 1
    \end{pmatrix}
    \}
    \xrightarrow{\;\cong\;} \Uu_{-\alpha} \ .
\end{equation*}
Moreover, one has $\coroot (t) = \varphi_\alpha( \bigl( \begin{smallmatrix}
t & 0 \\ 0 & t^{-1}
\end{smallmatrix}
\bigr) )$. For any $s = s_{(\alpha,\mathfrak{h})} \in S_{aff}$ we put
\begin{equation*}
    n_s := \varphi_\alpha (
    \begin{pmatrix}
    0 & \pi^\mathfrak{h} \\
    - \pi^{-\mathfrak{h}} & 0
    \end{pmatrix}
    ) \in N_\Gp(\Tp) \ .
\end{equation*}
We have $n_s^2 = \coroot(-1) \in \Tp^0$ and $n_s \Tp^0 = s \in \W$. In the following we make the choice $\tilde s := n_s \Tp^1$ and $\hat{\tilde{s}} := n_s$. We also let the subtorus $\Tp_s \subseteq \Tp$ denote the image (in the sense of algebraic groups) of the cocharacter $\coroot$. Its group of $\mathbb{F}_q$-rational points $\Tp_s(\mathbb{F}_q)$ is a subgroup of $\Tp^0 / \Tp^1$ isomorphic to $\mathbb{F}_q^\times$.
The quadratic relations in $\Hh$ are:
\begin{equation}\label{quadratic}
    \tau_{\tilde s}^2 = q \tau_{{\tilde s}^2} + \tau_{\tilde s} \theta_s \qquad\text{for any $s \in S_{aff}$},
\end{equation}
where
\begin{equation*}
    \theta_s := \sum_{t \in \Tp_s(\mathbb{F}_q)} \tau_t
\end{equation*}
(cf.\ \cite[Thm. 1]{Vig}).
A general element $w \in \tilde\W$ can be decomposed into $w = \omega \tilde{s_1} \ldots \tilde{s_\ell}$ with $\omega \in \tilde\Omega$, $s_i \in S_{aff}$, and $\ell = \ell(w)$.  The braid relations imply $\tau_w = \tau_\omega \tau_{\tilde{s_1}} \ldots \tau_{\tilde{s_\ell}}$.

\begin{rema} \label{rema:invertible}
By \eqref{braid},  a basis element $\tau_w$  for $w\in \tilde \W$ with length zero is invertible in $\Hh$. For $s\in S_{aff}$, we have $\tau_{\tilde s}(\tau_{\tilde s}-\theta_s) =q \tau_{\check\alpha(-1)}$ and therefore $ \tau_{\tilde s}$ is invertible if and only if $q$ is invertible in $k$. All basis elements $\tau_w$  for $w\in \tilde \W$ are invertible in $\Hh$ if and only if $q$ is invertible in $k$. Likewise, all basis elements $\tau'_w$  for $w\in  \W$ are invertible in $\Hh'$ if and only if $q$ is invertible in $k$.

\end{rema}
We set $\tau_{\tilde s}^\ast := \tau_{{\tilde s}^2} (\tau_{\tilde s} - \theta_s) = \tau_{\tilde{s}^{-1}} - \theta_s$.
Define
\begin{equation}\label{f:ast}
    \tau_w^\ast := \tau_{\tilde{s_\ell}}^\ast \ldots \tau_{\tilde{s_1}}^\ast \tau_{\omega^{-1}} \ .
\end{equation}
According to \cite[Cor. 2]{Vig} the element $\tau_w^\ast$ only depends on $w$ (and not on its decomposition) and there is a unique involutive automorphism $\upiota$ of $\Hh$ such that
\begin{equation*}
    \upiota(\tau_w) = (-1)^{\ell(w)} \tau_{w^{-1}}^\ast \qquad\text{for any $w \in \tilde\W$}.
\end{equation*}
Note that
\begin{equation*}
    \upiota(\tau_\omega) = \tau_\omega \quad\text{for any $\omega \in \tilde\Omega$ and}\quad \upiota(\tau_{\tilde s}) = \theta_s - \tau_{\tilde s} \quad\text{for any $s \in S_{aff}$}.
\end{equation*}
Obviously, $\upiota$ restricts to an automorphism of  $\Hh_F^\dagger$ for any facet $F$ contained in $\overline C$.

\begin{rema}\label{rema:involution-iwahori}
The corresponding involution $\upiota'$ on $\Hh'$  carries $\tau'_s$ to $-(\tau'_s+1-q)$ for $s\in S_{aff}$ and fixes $\tau'_\omega$ for all $\omega\in \Omega$. It coincides with the well-known Iwahori-Matsumoto involution \cite[2.2]{JanIM} (which is usually defined when $q$ is invertible in $k$).  For $w\in \W$, there is also an element $\tau'^\ast_w$ defined similarly as \eqref{f:ast} and satisfying $\iota'(\tau'_w)=(-1)^{\ell(w)}\tau'^\ast_{w^{-1}}$.
\end{rema}

\subsection{\label{subsec:bijproof}}

We now study the maps \eqref{f:F-map} and  \eqref{f:F-map-iwahori} and will first introduce a variant of them in Proposition \ref{F-mapBis}. We still work with the fixed facet $F$  contained in $\overline C$.
Extending functions on  ${\mathbf G}_F^\circ(\mathfrak O)$ by zero to $\Gp$ induces ${\mathbf G}_F^\circ(\mathfrak O)$-equivariant embeddings
\begin{equation*}
    \mathbf{X}_F := \ind^{{\mathbf G}_F^\circ(\mathfrak O)}_\I(1) \hookrightarrow \mathbf{X} \quad \textrm{and} \quad   \mathbf{X}'_F := \ind^{{\mathbf G}_F^\circ(\mathfrak O)}_{\I'}(1) \hookrightarrow \mathbf{X}' \ .
\end{equation*}
We introduce the $k$-algebras
\begin{equation*}
    \Hh_F := \End_{k[{\mathbf G}_F^\circ(\mathfrak O)]}(\mathbf{X}_F)^{\mathrm{op}} = [\ind^{{\mathbf G}_F^\circ(\mathfrak O)}_\I(1)]^\I \quad  \textrm{and} \quad  \Hh'_F := \End_{k[{\mathbf G}_F^\circ(\mathfrak O)]}(\mathbf{X}'_F)^{\mathrm{op}} = [\ind^{{\mathbf G}_F^\circ(\mathfrak O)}_{\I'}(1)]^{\I'} \ .
\end{equation*}
They are naturally subalgebras of $\Hh$ and $\Hh'$, respectively, via the extension by zero embedding.
In fact, $\mathbf{X}_F$ is a sub-${\mathbf G}_F^\circ(\mathfrak O)$-representation of the representation $\mathbf{X}_F^\dagger$
of $\Pp_F^\dagger$ introduced in \eqref{XF+}, and $\Hh_F$ is naturally a subalgebra of $\Hh_F^\dagger$. Likewise $\mathbf{X}'_F$ is a sub-${\mathbf G}_F^\circ(\mathfrak O)$-representation of  ${\mathbf{X}'_F}^{\!\!\!\dagger}$, and $\Hh'_F$ is naturally a subalgebra of ${\Hh_F'}^{\!\!\!\dagger}$.

A basis for $\Hh_F$ (resp.\ $\Hh'_F$) is given by $(\tau_w)_{w\in \tilde{  \W}_F}$ (resp.\  $(\tau'_w)_{w\in {\W}_F}$); a basis for $\Hh^\dagger_F$ (resp.\ ${\Hh_F'}^{\!\!\!\dagger}$) is given by $(\tau_w)_{w\in \tilde{\W}_F^\dagger}$ (resp.\ $(\tau'_w)_{w\in {\W}_F^\dagger}$).

\begin{lemm}\phantomsection\label{HF+HF}
\begin{itemize}
\item[i.] The Hecke algebra  $\Hh^\dagger_F$ is free as a left as well as a right $\Hh_F$-module, with basis $(\tau_{\tilde\omega})_{\omega\in \Omega_F}$. In fact, we have $\Hh^\dagger_F = \Hh_F \otimes_{k[\Tp^0/\Tp^1]} k[\tilde{\Omega}_F]$ as $(\Hh_F, k[\tilde{\Omega}_F])$-bimodules. As an algebra, $\Hh^\dagger_F$ is isomorphic to the twisted tensor product where the product is given by
$$
(\tau_w\otimes \omega)(\tau_{w'}\otimes \omega')=\tau_w \tau_{\omega w' \omega^{-1}}\otimes \omega\omega'\textrm{ for $w,w'\in \tilde \W_F$ and $\omega, \omega'\in \tilde{\Omega}_F$}
$$
\item[ii.] The algebra $\Hh_F$ is a direct summand of $\Hh_F^\dagger$ as an $(\Hh_F, \Hh_F)$-bimodule.
\end{itemize}
\end{lemm}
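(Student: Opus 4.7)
My plan for part i.\ is to exploit the semidirect product decomposition $\W_F^\dagger = \W_F \rtimes \Omega_F$ (established in the discussion preceding Lemma \ref{bruhat-finite}) lifted to $\tilde\W$. Fix a set-theoretic section $\omega \mapsto \tilde\omega$ of $\tilde\Omega_F \twoheadrightarrow \Omega_F$. Every $w \in \tilde\W_F^\dagger$ maps in $\W_F^\dagger$ to a unique product $\bar v\cdot\omega$ with $\bar v \in \W_F$, $\omega\in \Omega_F$, so $w = v\tilde\omega$ for a unique $v \in \tilde\W_F$. Since $\ell(\tilde\omega)=0$ we have $\ell(w)=\ell(v)$, and the braid relation \eqref{braid} gives $\tau_w = \tau_v\tau_{\tilde\omega}$. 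This immediately yields that $(\tau_{\tilde\omega})_{\omega\in\Omega_F}$ is a basis of $\Hh_F^\dagger$ as a left $\Hh_F$-module; the right module statement follows symmetrically from the decomposition $w = \tilde\omega v'$ with unique $v' \in \tilde\W_F$, using that $\tilde\Omega_F$ normalizes $\tilde\W_F$.

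Next I would identify $\Hh_F^\dagger$ with $\Hh_F \otimes_{k[\Tp^0/\Tp^1]} k[\tilde\Omega_F]$. Both $\Hh_F$ and $k[\tilde\Omega_F]$ naturally contain $k[\Tp^0/\Tp^1]$ (since $\Tp^0/\Tp^1 \subseteq \tilde\W_F \cap \tilde\Omega_F$ consists of length-zero elements), so the multiplication map $\Hh_F \otimes_{k[\Tp^0/\Tp^1]} k[\tilde\Omega_F] \to \Hh_F^\dagger$, $h\otimes x \mapsto h\tau_x$, is a well-defined morphism of $(\Hh_F,k[\tilde\Omega_F])$-bimodules. Because $\tilde\Omega_F$ is an extension of $\Omega_F$ by $\Tp^0/\Tp^1$, the lifts $\tilde\omega$ form a free basis of $k[\tilde\Omega_F]$ over $k[\Tp^0/\Tp^1]$; hence the tensor product is a free left $\Hh_F$-module on $(1\otimes\tilde\omega)_{\omega\in\Omega_F}$, and the multiplication map carries this bijectively onto the basis of $\Hh_F^\dagger$ already found. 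For the twisted product formula, I would compute directly inside $\Hh_F^\dagger$, for $w,w'\in\tilde\W_F$ and $\omega,\omega'\in\tilde\Omega_F$,
\[(\tau_w\tau_\omega)(\tau_{w'}\tau_{\omega'}) = \tau_w\bigl(\tau_\omega\tau_{w'}\tau_{\omega^{-1}}\bigr)\tau_{\omega\omega'} = \tau_w\tau_{\omega w'\omega^{-1}}\tau_{\omega\omega'},\]
where $\tau_\omega\tau_{\omega'} = \tau_{\omega\omega'}$ and $\tau_\omega\tau_{w'}\tau_{\omega^{-1}} = \tau_{\omega w'\omega^{-1}}$ both follow from \eqref{braid}, since $\omega,\omega^{-1}\in\tilde\Omega_F$ have length zero and conjugation by length-zero elements preserves $\ell$.

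For part ii., the observation is that the direct sum decomposition $\Hh_F^\dagger = \bigoplus_{\omega\in\Omega_F} \Hh_F\,\tau_{\tilde\omega}$ from part i.\ is in fact a decomposition of $(\Hh_F,\Hh_F)$-bimodules and not merely of left $\Hh_F$-modules. Indeed, each summand $\Hh_F\,\tau_{\tilde\omega}$ is stable under right multiplication by $\Hh_F$ because $\tau_{\tilde\omega}\Hh_F = \Hh_F\,\tau_{\tilde\omega}$, which is just the normalization $\tilde\omega\,\tilde\W_F\,\tilde\omega^{-1} = \tilde\W_F$ translated via the braid relations. The summand indexed by $\omega = 1$ is exactly $\Hh_F$, hence a direct bimodule summand with complement $\bigoplus_{\omega\neq 1} \Hh_F\,\tau_{\tilde\omega}$. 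The one subtlety throughout the argument is the interaction at the overlap $\Tp^0/\Tp^1 = \tilde\W_F \cap \tilde\Omega_F$, which forces the tensor product in part i.\ to be formed over $k[\Tp^0/\Tp^1]$ rather than over $k$; beyond that bookkeeping I expect no real obstacle.
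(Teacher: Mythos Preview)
Your proof is correct and follows essentially the same approach as the paper's own proof, which is very terse: the paper simply notes that $\tilde\W_F^\dagger/\tilde\W_F = \Omega_F$ and that elements of $\tilde\Omega_F$ have length $0$, then invokes the braid relations \eqref{braid}; for part ii.\ it observes that $\bigoplus_{\omega\neq 1}\Hh_F\tau_{\tilde\omega}$ is an $(\Hh_F,\Hh_F)$-bimodule ``by definition of $\Omega_F$'' (i.e., because $\Omega_F$ normalizes $\W_F$). Your write-up spells out exactly these steps in detail, including the unique factorization $w = v\tilde\omega$, the identification of the overlap $\tilde\W_F\cap\tilde\Omega_F = \Tp^0/\Tp^1$ forcing the tensor over $k[\Tp^0/\Tp^1]$, and the normalization $\tau_{\tilde\omega}\Hh_F = \Hh_F\tau_{\tilde\omega}$ underlying the bimodule splitting.
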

\begin{proof}
We have $\tilde{\W}_F^\dagger / \tilde{\W}_F = \W_F^\dagger / \W_F = \Omega_F$. Moreover, the elements in $\tilde{\Omega}_F$ have length $0$. It proves i. using \eqref{braid}.
To prove ii, note that the direct sum of all $\Hh_F\tau_{\tilde\omega}
$ for $\omega\in \Omega_F$, $\omega\neq 1$ is a  $(\Hh_F, \Hh_F)$-bimodule by definition of $\Omega_F$.
\end{proof}

\begin{prop}\phantomsection\label{prop:free}
\begin{itemize}
\item[i.] The Hecke algebra  $\Hh$ is free as a left as well as a right $\Hh_F^\dagger$-module.
It is also free as a left as well as a right $\Hh_F$-module.
\item[ii.] The algebra $\Hh_F^\dagger$ (resp.\ $\Hh_F$) is a direct summand of $\Hh$ as an $(\Hh_F^\dagger, \Hh_F^\dagger)$-bimodule (resp.\  a $(\Hh_F, \Hh_F)$-bimodule).
\end{itemize}
\end{prop}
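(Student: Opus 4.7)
The plan is to deduce both parts directly from the coset decomposition of $\tilde{\W}$ given by Proposition \ref{representants} and Lemma \ref{DF+}, combined with the braid relations \eqref{braid} and the pro-$p$ quadratic relations \eqref{quadratic}.

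For (i), I would observe that by Remark \ref{repr-I}.1 the lifts $\{\tilde d\}_{d \in \Dd_F^\dagger}$ form a system of representatives for $\tilde\W / \tilde\W_F^\dagger$, and that the length additivity $\ell(dv) = \ell(d) + \ell(v)$ of Lemma \ref{DF+}.i pulls back to $\tilde\W$ because $\ell$ is inflated from $\W$. Combined with \eqref{braid}, this shows that every basis element $\tau_w$ of $\Hh$ factors uniquely as $\tau_{\tilde d}\tau_v$ with $d \in \Dd_F^\dagger$ and $v \in \tilde\W_F^\dagger$, giving $(\tau_{\tilde d})_{d \in \Dd_F^\dagger}$ as a free right $\Hh_F^\dagger$-basis of $\Hh$. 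Left freeness follows symmetrically from the inverted decomposition $\tilde\W = \bigsqcup_d \tilde\W_F^\dagger \cdot \tilde d^{-1}$, whose length additivity is inherited via $\ell(w^{-1}) = \ell(w)$. The case of $\Hh_F$ is identical with $\Dd_F$ in place of $\Dd_F^\dagger$.

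For (ii), I would let $\pi : \Hh \to \Hh_F^\dagger$ be the $k$-linear projection with $\pi(\tau_w) = \tau_w$ for $w \in \tilde\W_F^\dagger$ and $\pi(\tau_w) = 0$ otherwise; it retracts the inclusion and is right $\Hh_F^\dagger$-linear by (i). Left $\Hh_F^\dagger$-linearity, which is all that remains, amounts to showing that for any $v \in \tilde\W_F^\dagger$ and $w \in \tilde\W \setminus \tilde\W_F^\dagger$ the support of $\tau_v\tau_w$ avoids $\tilde\W_F^\dagger$. Decomposing $\tau_v$ into $\tau_{\tilde s}$-factors for $s \in S_F$ and length-$0$ factors (from $\tilde\Omega_F$ and the kernel $\Tp^0/\Tp^1$), then inducting on $\ell(v)$, this reduces to the key claim: for any $s \in S_{aff}$ and $w \in \tilde\W$, the support of $\tau_{\tilde s}\tau_w$ consists of elements of $\tilde\W$ whose image in $\W$ lies in $\{\bar w, s\bar w\}$. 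This is immediate from \eqref{braid} when $\ell(sw) > \ell(w)$, and from \eqref{quadratic} when $\ell(sw) < \ell(w)$ by writing $w = \tilde s \cdot v'$ with $\ell(v') = \ell(w) - 1$ and using that both $\tilde s^2$ and the support of $\theta_s$ lie in $\ker(\tilde\W \to \W) = \Tp^0/\Tp^1$. Since $S_F \subseteq \W_F^\dagger$, the subset $\{\bar w, s\bar w\}$ is disjoint from $\W_F^\dagger$ as soon as $\bar w$ is, which closes the induction; the same argument then handles $\Hh_F$.

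The main technical subtlety will be the extra term $\theta_s$ in the pro-$p$ quadratic relation \eqref{quadratic}, absent from the classical Iwahori relation \eqref{quadratic-iwahori}. The point, however, is that $\theta_s$ is supported in the kernel $\Tp^0/\Tp^1$ of $\tilde\W \twoheadrightarrow \W$, so each extra term it introduces projects to the same element of $\W$. Thus the entire support computation reduces cleanly to the combinatorial picture in $\W$, where the stability of $\W_F^\dagger$ under left multiplication by $S_F$ makes the conclusion transparent.
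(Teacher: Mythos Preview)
Your proof is correct and follows essentially the same route as the paper. Part (i) is identical; for (ii), both you and the paper show that the $k$-span of $\{\tau_w : w \notin \tilde\W_F^\dagger\}$ is closed under left multiplication by the generators of $\Hh_F^\dagger$ via the braid and quadratic relations---the only difference being that the paper organizes this check around the right-module basis $(\tau_{\tilde d})_{d\in\Dd_F}$ and invokes Proposition~\ref{representants}.ii for the subcase $sd\notin\Dd_F$, whereas you track images in $\W$ and use directly that $\W_F^\dagger$ is a subgroup, which slightly streamlines that subcase.
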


\begin{proof}
i. By Lemma \ref{HF+HF}, it is enough to prove that $\Hh$ is free as a module over $\Hh_F^\dagger$.
The braid relations and the property of the system of representatives $\Dd_F^\dagger$ (Lemma \ref{DF+}.i) ensure that $\Hh$ is a free right module over $\Hh_F^\dagger$ with basis
$({\tau_{\tilde{d}}})_{d\in \Dd_F^\dagger}$. Likewise, it is  a free left module over $\Hh_F^\dagger$ with basis
$({\tau_{\tilde{d}^{-1}}})_{d\in \Dd_F^\dagger}$.

ii. We write the proof in the case of $\Hh_F$. Let $M_F$ be the direct sum of the $\tau_{\tilde d}\Hh_F$ for $d\in \Dd_F$, $d\neq 1$.
We prove that it is an  $(\Hh_F, \Hh_F)$-bimodule  by checking that it is stable under left multiplication by $\Hh_F$.
Because of the braid relations \eqref{braid}, $\Hh_F$ is generated by all $\tau_{\tilde w}
$ for $\tilde w\in\tilde \W_F$ and $\ell(\tilde w)\in \{0, 1\}$.
Let $d\in \Dd_F$ with $d\neq 1$.
If $\ell(\tilde w)=0$, then $\tilde w$ is a lift in $\tilde \W$ of  $1$ and   $\tau_{\tilde w}\tau_{\tilde d}=\tau_{\tilde d}\tau_{\tilde d^{-1} \tilde w\tilde d }\in M_F$.
If $\ell(\tilde w)=1$, then $\tilde w = \tilde s$ is a lift in $\tilde \W$ of an element $s$ in $S_{F}$.
\begin{itemize}
\item[a)]  Suppose $\ell(sd)=\ell(d)+1$. Then $\tau_{\tilde s}\tau_{\tilde d} = \tau_{\tilde {s}\tilde d}$ and $sd \neq 1$. If $sd \in \Dd_F$ then $\tau_{\tilde {s}\tilde d} \in M_F$  .
If $sd \not\in \Dd_F$ then, by Proposition \ref{representants} ii., there is a $v \in \W_F$ such that $sd=dv$ and $\ell(sd)=\ell(d)+\ell(v)$ so
that $\tau_{\tilde {s}\tilde {d}}=\tau_{\tilde d}\tau_{\tilde v}$ belongs to $M_F$.

\item[b)] Suppose $\ell(sd)=\ell(d)-1$. Note that $sd$ cannot be $1$ since $d\in \Dd_F$ and $d\neq 1$. Then by
 \cite[Thm.\ 1(ii)]{Vig}  (see also \eqref{quadratic}) we have $\tau_{\tilde s}^2 \in k[\Tp^0 / \Tp^1] \tau_{ \tilde s}+ k[\Tp^0 / \Tp^1]$ and hence
 $\tau_{\tilde s}\tau_{\tilde d}\in k[\Tp^0 / \Tp^1] \tau_{ \tilde d}+ k[\Tp^0 / \Tp^1]\tau_{ \tilde s\tilde d}$.
 Both $d$ and $sd$ lie in $\Dd_F-\{1\}$ by Proposition \ref{representants} ii. So  $\tau_{\tilde s}\tau_{\tilde d}\in M_F$.
\end{itemize}

Similarly, one proves  that
$\Hh$ is the direct sum of the
$(\Hh_F^\dagger,  \Hh_F^\dagger)$-bimodules $\Hh_F^\dagger$ and
$M_F^\dagger$  where $M_F^\dagger$ is defined to be the direct sum  of the $\tau_{\tilde d}\Hh_F^\dagger$ for $d\in \Dd_F^\dagger$, $d\neq 1$.
\end{proof}

\begin{rema}\label{rema:libre}
Recall that the right $\Hh_F^\dagger$-module $\Hh (\epsilon_F)$ was defined in section {\ref{subsubsec:epsilonF}} by
twisting the action of $\Hh_F^\dagger$ by the involution $j_F$. Proposition \ref{prop:free}  implies that the right $\Hh_F^\dagger$-module $\Hh (\epsilon_F)$  is free.
\end{rema}

\begin{rema}\label{rema:iwahori-3}
The results of Lemma \ref{HF+HF} and Proposition \ref{prop:free} are valid with $\Hh'$, $\Hh'_F$, ${\Hh'_F}^{\!\!\!\dagger}$ instead of, respectively, $\Hh$, $\Hh_F$, ${\Hh}_F^\dagger$. In particular,
${\Hh'_F}^{\!\!\!\dagger}$ is free as a left as well as a right $\Hh'_F$-module, with basis $(\tau'_{\omega})_{\omega\in \Omega_F}$, and $\Hh'$ is a free right module over ${\Hh}'^\dagger_F$ with basis $(\tau'_{d})_{d\in \Dd_F^\dagger}$. In the proof of Proposition \ref{prop:free}.ii.b) the relation \eqref{quadratic} is replaced by the classical quadratic relation \eqref{quadratic-iwahori}.
\end{rema}

There are the natural maps
\begin{equation*}
   \eta_F :  \mathbf{X}_F \otimes_{\Hh_F} \Hh_F^\dagger \longrightarrow  \mathbf{X}_F ^\dagger, \:\:\:  \:\:
   \eta'_F :  \mathbf{X}'_F \otimes_{\Hh'_F} {{\Hh}'_F}^{\!\!\!\dagger}  \longrightarrow  {\mathbf{X}'_F}^{\!\!\!\dagger}
\end{equation*}
of $({\mathbf G}_F^\circ(\mathfrak O), \Hh_F^\dagger)$- and
$({\mathbf G}_F^\circ(\mathfrak O), {\Hh'_F}^{\!\!\!\dagger})$-bimodules, respectively,
defined by $f \otimes h  \longmapsto h(f)$.

\begin{lemm} \label{eta}
The maps $\eta_F$ and $\eta'_F$ are bijective.
\end{lemm}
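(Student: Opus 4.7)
The plan is to verify bijectivity of $\eta_F$ by producing matching $\Omega_F$-indexed direct sum decompositions of source and target on which $\eta_F$ acts as the identity (up to a translation).

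By Lemma \ref{HF+HF}.i, $\Hh_F^\dagger$ is free as a left $\Hh_F$-module with basis $(\tau_{\tilde\omega})_{\omega \in \Omega_F}$, so
\begin{equation*}
\mathbf{X}_F \otimes_{\Hh_F} \Hh_F^\dagger \;=\; \bigoplus_{\omega \in \Omega_F} \mathbf{X}_F \otimes \tau_{\tilde\omega}.
\end{equation*}
For the target, I would combine the Bruhat decompositions in Lemma \ref{bruhat-finite} with the semi-direct product structure $\W_F^\dagger = \Omega_F \ltimes \W_F$ to write
\begin{equation*}
\Pp_F^\dagger \;=\; \bigsqcup_{\omega \in \Omega_F} {\mathbf G}_F^\circ(\mathfrak O)\, \hat{\tilde\omega}.
\end{equation*}
Since each $\omega \in \Omega_F \subseteq \Omega$ stabilizes the chamber $C$, the representative $\hat{\tilde\omega}$ normalizes $\I = \I_C$, so the right $\I$-cosets of $\Pp_F^\dagger$ partition according to the $\omega$ appearing in the above decomposition. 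Extending by zero therefore produces
\begin{equation*}
\mathbf{X}_F^\dagger \;=\; \bigoplus_{\omega \in \Omega_F} \mathbf{X}_F^{(\omega)},
\end{equation*}
where $\mathbf{X}_F^{(\omega)}$ denotes the subspace of functions in $\mathbf{X}_F^\dagger$ supported on ${\mathbf G}_F^\circ(\mathfrak O) \hat{\tilde\omega}$, which is canonically identified with $\mathbf{X}_F$ by right translation by $\hat{\tilde\omega}^{-1}$.

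Next I would unwind the action of $\tau_{\tilde\omega}$ on $\mathbf{X}_F$ viewed inside $\mathbf{X}_F^\dagger$. Since $\omega$ has length zero, $\I \hat{\tilde\omega} \I = \hat{\tilde\omega} \I$, and so the endomorphism $\tau_{\tilde\omega}$ of $\mathbf{X}$ satisfies $\tau_{\tilde\omega}(\mathrm{char}_\I) = \mathrm{char}_{\hat{\tilde\omega}\I}$; by $\Gp$-equivariance it sends $\mathrm{char}_{g\I}$ to $\mathrm{char}_{g\hat{\tilde\omega}\I}$ for every $g$. In particular, restricted to $\mathbf{X}_F \subseteq \mathbf{X}_F^\dagger$ it is right translation by $\hat{\tilde\omega}$, which carries $\mathbf{X}_F = \mathbf{X}_F^{(e)}$ bijectively onto $\mathbf{X}_F^{(\omega)}$. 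Thus $\eta_F$ carries the summand $\mathbf{X}_F \otimes \tau_{\tilde\omega}$ isomorphically onto $\mathbf{X}_F^{(\omega)}$, and summing over $\omega \in \Omega_F$ shows that $\eta_F$ is bijective.

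The proof for $\eta'_F$ is \emph{mutatis mutandis} the same: Remark \ref{rema:iwahori-3} supplies the analog of Lemma \ref{HF+HF}.i giving a basis $(\tau'_\omega)_{\omega \in \Omega_F}$ of ${\Hh'_F}^{\!\!\!\dagger}$ as a left $\Hh'_F$-module, and $\hat{\tilde\omega}$ likewise normalizes $\I' = \Tp^0 \I$ for $\omega \in \Omega_F$, so the same decomposition of $\Pp_F^\dagger$ into $\I'$-right-cosets produces the matching splitting of $(\mathbf{X}')^\dagger_F$. The only subtle point is the verification that length-zero elements of $\tilde\W_F^\dagger$ normalize $\I$ (and $\I'$), which ensures that translation by $\hat{\tilde\omega}$ permutes the $\I$-cosets cleanly and makes the two direct sum decompositions line up; once this is in hand the bijectivity is immediate and no delicate computation is required.
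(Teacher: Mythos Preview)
Your proof is correct and follows essentially the same approach as the paper's own proof. Both arguments use Lemma~\ref{HF+HF}.i to decompose the source by $\Omega_F$, use the semi-direct product decomposition $\Pp_F^\dagger \cong {\mathbf G}_F^\circ(\mathfrak O) \rtimes \Omega_F$ from Lemma~\ref{bruhat-finite} to decompose the target, and invoke the fact that $\hat{\tilde\omega}$ normalizes $\I$ (resp.\ $\I'$) to see that $\tau_{\tilde\omega}$ acts by right translation, matching up the pieces; the paper phrases the final step as bijectivity of the map ${\mathbf G}_F^\circ(\mathfrak O)/\I \times \Omega_F \to \Pp_F^\dagger/\I$, $(g\I,\omega) \mapsto g\hat\omega\I$, which is exactly your $\mathbf{X}_F^{(\omega)}$ decomposition expressed on cosets.
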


\begin{proof}
By Lemma \ref{HF+HF} it suffices to check that the maps
\begin{equation*}
   \mathbf{X}_F \otimes_{k[\Tp^0 /\Tp^1]} k[\tilde{\Omega}_F]  \longrightarrow  \mathbf{X}_F ^\dagger \quad\textrm{and}\quad
  \mathbf{X}'_F \otimes_{k} k[{\Omega}_F]  \longrightarrow  {\mathbf{X}'_F} ^{\!\!\!\dagger}
\end{equation*}
defined by $f \otimes \omega  \longmapsto \tau_\omega(f)$ and $f \otimes \omega  \longmapsto \tau'_\omega(f)$, respectively, are bijective. This follows from the bijectivity of the maps
$$
\begin{array}{rl}
    {\mathbf G}_F^\circ(\mathfrak O) /\I \times \Omega_F & \longrightarrow \Pp_F^\dagger / \I \cr
    (g\I, \omega) & \longmapsto g \I \hat{\omega} \I = g \hat{\omega} \I\cr
\end{array} \quad\textrm{and}\quad
\begin{array}{rl}
    {\mathbf G}_F^\circ(\mathfrak O) /\I' \times \Omega_F & \longrightarrow \Pp_F^\dagger / \I' \cr
    (g\I', \omega) & \longmapsto g \I '\hat{\omega} \I '= g \hat{\omega} \I'\cr
\end{array}
$$
since $\hat\omega$ normalizes $\I'$ and $\I$ (\cite[Lemma 6]{Vig}) and since $\Pp_F^\dagger \cong {\mathbf G}_F^\circ(\mathfrak O) \rtimes \Omega_F$ by Lemma \ref{bruhat-finite}.
\end{proof}

We now prove the following proposition, which, combined with Lemma  \ref{eta} gives the bijectivity of the maps \eqref{f:F-map} in Lemma \ref{F-map} and of \eqref{f:F-map-iwahori}.

\begin{prop}\label{F-mapBis}
The maps
\begin{equation}\label{f:F-mapBis}
\begin{array}{rl}
\mathbf{X}_F \otimes_{\Hh_F} \Hh & \longrightarrow \mathbf{X}^{\I_F} \\
    f \otimes h & \longmapsto h(f)
\end{array}
\quad\textrm{and}\quad
\begin{array}{rl}  \mathbf{X}'_F \otimes_{\Hh'_F} \Hh'& \longrightarrow \mathbf{X'}^{\I_F} \\
    f \otimes h & \longmapsto h(f)
\end{array}
\end{equation}
are well defined isomorphisms of, respectively, $({\mathbf G}_F^\circ(\mathfrak O), \Hh)$- and $({\mathbf G}_F^\circ(\mathfrak O), \Hh')$-bimodules.
\end{prop}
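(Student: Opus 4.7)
Plan. I would first verify well-definedness and equivariance, then establish bijectivity by comparing Bruhat-cell decompositions on both sides. The argument for the $\Hh'$-version is parallel throughout.

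For well-definedness, the map $(f, h) \mapsto h(f)$ is $\Hh_F$-balanced because the inclusion $\Hh_F \hookrightarrow \Hh$ (given by $h_0 \mapsto \ind^\Gp_{{\mathbf G}_F^\circ(\mathfrak O)}(h_0)$) restricts on $\mathbf X_F \subseteq \mathbf X$ to the original action of $h_0$. The image lies in $\mathbf X^{\I_F}$ because $\mathbf X_F \subseteq \mathbf X^{\I_F}$ — since $\I_F \triangleleft {\mathbf G}_F^\circ(\mathfrak O)$ and $\I_F \subseteq \I$, left-$\I$-invariant functions supported on ${\mathbf G}_F^\circ(\mathfrak O)$ are automatically $\I_F$-invariant under the $\Gp$-action — and $\mathbf X^{\I_F}$ is stable under the right $\Hh$-action. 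Equivariance is then immediate.

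For bijectivity, Proposition \ref{prop:free}.i (and Remark \ref{rema:iwahori-3}) gives $\Hh = \bigoplus_{d \in \Dd_F} \Hh_F \tau_{\tilde d^{-1}}$ as a left $\Hh_F$-module (via inversion from the right basis), so $\mathbf X_F \otimes_{\Hh_F} \Hh = \bigoplus_d \mathbf X_F$, with the $d$-summand mapping to functions supported in the Bruhat cell $\I \hat{\tilde d}^{-1} {\mathbf G}_F^\circ(\mathfrak O)$. By the disjoint decomposition $\Gp = \bigsqcup_{d \in \Dd_F} \I \hat{\tilde d}^{-1} {\mathbf G}_F^\circ(\mathfrak O)$ (Remark \ref{repr-I}, after inversion), the different summands land in disjoint cells, and it suffices to show that on each cell, the induced map onto the $\I_F$-invariants is an isomorphism. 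The $\I_F$-invariants on the cell have $k$-basis indexed by the double-coset space $(\hat{\tilde d}\I\hat{\tilde d}^{-1}\cap{\mathbf G}_F^\circ(\mathfrak O))\cdot\I_F \backslash {\mathbf G}_F^\circ(\mathfrak O)$; the crucial structural identity $(\hat{\tilde d}\I\hat{\tilde d}^{-1}\cap{\mathbf G}_F^\circ(\mathfrak O))\cdot\I_F = \I$ matches this with $\I\backslash{\mathbf G}_F^\circ(\mathfrak O)$, which is precisely the basis of $\mathbf X_F$. For $d^{-1}\in\Dd_F^\dagger$, this identity follows from the proof of Proposition \ref{prop:closest}.ii via Lemma \ref{inters} (which gives $\hat{\tilde d}\I\hat{\tilde d}^{-1}\cap\Pp_F^\dagger = \hat{\tilde d}\I\hat{\tilde d}^{-1}\cap{\mathbf G}_F^\circ(\mathfrak O)$); the general case $d\in\Dd_F$ reduces to this by conjugation via an element of $\Omega_F$, whose elements normalize each of $\I$, $\I_F$, and ${\mathbf G}_F^\circ(\mathfrak O)$.

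The main obstacle will be establishing this structural identity for general $d\in\Dd_F$, extending Proposition \ref{prop:closest}.ii beyond its stated range, and then promoting the basis match and the explicit disjoint-support injectivity argument to a genuine cell-wise bijection using left ${\mathbf G}_F^\circ(\mathfrak O)$-equivariance.
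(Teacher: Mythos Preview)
Your strategy is sound, but the execution has a consistent inversion error that garbles the argument. Functions in $\mathbf X$ are right $\I$-invariant, so the support cells must be of the form ${\mathbf G}_F^\circ(\mathfrak O)\hat{\tilde d}^{-1}\I$, not $\I\hat{\tilde d}^{-1}{\mathbf G}_F^\circ(\mathfrak O)$; correspondingly the structural identity you need is $\I_F\cdot(\hat{\tilde d}^{-1}\I\hat{\tilde d}\cap{\mathbf G}_F^\circ(\mathfrak O))=\I$ for $d\in\Dd_F$, not the version with $\hat{\tilde d}\I\hat{\tilde d}^{-1}$. Your attempt to derive this from Proposition~\ref{prop:closest}.ii ``for $d^{-1}\in\Dd_F^\dagger$'' does not parse: for $d\in\Dd_F$ there is no reason that $d^{-1}\in\Dd_F^\dagger$, and conjugation by $\Omega_F$ does not bridge this gap. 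The correct route, which works uniformly for all $d\in\Dd_F$ without any $\Omega_F$-reduction, is the one the paper uses for surjectivity: Lemma~\ref{IC} and \eqref{rootsubgroup} give $\I\subset\I_F\Uu_F^0\subset\I_F\hat{\tilde d}^{-1}\I\hat{\tilde d}$, hence $\I\hat{\tilde d}^{-1}\I=\I_F\hat{\tilde d}^{-1}\I$, which is equivalent to your identity.

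Once this is fixed, your approach and the paper's diverge on injectivity. The paper treats injectivity and surjectivity separately: surjectivity via the identity above, and injectivity by showing each $\tau_{\tilde d^{-1}}:\mathbf X_F\to\mathbf X$ is injective, using either that a nonzero kernel would have nonzero $\I$-invariants (characteristic $p$) or that $\tau_{\tilde d^{-1}}$ is invertible (characteristic $\neq p$). Your basis-matching argument handles both directions at once: $\mathrm{char}_{g\I}\otimes\tau_{\tilde d^{-1}}\mapsto\mathrm{char}_{g\I\hat{\tilde d}^{-1}\I}=\mathrm{char}_{\I_F g\hat{\tilde d}^{-1}\I}$ sets up a bijection between ${\mathbf G}_F^\circ(\mathfrak O)/\I$ and $\I_F\backslash{\mathbf G}_F^\circ(\mathfrak O)\hat{\tilde d}^{-1}\I/\I$ precisely by the structural identity. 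This is arguably cleaner (no case split on the characteristic), but it leans entirely on that identity, whereas the paper's injectivity argument is independent of it.
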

\begin{proof}
That the maps are well defined, ${\mathbf G}_F^\circ(\mathfrak O)$-equivariant, and  $\Hh$- and $\Hh'$-equivariant, respectively, is obtained
exactly as in Lemma \ref{F-map} for \eqref{f:F-map}.

We prove that the first one is injective. Recall that, by Proposition \ref{prop:free}, the space
$\mathbf{X}_F \otimes_{\Hh_F} \Hh$ decomposes as the direct sum of $\mathbf{X}_F \otimes {\tau_{\tilde{d}^{-1}}}$ for all $d\in \Dd_F$.
Each space $\mathbf{X}_F \otimes \tau_{\tilde{d}^{-1}}$ is mapped by \eqref{f:F-mapBis} onto a space of functions with support in
{${\mathbf G}_F^\circ(\mathfrak O) \hat{\tilde{d}}^{-1} \I$}. By  Remark \ref{repr-I} it is therefore enough to check that
${\tau_{\tilde{d}^{-1}}} :  \mathbf{X}_F\rightarrow \mathbf{X}$ is an injective map for any $d\in \Dd_F$: if $k$ has characteristic $p$ it comes (since a nonzero kernel would have nonzero $\I$-invariants), from the fact that its restriction to $\Hh_F=(\mathbf{X}_F)^\I$ is  injective by \eqref{braid} and  \eqref{additive0};  if $k$ has characteristic different from $p$, it comes from the fact that the basis element $\tau_{\tilde{d}^{-1}}$ is invertible in $\Hh$ (Remark \ref{rema:invertible}).

Now check the surjectivity. An element in $\mathbf{X}^{\I_F}$ is a linear combination of characteristic functions of double cosets of the form
$\I_F \backslash \Gp/ \I$. Since ${\{\hat{\tilde{d}}^{-1}\}_{d \in \Dd_F}}$ is a system of representatives of
${\mathbf G}_F^\circ(\mathfrak O) \backslash \Gp / \I$ and $\I_F$ is normal in   ${\mathbf G}_F^\circ(\mathfrak O)$, such a double coset can be written $g \I_F \hat{\tilde{d}}^{-1} \I$  with $g\in {\mathbf G}_F^\circ(\mathfrak O)$. We have to check that its characteristic function  belongs to the image of \eqref{f:F-mapBis}, which amounts to proving that
the characteristic function of $\I_F \hat{\tilde{d}}^{-1} \I$ does, given that \eqref{f:F-mapBis} is ${\mathbf G}_F^\circ(\mathfrak O)$-equivariant. To this end, we prove that $\I_F \hat{\tilde{d}}^{-1} \I = \I \hat{\tilde{d}}^{-1} \I$ by using Lemma \ref{IC} and the inclusion \eqref{rootsubgroup} which together ensure that
\begin{equation*}
    \I\subset \I_F \Uu_F^0 \subset \I_F \hat{\tilde{ d}}^{-1} \Uu_C \hat{\tilde{d}} \subset \I_F \hat{\tilde{d}}^{-1} \I \hat{\tilde{d}} .
\end{equation*}

For the second map, the argument for the injectivity goes through literally if $k$ has characteristic different from $p$. For the characteristic $p$ case,  note, by a similar argument as in Lemma \ref{lemma:iwahori-1}, that
$\Hh'_F= (\mathbf{X}'_F)^{\I'}$ coincides with  $(\mathbf{X}'_F)^{\I}$ so that $\tau'_{d^{-1}}:  \mathbf{X}'_F\rightarrow \mathbf X'$ is injective for any $d\in \Dd_F$. The argument for surjectivity is valid with $\I'$ instead of $\I$ because $\hat d$ normalizes $\Tp^0$ for any $d\in \Dd_F$.
\end{proof}

\section{Frobenius extensions}\label{sec:Frobenius}

\subsection{}

For the convenience of the reader we recall the formalism of Frobenius extensions (for example, \cite{BF}). Let $R \subseteq S$ be an inclusion of rings and $\alpha$ an automorphism of $R$. For a left, resp.\ right, $R$-module $N$ we denote by ${_\alpha}N$, resp.\ $N_\alpha$, the $R$-module which is the pullback of $N$ along $\alpha$. Then $R \subseteq S$ is called an $\alpha$-Frobenius extension if one of the following equivalent conditions is satisfied:
\begin{itemize}
  \item[a.] The functors $N \longmapsto S \otimes_R N$ and $N \longmapsto \Hom_R(S,{_\alpha}N)$ from left $R$-modules to left $S$-modules are naturally isomorphic.
  \item[b.] $S$ is finitely generated projective as a left $R$-module, and $S \cong \Hom_R(S,{_\alpha}R)$ as $(S,R)$-bimodules.
  \item[c.] $S$ is finitely generated projective as a right $R$-module, and $S \cong \Hom_R(S,R_{\alpha^{-1}})$ as $(R,S)$-bimodules.
\end{itemize}
Moreover, an $\alpha$-Frobenius extension $R \subseteq S$ is called free if $S$ is free as a left as well as a right $R$-module.

\begin{exam}
(\cite[Example (B) after 1.2]{BF}) If $G_1$ is a subgroup of finite index in the group $G_2$ then $k[G_1] \subseteq k[G_2]$ is a free $\id_{k[G_1]}$-Frobenius extension.
\end{exam}

By \cite[Cor.\ 1.2]{BF} we have the following criterion.

\begin{lemm}\label{free-Frob}
Suppose given, for the ring extension $R \subseteq S$ and the automorphism $\alpha$ of $R$, a homomorphism of $(R,R)$-bimodules $\theta : {_\id}S_\alpha \longrightarrow R$ together with elements $x_1, \ldots, x_n, y_1, \ldots, y_n \in S$ such that $S = \sum_{i=1}^n Rx_i = \sum_{i=1}^n y_i R$ and the matrix $(\theta(x_iy_j))_{1 \leq i,j \leq n}$ has a two-sided inverse in ${\rm M}_n(R)$. Then $R \subseteq S$ is a free $\alpha$-Frobenius extension.
\end{lemm}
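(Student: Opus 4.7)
The plan is to apply \cite[Cor.\ 1.2]{BF} essentially verbatim, since this lemma is a restatement of that criterion in the form needed by the authors. I sketch the underlying argument.

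Let $(b_{ij}) \in {\rm M}_n(R)$ denote the two-sided inverse of the matrix $M := (\theta(x_iy_j))$. The first step is to produce a dual basis from the hypotheses. Setting $x_i' := \sum_k b_{ik}\, x_k$, the invertibility of $(b_{ij})$ over $R$ preserves the left generating property, so $S = \sum_i R x_i'$; while left $R$-linearity of $\theta$ combined with the identity $\sum_k b_{ik}\,\theta(x_k y_j) = \delta_{ij}$ yields $\theta(x_i' y_j) = \delta_{ij}$.

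Second, I would deduce freeness from this dual-basis identity: any relation $\sum_i r_i x_i' = 0$ gives, after applying $\theta(\,\cdot\, y_j)$ and using left $R$-linearity, the equality $r_j = 0$. A symmetric argument on the right, respecting the $\alpha$-twist in the right $R$-module structure of ${}_{\id}S_\alpha$ (via $\theta(s\alpha(r)) = \theta(s)r$), starts from the right inverse relation $\sum_k \theta(x_i y_k)\, b_{kj} = \delta_{ij}$ and produces elements $y_j'$ obtained from $(y_j)$ by an invertible right-$R$-linear change of coordinates, so that $(y_j')$ is a right $R$-basis of $S$ with $\theta(x_i y_j') = \delta_{ij}$. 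Hence $S$ is free over $R$ on both sides, which is the freeness required in the conclusion of the lemma.

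Finally, to verify condition (b) of the Frobenius extension definition, I would define $\Phi : S \to \Hom_R(S, {}_\alpha R)$ by $\Phi(s)(t) := \theta(st)$; the twist on ${}_\alpha R$ is calibrated exactly so that $\Phi$ is an $(S,R)$-bimodule homomorphism. Its inverse is $\phi \longmapsto \sum_i \phi(y_i')\, x_i'$. The verification rests on the expansion $s = \sum_i \theta(s y_i')\, x_i'$, which itself follows by writing $s$ in the free left basis $(x_i')$, pairing against each $y_j'$ via $\theta$, and invoking $\theta(x_i' y_j') = \delta_{ij}$. Combined with the freeness established above, this gives that $R \subseteq S$ is a free $\alpha$-Frobenius extension. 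The main delicacy in executing the plan is bookkeeping the $\alpha$-twist throughout, particularly when converting the right-inverse matrix identity into a statement about a right $R$-basis, and when checking that $\Phi$ really lands in $\Hom_R(S, {}_\alpha R)$ rather than $\Hom_R(S,R)$.
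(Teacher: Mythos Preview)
Your proposal is correct and takes the same approach as the paper: the paper simply cites \cite[Cor.\ 1.2]{BF} without any further argument, and you do the same while additionally sketching the underlying dual-basis argument from Bell--Farnsteiner. Your added exposition is sound in outline; as you yourself flag, the only care needed is in tracking the $\alpha$-twist when verifying that $\Phi$ lands in $\Hom_R(S,{}_\alpha R)$, but this is a matter of unwinding conventions rather than a gap.
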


Our interest in Frobenius extensions comes from the following property. Although it is well known we include a sketch of proof for the convenience of the reader.

\begin{lemm}\label{Frob-injdim}
For any free $\alpha$-Frobenius extension $R \subseteq S$ the rings $R$ and $S$ have the same self-injective dimension.
\end{lemm}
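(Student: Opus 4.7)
The plan is to establish both inequalities $\mathrm{injdim}_S(S) \leq \mathrm{injdim}_R(R)$ and $\mathrm{injdim}_R(R) \leq \mathrm{injdim}_S(S)$ via the interplay, supplied by the Frobenius hypothesis, between induction $S \otimes_R -$ and coinduction $\Hom_R(S,-)$. Two general principles will be used throughout: first, since $S$ is free (hence flat) as a right $R$-module, induction is exact, so restriction of scalars $\mathrm{res} : S\text{-mod} \to R\text{-mod}$, being right adjoint to induction, preserves injectivity; second, since $S$ is free (hence projective) as a left $R$-module, coinduction $\Hom_R(S,-)$ is itself exact and, being right adjoint to the exact functor $\mathrm{res}$, it too preserves injectivity.

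For the inequality $\mathrm{injdim}_S(S) \leq \mathrm{injdim}_R(R)$, I would pick an injective resolution
\[0 \longrightarrow {_\alpha R} \longrightarrow E^0 \longrightarrow \cdots \longrightarrow E^r \longrightarrow 0\]
of the twisted module ${_\alpha R}$ over $R$ of length $r = \mathrm{injdim}_R(R)$ (the equality $\mathrm{injdim}_R({_\alpha R}) = \mathrm{injdim}_R(R)$ holds because $\alpha$ is an automorphism). Applying the exact functor $\Hom_R(S,-)$ yields an exact complex of left $S$-modules whose terms $\Hom_R(S, E^i)$ are injective by the second principle, and whose leftmost term $\Hom_R(S, {_\alpha R})$ is identified with $S$ as a left $S$-module by Frobenius condition~(b). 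This produces an injective resolution of $S$ over itself of length $\leq r$, whence $\mathrm{injdim}_S(S) \leq r$.

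For the reverse inequality, set $s = \mathrm{injdim}_S(S)$ and take an injective resolution $0 \to S \to I^0 \to \cdots \to I^s \to 0$ of $S$ as a left $S$-module. Restriction to $R$ is exact and, by the first principle, sends each $I^i$ to an injective left $R$-module, so $\mathrm{injdim}_R(S) \leq s$. Since $S$ is a nonzero free left $R$-module, $R$ embeds as a direct $R$-module summand of $S$, and therefore $\Ext^n_R(M,R)$ is a direct summand of $\Ext^n_R(M,S)$ for every left $R$-module $M$; this forces $\mathrm{injdim}_R(R) \leq s$. The right-handed statement is proved identically, using condition~(c) in place of~(b). The only delicate point is the adjunction bookkeeping behind the two preservation-of-injectives principles; once those are secured, the rest is immediate.
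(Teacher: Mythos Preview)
Your proof is correct. Both inequalities are established cleanly: coinduction $\Hom_R(S,-)$ transports an injective resolution of ${_\alpha R}$ to one of $S$ (using condition~(b) and the adjunction with restriction), and restriction transports an injective resolution of $S$ to one bounding $\mathrm{injdim}_R(R)$ (using that $R$ is a direct summand of the finitely generated free $R$-module $S$).

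The paper takes a different route: rather than moving injective resolutions, it works with projective resolutions and establishes the explicit $\Ext$-identities
\[
\Ext_S^*(S \otimes_R N, S) \cong \Ext_R^*(N,R)^n \quad\text{and}\quad \Ext_S^*(M,S) \cong \Ext_R^*(M,R)_{\alpha^{-1}}
\]
for $R$-modules $N$ and $S$-modules $M$, from which the equality of self-injective dimensions follows at once. Your argument is arguably more direct for the lemma as stated. The paper's approach, however, yields the second identity above as a byproduct, and this is reused later (Remark~\ref{F-AG}) to reduce the Auslander condition for $\Hh_F^\dagger$ to that of the Laurent polynomial subring. If you only need the injective-dimension statement, your proof is entirely adequate; if you later need control over individual $\Ext$-groups, the paper's computation gives more.
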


\begin{proof}
First let $N$ be any left $R$-module. By c. the tensor product with $S$ over $R$ of a projective resolution of $N$ is a projective resolution of $S \otimes_R N$. This implies
\begin{equation*}
    \Ext_S^* (S \otimes_R N, S) = \Ext_R^* (N,S) \ .
\end{equation*}
Since $S \cong R^n$ as left $R$-modules we deduce that $\Ext_S^* (S \otimes_R N, S) = \Ext_R^* (N,R)^n$.

On the other hand let $M$ be a left $S$-module. By b. any projective resolution of $M$ as an $S$-module is, at the same time, a projective resolution of $M$ as an $R$-module. We deduce that
\begin{equation*}
    \Ext_S^* (M, S) = \Ext_S^*(M, \Hom_R(S, {_\alpha}R)) = \Ext_R^*(M, {_\alpha}R) = \Ext_R^*(M,R)_{\alpha^{-1}} \ .
\end{equation*}

Since the notion of a Frobenius extension is left-right symmetric an analogous argument works for right modules.
\end{proof}

We go back to the notations of the earlier sections and fix a facet $F$ in $\overline{C}$. We recall that the characteristic functions $(\tau_w)_{w\in \tilde{\W}_F^\dagger}$ form a $k$-basis of $\Hh^\dagger_F$. Moreover, they satisfy the braid relations \eqref{braid}. As before ${\rm Z}$ denotes the connected center of $\Gp$. The subgroup ${\rm Z}\Tp^0 / \Tp^1$ is normal in $\tilde{\W}_F^\dagger$. We fix an element of maximal length $w_0 \in \tilde{\W}_F$ and have the corresponding automorphism of $k$-algebras
\begin{align*}
    \alpha_{w_0} : \qquad k[{\rm Z}\Tp^0 / \Tp^1] & \longrightarrow k[{\rm Z}\Tp^0 / \Tp^1] \\
    {\rm Z}\Tp^0 / \Tp^1 \ni \xi & \longmapsto w_0 \xi w_0^{-1}
\end{align*}
The $k$-algebras $\Hh_F$ (\cite[Prop.\ 3.7]{Tin} and \cite[Thm.\ 2.4]{Saw}) and $\Hh'_F$ (compare \cite[Prop.\ 4.1]{Fay}) are Frobenius algebras. We generalize these results as follows. Let $r$ denote the rank of the torus ${\rm Z}$. Then ${\rm Z} / {\rm Z} \cap \Tp^0$ is a free abelian group of rank $r$. Since the extension $0 \rightarrow {\rm Z} \cap \Tp^0 / {\rm Z} \cap \Tp^1 \rightarrow {\rm Z} / {\rm Z} \cap \Tp^1 \rightarrow {\rm Z} / {\rm Z} \cap \Tp^0 \rightarrow 0$ splits we may fix a subgroup of finite index ${\rm Z}_0 \subseteq {\rm Z}\Tp^0 / \Tp^1$ which is free of rank $r$ and which is central in $\tilde{\W}$. Note that, because the elements in ${\rm Z}\Tp^0 / \Tp^1$ have length zero, the map  $\xi\in {\rm Z}\Tp^0 / \Tp^1\mapsto \tau_\xi$ yields  an injective morphism of $k$-algebras  $k[{\rm Z}\Tp^0 / \Tp^1]\rightarrow \Hh$. We identify $k[{\rm Z}\Tp^0 / \Tp^1]$ with its image in $\Hh$. Likewise, we identify $k[{\rm Z} / {\rm Z} \cap \Tp^0]$ with its image in $\Hh'$.

\begin{prop}\phantomsection\label{Hecke-Frob}
\begin{itemize}
\item[i.] $k[{\rm Z}\Tp^0 / \Tp^1] \subseteq \Hh_F^\dagger$ is a free $\alpha_{w_0}$-Frobenius extension.
\item[ii.] $k[{\rm Z}_0] \subseteq \Hh_F^\dagger$ is a free $\id_{k[{\rm Z}_0]}$-Frobenius extension.
\item[iii.] $k[{\rm Z} / {\rm Z} \cap \Tp^0]\subseteq {{\Hh}'_F}^{\!\!\!\dagger}$ is a free $\id_{k[{\rm Z} / {\rm Z} \cap \Tp^0]}$-Frobenius extension.
\end{itemize}
\end{prop}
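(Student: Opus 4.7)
The approach in all three parts is to apply Lemma \ref{free-Frob}; the plan for~(i) is described in detail, the other parts being simplifications. The core observation is that every element $\xi$ of ${\rm Z}\Tp^0/\Tp^1$ has length zero in $\tilde{\W}$: the connected center ${\rm Z}$ acts trivially on the reduced apartment $\mathscr{A} = \R\otimes(X_*(\Tp)/X_*({\rm Z}))$, and $\Tp^0/\Tp^1$ lies in the kernel of $\tilde\W \to \W$. The braid relations \eqref{braid} then yield $\tau_\xi\tau_w = \tau_{\xi w}$ and $\tau_w\tau_\xi = \tau_{w\xi}$ for all $w \in \tilde{\W}_F^\dagger$, with no quadratic correction. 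Moreover $\tilde{\W}_F^\dagger/({\rm Z}\Tp^0/\Tp^1)$ is finite, because $\Omega \cong \Lambda/Q^\vee$ is a finitely generated abelian group of rank $r$ and the image of ${\rm Z}$ in $\Omega$ has full rank. Setting $R := k[{\rm Z}\Tp^0/\Tp^1]$ and fixing a set of representatives $V = \{v_1 = w_0, v_2, \ldots, v_n\}$ of $\tilde{\W}_F^\dagger/({\rm Z}\Tp^0/\Tp^1)$ containing $w_0$, the family $(\tau_{v_i})_{i=1}^n$ is simultaneously a left and a right $R$-basis of $\Hh_F^\dagger$.

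I would then define the Frobenius form $\theta \colon \Hh_F^\dagger \to R$ as the left $R$-linear projection onto the summand $R\tau_{w_0}$; on basis elements $\theta(\tau_v) = vw_0^{-1}$ if $v \in Rw_0$ and zero otherwise. The twisted bimodule condition $\theta(h\cdot\alpha_{w_0}(\xi)) = \theta(h)\cdot\xi$ reduces, after a direct computation on each $\tau_{v_i}$, to the identity $\alpha_{w_0}^2 = \id$ on $R$. This holds because the image of $w_0$ in the finite Coxeter group $\W_F$ is its longest element, hence an involution, so $w_0^2 \in \Tp^0/\Tp^1$; and conjugation by $\Tp^0/\Tp^1$ is trivial on $R$ since both ${\rm Z}\Tp^0/\Tp^1$ and $\Tp^0/\Tp^1$ lie in the abelian group $\Tp/\Tp^1$.

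The technical heart is the production of a dual basis: elements $(y_j)_{j=1}^n$ of $\Hh_F^\dagger$ such that $\bigl(\theta(\tau_{v_i}y_j)\bigr)_{i,j}$ is invertible. The natural candidate is $y_j := \tau^*_{v_j^{-1}w_0}$, using Vign\'eras's star-basis from \eqref{f:ast}. The maximality of $\ell(w_0)$ in $\tilde{\W}_F$ and the length-zero property of $\Omega_F$ give length-additivity $\ell(v_i) + \ell(v_i^{-1}w_0) = \ell(w_0)$, and combined with the braid relations for the star-basis (obtained by applying $\upiota$ to \eqref{braid}) this should produce the diagonal entries $\theta(\tau_{v_i}\tau^*_{v_i^{-1}w_0})$ as units in $R$. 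For $i \neq j$ one needs to verify that every term appearing in the expansion of $\tau_{v_i}\tau^*_{v_j^{-1}w_0}$ lies outside $Rw_0$, i.e., its image in $\W$ is strictly shorter than $w_0$; this is the extended pro-$p$ analogue of the standard Coxeter length identity $\ell(u\cdot u_0) = \ell(u_0) - \ell(u)$ and is the most delicate step.

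Cases (ii) and (iii) are simplifications. For (ii), the sub-extension $k[{\rm Z}_0] \subseteq k[{\rm Z}\Tp^0/\Tp^1]$ is a free identity-Frobenius extension by the group-algebra example, and $\alpha_{w_0}$ restricts to the identity on $k[{\rm Z}_0]$ because ${\rm Z}_0$ was chosen central in $\tilde\W$; transitivity of Frobenius extensions then deduces (ii) from (i). For (iii), work inside ${\Hh'_F}^\dagger$ with basis $(\tau'_w)_{w\in\W_F^\dagger}$ and quadratic relations \eqref{quadratic-iwahori}: the subgroup ${\rm Z}/({\rm Z}\cap\Tp^0)$ is central in $\W$ because ${\rm Z}$ is central in $\Gp$, so $k[{\rm Z}/({\rm Z}\cap\Tp^0)]$ is central in ${\Hh'_F}^\dagger$ and the twist is trivial; the trace and dual basis (using the star-basis $\tau'^{*}$ from Remark \ref{rema:involution-iwahori}) are constructed as in (i). The main obstacle throughout is keeping the dual-basis calculation uniform in the characteristic of $k$: the classical Frobenius forms on finite-type Hecke algebras rely on inverting $q$, which may be zero in $k$, whereas the star-basis and the involution $\upiota$ are defined without any division and are precisely what allows the argument to go through.
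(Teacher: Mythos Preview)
Your overall strategy is correct and coincides with the paper's: apply Lemma~\ref{free-Frob} with the Frobenius form $\theta$ that reads off the coefficient on the coset $({\rm Z}\Tp^0/\Tp^1)w_0$, and deduce~(ii) from~(i) by transitivity through the group-algebra example. Your treatment of~(ii) and~(iii) is fine.

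There is, however, a real gap in your execution of the ``delicate step''. The claim that for $i\neq j$ every term of $\tau_{v_i}\tau^\ast_{v_j^{-1}w_0}$ has image in $\W$ of length strictly less than $\ell(w_0)$ is false. Already for $\W_F=\{1,s\}$, $w_0=\tilde s$, $v_1=\tilde s$, $v_2=1$, one computes
\[
\tau_{\tilde s}\,\tau^\ast_{\tilde s}=q\tau_1+(\tau_{\tilde s^{-1}}-\tau_{\tilde s})\theta_s,
\]
and the length-one part gives $\theta(\tau_{\tilde s}\tau^\ast_{\tilde s})=(\check\alpha(-1)^{-1}-1)\sum_{t\in\Tp_s(\mathbb F_q)}t$, which is a nonzero element of $k[\Tp^0/\Tp^1]$ whenever $p\neq 2$. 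So the matrix is not diagonal.

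The fix is simpler than your proposal and does not require the $\tau^\ast$-basis at all. Take $y_w:=\tau_{\tilde w^{-1}w_0}$ (ordinary basis). Then $\theta(\tau_{\tilde w}\tau_{\tilde w^{-1}w_0})=\theta(\tau_{w_0})=1$ by length-additivity $\ell(\tilde w)+\ell(\tilde w^{-1}w_0)=\ell(w_0)$. For general $v,w$, the braid and quadratic relations give
\[
\tau_{\tilde v}\,\tau_{\tilde w^{-1}w_0}\ \in\ \sum_{v'\le v} k[\Tp^0/\Tp^1]\,\tau_{\widetilde{v'w^{-1}}w_0},
\]
where $\le$ is the Bruhat order on $\W_F^\dagger$; hence $\theta(\tau_{\tilde v}\tau_{\tilde w^{-1}w_0})=0$ unless $w\le v$. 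The matrix is lower triangular with $1$'s on the diagonal, hence invertible over $R$ with no hypothesis on $q$. Your concern that the classical Frobenius forms require inverting $q$ does not apply here: it is the unitriangularity, not any division, that makes the argument characteristic-free. The involution $\upiota$ and the $\tau^\ast$-basis enter the paper only later (Lemma~\ref{explicit-inverse}), for computing the inverse of the Frobenius isomorphism, not for establishing its existence.
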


\begin{proof}
i. We consider the map
\begin{align*}
    \theta : \qquad\qquad \Hh^\dagger_F & \longrightarrow k[{\rm Z}\Tp^0 / \Tp^1] \\
    \sum_{w \in \tilde{\W}_F^\dagger} a_w \tau_w & \longmapsto \sum_{\xi \in {\rm Z}\Tp^0 / \Tp^1} a_{\xi w_0} \xi \ .
\end{align*}
Using the fact that the length function on $\tilde{\W}_F^\dagger$ is $\tilde{\Omega}_F$-bi-invariant the braid relations \eqref{braid} imply that $\theta$ in fact is a homomorphism of $(k[{\rm Z}\Tp^0 / \Tp^1], (k[{\rm Z}\Tp^0 / \Tp^1])$-bimodules
\begin{equation*}
    \theta : \  {_\id}(\Hh^\dagger_F)_{\alpha_{w_0}} \longrightarrow k[{\rm Z}\Tp^0 / \Tp^1] \ .
\end{equation*}
Let $[\W_F^\dagger] \subseteq \W_F^\dagger$ denote a set of representatives for the cosets of the image of ${\rm Z}$ in $\W_F^\dagger$. This is a finite set.
We know from Section \ref{subsec:bijproof} that $(\tau_{\tilde{w}})_{w \in [\W_F^\dagger]}$ and $(\tau_{\tilde{w}^{-1} w_0})_{w \in [\W_F^\dagger]}$ are bases of $\Hh^\dagger_F$ as a left as well as a right $k[{\rm Z}\Tp^0 / \Tp^1]$-module. In order to apply Lemma \ref{free-Frob} it remains to show that the matrix $(\theta(\tau_{\tilde{v}} \tau_{\tilde{w}^{-1} w_0}))_{v,w \in [\W_F^\dagger]}$ is invertible. Since $\ell(\tilde{w}^{-1} w_0) = \ell(w_0) - \ell(\tilde{w}^{-1})$ (\cite[VI.1.6 Cor.\ 3]{Bki-LA}) it follows from \eqref{braid} that $\theta(\tau_{\tilde{w}} \tau_{\tilde{w}^{-1} w_0}) = \theta(\tau_{w_0}) = 1$. By \cite[Thm.\ 1(ii)]{Vig} (see \eqref{quadratic}) we have $\tau_\sigma^2 \in k[\Tp^0 / \Tp^1] \tau_\sigma + k[\Tp^0 / \Tp^1]$ for any lift $\sigma \in \tilde{\W}$ of an element in $S_{aff}$. This together with the braid relations implies that

\begin{equation*}
    \tau_{\tilde{v}} \tau_{\tilde{w}^{-1} w_0} \in \sum_{v' \leq v} k[\Tp^0 / \Tp^1] \tau_{\widetilde{v'w^{-1}}w_0}
\end{equation*}
where $\leq$ denotes the Bruhat order on $\W_F^\dagger$ (compare \cite[2.4.9]{Born}). We see that
\begin{equation*}
    \theta(\tau_{\tilde{v}} \tau_{\tilde{w}^{-1} w_0}) = 0 \qquad\text{unless $w \leq v$}.
\end{equation*}
In other words the matrix in question is lower triangular.

ii. This is a consequence of i., the above example, and the transitivity of Frobenius extensions (\cite[Prop.\ 1.3]{BF}).

iii. The proof of iii. is a somewhat simpler copy of the proof of i. Here we identify $w_0$ with its image  in $\W_F$. The map
\begin{align*}
    \theta' : \qquad\qquad {{\Hh}'_F}^{\!\!\!\dagger} & \longrightarrow k[{\rm Z}/{\rm Z} \cap \Tp^0] \\
    \sum_{w \in \W_F^\dagger} a_w \tau'_w & \longmapsto \sum_{\xi \in {\rm Z}/{\rm Z} \cap \Tp^0} a_{\xi w_0} \xi
\end{align*}
is a homomorphism of $(k[{\rm Z}/{\rm Z}\cap \Tp^0], k[{\rm Z}/{\rm Z}\cap \Tp^0])$-bimodules $    \  {_\id}({{\Hh}'_F}^{\!\!\!\dagger})_{\id} \longrightarrow k[{\rm Z}/{\rm Z}\cap \Tp^0 ].$ As in the proof of ii.,
$(\tau'_{{w}})_{w \in [\W_F^\dagger]}$ and $(\tau'_{{w}^{-1} w_0})_{w \in [\W_F^\dagger]}$ are bases of ${{\Hh}'_F}^{\!\!\!\dagger}$ as a left as well as a right $k[{\rm Z}/{\rm Z}\cap \Tp^0]$-module,  and we verify that the matrix $(\theta'(\tau'_{{v}} \tau'_{{w}^{-1} w_0}))_{v,w \in [\W_F^\dagger]}$ is invertible. Using  \eqref{braid-iwahori} we obtain $\theta'(\tau'_{{w}} \tau'_{ {w}^{-1} w_0}) = \theta'(\tau'_{w_0}) = 1$. By the quadratic relation \eqref{quadratic-iwahori}, we have $(\tau'_s)^2 \in k \tau'_s + k$ for any $s\in S_{aff}$. This together with the braid relations implies that $\tau'_{ {v}} \tau'_{ {w}^{-1} w_0} \in \sum_{v' \leq v} k \tau'_{{v'w^{-1}}w_0}$ where $\leq$ denotes the Bruhat order on $\W_F^\dagger$. Therefore, we have $\theta'(\tau'_{ {v}} \tau'_{ {w}^{-1} w_0}) = 0$ unless $w \leq v$, and the matrix in question is lower triangular.
\end{proof}

\begin{prop}\label{F-injdim}
Let $F$ be a facet of $\mathscr{X}$. The $k$-algebras $\Hh_F^\dagger$ and  ${{\Hh}'_F}^{\!\!\!\dagger}$ are left and right noetherian and have self-injective dimension $r$.
\end{prop}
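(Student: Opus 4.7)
The plan is to deduce both statements from the free Frobenius extensions furnished by Proposition \ref{Hecke-Frob}.ii--iii, together with Lemma \ref{Frob-injdim}. Specifically, $k[{\rm Z}_0]\subseteq \Hh_F^\dagger$ and $k[{\rm Z}/{\rm Z}\cap\Tp^0]\subseteq {{\Hh}'_F}^{\!\!\!\dagger}$ are free Frobenius extensions over central commutative subrings, so both the self-injective dimension and the noetherian property reduce to classical facts about the base rings.

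First I would identify the two base rings as Laurent polynomial rings. For $k[{\rm Z}_0]$, this is immediate from the construction of ${\rm Z}_0$ as a free abelian group of rank $r$. For $k[{\rm Z}/{\rm Z}\cap\Tp^0]$, the map $\nu$ induces an isomorphism ${\rm Z}/{\rm Z}\cap\Tp^0 \xrightarrow{\;\cong\;} X_*({\rm Z}) \cong \mathbb{Z}^r$ (since ${\rm Z}\cap \Tp^0$ is the maximal compact subgroup of the $\mathfrak F$-split torus ${\rm Z}$). Hence both base rings are isomorphic to $k[X_1^{\pm 1},\ldots,X_r^{\pm 1}]$, a commutative regular noetherian ring of Krull dimension $r$; being regular, it is Gorenstein, so its self-injective dimension over itself is exactly $r$. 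Applying Lemma \ref{Frob-injdim} to each of the two free $\id$-Frobenius extensions then yields that $\Hh_F^\dagger$ and ${{\Hh}'_F}^{\!\!\!\dagger}$ both have self-injective dimension equal to $r$, establishing the second assertion.

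For the noetherian property I would argue as follows. By construction ${\rm Z}_0$ is central in $\tilde\W$, and ${\rm Z}$ is central in $\Gp$, so the base rings sit as \emph{central} subrings of their respective Hecke algebras. Moreover, from the proof of Proposition \ref{Hecke-Frob}, each Hecke algebra is free of finite rank over its base ring, a basis being indexed by the finite set $[\W_F^\dagger]$ of coset representatives of the image of ${\rm Z}$ in $\W_F^\dagger$. Any one-sided ideal of the Hecke algebra is then a submodule of this finitely generated module over a noetherian commutative ring; hence it is finitely generated already over the base ring and \emph{a fortiori} as a one-sided ideal of the Hecke algebra. This gives both the left and right noetherian property.

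I do not anticipate a genuine obstacle here. Proposition \ref{Hecke-Frob}, which is the substantive input, was designed precisely so that the present proposition becomes a formal consequence of Lemma \ref{Frob-injdim} and the standard homological algebra of Laurent polynomial rings. The only mildly technical point is invoking the fact that, for a commutative Gorenstein ring of finite Krull dimension $n$, the self-injective dimension equals $n$, so as to obtain self-injective dimension exactly $r$ rather than merely an upper bound.
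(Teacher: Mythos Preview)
Your proof is correct and follows essentially the same route as the paper: reduce to the free Frobenius extensions of Proposition~\ref{Hecke-Frob}, identify the base rings as Laurent polynomial rings in $r$ variables (hence noetherian and regular with self-injective dimension $r$), apply Lemma~\ref{Frob-injdim} for the self-injective dimension, and use finite-freeness over a central noetherian subring for the noetherian property. The only omission is the one-line reduction to the case $F\subseteq\overline{C}$ (Proposition~\ref{Hecke-Frob} is stated for such $F$), which follows since any facet is $\Gp$-conjugate to one in $\overline{C}$.
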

\begin{proof}
We may assume that $F$ is contained in $\overline{C}$.  Recall that $\Hh_F^\dagger$ (respectively ${{\Hh}'_F}^{\!\!\!\dagger}$)
is a free $\id_{k[{\rm Z}_0]}$-Frobenius extension (respectively a free $\id_{k[{\rm Z} / {\rm Z} \cap \Tp^0]}$-Frobenius extension) by Proposition \ref{Hecke-Frob}. But  $k[{\rm Z}_0]$ and  $k[{\rm Z} / {\rm Z} \cap \Tp^0]$ are Laurent polynomial rings in $r$ variables over $k$. Hence they are both  noetherian and regular of global dimension $r$. The former implies that $\Hh_F^\dagger$ and ${{\Hh}'_F}^{\!\!\!\dagger}$ are  noetherian. The latter implies that the  self-injective dimension of $k[{\rm Z}_0]$  and of $k[{\rm Z} / {\rm Z} \cap \Tp^0]$ is equal to $r$ (compare \cite[Bem.\ 10.16]{Sch}). We conclude  that $\Hh_F^\dagger$ and ${{\Hh}'_F}^{\!\!\!\dagger}$ have  self-injective dimension $r$ using Lemma \ref{Frob-injdim}.
\end{proof}

\subsection{\label{subsec:explicit-computation}}

If we assume that $\Gp$ is semisimple then ${\rm Z} = 1$ and Proposition \ref{Hecke-Frob}.ii says that $\Hh_F^\dagger$ is a Frobenius algebra. More precisely, assuming that $F \subseteq \overline{C}$  the proof of Proposition \ref{Hecke-Frob} tells us that, fixing an element $w_F$ of maximal length in $\tilde\W_F^\dagger$ and using the linear form
\begin{align*}
    \delta_{w_F} : \qquad\qquad \Hh_F^\dagger & \longrightarrow k \\
    \sum_{w \in \tilde\W_F^\dagger} a_w \tau_w & \longmapsto a_{w_F} \ ,
\end{align*}
we have the isomorphism of left $\Hh_F^\dagger$-modules $\Hh_F^\dagger \cong \Hom_k(\Hh_F^\dagger,k)$ sending $1$ to $\delta_{w_F}$. Composing with the adjunction isomorphism
\begin{align*}
    \Hom_{\Hh_F^\dagger} (M, \Hom_k(\Hh_F^\dagger,k)) & \xrightarrow{\;\cong\;}\Hom_k(M,k) \\
    f & \longrightarrow [x \mapsto f(x)(1)]
\end{align*}
we obtain, for any left $\Hh_F^\dagger$-module $M$, the $k$-linear isomorphism
\begin{equation}\label{f:k-dual}
    \Hom_{\Hh_F^\dagger} (M, \Hh_F^\dagger) \cong \Hom_{\Hh_F^\dagger} (M, \Hom_k(\Hh_F^\dagger,k)) \cong \Hom_k(M,k) \ , \ f \longmapsto \delta_{w_F} \circ f \ .
\end{equation}
For later purposes we need to explicitly determine the inverse of this isomorphism. This will make use of  the defining relations in $\Hh$ and the canonical involution $\upiota$ introduced in \ref{subsec:defining-relations}.

\begin{lemm}\label{explicit-inverse}
Let $F$ be a facet in $\overline{C}$, $M$ be any left $\Hh_F^\dagger$-module, and $f \in \Hom_{\Hh_F^\dagger} (M, \Hh_F^\dagger)$; setting $f_0 := \delta_{w_F} \circ f$ we have
\begin{equation*}
    f(x) = \sum_{w \in \tilde\W_F^\dagger} f_0(\tau_{w w_F^{-1}}^\ast x) \tau_w =
    \sum_{w \in \tilde\W_F^\dagger} f_0((-1)^{\ell(w_F)-\ell(w)} \upiota(\tau_{w_F w^{-1}}) x) \tau_w  \qquad\text{for any $x \in M$}.
\end{equation*}
\end{lemm}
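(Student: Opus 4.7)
The structure of the proof is guided by the fact that $\Hh_F^\dagger$ is a Frobenius $k$-algebra (Proposition~\ref{Hecke-Frob}.ii in the semisimple setting of this section), so that $(a,b)\mapsto \delta_{w_F}(ab)$ is a non-degenerate $k$-bilinear form on $\Hh_F^\dagger$. Writing $f(x) = \sum_{w\in\tilde{\W}_F^\dagger}\lambda_w(x)\,\tau_w$ in the canonical basis, the $\Hh_F^\dagger$-linearity of $f$ gives, for every $h\in\Hh_F^\dagger$,
\begin{equation*}
 f_0(hx) = \delta_{w_F}(hf(x)) = \sum_{w\in\tilde{\W}_F^\dagger}\lambda_w(x)\,\delta_{w_F}(h\tau_w).
\end{equation*}
The plan is therefore to exhibit, for each $w_0\in \tilde{\W}_F^\dagger$, an element $h_{w_0}\in\Hh_F^\dagger$ such that $\delta_{w_F}(h_{w_0}\tau_w)=\delta_{w_0,w}$ for every $w$; then $\lambda_{w_0}(x) = f_0(h_{w_0}x)$, which is the first formula of the lemma with $h_{w_0}=\tau^\ast_{w_0 w_F^{-1}}$. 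So the whole problem reduces to the orthogonality
\begin{equation*}
\delta_{w_F}(\tau^\ast_{w_0 w_F^{-1}}\tau_w) = \delta_{w_0,w}\qquad\text{for all } w,w_0\in\tilde{\W}_F^\dagger,
\end{equation*}
i.e.\ to showing that $\{\tau^\ast_{w w_F^{-1}}\}_w$ is the left dual basis of $\{\tau_w\}_w$ under the Frobenius form.

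To verify this orthogonality I would combine three ingredients. First, because $w_F$ is of maximal length in $\tilde{\W}_F^\dagger$, one has $\ell(w_0 w_F^{-1}) = \ell(w_F) - \ell(w_0)$ and a reduced decomposition $w_F = (w_F w_0^{-1})\cdot w_0$ with length-additivity, so the braid relation \eqref{braid} gives $\tau_{w_F w_0^{-1}}\tau_{w_0}=\tau_{w_F}$. Second, by the definition \eqref{f:ast}, together with a reduced expression for $w_0 w_F^{-1}$ and $\tau^\ast_{\tilde s}=\tau_{\tilde s^{-1}}-\theta_s$, the element $\tau^\ast_{w_0 w_F^{-1}}$ expands in the $\tau$-basis as $\tau_{w_F w_0^{-1}} + \sum_{\ell(v)<\ell(w_F)-\ell(w_0)} c_v\tau_v$. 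Third, multiplication by any element of $k[\Tp^0/\Tp^1]$ preserves length, and $\theta_s$ commutes with $\tau_{\tilde s}$ (since $\tilde s$ acts on $\Tp_s(\mathbb F_q)\subseteq \Tp^0/\Tp^1$ by inversion, preserving the sum defining $\theta_s$). Expanding $\tau^\ast_{w_0 w_F^{-1}}\tau_w$ in the basis $\{\tau_u\}$ and comparing lengths then breaks into three sub-cases: if $\ell(w)<\ell(w_0)$, no term can reach length $\ell(w_F)$; if $\ell(w)=\ell(w_0)$, only the leading summand $\tau_{w_F w_0^{-1}}\tau_w$ can, and it does so (producing exactly $\tau_{w_F}$) iff $w=w_0$; if $\ell(w)>\ell(w_0)$, iterated application of the quadratic relation \eqref{quadratic}, together with the pro-$p$ analog of the classical identity $\tau'^\ast_s\,\tau'_{sv}=q\tau'_v$ (valid for $\ell(sv)=\ell(v)+1$), drives all ``would-be'' $\tau_{w_F}$-contributions down to strictly lower length. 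The second equality of the lemma follows by substituting $\tau^\ast_v=(-1)^{\ell(v)}\upiota(\tau_{v^{-1}})$ at $v=ww_F^{-1}$, using $\ell(ww_F^{-1})=\ell(w_F)-\ell(w)$ and $(ww_F^{-1})^{-1}=w_F w^{-1}$.

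The main obstacle is the case $\ell(w)>\ell(w_0)$ of the orthogonality, where \eqref{quadratic} must be applied repeatedly and the torus terms $\theta_s$ have to be tracked through every simplification without contributing to the $\tau_{w_F}$-coefficient. A cleaner conceptual device exploits the symmetry offered by the involution $\upiota$: one first checks that $\delta_{w_F}\circ\upiota$ agrees with $(-1)^{\ell(w_F)}\delta_{w_F}$ on the $\tau$-basis, and then, using that $\upiota$ is an algebra automorphism together with $\upiota(\tau^\ast_v)=(-1)^{\ell(v)}\tau_{v^{-1}}$, transforms the identity $\delta_{w_F}(\tau^\ast_{w_0 w_F^{-1}}\tau_w)$ into the dual form $\pm\,\delta_{w_F}(\tau_{w_F w_0^{-1}}\tau^\ast_{w^{-1}})$. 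The two expressions jointly cover both length regimes, and the orthogonality follows from their combined application.
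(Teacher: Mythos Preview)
Your strategy is genuinely different from the paper's. Rather than establishing the orthogonality $\delta_{w_F}(\tau^\ast_{w_0 w_F^{-1}}\tau_w)=\delta_{w_0,w}$ directly, the paper writes $f(x)=\sum_w f_w(x)\tau_w$ and derives, from the $\Hh_F^\dagger$-linearity of $f$, the recursions
\[
f_{\omega w}(x)=f_w(\tau_\omega^\ast x)\quad(\omega\in\tilde\Omega_F),\qquad
f_{\tilde s w}(x)=f_w(\tau_{\tilde s}^\ast x)\quad\text{whenever }\ell(\tilde s w)<\ell(w),
\]
obtained by expanding $\tau_{\omega^{-1}}f(x)=f(\tau_{\omega^{-1}}x)$ and $\tau_{\tilde s^{-1}}f(x)=f(\tau_{\tilde s^{-1}}x)$ in the basis. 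Iterating from $f_{w_F}=f_0$ gives the formula immediately. This bypasses the dual--basis computation altogether; your orthogonality relation is then a \emph{consequence} (take $M=\Hh_F^\dagger$, $f=\id$, $x=\tau_w$).

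Your own argument has a real gap in the case $\ell(w)>\ell(w_0)$. The identity $\tau^\ast_{\tilde s}\tau_{\tilde s}=q$ is correct, but to use it on $\tau^\ast_{w_0w_F^{-1}}\tau_w$ you would need the innermost $\tilde s_1$ in a reduced expression of $w_0w_F^{-1}$ to be a left descent of (the length-zero twist of) $w$; nothing guarantees this, and if it fails the product has \emph{higher}, not lower, length. Your second device does not rescue this: applying $\upiota$ transforms $\delta_{w_F}(\tau^\ast_{w_0w_F^{-1}}\tau_w)$ into $\pm\,\delta_{w_F}(\tau_{w_Fw_0^{-1}}\tau^\ast_{w^{-1}})$, but here $\tau_{w_Fw_0^{-1}}$ has length $\ell(w_F)-\ell(w_0)$ and $\tau^\ast_{w^{-1}}$ has top length $\ell(w)$, so the elementary length bound gives vanishing precisely when $\ell(w)<\ell(w_0)$ --- the \emph{same} regime already handled. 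The two forms do not ``jointly cover'' the remaining case; they are equivalent under the symmetry and share the same difficulty. To salvage the dual--basis route you would need an independent argument for $\ell(w)>\ell(w_0)$ (for instance, an induction that carefully tracks how the quadratic relation interacts with the $\theta_s$-terms, or a direct appeal to the Nakayama automorphism), which you have not supplied.
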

\begin{proof}
Write $f(x) = \sum_{w \in \tilde\W_F^\dagger} f_w(x) \tau_w$. For $\omega \in \tilde\Omega_F$ we compute
\begin{equation*}
    \sum_{w \in \tilde\W_F^\dagger} f_w(\tau_{\omega^{-1}} x) \tau_w = f(\tau_{\omega^{-1}} x) = \tau_{\omega^{-1}} f(x)
    = \sum_{w \in \tilde\W_F^\dagger} f_w(x) \tau_{\omega^{-1}} \tau_w
    = \sum_{w \in \tilde\W_F^\dagger} f_w(x) \tau_{\omega^{-1} w}
    = \sum_{w \in \tilde\W_F^\dagger} f_{\omega w}(x) \tau_w
\end{equation*}
and obtain
\begin{equation}\label{f:inverse1}
    f_{\omega w}(x) = f_w(\tau_{\omega^{-1}} x) = f_w(\tau_\omega^\ast x) \ .
\end{equation}
For $s \in S_{aff}$ we observe that $\tau_{\tilde{s}^{-1}} \tau_{\tilde{s}} = q + \tau_{\tilde{s}^{-1}} \theta_s = q + \theta_s \tau_{\tilde{s}}$ and we compute
\begin{align*}
    & \sum_{w \in \tilde\W_F^\dagger} f_w(\tau_{\tilde{s}^{-1}} x) \tau_w  = f(\tau_{\tilde{s}^{-1}} x) = \tau_{\tilde{s}^{-1}} f(x)
    = \sum_{w \in \tilde\W_F^\dagger} f_w(x) \tau_{\tilde{s}^{-1}} \tau_w \\
    & = \sum_{w \in \tilde\W_F^\dagger, \ell(\tilde{s} w) \geq \ell(w)} f_w(x) \tau_{\tilde{s}^{-1} w}
    + \sum_{w \in \tilde\W_F^\dagger, \ell(\tilde{s} w) < \ell(w)} f_w(x) \tau_{\tilde{s}^{-1}} \tau_{\tilde{s}} \tau_{\tilde{s}^{-1} w} \\
    & = \sum_{w \in \tilde\W_F^\dagger, \ell(\tilde{s} w) \geq \ell(w)} f_w(x) \tau_{\tilde{s}^{-1} w}
    + \sum_{w \in \tilde\W_F^\dagger, \ell(\tilde{s} w) < \ell(w)} f_w(x) (q + \theta_s \tau_{\tilde{s}}) \tau_{\tilde{s}^{-1} w} \\
    & = \sum_{w \in \tilde\W_F^\dagger} f_w(x) \tau_{\tilde{s}^{-1} w}
    + (q-1) \sum_{w \in \tilde\W_F^\dagger, \ell(\tilde{s} w) < \ell(w)} f_w(x)  \tau_{\tilde{s}^{-1} w} +
    \sum_{w \in \tilde\W_F^\dagger, \ell(\tilde{s} w) < \ell(w)} f_w(x) \theta_s \tau_w \\
    & =
    \sum_{w \in \tilde\W_F^\dagger} f_{\tilde{s} w}(x) \tau_w
    + (q-1) \sum_{w \in \tilde\W_F^\dagger, \ell(\tilde{s} w) > \ell(w)} f_{\tilde{s} w}(x)  \tau_w +
    \sum_{w \in \tilde\W_F^\dagger, \ell(\tilde{s} w) < \ell(w)} f_w(x) \theta_s \tau_w \\
    & =
    \sum_{w \in \tilde\W_F^\dagger} f_{\tilde{s} w}(x) \tau_w
    + (q-1) \sum_{w \in \tilde\W_F^\dagger, \ell(\tilde{s} w) > \ell(w)} f_{\tilde{s} w}(x)  \tau_w +
    \sum_{w \in \tilde\W_F^\dagger, \ell(\tilde{s} w) < \ell(w)} \sum_{t \in \Tp_s(\mathbb{F}_q)} f_w(x) \tau_{tw} \\
    & =
    \sum_{w \in \tilde\W_F^\dagger} f_{\tilde{s} w}(x) \tau_w
    + (q-1) \sum_{w \in \tilde\W_F^\dagger, \ell(\tilde{s} w) > \ell(w)} f_{\tilde{s} w}(x)  \tau_w +
    \sum_{w \in \tilde\W_F^\dagger, \ell(\tilde{s} w) < \ell(w)} (\sum_{t \in \Tp_s(\mathbb{F}_q)} f_{t^{-1}w}(x)) \tau_w \ .
\end{align*}
It follows that, if $\ell(\tilde{s} w) < \ell(w)$, then we have
\begin{equation}\label{f:inverse2}
\begin{split}
    f_{\tilde{s} w}(x) & = f_w(\tau_{\tilde{s}^{-1}} x) - \sum_{t \in \Tp_s(\mathbb{F}_q)} f_{t^{-1}w}(x) = f_w(\tau_{\tilde{s}^{-1}} x) - \sum_{t \in \Tp_s(\mathbb{F}_q)} f_w(\tau_t x) \\
    & = f_w((\tau_{\tilde{s}^{-1}} - \theta_s)x) = f_w(\tau_{\tilde{s}}^\ast x) \ .
\end{split}
\end{equation}
where the second identity uses \eqref{f:inverse1}. The equations \eqref{f:inverse1} and \eqref{f:inverse2} together imply inductively the assertion.
\end{proof}

\begin{rema}\label{rema:iwahori-4}
We remark that, replacing $w_F$ with its image in $\W_F^\dagger$, we get an analogous linear form $\delta'_{w_F}: {{\Hh}'_F}^{\!\!\!\dagger} \rightarrow k$ as well as a formula for ${{\Hh}'_F}^{\!\!\!\dagger}$-modules which is analogous to the one in Lemma \ref{explicit-inverse}. The calculation makes use of the classical quadratic relations \eqref{quadratic-iwahori}.
\end{rema}

\section{Modules, projective dimension, and duality}\label{subsec:comparison}

\subsection{\label{subsec:GP}}

In the course of this paper we have established two kinds of functorial exact resolutions.

$\mathbf{1.}$  For any left $\Hh$-module $\mathfrak{m}$ we have  as a consequence of Theorems \ref{I-resolution} and  \ref{theo:freeresolution}, the exact sequence of $\Hh$-modules
      \begin{equation}\label{f:M-resolution}
    0 \longrightarrow C_c^{or} (\mathscr{X}_{(d)}, \cX)^\I \otimes_\Hh \mathfrak{m} \xrightarrow{\partial \otimes id} \ldots \xrightarrow{\partial \otimes id} C_c^{or} (\mathscr{X}_{(0)}, \cX)^\I \otimes_\Hh \mathfrak{m} \xrightarrow{\epsilon \otimes id} \mathfrak{m} \longrightarrow 0 \ .
\end{equation}
Moreover, by Theorem \ref{theo:freeresolution}, each term in this resolution is a finite direct sum of $\Hh$-modules of the form $\Hh (\epsilon_F) \otimes_{\Hh_F^\dagger} \mathfrak{m}$.

$\mathbf{2.}$  In the case that the left $\Hh$-module $\mathfrak{m}$ is of the form $\mathfrak{m} = \mathbf V^\I$ for some smooth representation $\mathbf V$ of $\Gp$ we have, by Theorem \ref{I-resolutionV}, the exact sequence of $\Hh$-modules
\begin{equation}\label{f:VI-resolution}
    0 \longrightarrow C_c^{or} (\mathscr{X}_{(d)}, \cV)^\I \xrightarrow{\;\partial\;} \ldots \xrightarrow{\;\partial\;} C_c^{or} (\mathscr{X}_{(0)}, \cV)^\I \xrightarrow{\;\epsilon\;} \mathfrak{m} = \mathbf V^\I \longrightarrow 0 \ .
\end{equation}

We will show that \eqref{f:VI-resolution} is naturally isomorphic to \eqref{f:M-resolution}.\\

The $\Gp$-equivariant map
\begin{align*}
    \mathbf{X} \otimes_\Hh \mathbf{V}^\I & \longrightarrow \mathbf{V} \\
    \1_{g\I} \otimes v & \longmapsto gv
\end{align*}
(where $\1_{g\I}$ denotes the characteristic function of the coset $g\I$) induces a homomorphism of $\Gp$-equivariant coefficient systems $\{ \zeta_F : \mathbf X^{\I_F} \otimes_\Hh\mathbf V^{\I} \rightarrow \mathbf V^{\I_F} \}_F$ and therefore a $\Gp$-equivariant map of complexes
\begin{equation}\label{eq:noniso}
C_c^{or} (\mathscr{X}_{(i)}, \cX)\otimes_\Hh \mathbf V^\I = C_c^{or} (\mathscr{X}_{(i)}, \cX \otimes_\Hh \mathbf V^\I) \longrightarrow  C_c^{or} (\mathscr{X}_{(i)}, \cV)\ .
\end{equation}
As the subsequent remark shows, this map is not an isomorphism in general.

\begin{rema}
\begin{itemize}
\item[1.] The map $\zeta_C$ is the natural identification $\Hh \otimes_\Hh\mathbf V^{\I}\simeq \V^\I$.
\item[2.] The map $\zeta_F$ is not surjective in general. Suppose that $F \subseteq \overline{C}$. By Proposition \ref{F-mapBis} we have $\mathbf{X} ^{\I_F}\otimes_\Hh \mathbf{V}^\I \cong \mathbf{X}_F \otimes_{\Hh_F} \mathbf{V}^\I$ which implies that the image of $\zeta_F$ is the sub-${\mathbf G}_F^\circ(\mathfrak O)$-representation of $\mathbf V^{\I_F}$ generated by $\mathbf V^{\I}$. For example, if $k$ has  characteristic $p$ and  $\Gp={\rm GL}_2(\mathbb Q_p)$ choose the same supersingular representation  $\mathbf V$ as in Remark \ref{rema:about-exactness} and  $F$  to be the vertex $x_0$. Then
$\zeta_F$ is not surjective.

\item[3.] Let $F\subseteq \overline C$.
 If the cardinality of the finite reductive group
${\mathbf G}_F^\circ(\mathfrak O)/\I_F$
 is invertible in $k$, it is classical to establish that
the functors $\mathbf{X}_F\otimes_{\Hh_F}  .$ and $\Hom_{{\mathbf G}_F^\circ(\mathfrak O)}( \mathbf{X}_F, \, .\,)$ are quasi-inverse functors between the category of $\Hh_F$-modules and the category of representations of ${\mathbf G}_F^\circ(\mathfrak O)$ generated by their $\I$-invariant subspace (note that the latter actually are representations of the finite reductive group
${\mathbf G}_F^\circ(\mathfrak O)/\I_F$). The map $\zeta_F$ is  then injective.

Suppose $k$ has   characteristic $p$ and $\Gp$ has type $A_n$.
The above mentioned functors are quasi-inverse equivalences if and only if  $F$ has codimension $0$, or $F$ has codimension $1$ and $q=p$,  or $F$ has codimension $2$ and $q=2$ (\cite[Thm 4.17]{OS}). In those cases, $\zeta_F$ is injective. In the other cases, the functor
${\mathbf X}_F\otimes _{\Hh_F}.$ is not even exact.
\end{itemize}
\end{rema}

On the other hand there is the following trivial observation.

\begin{rema}\label{chamberiso}
For any chamber $D$ of $\mathscr{X}$ the map $\zeta_D$ is an isomorphism.
\end{rema}
\begin{proof}
Pick an element $g \in \Gp$ such that $D = gC$. Then $\mathbf{X}^{\I_D} = g \mathbf{X}^\I$ and $\mathbf{V}^{\I_D} = g \mathbf{V}^\I$. Modulo the identification $\mathbf{X}^\I \otimes_\Hh \mathbf{V}^\I = g \mathbf{X}^\I$ the map $\zeta_D$ is the identity.
\end{proof}

Of course, the map \eqref{eq:noniso} restricts to a homomorphism of complexes of $\Hh$-modules from \eqref{f:M-resolution} to \eqref{f:VI-resolution}.

\begin{prop}\label{prop:isoV}
The exact complexes
\begin{align*}\eqref{f:M-resolution}\qquad &
    0 \longrightarrow C_c^{or} (\mathscr{X}_{(d)}, \cX)^\I \otimes_\Hh \mathbf V^\I \xrightarrow{\;\partial\otimes id\;} \ldots \xrightarrow{\;\partial\otimes id\;} C_c^{or} (\mathscr{X}_{(0)}, \cX)^\I \otimes_\Hh \mathbf V^\I \xrightarrow{\;\epsilon\otimes id\;} \mathbf V^\I \longrightarrow 0 \cr \textrm{and}\qquad&\cr
\eqref{f:VI-resolution}\qquad&
    0 \longrightarrow C_c^{or} (\mathscr{X}_{(d)}, \cV)^\I \xrightarrow{\;\partial\;} \ldots \xrightarrow{\;\partial\;} C_c^{or} (\mathscr{X}_{(0)}, \cV)^\I \xrightarrow{\;\epsilon\;} \mathbf V^\I \longrightarrow 0\
\end{align*}
are isomorphic.
\end{prop}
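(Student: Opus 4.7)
The plan is to reduce both complexes to chain complexes on the apartment $\mathscr A$ via Proposition \ref{prop:isocomplex}, where the coefficient systems simplify enough that the map \eqref{eq:noniso} becomes a termwise isomorphism.

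First I would apply Proposition \ref{prop:isocomplex} to both representations. For $\mathbf V$ it gives an isomorphism $C_c^{or}(\mathscr X_{(i)}, \cV)^\I \cong C_c^{or}(\mathscr A_{(i)}, \cV^\I)$ of $\Hh$-modules, and for $\mathbf X$ it gives an isomorphism $C_c^{or}(\mathscr X_{(i)}, \cX)^\I \cong C_c^{or}(\mathscr A_{(i)}, \cX^\I)$ of $(\Hh,\Hh)$-bimodules. (The right $\Hh$-action in the second case survives because the restriction isomorphism \eqref{eq:restriction} is built from evaluation at facets of $\mathscr A$, and the right $\Hh$-action on $\cX(F) = \mathbf X^{\I_F}$ commutes with the left $\Gp$-action used throughout Proposition \ref{prop:isocomplex}.) Tensoring the second identification by $\mathbf V^\I$ over $\Hh$ rewrites \eqref{f:M-resolution} with terms $\bigoplus_{F \in \mathscr A_i} \mathbf X^{\I_F(\I \cap \Pp_F^\dagger)} \otimes_\Hh \mathbf V^\I$, while \eqref{f:VI-resolution} gets rewritten with terms $\bigoplus_{F \in \mathscr A_i} \mathbf V^{\I_F(\I \cap \Pp_F^\dagger)}$.

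Next I would invoke Proposition \ref{prop:closest}: for every facet $F$ of $\mathscr A$, one has $\I_F(\I \cap \Pp_F^\dagger) = \I_{C(F)}$ where $C(F)$ is the chamber of $\mathscr A$ containing $F$ in its closure and closest to $C$. Since $C(F)$ is a chamber, Remark \ref{chamberiso} provides a natural $\Hh$-linear isomorphism $\zeta_{C(F)}: \mathbf X^{\I_{C(F)}} \otimes_\Hh \mathbf V^\I \xrightarrow{\;\cong\;} \mathbf V^{\I_{C(F)}}$. This supplies the termwise bijection in degree $i$.

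Finally I would verify that the family $\{\zeta_{C(F)}\}_{F \in \mathscr A}$ is compatible with transition maps, so as to constitute an isomorphism of coefficient systems on $\mathscr A$ and therefore, by formula \eqref{f:diff}, an isomorphism between the two complexes. The transition map of $\cV^\I$ from $F$ to $F' \subseteq \overline F$ is the sum over cosets in $(\I \cap \Pp_{F'}^\dagger)/(\I \cap \Pp_F^\dagger)$ of the $g$-action on $\mathbf V^{\I_F(\I \cap \Pp_F^\dagger)} = \mathbf V^{\I_{C(F)}}$; the transition map of $\cX^\I$ is given by the same formula on $\mathbf X^{\I_{C(F)}}$, and then tensored with $\mathrm{id}_{\mathbf V^\I}$. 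Compatibility follows from the $\Gp$-equivariance of $\zeta$: applying $\zeta_{C(F')}$ after the summation agrees with summing after $\zeta_{C(F)}$. I expect the only real bookkeeping hurdle to be confirming this $\Hh$-linearity of the identification in Proposition \ref{prop:isocomplex}, which is a routine unwinding of definitions; once that is in hand, the orientation signs in the differentials match trivially on both sides and the result follows.
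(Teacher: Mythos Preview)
Your proposal is correct and follows essentially the same route as the paper: reduce both complexes to the apartment via Proposition~\ref{prop:isocomplex}, observe (using Proposition~\ref{prop:closest}) that the apartment coefficient systems $\cX^\I$ and $\cV^\I$ take values in chamber-level invariants, and then apply Remark~\ref{chamberiso}. The paper compresses your transition-map check into the single word ``functorial'' (the restriction isomorphism of Proposition~\ref{prop:isocomplex} is natural in $\mathbf V$, so the map \eqref{eq:noniso} transports directly to the apartment), but your more explicit verification is perfectly fine.
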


\begin{proof}
The isomorphism in Proposition \ref{prop:isocomplex} is functorial. It therefore suffices to show that the corresponding homomorphism of complexes
\begin{equation*}
    C_c^{or} (\mathscr{A}_{(i)}, \cX^\I) \otimes_\Hh \mathbf V^\I = C_c^{or} (\mathscr{A}_{(i)}, \cX^\I \otimes_\Hh \mathbf V^\I) \longrightarrow C_c^{or} (\mathscr{A}_{(i)}, \cV^\I)
\end{equation*}
is bijective. As we have seen in the proof of Theorem \ref{I-resolutionV} the coefficient systems $\cX^\I$ and $\cV^\I$ on $\mathscr{A}$ only involve subspaces of fixed vectors with respect to the groups $\I_D$ where $D$ runs over the chambers in $\mathscr{A}$. The bijectivity therefore is an immediate consequence of Remark \ref{chamberiso}.
\end{proof}

\subsection{}

Left and right noetherian rings which are of finite left and right self-injective dimension are also called \emph{Gorenstein rings}. A left module $M$ over a Gorenstein ring $A$ is called \emph{Gorenstein projective} if $\Ext_A^i(M, P) = 0$ for any projective $A$-module $P$ and any $i \geq 1$. Obviously any projective module is Gorenstein projective.

We recall from \cite[Theorem 4]{Vig} that $\Hh$ is left and right noetherian and hence is a Gorenstein ring by Theorem \ref{theo:Gorenstein}.

\begin{lemm}\label{Gorproj-res}
Suppose that $\Gp$ is semisimple. For any left $\Hh$-module $\mathfrak{m}$ the exact resolution \eqref{f:M-resolution} consists of Gorenstein projective modules.
\end{lemm}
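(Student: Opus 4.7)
The strategy is to apply Lemma \ref{induced}.ii with $r=0$, which becomes available thanks to the semisimplicity assumption. When $\Gp$ is semisimple, the rank $r$ of the center vanishes, so Proposition \ref{F-injdim} gives that each $\Hh_F^\dagger$ is a left and right noetherian ring of self-injective dimension $0$, i.e., is self-injective as a left and as a right module over itself. Combined with the freeness of $\Hh$ as a left and right $\Hh_F^\dagger$-module (Proposition \ref{prop:free}), the three properties 1.--3.\ from Section \ref{sec:gene} are satisfied for the inclusion $\Hh_F^\dagger \subseteq \Hh$ with the distinguished value $r=0$. In particular Lemma \ref{induced}.i tells us that $\Hh$ is injective as a left and as a right $\Hh_F^\dagger$-module.

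First I would rewrite each term of \eqref{f:M-resolution}: after tensoring \eqref{f:H-H-bimod} with $\mathfrak{m}$ over $\Hh$, the $i$-th term is a finite direct sum of $\Hh$-modules of the form $\Hh(\epsilon_F) \otimes_{\Hh_F^\dagger} \mathfrak{m}$ with $F \in \mathscr{F}_i$. Since twisting the right action of $\Hh_F^\dagger$ on $\Hh$ by the automorphism $j_F$ is equivalent (via the same defining relation) to twisting the left action on $\mathfrak{m}$ by $j_F$, such a module is naturally isomorphic to $\Hh \otimes_{\Hh_F^\dagger} (\epsilon_F)\mathfrak{m}$, i.e., of the form $\Hh \otimes_{\Hh_F^\dagger} M$ for the $\Hh_F^\dagger$-module $M:=(\epsilon_F)\mathfrak{m}$. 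It thus fits into the framework of Lemma \ref{induced}.ii.

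Next I would verify that any projective left $\Hh$-module $P$ has injective dimension $0$ when viewed as a left $\Hh_F^\dagger$-module. Indeed, $\Hh$ itself is injective over $\Hh_F^\dagger$ by the first paragraph. A free $\Hh$-module $\Hh^{(I)}$ is then a direct sum of injective $\Hh_F^\dagger$-modules, hence still injective because $\Hh_F^\dagger$ is noetherian. A projective $\Hh$-module $P$ is a direct summand of such a free $\Hh$-module, hence is injective over $\Hh_F^\dagger$ as well. Lemma \ref{induced}.ii (with $N=P$, $M=(\epsilon_F)\mathfrak{m}$, and $r=0$) therefore yields
\begin{equation*}
\Ext_\Hh^i\bigl(\Hh(\epsilon_F)\otimes_{\Hh_F^\dagger}\mathfrak{m},\,P\bigr)=\Ext_\Hh^i\bigl(\Hh\otimes_{\Hh_F^\dagger} (\epsilon_F)\mathfrak{m},\,P\bigr)=0\quad\text{for all } i\geq 1.
\end{equation*}
Taking a direct sum over $F\in\mathscr{F}_i$ shows that each term of \eqref{f:M-resolution} is Gorenstein projective.

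No step here is really an obstacle: the argument is a direct specialization of the general algebra of Section \ref{sec:gene} to the situation where, thanks to $\Gp$ being semisimple, all the Frobenius extensions $\Hh_F^\dagger$ of $k$-algebras have self-injective dimension zero. The only tiny point of vigilance is the correct bookkeeping of the $\epsilon_F$-twist, which is harmless because $j_F$ is an automorphism of $\Hh_F^\dagger$ and hence preserves injectivity and projectivity.
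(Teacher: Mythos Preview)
Your proof is correct and follows essentially the same route as the paper: both reduce to showing that a projective $\Hh$-module $P$ is injective as an $\Hh_F^\dagger$-module and then invoke the Frobenius reciprocity computation $\Ext_\Hh^i(\Hh(\epsilon_F)\otimes_{\Hh_F^\dagger}\mathfrak{m},P)=\Ext_{\Hh_F^\dagger}^i((\epsilon_F)\mathfrak{m},P)$. The only cosmetic difference is that the paper verifies injectivity of $P$ over $\Hh_F^\dagger$ via the Frobenius-algebra property (projective $=$ injective, Proposition~\ref{Hecke-Frob}.ii) rather than via Lemma~\ref{induced}.i with $r=0$ and noetherianity.
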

\begin{proof}
By Theorem \ref{theo:freeresolution} it suffices to show that the left $\Hh$-modules
\begin{equation*}
    \Hh (\epsilon_F) \otimes_{\Hh_F^\dagger} \Hh \otimes_\Hh \mathfrak{m} = \Hh (\epsilon_F) \otimes_{\Hh_F^\dagger} \mathfrak{m}
\end{equation*}
are Gorenstein projective for any facet $F\subseteq \overline C$. As in the proof of Lemma \ref{induced}.ii (using Remark \ref{rema:libre}) we have
\begin{equation*}
    \Ext_\Hh^i(\Hh (\epsilon_F) \otimes_{\Hh_F^\dagger} \mathfrak{m}, P) = \Ext_{\Hh_F^\dagger}^i((\epsilon_F)\mathfrak{m},P)
\end{equation*}
for any $i \geq 0$. By our assumption on the center of $\Gp$ the algebra $\Hh_F^\dagger$ is a Frobenius algebra (Proposition \ref{Hecke-Frob}.ii). Over such an algebra every projective module is injective (cf.\ \cite[Thm. 15.9]{Lam}). Since $\Hh$ is free as a left $\Hh_F^\dagger$-module (Proposition \ref{prop:free}), $P$ viewed as an $\Hh_F^\dagger$-module is projective as well, and hence is injective. It follows that $\Ext_{\Hh_F^\dagger}^i((\epsilon_F)\mathfrak{m},P) = 0$ for any $i \geq 1$.
\end{proof}

\begin{rema}\label{rema:Gpr-iwahori}
Similarly,  \eqref{f:H'-H'-bimod}  being a resolution of $\Hh'$ by free right $\Hh'$-modules, tensoring it by  a left $\Hh'$-module $\m$ yields an exact resolution
\begin{equation}\label{f:m'-resolution}
    0 \longrightarrow \bigoplus_{F \in \mathscr{F}_d} \Hh' (\epsilon_F) \otimes_{\Hh_F'^\dagger} \mathfrak{m} \xrightarrow{\ \partial\ } \ldots \xrightarrow{\ \partial\ } \bigoplus_{F \in \mathscr{F}_0} \Hh' (\epsilon_F) \otimes_{\Hh_F'^\dagger} \mathfrak{m} \longrightarrow \mathfrak{m} \longrightarrow 0
\end{equation}
for $\m$. Suppose that $\Gp$ is semisimple.
Then the argument of the previous lemma goes through using Proposition \ref{Hecke-Frob}.iii (and Remark \ref{rema:iwahori-3} for the freeness of $\Hh'$ over $\Hh_F'^\dagger$ for any facet $F\subseteq\overline C$). Thus,  \eqref{f:m'-resolution}  is an exact resolution of $\m$ by Gorenstein projective modules.
\end{rema}

\subsection{Differentials}\label{subsec:differentials}

Note that this paragraph does not require the hypothesis of semi-simplicity for $\Gp$. Let $\m$ be a left $\Hh$-module. In this section we explicitly compute the differentials in the exact resolution
\begin{equation}\label{f:m-resolution}
    0 \longrightarrow \bigoplus_{F \in \mathscr{F}_d} \Hh (\epsilon_F) \otimes_{\Hh_F^\dagger} \mathfrak{m} \xrightarrow{\ \partial\ } \ldots \xrightarrow{\ \partial\ } \bigoplus_{F \in \mathscr{F}_0} \Hh (\epsilon_F) \otimes_{\Hh_F^\dagger} \mathfrak{m} \longrightarrow \mathfrak{m} \longrightarrow 0
\end{equation}
of $\mathfrak{m}$ derived from \eqref{f:M-resolution}. We emphasize that the isomorphism between \eqref{f:m-resolution} and \eqref{f:M-resolution} obviously depends on the choice of the sets $\mathscr{F}_i$ but also on the choice of an orientation $(F,c_F)$ of any $F \in \mathscr{F}_i$.

For any facet $F \subseteq \overline{C}$ of dimension $i \geq 1$ we define
\begin{equation*}
    \mathscr{F}_{i-1}(F) := \{F' \in \mathscr{F}_{i-1} : F'\ \text{is $\W$-equivalent to a facet contained in}\ \overline{F} \}.
\end{equation*}
By Remark \ref{rema:Omega} any two $\W$-equivalent facets contained in $\overline{C}$ already are $\Omega$-equivalent. Therefore, the facets of dimension $i-1$ contained in $\overline{F}$ are of the form
\begin{equation*}
    \omega  F' \qquad\text{for $F' \in \mathscr{F}_{i-1}(F)$ and certain $\omega\in \Omega/\Omega_{F'}$.}
\end{equation*}
We pick, for any $F' \in \mathscr{F}_{i-1}(F)$, a subset $\Omega(F,F') \subseteq \Omega$ of elements which are pairwise distinct modulo $\Omega_{F'}$ and such that
\begin{equation*}
    \{F'' \subseteq \overline{F} :\ \text{$F''$ has dimension $i-1$} \} = \{ \omega F' : F' \in \mathscr{F}_{i-1}(F) , \omega \in \Omega(F,F') \}.
\end{equation*}
For any element $\omega F'$ in the right hand side we introduce the sign $\epsilon(F,F',\omega) \in \{\pm 1\}$ by the requirement that
\begin{equation*}
    c_{F,\omega F'} = \epsilon(F,F',\omega) \omega c_{F'} \ ,
\end{equation*}
where $c_{F,\omega F'}$ is the orientation induced by $(F,c_F)$ on $\omega F'$.

\begin{prop}\label{differential}
The differential $\partial$ in \eqref{f:m-resolution} is given on $\Hh (\epsilon_F) \otimes_{\Hh_F^\dagger} \mathfrak{m}$, for any $1 \leq i \leq d$ and any $F \in \mathscr{F}_i$, by
\begin{equation*}
    \partial (1 \otimes x) = \sum_{F' \in \mathscr{F}_{i-1}(F)} \sum_{\omega \in \Omega(F,F')} \epsilon(F,F',\omega) \tau_{\tilde \omega}\otimes \tau_{\tilde \omega^{-1}} x \qquad\text{for any $x \in \mathfrak{m}$.}
\end{equation*}
\end{prop}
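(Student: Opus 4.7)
The plan is to trace the generator $1\otimes x\in\Hh(\epsilon_F)\otimes_{\Hh_F^\dagger}\m$ through the isomorphism identifying the resolution \eqref{f:m-resolution} with the tensor product of \eqref{f:I-complex} by $\m$ over $\Hh$, compute the differential directly in the oriented chain complex, and then translate the result back. By functoriality in $\m$ it is enough to carry out the computation at the level of the bimodule resolution \eqref{f:H-H-bimod}: one evaluates $\partial(1\otimes 1)$ in $\bigoplus_{F'\in\mathscr{F}_{i-1}}\Hh(\epsilon_{F'})\otimes_{\Hh_{F'}^\dagger}\Hh$ and then tensors with $x$ over $\Hh$ to obtain the stated formula.

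First I identify $1\otimes 1\in\Hh(\epsilon_F)\otimes_{\Hh_F^\dagger}\Hh$ explicitly. Unravelling the isomorphisms of Lemma~\ref{F-map}, Proposition~\ref{H-H}, Lemma~\ref{lemma:isom}.ii and the embedding of Section~\ref{subsubsec:iso}, the element $1\otimes 1$ corresponds to the $\I$-invariant $i$-chain $\chain_F$ supported on the single facet $F$ with value $\mathrm{char}_\I\in\mathbf{X}^{\I_F}$ at $(F,c_F)$; no further orbit sum appears, since $\I\subset\Pp_F^\dagger$ and preserves $c_F$ by Lemma~\ref{I-orient}. Applying the boundary formula \eqref{f:diff} from the proof of Proposition~\ref{prop:isocomplex}, and using that $\chain_F$ is concentrated on $F$, the chain $\partial\chain_F$ is supported on the codimension-one facets $F''\subset\overline F$ with value $t_{F''}^F(\mathrm{char}_\I)=\mathrm{char}_\I$ at $(F'',c_{F,F''})$.

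By Remark~\ref{rema:Omega} each such $F''$ is uniquely of the form $\omega F'$ with $F'\in\mathscr{F}_{i-1}(F)$ and $\omega\in\Omega(F,F')$; the defining identity $c_{F,\omega F'}=\epsilon(F,F',\omega)\,\omega c_{F'}$ rewrites the corresponding summand as $\epsilon(F,F',\omega)$ times the chain $\xi_{F',\omega}$ supported on $\omega F'$ with value $\mathrm{char}_\I$ at $(\omega F',\omega c_{F'})$. It then remains to recognise $\xi_{F',\omega}$ inside $\Hh(\epsilon_{F'})\otimes_{\Hh_{F'}^\dagger}\Hh$. The identification from the first step, applied now at $F'$, is left-$\Hh$-equivariant, and the left action of $\tau_{\tilde\omega}$ on chains coincides with the $\Gp$-action by (any lift of) $\omega$; hence $\tau_{\tilde\omega}\otimes 1$ corresponds to the chain on $\omega F'$ with value $\omega\cdot\mathrm{char}_\I=\mathrm{char}_{\omega\I}$ at $(\omega F',\omega c_{F'})$. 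A subsequent right multiplication by $\tau_{\tilde\omega^{-1}}$ acts on the value via the endomorphism $\tau_{\tilde\omega^{-1}}\in\End_{k[\Gp]}(\mathbf X)^{\mathrm{op}}$ and, by $\Gp$-equivariance together with the fact that $\omega$ has length zero and normalises $\I$, carries $\mathrm{char}_{\omega\I}=\omega\cdot\mathrm{char}_\I$ to $\omega\cdot\tau_{\tilde\omega^{-1}}(\mathrm{char}_\I)=\omega\cdot\mathrm{char}_{\omega^{-1}\I}=\mathrm{char}_\I$. Hence $\xi_{F',\omega}=\tau_{\tilde\omega}\otimes\tau_{\tilde\omega^{-1}}$, and assembling the three steps yields the claimed formula.

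The most delicate point I anticipate is the bookkeeping that transports chain, value and orientation data through the chain of identifications in Section~\ref{subsec:bimodules}; in particular, one has to check that the $\epsilon_{F'}$-twist of the $\Hh_{F'}^\dagger$-action plays no hidden role in the identification of $\xi_{F',\omega}$, all of the orientation information being packaged into the explicit sign $\epsilon(F,F',\omega)$, while the Hecke factor $\tau_{\tilde\omega}\otimes\tau_{\tilde\omega^{-1}}$ falls out of the fact that $\omega\in\Omega$ has length zero.
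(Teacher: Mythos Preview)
Your proof is correct and follows essentially the same route as the paper's: reduce to the universal case $\mathfrak{m}=\Hh$, identify $1\otimes 1$ with the chain $f_{(F,c_F)}$ supported on $F$ with value $\mathrm{char}_\I$, compute its boundary as a sum over the facets $\omega F'\subset\overline F$, and recognise each summand as the image of $\tau_{\tilde\omega}\otimes\tau_{\tilde\omega^{-1}}$. The paper dispatches the last identification with ``One checks that\ldots'', whereas you spell it out via the left $\Hh$-action by $\tau_{\tilde\omega}$ (which, since $\hat\omega$ normalises $\I$, agrees with the $\Gp$-action by $\hat\omega$) followed by right multiplication by $\tau_{\tilde\omega^{-1}}$; your extra remark that the $\epsilon_{F'}$-twist plays no role here is a useful sanity check.
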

\begin{proof}
(One easily checks that each summand on the right hand side only depends on $F'$ and the coset $\omega \Omega_{F'}$.) It suffices to treat the universal case $\mathfrak{m} = \Hh$ and to show that
\begin{equation*}
    \partial (1 \otimes 1) = \sum_{F' \in \mathscr{F}_{i-1}(F)} \sum_{\omega \in \Omega(F,F')} \epsilon(F,F',\omega) \tau_{\tilde \omega}\otimes \tau_{\tilde \omega^{-1}} \ .
\end{equation*}
By section {\ref{subsubsec:iso}} the embedding $\Hh (\epsilon_F) \otimes_{\Hh_F^\dagger} \Hh \hookrightarrow C_c^{or}(\mathscr{X}_{(i)}, \cX)^\I$ sends $1 \otimes 1$ to the unique oriented chain $f_{(F,c_F)}$ supported on $F$ and with value ${\rm char}_{\I}$ at $(F,c_F)$. The definition of the differential in the oriented chain complex $C_c^{or}(\mathscr{X}_{(i)}, \cX)$ implies
\begin{equation*}
    \partial(f_{(F,c_F)})((F'',c'')) =
    \begin{cases}
    \pm {\rm char}_{\I} & \text{if $F'' \subseteq \overline{F}$ and $c'' = \pm c_{F,F''}$}, \\
    0 & \text{otherwise}.
    \end{cases}
\end{equation*}
 From our initial discussion we deduce
\begin{equation*}
    \partial(f_{(F,c_F)}) = \sum_{F' \in \mathscr{F}_{i-1}(F)} \sum_{\omega \in \Omega(F,F')} \tilde{f}_{F',\omega} = \sum_{F' \in \mathscr{F}_{i-1}(F)} \sum_{\omega \in \Omega(F,F')} \epsilon(F,F',\omega) f_{F',\omega}
\end{equation*}
where $\tilde{f}_{F',\omega}$, resp.\ $f_{F',\omega}$, is the unique oriented chain supported on $\omega F'$ with value ${\rm char}_\I$ on $(\omega F', c_{F,\omega F'}) = (\omega F', \epsilon(F,F',\omega) \omega c_{F'})$, resp.\ on $\omega(F',c_{F'})$. One checks that $f_{F',\omega}$ is the image of $\tau_{\tilde \omega}\otimes \tau_{\tilde \omega^{-1}}$ under the embedding $\Hh (\epsilon_{F'}) \otimes_{\Hh_{F'}^\dagger} \Hh \hookrightarrow C_c^{or}(\mathscr{X}_{(i-1)}, \cX)^\I$.
\end{proof}

The description of the highest differential can be somewhat simplified by making, without loss of generality, the following choice of orientations
\begin{equation*}
    c_F := c_{C,F} \qquad\text{for any $F \in \mathscr{F}_{d-1}$}.
\end{equation*}

\begin{coro}\label{highest-diff}
Under the above simplifying assumption we have
\begin{align*}
    \Hh (\epsilon_C) \otimes_{\Hh_C^\dagger} \mathfrak{m} & \xrightarrow{\ \partial\ }  \bigoplus_{F \in \mathscr{F}_{d-1}} \Hh (\epsilon_F) \otimes_{\Hh_F^\dagger} \mathfrak{m} \\
    1 \otimes x & \longmapsto \sum_{F \in \mathscr{F}_{d-1}} \sum_{\omega \in \Omega/\Omega_F} \epsilon_C(\hat{\tilde{\omega}}) \tau_{\tilde \omega}\otimes \tau_{\tilde \omega^{-1}} x \ .
\end{align*}
\end{coro}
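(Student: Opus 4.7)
The plan is to specialize Proposition \ref{differential} to the top-dimensional step $i = d$, $F = C$, and then evaluate the abstract sign $\epsilon(C,F',\omega)$ under the simplifying orientation choice $c_{F} = c_{C,F}$. Since $\Gp$ acts transitively on chambers, $\mathscr{F}_d = \{C\}$, and the source term of the highest differential is indeed $\Hh(\epsilon_C)\otimes_{\Hh_C^\dagger}\mathfrak{m}$, so Proposition \ref{differential} applies directly.

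First I would verify that $\mathscr{F}_{d-1}(C) = \mathscr{F}_{d-1}$: the facets of dimension $d-1$ contained in $\overline{C}$ are precisely the walls of $C$, and every representative in $\mathscr{F}_{d-1}$ already lies in $\overline{C}$ by construction, so each $F' \in \mathscr{F}_{d-1}$ is trivially $\W$-equivalent (via $1 \in \W$) to a wall of $C$. Next, since $\Omega C = C$ by the properties recalled in \ref{subsec:defining-relations}, the group $\Omega$ permutes the walls of $C$, and for a fixed $F' \in \mathscr{F}_{d-1}$ the walls contained in $\overline{C}$ that are $\Omega$-translates of $F'$ are parameterized by the cosets $\Omega/\Omega_{F'}$. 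Therefore $\Omega(C,F')$ may be taken as a system of representatives for $\Omega/\Omega_{F'}$, matching the indexing in the statement.

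The heart of the matter is the identification $\epsilon(C,F',\omega) = \epsilon_C(\hat{\tilde\omega})$. The induced orientation is $\Gp$-equivariant, i.e.\ $c_{gC,gF} = g\cdot c_{C,F}$ for any $g \in \Gp$ and any face $F \subseteq \overline{C}$. Applying this with $g = \hat{\tilde\omega}$ for $\omega \in \Omega$, using that $gC = C$ and that by definition of the orientation character $\hat{\tilde\omega}\cdot c_C = \epsilon_C(\hat{\tilde\omega})\,c_C$, one obtains
$$
c_{C,\omega F'} \;=\; \epsilon_C(\hat{\tilde\omega})\,\omega\, c_{C,F'}.
$$
Under the simplifying assumption $c_{F'} = c_{C,F'}$, comparing this with the defining formula $c_{C,\omega F'} = \epsilon(C,F',\omega)\,\omega\, c_{F'}$ (from the discussion preceding Proposition \ref{differential}) yields $\epsilon(C,F',\omega) = \epsilon_C(\hat{\tilde\omega})$.

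Substituting the identifications $\mathscr{F}_{d-1}(C) = \mathscr{F}_{d-1}$, $\Omega(C,F') = \Omega/\Omega_{F'}$, and $\epsilon(C,F',\omega) = \epsilon_C(\hat{\tilde\omega})$ into the formula of Proposition \ref{differential} delivers the corollary. No genuinely new computation is required; the only subtle point is keeping the orientation conventions consistent, which is careful bookkeeping rather than a conceptual obstacle.
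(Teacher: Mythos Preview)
Your proposal is correct and follows essentially the same approach as the paper: specialize Proposition \ref{differential} to $F=C$, observe $\mathscr{F}_{d-1}(C)=\mathscr{F}_{d-1}$ and $\Omega(C,F')=\Omega/\Omega_{F'}$, then compute the sign by using that $\omega c_{C,F'}$ is the orientation induced on $\omega F'$ by $\omega c_C = \epsilon_C(\hat{\tilde\omega})c_C$. Your argument is slightly more explicit about the equivariance of induced orientations, but the substance is identical.
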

\begin{proof}
Obviously $\mathscr{F}_{d-1}(C) = \mathscr{F}_{d-1}$, and $\Omega(C,F)$ is a set of representatives for all cosets in $\Omega/\Omega_F$. The defining equation for the sign becomes $c_{C,\omega F} = \epsilon(C,F,\omega) \omega c_{C,F}$. But $\omega c_{C,F}$ is induced by $\omega c_C = \epsilon_C(\hat{\tilde{\omega}}) c_C$. It follows that $\epsilon(C,F,\omega) = \epsilon_C(\hat{\tilde{\omega}})$.
\end{proof}

\begin{rema}\label{rema:restric}
The argument in the proof of Corollary \ref{highest-diff} shows that $\epsilon_C | \Omega_F = \epsilon_F$ for any $F \subseteq \overline{C}$ of codimension one.
\end{rema}

\subsection{\label{subsec:duality}Duality}

Throughout this section we assume that the group $\Gp$ is semisimple. Let $\m$ be a left $\Hh$-module. By Lemma \ref{Gorproj-res} the exact resolution \eqref{f:m-resolution} of $\mathfrak{m}$ consists of Gorenstein projective $\Hh$-modules. We abbreviate it from now on by \begin{equation}\label{notation:Gpr}
Gpr_\bullet(\mathfrak{m}) \longrightarrow \mathfrak{m} \ .
\end{equation}
We always impose the condition that the corresponding choice of orientations satisfies $c_F = c_{C,F}$ for any $F \in \mathscr{F}_{d-1}$.

By definition, Gorenstein projective modules are $\Hom_\Hh(\,.\,,\Hh)$-acyclic. It follows that
\begin{equation*}
    \Ext^i_\Hh(\mathfrak{m},\Hh) = h^i(\Hom_\Hh(Gpr_\bullet(\mathfrak{m}),\Hh)) \qquad \text{for any $i \geq 0$.}
\end{equation*}

From now on, we suppose that  $\mathfrak{m}$ is  a left $\Hh$-module of finite length.

\begin{lemm}\label{finitedim}
The $\Hh$-module $\mathfrak{m}$ has finite $k$-dimension.
\end{lemm}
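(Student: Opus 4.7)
The plan is to reduce to simple modules and then invoke the structure of the center of $\Hh$.

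By induction on the length of a composition series of $\mathfrak{m}$, using the short exact sequences $0 \to \mathfrak{m}' \to \mathfrak{m} \to \mathfrak{m}/\mathfrak{m}' \to 0$ and the fact that an extension of two finite dimensional modules is finite dimensional, it suffices to prove the statement when $\mathfrak{m}$ is a simple left $\Hh$-module.

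The key structural input, which is the main obstacle, is the pro-$p$ Iwahori analogue of Bernstein's theorem: when $\Gp$ is semisimple, there exists a finitely generated commutative $k$-subalgebra $Z \subseteq Z(\Hh)$ such that $\Hh$ is a finitely generated module over $Z$. This is established in Vign\'eras's work on the pro-$p$ Iwahori-Hecke algebra (cf.\ \cite{Vig}) via a Bernstein-type presentation of $\Hh$, and in the semisimple case the center is ``large'' in this precise sense.

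Assuming this, the argument proceeds as follows. Since $\tilde\W$ is countable, $\Hh$ has countable $k$-dimension, and hence any cyclic $\Hh$-module has at most countable dimension over $k$; in particular this holds for the simple module $\mathfrak{m}$. A Dixmier-type application of Schur's lemma (after possibly extending scalars to the algebraic closure $\bar k$ and noting that $Z$ is commutative) shows that $Z$ must act on $\mathfrak{m}$ through a character $\chi : Z \to \bar k$. Since $Z$ is finitely generated as a $k$-algebra, $\chi(Z)$ is contained in a finite extension of $k$, so $\mathfrak{p} := \ker(\chi)$ is a maximal ideal of $Z$ with $Z/\mathfrak{p}$ finite dimensional over $k$. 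The module $\mathfrak{m}$ is then a cyclic module over the quotient $\Hh / \mathfrak{p}\Hh$, which is a finite dimensional $k$-algebra because $\Hh$ is finitely generated over $Z$ and $Z/\mathfrak{p}$ is finite over $k$. Therefore $\mathfrak{m}$ has finite $k$-dimension, as required.

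The main obstacle is the central structure result for $\Hh$ in the semisimple case; once that is in hand, the remaining steps are standard commutative algebra and a straightforward application of Schur's lemma for algebras of countable dimension.
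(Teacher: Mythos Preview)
Your reduction to simple modules and your identification of the key input (Vign\'eras's structure theorem \cite[Theorem~4]{Vig}, giving $\Hh$ as a finite module over a finitely generated commutative $k$-subalgebra) match the paper exactly. The paper, however, packages the remaining step as a single citation to the theory of affine PI algebras: by \cite[Cor.~13.1.13(ii)]{MC} module-finiteness over a commutative subring makes $\Hh$ a PI ring, hence a PI affine $k$-algebra, and \cite[Theorem~13.10.3(i)]{MC} then asserts that every simple module over such an algebra is finite dimensional over $k$. The paper also notes that none of this requires $\Gp$ to be semisimple.

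Your direct argument has a genuine gap at the Dixmier step. Over an arbitrary field $k$, Dixmier's form of Schur's lemma is not available, and your proposed remedy of extending scalars to $\bar k$ does not work as stated: $\mathfrak{m}\otimes_k\bar k$ need not remain simple, so you cannot conclude that $Z$ acts on the original $\mathfrak{m}$ through a single character. The clean repair, assuming $Z$ can be taken central, avoids Dixmier entirely: $\mathfrak{m}$ is cyclic over $\Hh$ and hence finitely generated over the noetherian ring $Z$, so some maximal ideal $\mathfrak{q}\subseteq Z$ satisfies $\mathfrak{q}\mathfrak{m}\neq\mathfrak{m}$; centrality of $Z$ forces $\mathfrak{q}\mathfrak{m}$ to be an $\Hh$-submodule, hence $\mathfrak{q}\mathfrak{m}=0$; Zariski's lemma gives $\dim_k Z/\mathfrak{q}<\infty$, whence $\Hh/\mathfrak{q}\Hh$ and therefore $\mathfrak{m}$ are finite dimensional. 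This is precisely the content of the PI result the paper invokes.
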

\begin{proof}
(Note that the subsequent argument remains valid even if $\Gp$ is not semisimple.) It follows from \cite[Theorem 4]{Vig} and \cite[Cor.\ 13.1.13(ii)]{MC} that $\Hh$ is a PI affine $k$-algebra in the sense of \cite[13.10.1]{MC}. Therefore any simple $\Hh$-module has finite $k$-dimension by \cite[Theorem 13.10.3(i)]{MC}.
\end{proof}

It follows from the lemma that  each term in the resolution $Gpr_\bullet(\mathfrak{m})$ is a finitely generated $\Hh$-module. We have
\begin{align*}
     & \Hom_\Hh(Gpr_\bullet (\mathfrak{m}), \Hh) = \Hom_\Hh(\oplus_{F \in \mathscr{F}_\bullet} \Hh (\epsilon_F) \otimes_{\Hh^\dagger_F} \mathfrak{m}, \Hh) \\
    & = \oplus_{F \in \mathscr{F}_\bullet} \Hom_\Hh(\Hh  \otimes_{\Hh^\dagger_F} (\epsilon_F) \mathfrak{m}, \Hh) = \oplus_{F \in \mathscr{F}_\bullet} \Hom_{\Hh^\dagger_F}( (\epsilon_F) \mathfrak{m}, \Hh) \\
    & = \oplus_{F \in \mathscr{F}_\bullet} \Hom_{\Hh^\dagger_F}( (\epsilon_F) \mathfrak{m}, \Hh^\dagger_F) \otimes_{\Hh^\dagger_F} \Hh
\end{align*}
where the second, third, and last identity uses Remark \ref{rema:sym}, Frobenius reciprocity, and the facts that $\mathfrak{m}$ is finite dimensional and $\Hh$ is free over $\Hh^\dagger_F$ (cf.\ Prop.\ \ref{prop:free}.i), respectively. Hence the $\Ext^i_\Hh(\mathfrak{m},\Hh)$ are the cohomology groups of the complex
\begin{equation}\label{f:dual-complex}
    \oplus_{F \in \mathscr{F}_0} \Hom_{\Hh^\dagger_F}( (\epsilon_F) \mathfrak{m}, \Hh^\dagger_F) \otimes_{\Hh^\dagger_F} \Hh \xrightarrow{\;\partial^\ast\;} \ldots \xrightarrow{\;\partial^\ast\;}  \Hom_{\Hh^\dagger_C}( (\epsilon_C) \mathfrak{m}, \Hh^\dagger_C) \otimes_{\Hh^\dagger_C} \Hh
\end{equation}
with $\partial^\ast := \Hom_\Hh(\partial,\Hh)$.

In the following we will construct an augmentation map at the right end of this complex. But first we have to introduce a certain class of automorphisms of the algebra $\Hh$. Let $\xi : \Gp \longrightarrow k^\times$ be any character which is trivial on $\I$. Analogously as in section {\ref{subsubsec:epsilonF}}, multiplying an element of $\Hh$, viewed as an $\I$-bi-invariant function on $\Gp$, by the function $\xi$ defines an automorphism $j_\xi$ of the algebra $\Hh$. It satisfies
\begin{equation*}
    j_\xi(\tau_w) = \xi(\hat{w}) \tau_w \qquad\text{for any $w \in \tilde{\W}$}.
\end{equation*}
As recalled in section \ref{subsec:bruhat} we have in $\Gp$ the normal subgroup $\Gp_{aff}$ such that $\Gp /\Gp_{aff} = \Omega$. Hence any character of $\Omega$ can be viewed as a character of $\Gp$ trivial on $\I$. In the following we apply this to the orientation character $\epsilon_C$ of the chamber $C$ (note that $\Omega = \Omega_C$) and obtain the involution $j_C := j_{\epsilon_C}$ of the algebra $\Hh$, which is the identity on the subalgebra $\Hh_{aff}$ generated by all $\tau_w$ with $w$ in the preimage of $\W_{aff}$ in $\tilde{\W}$ and satisfies $j_C(\tau_{\tilde{\omega}}) = \epsilon_{C}(\hat{\tilde{\omega}}) \tau_{\tilde{\omega}}$ for any $\omega \in \Omega$. Of course, it extends to $\Hh$ the involution $j_C$ on $\Hh_C^\dagger$ which we had introduced in section {\ref{subsubsec:epsilonF}}.

In particular, we see that $(\epsilon_C)\mathfrak{m}$ makes sense as a (left) $\Hh$-module (where the action is the action on $\mathfrak{m}$ composed with $j_C$). But, in fact, we introduce the automorphism
\begin{equation*}
    \iota_C := \upiota \circ j_C
\end{equation*}
of $\Hh$, where $\upiota$ is the canonical involution recalled in section \ref{sec:Frobenius}. One easily checks from the definitions that the involutions $\upiota$ and $j_C$ commute. Hence $\iota_C$ is an involution as well. We let $\iota_C^\ast \mathfrak{m}$ denote $\mathfrak{m}$ with the new $\Hh$-action through the automorphism $\iota_C$, and we form the right $\Hh$-module
\begin{equation*}
    \mathfrak{m}^d := \Hom_k(\iota_C^\ast\mathfrak{m},k) \ .
\end{equation*}
We obtain the exact contravariant functor $\mathfrak{m} \longmapsto \mathfrak{m}^d$ from (left) finite length $\Hh$-modules to (right) finite length $\Hh$-modules. Of course, there is a corresponding functor from right to left finite length $\Hh$-modules. Clearly, we have
\begin{equation*}
    (\mathfrak{m}^d)^d = \mathfrak{m} \ .
\end{equation*}
This implies that the exact functor $(\, .\,)^d$ maps simple modules to simple modules.

We observe that $\Hh_C^\dagger = k[\Pp_C^\dagger /\I] = k[\tilde\Omega]$ is the group algebra of a finite group and hence is a symmetric Frobenius algebra. It is well known that, using the $k$-linear form
\begin{align*}
    \delta_1 : \qquad k[\tilde\Omega] & \longrightarrow k \\
    \sum_{\omega \in \tilde\Omega} c_\omega \omega & \longmapsto c_1 \ ,
\end{align*}
one obtains the isomorphism of right $\Hh_C^\dagger$-modules
\begin{align*}
    \Delta : \Hom_{\Hh^\dagger_C}( (\epsilon_C) \mathfrak{m}, \Hh^\dagger_C) & \longrightarrow \Hom_k((\epsilon_C)\mathfrak{m},k) \\
    f & \longmapsto \delta_1 \circ f \ .
\end{align*}
Since $\upiota|\Hh_C^\dagger = \id$, hence $\iota_C |\Hh_C^\dagger = j_C | \Hh_C^\dagger$, we may view $\Delta$ as an isomorphism onto the right $\Hh_C^\dagger$-module $\mathfrak{m}^d$. As such it
extends to the surjective homomorphism of right $\Hh$-modules
\begin{align*}
    augm : \Hom_{\Hh^\dagger_C}( (\epsilon_C) \mathfrak{m}, \Hh^\dagger_C) \otimes_{\Hh^\dagger_C} \Hh & \longrightarrow \Hom_k(\iota_C^\ast\mathfrak{m},k) = \mathfrak{m}^d \\
    f \otimes \tau & \longmapsto (\delta_1 \circ f)\tau = \delta_1(f(\iota_C(\tau) .)) \ .
\end{align*}
This is the envisaged augmentation map.

Next we compute the image of the last differential in the complex \eqref{f:dual-complex}. If $F \subseteq \overline{C}$ has codimension one then $S_F = \{s_F\}$ (\cite[1.2.10 and 2.1.1]{BT1}).

\begin{lemm}\label{dual-image}
We have
\begin{equation*}
    ((\Delta \otimes \id_\Hh) \circ \partial^\ast) \big( \oplus_{F \in \mathscr{F}_{d-1}} \Hom_{\Hh^\dagger_F}( (\epsilon_F) \mathfrak{m}, \Hh^\dagger_F) \otimes_{\Hh^\dagger_F} \Hh \big) = \sum_{s \in S_{aff}} \mathfrak{m}^d (1 \otimes \tau_{\tilde{s}} -  \tau_{\tilde{s}} \otimes 1)\Hh \ .
\end{equation*}
\end{lemm}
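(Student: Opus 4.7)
My plan is a direct computation organized around the explicit description of the last differential $\partial$ in \eqref{f:m-resolution} from Corollary \ref{highest-diff}. Since both sides of the asserted equality are right $\Hh$-submodules of $\mathfrak{m}^d \otimes_{\Hh_C^\dagger} \Hh$, and the map $(\Delta \otimes \id_\Hh) \circ \partial^\ast$ is right $\Hh$-linear, it suffices to handle elementary tensors $\phi_F \otimes 1$. Using the natural isomorphism $\Hom_{\Hh_F^\dagger}((\epsilon_F)\mathfrak{m}, \Hh_F^\dagger) \otimes_{\Hh_F^\dagger} \Hh \cong \Hom_{\Hh_F^\dagger}((\epsilon_F)\mathfrak{m}, \Hh)$ (which holds by Proposition \ref{prop:free} and Lemma \ref{finitedim}), $\phi_F \otimes 1$ identifies with the map $y \mapsto \phi_F(y)$, and dualizing Corollary \ref{highest-diff} yields
$$\partial^\ast(\phi_F \otimes 1)(x) = \sum_{\omega \in \Omega/\Omega_F} \epsilon_C(\hat{\tilde\omega}) \, \tau_{\tilde\omega} \, \phi_F(\tau_{\tilde\omega^{-1}} x) \ \in \ \Hh.$$
To convert this into an element of $\mathfrak{m}^d \otimes_{\Hh_C^\dagger} \Hh$, I expand $\Hh = \bigoplus_{w \in \W_{aff}} \Hh_C^\dagger \tau_{\tilde w}$ as a free left $\Hh_C^\dagger$-module (Proposition \ref{prop:free}), write the right-hand side above as $\sum_w c_w(x) \tau_{\tilde w}$ with $c_w \in \Hom_{\Hh_C^\dagger}((\epsilon_C)\mathfrak{m}, \Hh_C^\dagger)$, and apply $\delta_1$ termwise.

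The key structural fact is that for $F \in \mathscr{F}_{d-1}$ we have $\W_F = \{1, s_F\}$, so $\tilde\W_F^\dagger = \tilde\Omega_F \sqcup \tilde s_F \tilde\Omega_F$; moreover $(F, \omega) \mapsto s_{\omega F}$ is a bijection between $\{(F, \omega) : F \in \mathscr{F}_{d-1},\, \omega \in \Omega/\Omega_F\}$ and $S_{aff}$, parameterizing the walls of $C$. I will parametrize $\phi_F$ by $\xi := \delta_{w_F} \circ \phi_F \in \mathfrak{m}^d$ using Lemma \ref{explicit-inverse}, choosing $w_F = \tilde s_F$ as a longest element of $\tilde\W_F^\dagger$. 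Substituting the explicit formula for $\phi_F$ into $\partial^\ast(\phi_F \otimes 1)$ and using the braid relations \eqref{braid} and quadratic relations \eqref{quadratic} to express the products $\tau_{\tilde\omega} \tau_{\tilde s_F \eta}$ in the left $\Hh_C^\dagger$-basis, Remark \ref{rema:restric} (i.e.\ $\epsilon_C|_{\Omega_F} = \epsilon_F$) and the definition $\iota_C = \upiota \circ j_C$ conspire so that each $(F, \omega)$-contribution becomes a right-$\Hh$-multiple of an element of the form $\xi' \otimes \tau_{\tilde s} - \xi' \tau_{\tilde s} \otimes 1$ with $s = s_{\omega F}$ and $\xi'$ a suitable twist of $\xi$; this yields the inclusion $\subseteq$.

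For the reverse inclusion, the bijection above lets me invert the computation: given $s \in S_{aff}$ and $\xi \in \mathfrak{m}^d$, pick the corresponding $F \in \mathscr{F}_{d-1}$ and $\omega$, then use Lemma \ref{explicit-inverse} to build a $\phi_F$ (together with an appropriate right multiplication in $\Hh$) whose image under $(\Delta \otimes \id_\Hh) \circ \partial^\ast$ realizes $\xi \otimes \tau_{\tilde s} - \xi \tau_{\tilde s} \otimes 1$. The main obstacle is the bookkeeping: carefully tracking the sign contributions from $\epsilon_C(\hat{\tilde\omega})$ and $(-1)^{\ell(w_F) - \ell(w)}$, handling the torus-level corrections from $\tilde s^2 = \coroot(-1) \ne 1$ in $\tilde\W$, and choosing compatible lifts so that the two bases of $\Hh$ — over $\Hh_C^\dagger$ and over $\Hh_F^\dagger$ — line up correctly when moving a $\tau_{\tilde\omega}$ past $\tau_{\tilde s_F \eta}$.
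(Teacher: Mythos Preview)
Your approach is essentially the paper's: dualize Corollary \ref{highest-diff}, parametrize $\phi_F$ via Lemma \ref{explicit-inverse} with $w_F=\tilde s_F$, substitute, and simplify using $j_F(\tau_{\tilde s_F^{-1}}^\ast)=\tau_{\tilde s_F^{-1}}^\ast$ together with Remark \ref{rema:restric} and $\iota_C|_{\Hh_{aff}}=\upiota$. One point to correct in your bookkeeping plan: after reindexing the double sum over $\omega\in\Omega/\Omega_F$ and $w\in\tilde\W_F^\dagger$ into a single sum over $\omega'\in\tilde\Omega$, applying $\Delta$ (i.e.\ $\delta_1$) kills every term except $\omega'=1$, so the contribution of \emph{all} of $\Hom_{\Hh_F^\dagger}((\epsilon_F)\mathfrak m,\Hh_F^\dagger)\otimes 1$ is exactly $\{f_0\otimes\tau_{\tilde s_F}-f_0\tau_{\tilde s_F}\otimes 1 : f_0\in\mathfrak m^d\}$ --- a single reflection $s_F$, not one $s_{\omega F}$ per $\omega$ as you anticipate. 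The passage from $\{s_F:F\in\mathscr F_{d-1}\}$ to all of $S_{aff}$ then comes either from the paper's observation that the image of $\partial^\ast$ is independent of the choice of $\mathscr F_{d-1}$, or equivalently from the identity $(g\otimes\tau_{\tilde s_F}-g\tau_{\tilde s_F}\otimes 1)\tau_{\tilde\omega}=g'\otimes\tau_{\tilde s'}-g'\tau_{\tilde s'}\otimes 1$ with $g'=g\tau_{\tilde\omega}$ and $s'=\omega^{-1}s_F\omega$, obtained by sliding $\tau_{\tilde\omega}\in\Hh_C^\dagger$ across the tensor. Either route replaces your planned $(F,\omega)$-bookkeeping and makes the reverse inclusion immediate.
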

\begin{proof}
Let $F \in \mathscr{F}_{d-1}$. Obviously, the image  $\partial^\ast ( \Hom_{\Hh^\dagger_F}( (\epsilon_F) \mathfrak{m}, \Hh^\dagger_F) \otimes_{\Hh^\dagger_F} \Hh)$ is the right $\Hh$-submodule of $\Hom_{\Hh^\dagger_C}( (\epsilon_C) \mathfrak{m}, \Hh) = \Hom_{\Hh^\dagger_C}( (\epsilon_C) \mathfrak{m}, \Hh^\dagger_C) \otimes_{\Hh^\dagger_C} \Hh$ generated by $\partial^\ast ( \Hom_{\Hh^\dagger_F}( (\epsilon_F) \mathfrak{m}, \Hh^\dagger_F) \otimes 1)$. It follows from Corollary \ref{highest-diff} that
\begin{equation*}
    \partial^\ast (f \otimes 1) = \sum_{\omega \in \Omega/\Omega_F} \epsilon_C(\hat{\tilde{\omega}}) \tau_{\tilde \omega} f(\tau_{\tilde \omega^{-1}} .)
\end{equation*}
for any $f \in  \Hom_{\Hh^\dagger_F}( (\epsilon_F) \mathfrak{m}, \Hh^\dagger_F)$. Moreover, by Lemma \ref{explicit-inverse} we have
\begin{equation*}
    f = \sum_{w \in \tilde\W_F^\dagger} f_0(j_F(\tau_{w \tilde{s_F}^{-1}}^\ast) \,.\,) \tau_w
\end{equation*}
for some $f_0 \in \Hom_k(\mathfrak{m},k)$. If we insert the second formula into the first one then we obtain
\begin{align*}
    & \partial^\ast (f \otimes 1)  = \sum_{\omega \in \Omega/\Omega_F} \epsilon_C(\hat{\tilde{\omega}}) \tau_{\tilde \omega} \sum_{w \in \tilde\W_F^\dagger} f_0(j_F(\tau_{w \tilde{s_F}^{-1}}^\ast) \tau_{\tilde \omega^{-1}} \,.\,) \tau_w  \\
    & = \sum_{\omega \in \Omega/\Omega_F} \epsilon_C(\hat{\tilde{\omega}}) \sum_{\omega' \in \tilde{\Omega}_F} f_0(j_F(\tau_{\omega'}^\ast) \tau_{\tilde \omega^{-1}} \,.\,) \tau_{\tilde\omega \omega' \tilde{s_F}}
    + \sum_{\omega \in \Omega/\Omega_F} \epsilon_C(\hat{\tilde{\omega}}) \sum_{\omega' \in \tilde{\Omega}_F}  f_0(j_F(\tau_{\omega' \tilde{s_F}^{-1}}^\ast) \tau_{\tilde \omega^{-1}} \,.\,) \tau_{\tilde\omega \omega'}  \\
    & = \sum_{\omega \in \Omega/\Omega_F} \epsilon_C(\hat{\tilde{\omega}}) \sum_{\omega' \in \tilde{\Omega}_F} f_0(\epsilon_F(\hat{\omega'})(\tau_{\tilde\omega \omega'}^\ast) \,.\,) \tau_{\tilde\omega \omega' \tilde{s_F}}
    + \sum_{\omega \in \Omega/\Omega_F} \epsilon_C(\hat{\tilde{\omega}}) \sum_{\omega' \in \tilde{\Omega}_F}  f_0(\epsilon_F(\hat{\omega'})(\tau_{\tilde\omega \omega' \tilde{s_F}^{-1}}^\ast) \,.\,) \tau_{\tilde\omega \omega'}  \\
    & = \sum_{\omega' \in \tilde\Omega} \epsilon_C(\hat{\omega'}) (f_0(\tau_{\omega'}^\ast \,.\,) \tau_{\omega'} \tau_{\tilde{s_F}} + f_0(\tau_{\omega' \tilde{s_F}^{-1}}^\ast \,.\,) \tau_{\omega'}) \\
    & = \sum_{\omega' \in \tilde\Omega} \epsilon_C(\hat{\omega'}) (f_0(\tau_{\omega'}^\ast \,.\,) \tau_{\omega'} \otimes \tau_{\tilde{s_F}} + f_0(\tau_{\omega' \tilde{s_F}^{-1}}^\ast \,.\,) \tau_{\omega'} \otimes 1) \ .
\end{align*}
using that $j_F(\tau_{\tilde{s_F}^{-1}}^\ast) = \tau_{\tilde{s_F}^{-1}}^\ast$ in the third and Remark \ref{rema:restric} in the fourth identity. We deduce that in $\m^d\otimes_{\Hh_C^\dagger}\Hh$ we have
\begin{equation*}
    (\Delta \otimes \id)(\partial^\ast (f \otimes 1)) = f_0 \otimes \tau_{\tilde{s_F}} - f_0 (\upiota(\tau_{\tilde{s_F}}) \,.\,)  \otimes 1 = f_0 \otimes \tau_{\tilde{s_F}} - f_0 \tau_{\tilde{s_F}}  \otimes 1 \ .
\end{equation*}This implies our assertion with the sum on the right hand side over all $s_F$ with $F \in \mathscr{F}_{d-1}$. But the form \eqref{f:M-resolution} of the complex $Gpr_\bullet (\mathfrak{m})$ shows that the image of the differential $\partial^\ast$ in question is independent of the particular choice of the set $\mathscr{F}_{d-1}$. Hence we may sum over all $s \in S_{aff}$.
\end{proof}

\begin{prop}\label{augment}
The sequence
\begin{equation*}
    \Hom_\Hh(Gpr_{d-1}(\mathfrak{m}),\Hh) \xrightarrow{\ \partial^\ast\ } \Hom_\Hh(Gpr_d(\mathfrak{m}),\Hh) \xrightarrow{augm} \mathfrak{m}^d \longrightarrow 0
\end{equation*}
is exact.
\end{prop}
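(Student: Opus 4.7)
The plan is to identify, via the isomorphism $\Delta$ of Section~\ref{subsec:duality}, the rightmost term of the complex \eqref{f:dual-complex} with $\m^d \otimes_{\Hh_C^\dagger} \Hh$ and the augmentation $augm$ with the multiplication map
\[
\mu : \m^d \otimes_{\Hh_C^\dagger} \Hh \longrightarrow \m^d, \qquad y \otimes \tau \longmapsto y \tau.
\]
This identification is essentially built into the definition of $augm$ as $f \otimes \tau \mapsto (\delta_1 \circ f)\tau = \Delta(f)\tau$. Surjectivity of $augm$ is then immediate, since $\mu(y \otimes 1) = y$ for every $y \in \m^d$.

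For exactness at the middle term, I will first invoke Lemma~\ref{dual-image} to identify the image of $\partial^\ast$ (transported along $\Delta \otimes \id_\Hh$) with the right $\Hh$-submodule
\[
K := \sum_{s \in S_{aff}} \m^d \, (1 \otimes \tau_{\tilde s} - \tau_{\tilde s} \otimes 1) \, \Hh \ \subseteq\ \m^d \otimes_{\Hh_C^\dagger} \Hh,
\]
which is generated as a right $\Hh$-module by the elements $y \otimes \tau_{\tilde s} - y\tau_{\tilde s} \otimes 1$ with $y \in \m^d$ and $s \in S_{aff}$. Since these visibly lie in $\ker \mu$, the inclusion $K \subseteq \ker \mu$ is automatic. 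For the reverse inclusion I plan to construct a right $\Hh$-linear section $q : \m^d \to (\m^d \otimes_{\Hh_C^\dagger} \Hh)/K$, $y \mapsto \overline{y \otimes 1}$, and observe that $q$ and the map $\overline{\mu}$ induced by $\mu$ on the quotient are mutually inverse, forcing $\ker \mu = K$.

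The only step requiring care is the right $\Hh$-linearity of $q$, equivalently the congruence $yh \otimes 1 \equiv y \otimes h \pmod{K}$ for every $y \in \m^d$ and $h \in \Hh$. I will verify it on a set of algebra generators of $\Hh$: it holds by definition of $\otimes_{\Hh_C^\dagger}$ when $h \in \Hh_C^\dagger$, and by construction of $K$ when $h = \tau_{\tilde s}$ for some $s \in S_{aff}$. A short induction then shows that the congruence passes through products: if it is known for $h_1$ and $h_2$, then
\[
yh_1 h_2 \otimes 1 \equiv yh_1 \otimes h_2 = (yh_1 \otimes 1)h_2 \equiv (y \otimes h_1)h_2 = y \otimes h_1 h_2 \pmod{K},
\]
using that $K$ is stable under right multiplication by $\Hh$. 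The decomposition $w = \omega \tilde{s_1} \cdots \tilde{s_\ell}$ with $\omega \in \tilde\Omega$ and $s_i \in S_{aff}$, together with the braid relations \eqref{braid} recalled in Section~\ref{subsec:defining-relations}, shows that $\Hh$ is generated as a $k$-algebra by $\Hh_C^\dagger = k[\tilde\Omega]$ and $\{\tau_{\tilde s} : s \in S_{aff}\}$, so the congruence extends to all of $\Hh$ and $q$ is well defined.

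The main obstacle is not the exactness argument itself, which becomes essentially formal once Lemma~\ref{dual-image} is available, but rather the bookkeeping needed to match $augm$ with the multiplication map $\mu$; fortunately the preceding subsection has already arranged all the relevant identifications.
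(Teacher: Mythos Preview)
Your proposal is correct and follows essentially the same route as the paper: both reduce, via Lemma~\ref{dual-image} and the identification $\Delta \otimes \id_\Hh$, to showing that the kernel of the multiplication map $\m^d \otimes_{\Hh_C^\dagger} \Hh \to \m^d$ equals $K$, and both prove the nontrivial inclusion by using that $\Hh$ is generated as a $k$-algebra by $\Hh_C^\dagger$ and the $\tau_{\tilde s}$. Your explicit construction of the section $q$ and the inductive verification of its $\Hh$-linearity spell out what the paper compresses into a single sentence (``any element \ldots\ modulo $K$ is of the form $f \otimes 1$''), but the underlying argument is the same.
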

\begin{proof}
Because of Lemma \ref{dual-image} we have to show that the kernel of the multiplication map $\mathfrak{m}^d \otimes_{\Hh_C^\dagger} \Hh \longrightarrow \mathfrak{m}^d$ is equal to $\sum_{s \in S_{aff}} \mathfrak{m}^d (1 \otimes \tau_{\tilde{s}} -  \tau_{\tilde{s}} \otimes 1)\Hh$. Obviously the latter is contained in this kernel. Since the algebra $\Hh$ is generated by $\Hh_C^\dagger$ and the $\tau_{\tilde{s}}$ we also see that any element in $\mathfrak{m}^d \otimes_{\Hh_C^\dagger} \Hh$ modulo $\sum_{s \in S_{aff}} \mathfrak{m}^d (1 \otimes \tau_{\tilde{s}} -  \tau_{\tilde{s}} \otimes 1)\Hh$ is of the form $f \otimes 1$ with $f \in \mathfrak{m}^d$. This shows the reverse inclusion.
\end{proof}

\begin{coro}\label{duality}
For any left $\Hh$-module of finite length
there is a natural isomorphism of right $\Hh$-modules $\Ext_\Hh^d(\mathfrak{m},\Hh) \cong \mathfrak{m}^d$.
\end{coro}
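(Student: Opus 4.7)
The proof is essentially immediate from the setup already established. Here is how I would present it.

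First, I would invoke the resolution $Gpr_\bullet(\mathfrak{m}) \longrightarrow \mathfrak{m}$ from \eqref{notation:Gpr}. By Lemma \ref{Gorproj-res}, the semisimplicity assumption guarantees that each $Gpr_i(\mathfrak{m})$ is Gorenstein projective, and hence $\Hom_\Hh(\,.\,,\Hh)$-acyclic by definition. Consequently
\[
    \Ext^i_\Hh(\mathfrak{m},\Hh) = h^i\bigl(\Hom_\Hh(Gpr_\bullet(\mathfrak{m}),\Hh)\bigr)
\]
for all $i \geq 0$. In particular, the top cohomology $\Ext^d_\Hh(\mathfrak{m},\Hh)$ is the cokernel of the differential
\[
    \partial^\ast : \Hom_\Hh(Gpr_{d-1}(\mathfrak{m}),\Hh) \longrightarrow \Hom_\Hh(Gpr_d(\mathfrak{m}),\Hh).
\]

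Next, I would apply Proposition \ref{augment}, which asserts precisely that the sequence
\[
    \Hom_\Hh(Gpr_{d-1}(\mathfrak{m}),\Hh) \xrightarrow{\;\partial^\ast\;} \Hom_\Hh(Gpr_d(\mathfrak{m}),\Hh) \xrightarrow{\;augm\;} \mathfrak{m}^d \longrightarrow 0
\]
is exact. Combining this with the previous identification yields the required isomorphism
\[
    \Ext^d_\Hh(\mathfrak{m},\Hh) \;=\; \operatorname{coker}(\partial^\ast) \;\xrightarrow{\;\cong\;}\; \mathfrak{m}^d,
\]
induced by the augmentation map $augm$ of right $\Hh$-modules.

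It remains to address naturality. The construction of $Gpr_\bullet(\mathfrak{m})$, the differentials $\partial$ and $\partial^\ast$, and the augmentation map $augm$ are all functorial in $\mathfrak{m}$: for a morphism $\phi : \mathfrak{m} \to \mathfrak{m}'$ of finite length $\Hh$-modules, the induced map $\mathrm{id} \otimes \phi : \Hh(\epsilon_F) \otimes_{\Hh_F^\dagger} \mathfrak{m} \to \Hh(\epsilon_F) \otimes_{\Hh_F^\dagger} \mathfrak{m}'$ gives a chain map between the corresponding Gorenstein projective resolutions, and the explicit description of $augm$ via the linear form $\delta_1$ and the involution $\iota_C$ shows at once that $augm$ commutes with the duals $\phi^d : \mathfrak{m}'^{\,d} \to \mathfrak{m}^d$. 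Thus the isomorphism is natural in $\mathfrak{m}$, which completes the plan; no step here requires further technical work beyond what has already been carried out in Lemma \ref{dual-image} and Proposition \ref{augment}.
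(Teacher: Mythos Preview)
Your proposal is correct and follows exactly the approach implicit in the paper: the corollary is immediate from the identification $\Ext^i_\Hh(\mathfrak{m},\Hh) = h^i(\Hom_\Hh(Gpr_\bullet(\mathfrak{m}),\Hh))$ together with Proposition \ref{augment}, and indeed the paper states it without proof. Your added remarks on naturality are a reasonable elaboration of what the paper leaves implicit.
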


\begin{rema}\label{rema:iwahori-5}
By viewing $\epsilon_C$ as an $\I'$-bi-invariant character of $\Gp$ we introduce the involution $j'_C$ of $\Hh'$ defined by $j'_C(\tau'_w)=\epsilon_C(\hat w)\tau'_w$. Set $\iota_C'= \upiota'\circ j'_C$ where $\upiota'$ is the canonical involution on $\Hh'$ described in \ref{rema:involution-iwahori} and define, for any left $\Hh'$-module $\m$, the right $\Hh'$-module
$$
\m^d:=\Hom_k({\iota'_C}^{\!\!\ast}\m, k) \ .
$$
Then the calculations leading to Corollary \ref{duality} go through literally (see also Remark \ref{rema:iwahori-4}) and we obtain, for any left $\Hh'$-module $\mathfrak{m}$ of finite length, a natural isomorphism of right $\Hh'$-modules $\Ext_{\Hh'}^d(\mathfrak{m},\Hh') \cong \mathfrak{m}^d$.
\end{rema}

One instance of this duality can be seen on the trivial character $\chi_{triv}$ and the sign character $\chi_{sign}$ of $\Hh$ defined by
$$
\chi_{sign}:  \tau_{w}\mapsto (-1)^{\ell(w)} \quad\text{and}\quad \chi_{triv}: \tau_{w}\mapsto q^{\ell(w)}
$$
for $w\in \tilde\W$, and with the convention that $0^0=1$ (\cite[Corollary 1]{Vig}). The canonical involution $\upiota$ exchanges these two characters.

\begin{coro}\phantomsection\label{characters}
\begin{itemize}
\item[i.] $\Ext_\Hh^d(\chi_{triv}, \Hh) \cong \chi_{sign} \circ j_C$ and  $\Ext_\Hh^d(\chi_{sign}, \Hh) \cong \chi_{triv} \circ j_C$.
\item[ii.] The injective dimension of $\Hh$ (resp.\ $\Hh'$) as a left as well as a right $\Hh$-module (resp.\ $\Hh'$-module) is equal to $d^1 = d$.
\end{itemize}
\end{coro}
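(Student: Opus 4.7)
The plan is to deduce both parts directly from Corollary \ref{duality} (together with its Iwahori analog Remark \ref{rema:iwahori-5}), using the fact that the canonical involution $\upiota$ exchanges $\chi_{triv}$ and $\chi_{sign}$.

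For part i., I apply Corollary \ref{duality} to $\mathfrak{m} = \chi_{triv}$ and $\mathfrak{m} = \chi_{sign}$. For a one-dimensional module $\chi$, the definition $\chi^d = \Hom_k(\iota_C^\ast\chi, k)$ reduces to the character $\chi \circ \iota_C = \chi \circ \upiota \circ j_C$. Since the sign and trivial characters differ by $\upiota$ (immediate from $\upiota(\tau_w) = (-1)^{\ell(w)} \tau_{w^{-1}}^\ast$ evaluated at these two characters, using the defining formulas for $\chi_{triv}$ and $\chi_{sign}$), we have $\chi_{triv} \circ \upiota = \chi_{sign}$ and $\chi_{sign} \circ \upiota = \chi_{triv}$, which gives the announced isomorphisms. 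The one quick verification needed is that $\chi \circ \upiota$ is indeed a character, which follows because $\upiota$ is an algebra automorphism.

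For part ii., the upper bound is already established: in Theorem \ref{theo:Gorenstein} applied to the semisimple group $\Gp$ (so that $r=0$ and $d^1 = d$), the injective dimension of $\Hh$ as a left and as a right module over itself is $\leq d$. To see that this bound is attained, observe that part i. exhibits a nonzero module, namely $\chi_{sign} \circ j_C$, which is the $d$-th Ext group $\Ext_\Hh^d(\chi_{triv}, \Hh)$. Since $\chi_{sign} \circ j_C$ is a nonzero one-dimensional $k$-vector space, we get $\Ext_\Hh^d(\chi_{triv}, \Hh) \neq 0$, forcing the injective dimension to be exactly $d$. The same argument on the right, applied with the opposite orientation convention, gives the statement for the right-module structure.

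For $\Hh'$, I would run the same argument, this time invoking Remark \ref{rema:iwahori-5} in place of Corollary \ref{duality}, together with Theorem \ref{theo:Gorenstein-iwahori} for the upper bound. The characters $\chi_{triv}'$ and $\chi_{sign}'$ of $\Hh'$ (defined by $\tau_w' \mapsto q^{\ell(w)}$ and $\tau_w' \mapsto (-1)^{\ell(w)}$, respectively) are likewise exchanged by the Iwahori--Matsumoto involution $\upiota'$ of Remark \ref{rema:involution-iwahori}, so $\Ext_{\Hh'}^d(\chi_{triv}', \Hh') \cong \chi_{sign}' \circ j_C'$ is nonzero. This is essentially a formal consequence of the preceding material; no new obstacle appears. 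The only point requiring minor care is the compatibility of the signs in the Iwahori case, where the modified quadratic relations \eqref{quadratic-iwahori} are replaced by the classical ones, but this only affects intermediate formulas and not the final identification of $\chi^d$ with $\chi \circ \upiota' \circ j_C'$.
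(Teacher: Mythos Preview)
Your proof is correct and follows essentially the same approach as the paper: deduce part~i from Corollary~\ref{duality} using that $\upiota$ exchanges $\chi_{triv}$ and $\chi_{sign}$, then combine with the upper bound from Theorem~\ref{theo:Gorenstein} (and Theorem~\ref{theo:Gorenstein-iwahori}, Remark~\ref{rema:iwahori-5} for $\Hh'$) to get part~ii. You have simply unpacked the one-line justification the paper gives.
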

\begin{proof}
The first assertion follows from Corollary \ref{duality}. The second assertion about the self-injective dimension of $\Hh$ follows from the first in view of Theorem \ref{theo:Gorenstein}. In view of Theorem \ref{theo:Gorenstein-iwahori} and Remark \ref{rema:iwahori-5} an analogous reasoning works for $\Hh'$.
\end{proof}

From Theorem \ref{theo:Gorenstein} we know that $\Ext^i_\Hh(\mathfrak{m},\Hh) = 0$ for any $i > d$. We will show that these groups also vanish for $i < d$. In fact, we will prove that $\Hh$ is a ring of the following kind.

A ring $S$ is called Auslander-Gorenstein if
\begin{itemize}
  \item[--] $S$ is left and right noetherian,
  \item[--] $S$ has finite injective dimension as a left as well as a right $S$-module, and
  \item[--] every finitely generated left and every finitely generated right $S$-module $M$ satisfies the Auslander condition: For every $i \geq 0$ and any submodule $N$ of $\Ext^i_S(M,S)$ we have $\Ext^j_S(N,S) = 0$ for any $j < i$.
\end{itemize}
For Auslander-Gorenstein rings the \textit{grade}
\begin{equation*}
    j(M) := \min \{i : \Ext_\Hh^i(M, \Hh) \neq 0 \}
\end{equation*}
of a finitely generated $\Hh$-module $M$ is an invariant which has all the properties of a ``codimension of support'' (compare \cite{ASZ}), i.\ e., which is a ``good'' codimension function. Nonzero modules of maximal grade are called  \textit{holonomic}.

\begin{rema}\label{F-AG}
For any facet $F$ of $\mathscr{X}$ the $k$-algebra $\Hh_F^\dagger$ is Auslander-Gorenstein.
\end{rema}
\begin{proof}
$\Hh_F^\dagger$ contains, by Prop.\ \ref{Hecke-Frob}.ii, a Laurent polynomial ring $R$ such that $R \subseteq \Hh_F^\dagger$ is a free $\id_R$-Frobenius extension. By the argument in the proof of Lemma \ref{Frob-injdim} we therefore have $\Ext^*_{\Hh_F^\dagger}(M,\Hh_F^\dagger) = \Ext^*_R(M,R)$ for any $\Hh_F^\dagger$-module $M$. This reduces the Auslander property over $\Hh_F^\dagger$ to the Auslander property over $R$. But $R$ as a regular noetherian commutative ring is Auslander-Gorenstein (compare \cite[Beispiele 1 and 2 after Satz 10.4]{Sch}).
\end{proof}

\begin{theo}\label{AG}
Suppose that the group $\Gp$ is semisimple. Then the $k$-algebra $\Hh$ is Auslander-Gorenstein. Moreover, for any finitely generated $\Hh$-module $M$, the Gelfand-Kirillov dimension and the Krull dimension of $M$ coincide and are equal to $d - j(M)$.
\end{theo}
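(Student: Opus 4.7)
The plan is to reduce Theorem \ref{AG} to the analogous statement for the associated graded ring $gr_\bullet\Hh$ that will be established in Section \ref{sec:graded}, via a classical filtered-to-graded transfer. To begin with, noetherianity of $\Hh$ is \cite[Theorem 4]{Vig}, and its two-sided injective dimension equals $d$ by Theorem \ref{theo:Gorenstein} together with Corollary \ref{characters}.ii. It therefore remains to verify the Auslander condition on finitely generated $\Hh$-modules, together with the three dimension equalities.

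For the Auslander condition I would endow $\Hh$ with the natural length filtration introduced in Section \ref{sec:graded}. Under the semisimplicity hypothesis, the upcoming theorem of that section exhibits $gr_\bullet\Hh$ as a noetherian Auslander-Gorenstein ring of self-injective dimension $d$. A classical Björk-type lifting principle for filtered rings, whose hypotheses are automatically met by the length filtration on $\Hh$, then transfers the Auslander property from $gr_\bullet\Hh$ back to $\Hh$ with the same self-injective dimension. Since everything in sight is left-right symmetric, the argument applies to both left and right $\Hh$-modules simultaneously.

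For the dimension identities I would choose a good filtration on a finitely generated $\Hh$-module $M$, producing a finitely generated graded $gr_\bullet\Hh$-module $gr_\bullet M$. The standard filtered-to-graded comparisons then give
\[
    \mathrm{GK}(M) = \mathrm{GK}(gr_\bullet M), \quad \mathrm{Kdim}(M) = \mathrm{Kdim}(gr_\bullet M), \quad j_{\Hh}(M) = j_{gr_\bullet\Hh}(gr_\bullet M),
\]
the last one by the Ekström--Björk grade-comparison available because both rings are Auslander-Gorenstein of the same self-injective dimension $d$. Section \ref{sec:graded} will moreover identify $gr_\bullet\Hh$ as a finitely generated module over a central polynomial subring, which forces $\mathrm{GK} = \mathrm{Kdim}$ on its finitely generated modules. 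The Auslander-Gorenstein property on the graded side then supplies $\mathrm{GK}(gr_\bullet M) = d - j_{gr_\bullet\Hh}(gr_\bullet M)$, and concatenating the equalities above yields $\mathrm{GK}(M) = \mathrm{Kdim}(M) = d - j(M)$.

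The principal obstacle is the ring-theoretic input from Section \ref{sec:graded}: that $gr_\bullet\Hh$ is Auslander-Gorenstein of self-injective dimension exactly $d$ and is finitely generated over a central polynomial subalgebra. Once those structural facts are in place, the filtered-to-graded machinery makes Theorem \ref{AG} a formal consequence of its counterpart on the graded side; conversely, if one preferred to avoid any forward reference, one could attempt to verify the Auslander condition directly on the complex \eqref{f:dual-complex}, using the Auslander-Gorenstein property of each $\Hh_F^\dagger$ (Remark \ref{F-AG}) and the freeness assertion of Proposition \ref{prop:free}, but that route seems substantially more intricate than the graded-ring reduction sketched above.
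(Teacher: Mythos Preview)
Your route via filtered--graded transfer is genuinely different from the paper's, and the Auslander--Gorenstein part of it would go through: the length filtration is nonnegative and exhaustive, Proposition~\ref{gr-Gor}.i makes $gr_\bullet\Hh$ noetherian so the filtration is Zariskian, and then Bj\"ork's lifting theorem (\cite[Ch.~III]{LO}) together with Proposition~\ref{gr-AG} yields the Auslander condition for $\Hh$. There is no circularity, since nothing in Section~\ref{sec:graded} appeals to Theorem~\ref{AG}.

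The dimension identities, however, have real gaps. First, the Auslander--Gorenstein property alone does \emph{not} supply $\mathrm{GK}(N)=d-j(N)$; that equality is the (GK--)Cohen--Macaulay condition, which is an additional hypothesis, not a consequence. Second, the ``standard'' comparison you invoke for Krull dimension only gives $\mathrm{Kdim}(M)\le\mathrm{Kdim}(gr_\bullet M)$ in general, not equality. Third, a smaller point: the commutative subring $\mathbb{A}$ of Proposition~\ref{gr-Gor}.i is not central in $gr_\bullet\Hh$ (for $x\in\tilde\Lambda$ one has $\bar\tau_x\bar\tau_w=\bar\tau_{xw}$ or $0$, while $\bar\tau_w\bar\tau_x=\bar\tau_{wx}$ or $0$, and $xw\ne wx$ in general), so your ``central polynomial subring'' does not exist as stated. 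None of these points is fatal to a filtered--graded strategy, but they would force you to import further results from \cite{SZ} on the graded side (to get GK--Macaulay there) and then transfer that separately---at which point the detour through $gr_\bullet\Hh$ is buying little.

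The paper instead stays on $\Hh$ and uses what has just been built. By \cite[Theorem~4]{Vig} and \cite[Cor.~13.1.13(ii)]{MC}, $\Hh$ is a noetherian PI affine $k$-algebra. Corollary~\ref{duality} (and Corollary~\ref{characters}.ii for the self-injective dimension) shows that for every finite length module $\mathfrak m$ one has $\Ext^d_\Hh(\mathfrak m,\Hh)\cong\mathfrak m^d$, which is exactly the \emph{injectively smooth} condition of \cite[p.~990]{SZ}. Then \cite[Thm.~3.10 and Lemma~4.3]{SZ}, applied to $\Hh$ with its trivial grading, give simultaneously the Auslander--Gorenstein property, the GK--Macaulay identity $\mathrm{GK}(M)=d-j(M)$, and the equality $\mathrm{GK}(M)=\mathrm{Kdim}(M)$. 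In short, the duality of Corollary~\ref{duality} is the key input you are bypassing; once it is available, the Stafford--Zhang machinery delivers all three conclusions in one stroke, with no appeal to the graded ring.
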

\begin{proof}
We already know from \cite[Thm.\ 4]{Vig} and the proof of Lemma \ref{finitedim} that $\Hh$ is a noetherian PI affine $k$-algebra. We deduce from Corollary \ref{duality} and Corollary \ref{characters}.ii that $\Hh$ is injectively smooth in the sense of \cite[p.~990]{SZ}. Hence \cite[Thm.~3.10 and Lemma 4.3]{SZ} (applied with the trivial grading of $\Hh$) imply our assertion.
\end{proof}

\begin{coro}\label{vanishing}
Suppose that the group $\Gp$ is semisimple. For any $\Hh$-module of finite length $\mathfrak{m}$ we have $\Ext_\Hh^i(\mathfrak{m},\Hh) = 0$ for any $i < d$. In particular, a nonzero $\Hh$-module is holonomic if and only if it is of finite length.
\end{coro}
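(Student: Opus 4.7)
The plan is to derive this corollary directly from Theorem \ref{AG} (the Auslander-Gorenstein property together with the dimension identity $\mathrm{GK\text{-}dim}(M) = d - j(M)$) and Corollary \ref{duality} (the duality $\Ext^d_\Hh(\m, \Hh) \cong \m^d$).

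First I would observe that any $\Hh$-module $\m$ of finite length is in particular finitely generated, and by Lemma \ref{finitedim} is finite-dimensional over $k$; consequently its Gelfand-Kirillov dimension is zero. The equality $\mathrm{GK\text{-}dim}(\m) = d - j(\m)$ supplied by Theorem \ref{AG} then forces $j(\m) = d$, which is exactly the desired vanishing $\Ext^i_\Hh(\m, \Hh) = 0$ for all $i < d$.

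For the ``holonomic if and only if finite length'' statement, one direction is immediate from what has just been done combined with Corollary \ref{duality}: if $\m \neq 0$ has finite length then $\Ext^d_\Hh(\m, \Hh) \cong \m^d \neq 0$ while all lower Ext-groups vanish, so $j(\m) = d$ and $\m$ is holonomic. Conversely, if $M$ is holonomic then by definition $M$ is finitely generated with $j(M) = d$, so Theorem \ref{AG} yields $\mathrm{GK\text{-}dim}(M) = 0$. Since $\Hh$ is a PI affine $k$-algebra (as recalled in the proof of Lemma \ref{finitedim}), a finitely generated module with vanishing Gelfand-Kirillov dimension must be finite-dimensional over $k$ and therefore of finite length.

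There is really no substantive obstacle: both implications are absorbed into the previously established Theorem \ref{AG} and Corollary \ref{duality}. The only auxiliary fact I rely on is the standard statement that, over a PI noetherian affine $k$-algebra, a finitely generated module has Gelfand-Kirillov dimension zero if and only if it is finite-dimensional over $k$; this is where the ``hardest'' (though entirely classical) piece of input enters.
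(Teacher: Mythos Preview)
Your argument is correct, but it takes a different route from the paper's. The paper derives the vanishing directly from the Auslander condition: Corollary \ref{duality} gives $\mathfrak{m} = (\mathfrak{m}^d)^d = \Ext_\Hh^d(\mathfrak{m}^d,\Hh)$, so $\mathfrak{m}$ is (all of, hence a submodule of) a $d$-th Ext group, and the Auslander condition in Theorem \ref{AG} then immediately kills $\Ext_\Hh^j(\mathfrak{m},\Hh)$ for $j<d$. For the converse direction the paper simply cites \cite[Cor.\ 1.3]{ASZ}, a general result for Auslander--Gorenstein rings. You instead use the other half of Theorem \ref{AG}, the identity $\mathrm{GK\text{-}dim}(M)=d-j(M)$, together with the elementary fact that finite $k$-dimension is equivalent to Gelfand--Kirillov dimension zero for finitely generated modules over an affine $k$-algebra. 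Your approach is more uniform (one tool handles both implications) and avoids the external citation; the paper's approach is slightly more structural in that it shows directly how the Auslander condition interacts with the duality of Corollary \ref{duality}. Either way, everything is already packaged inside Theorem \ref{AG}.
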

\begin{proof}
By Corollary \ref{duality} we have $\mathfrak{m} = (\mathfrak{m}^d)^d = \Ext_\Hh^d(\mathfrak{m}^d, \Hh)$. Hence the first assertion is immediate from the Auslander condition. By \cite[Cor.\ 1.3]{ASZ} holonomic $\Hh$-modules are of finite length.
\end{proof}

We note that any finitely generated Gorenstein projective $\Hh$-module $M$ is reflexive, which mean that it satisfies $M = \Hom_\Hh(\Hom_\Hh(M,\Hh),\Hh)$, where in addition $\Hom_\Hh(M,\Hh)$ again is finitely generated Gorenstein projective (cf.\ \cite[Lemma 4.2.2(ii)]{Buc}). Proposition \ref{augment} and Corollary \ref{vanishing} therefore imply that
\begin{equation*}
    \Hom_\Hh(Gpr_{d-\bullet}(\mathfrak{m}),\Hh) \longrightarrow \mathfrak{m}^d \ ,
\end{equation*}
for any finite length $\Hh$-module $\mathfrak{m}$, is an exact resolution of $\mathfrak{m}^d$ by finitely generated Gorenstein projective modules.

\begin{rema}\label{rema:iwahori-6}
The above assertions remain valid with $\Hh'$ instead of $\Hh$ (with no condition on $k$). In particular,  under the assumption that $\Gp$ is semisimple,
 $\Hh'$ is Auslander-Gorenstein and $\Ext^i_{\Hh'}(\mathfrak{m},\Hh') = 0$ for $i<d$ and  any $\Hh'$-module $\m$ of finite length.
\end{rema}

\subsection{\label{subsec:triv-sign}Homological dimensions of the trivial and sign characters for $\Hh$.}

\begin{prop}\label{prop:char-proj}
Let $F$ be  a facet $F\subseteq \overline C$ and $\chi\in\{\chi_{triv}, \chi_{sign}\}$. Suppose that
\begin{itemize}
\item[i.]  $\Gp$ is semisimple,
\item[ii.] $\sum_{w\in\tilde\W_F^\dagger} q^{\ell(w)} \neq 0$ in $k$.
\end{itemize}
Then the  restriction of $\chi$  to $\Hh_F^\dagger$ is  projective and injective as left and as a right $\Hh_F^\dagger$-module.
\end{prop}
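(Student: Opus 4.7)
The strategy is to exhibit, for each character $\chi$, an idempotent $e_\chi \in \Hh_F^\dagger$ whose principal left and right ideals are each isomorphic to $\chi$. The semisimplicity of $\Gp$ forces $r = 0$, so by Proposition \ref{Hecke-Frob}.ii the algebra $\Hh_F^\dagger$ is a Frobenius $k$-algebra; in particular, finitely generated projective $\Hh_F^\dagger$-modules are automatically injective, and the two desired properties are equivalent. Recall also (see the remark before Proposition \ref{prop:char-proj}) that $\Hh_F^\dagger$ is finite-dimensional, so all sums below are finite.

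For $\chi_{triv}$ the natural candidate is
\[
  e_+ \ := \ \sum_{w \in \tilde{\W}_F^\dagger} \tau_w \ \in \ \Hh_F^\dagger.
\]
The key point is to verify the two identities $\tau_v \cdot e_+ = q^{\ell(v)} e_+$ and $e_+ \cdot \tau_v = q^{\ell(v)} e_+$ for all $v \in \tilde{\W}_F^\dagger$; the second follows from the first via the anti-automorphism $\tau_w \mapsto \tau_{w^{-1}}$ of $\Hh_F^\dagger$, which fixes $e_+$. By the braid relations \eqref{braid} the first identity is trivial when $\ell(v) = 0$ (i.e., $v \in \tilde{\Omega}_F$ or $v \in \Tp^0/\Tp^1$), since then $\tau_v$ just permutes the summands of $e_+$. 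The remaining generator case $v = \tilde{s}$ with $s \in S_F$ is the heart of the argument. Choose a set-theoretic section $\sigma : \W_F^\dagger \to \tilde{\W}_F^\dagger$ and rewrite
\[
  e_+ \ = \ \Theta \cdot \!\!\!\!\sum_{\bar{w} \in \W_F^\dagger} \tau_{\sigma(\bar{w})}, \qquad \Theta \ := \sum_{t \in \Tp^0/\Tp^1} \tau_t.
\]
Pair the summands according to the $\bar{s}$-orbits $\{\bar{w},\, \bar{s}\bar{w}\}$ in $\W_F^\dagger$ (these are all of size two since $\ell(\bar{s}\bar{w}) = \ell(\bar{w}) \pm 1 \neq \ell(\bar{w})$). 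Using the quadratic relation \eqref{quadratic} together with the two torus-absorption identities
\[
  \Theta \cdot \theta_s \ = \ (q-1)\,\Theta \qquad\text{and}\qquad \Theta \cdot \tau_{\tilde{s}^2} \ = \ \Theta
\]
(both valid because $\Tp_s(\mathbb{F}_q)$ and $\tilde{s}^2 = \check{\alpha}(-1)$ lie in $\Tp^0/\Tp^1$), a direct computation on each pair with $\ell(\bar{s}\bar{w}) = \ell(\bar{w}) + 1$ yields
\[
  \tau_{\tilde{s}} \cdot E_{\bar{w}} \ = \ E_{\bar{s}\bar{w}}, \qquad \tau_{\tilde{s}} \cdot E_{\bar{s}\bar{w}} \ = \ q E_{\bar{w}} + (q-1) E_{\bar{s}\bar{w}},
\]
where $E_{\bar{w}} := \Theta \tau_{\sigma(\bar{w})}$; adding and summing over pairs gives $\tau_{\tilde{s}} e_+ = q e_+$. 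Induction on length using \eqref{braid} then propagates the identity to all $v$.

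Granting these two identities, $e_+^2 = \chi_{triv}(e_+) \cdot e_+ = P \cdot e_+$ with $P := \sum_{w \in \tilde{\W}_F^\dagger} q^{\ell(w)}$. Under the hypothesis $P \neq 0$ in $k$ we set $e_{triv} := P^{-1} e_+$ and obtain a nonzero idempotent with $\Hh_F^\dagger \cdot e_{triv} \cong \chi_{triv}$ as a left module and $e_{triv} \cdot \Hh_F^\dagger \cong \chi_{triv}$ as a right module. Hence $\chi_{triv}$ is a direct summand of the regular module on both sides, giving projectivity, and the Frobenius property then gives injectivity. For the sign character we transport the construction through the canonical involution $\upiota$ of Section \ref{subsec:defining-relations}: it restricts to $\Hh_F^\dagger$ and satisfies $\chi_{triv} \circ \upiota = \chi_{sign}$, so $e_{sign} := P^{-1}\, \upiota(e_+)$ (note that $\upiota(e_+)$ has the same $\chi_{sign}$-value $P$ as $e_+$ has for $\chi_{triv}$) is the analogous idempotent realising $\chi_{sign}$.

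The main obstacle is the single generator identity $\tau_{\tilde{s}} e_+ = q e_+$. In the Iwahori-Hecke setting the analogous statement follows from $(T_s)^2 = (q-1)T_s + q$ by simply pairing $w$ with $sw$; in the pro-$p$ case the relation \eqref{quadratic} contributes an extra $\tau_{\tilde{s}}\theta_s$ term which threatens to obstruct this pairing. The resolution is precisely that the sum $\Theta$ appearing as the coefficient of every $\W_F^\dagger$-coset in $e_+$ absorbs the correction via $\Theta \theta_s = (q-1)\Theta$, reducing the pro-$p$ bookkeeping to the classical Iwahori-Hecke pairing.
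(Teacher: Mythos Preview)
Your proof is correct and follows the same broad strategy as the paper --- construct an idempotent realising the character and then invoke the Frobenius property (Proposition~\ref{Hecke-Frob}.ii) --- but the execution differs in two ways worth noting.

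First, the paper does not work directly in $\Hh_F^\dagger$. It observes that hypothesis ii.\ forces $[\Tp^0:\Tp^1]$, $|\Omega_F|$, and $\sum_{w\in\W_F} q^{\ell(w)}$ each to be invertible, uses the central idempotent $\varepsilon_1 = [\Tp^0:\Tp^1]^{-1}\Theta$ to pass to the Iwahori--Hecke algebra $\Hh'_F\cong\varepsilon_1\Hh_F$, builds the idempotent there from the classical relations \eqref{quadratic-iwahori}, and only then averages over $\Omega_F$ to reach ${\Hh'_F}^{\!\!\!\dagger}$. Your $\Theta$-absorption identity $\Theta\theta_s=(q-1)\Theta$ is exactly what makes this reduction work, so the two arguments are computing the same element: your $e_{triv}$ agrees with the paper's $\varepsilon'^\dagger_{F,\chi_{triv}}$ under the identification $\Hh'^\dagger_F\cong\varepsilon_1\Hh_F^\dagger$. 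Your route is a little more direct; the paper's route lets it quote the \enquote{straightforward formal computation} \eqref{f:idempotent} in the familiar finite Hecke algebra.

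Second, for $\chi_{sign}$ the paper writes down the explicit $q$-analogue $\sum_{w\in\W_F} q^{\ell(w_F)-\ell(w)}(-1)^{\ell(w)}\tau'_w$, whereas you transport $e_+$ through the involution $\upiota$. Your argument is cleaner here; the paper's has the virtue of exhibiting the idempotent concretely (which feeds into Remark~\ref{rema:proj-dim-chi'}). Either way the conclusion is the same.
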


\begin{proof}
First of all we note that
\begin{equation*}
    \sum_{w\in\tilde\W_F^\dagger} q^{\ell(w)} = [\Tp^0 : \Tp^1] \cdot |\Omega_F| \cdot \sum_{w\in\W_F} q^{\ell(w)} \ .
\end{equation*}
As a consequence of our assumption ii.  all three factors on the right hand side are nonzero in $k$. In particular,  we have the central idempotent  $\varepsilon_{1} := \frac{1}{[\Tp^0 : \Tp^1]} \sum_{t \in \Tp^0/\Tp^1} \tau_t \in \Hh$  and there is an isomorphism of algebras $\Hh'\cong \varepsilon_1\Hh $ given by
$\tau'_w\mapsto   \varepsilon _1 \tau_{\tilde w}$ for all $w\in \W$ (see  \ref{subsec:ourtheorem}). Since $\chi(\varepsilon_{1} )=1$, the character  $\chi$  of $\Hh$ factorizes through
$\varepsilon_1\Hh$ and we denote  by $\chi'$ the character of $\Hh'$ given by the composition of $\chi$ by
$\Hh'\cong \varepsilon_1\Hh\hookrightarrow \Hh$. If $\chi=\chi_{sign}$  or  $\chi=\chi_{triv}$,   the character $\chi'$ is respectively equal to
\begin{equation}\label{chi-iwahori}
\chi'_{sign}:  \tau'_{w}\mapsto (-1)^{\ell(w)} \quad\text{and}\quad \chi'_{triv}: \tau'_{w}\mapsto q^{\ell(w)}
\end{equation} for all $w\in \W$.
We are going to verify that the  restriction of $\chi'$  to $\Hh_F'^\dagger$ is  projective and injective as a left and as a right $\Hh_F'^\dagger$-module. Since, $\Hh'^\dagger_F$ identifies with $\varepsilon_1\Hh_F^\dagger$ in the  isomorphism $ \Hh' \cong  \varepsilon_1\Hh$, it will prove the claim of the proposition.

In a first step we show that $\chi' | \Hh_F'$ is projective.
For this we let $w_F$ denote the longest element in $\W_F$ and we introduce the element
\begin{equation*}
    e'_{F,\chi} =
    \begin{cases}
    \sum_{w \in \W_F} \tau'_w & \text{if $\chi = \chi_{triv}$}, \\
    \sum_{w \in \W_F} q^{\ell(w_F) - \ell(w)} (-1)^{\ell(w)} \tau'_w & \text{if $\chi = \chi_{sign}$}.
    \end{cases}
\end{equation*}
in $\Hh_F'$. We claim that
\begin{equation}\label{f:idempotent}
    \tau'_v e'_{F,\chi} = \chi'(\tau'_v) e'_{F,\chi} = e'_{F,\chi} \tau'_v \qquad\text{for any $v \in \W_F$}
\end{equation}
holds true: it  follows from a straightforward and completely formal computation
in the Hecke algebra of $\W_F$. In particular,  we see that $e'_{F,\chi}$ lies in the center of $\Hh_F'$. We compute
\begin{equation*}
    \chi'(e'_{F,\chi}) = \sum_{w \in \W_F} q^{\ell(w)} \neq 0
\end{equation*}
which is nonzero in $k$ by our assumption ii. Hence $\varepsilon'_{F,\chi} := \chi(e'_{F,\chi})^{-1} e'_{F,\chi}$ is a central idempotent in $\Hh'_F$, and $\Hh'_F \,\varepsilon'_{F,\chi}$ is a projective $\Hh'_F$-module, one dimensional over $k$, which realizes the character $\chi'| \Hh'_F$.

Since the length $\ell$ on $\W_F$ is invariant under conjugation by $\Omega_F$ and $\W_F^\dagger=\W_F\rtimes \Omega_F$, the idempotent $\varepsilon'_{F,\chi}$ commutes with any $\tau'_{\omega}$, for $\omega \in \Omega_F$, and one easily sees (use \eqref{braid-iwahori}) that
\begin{align*}
    \varepsilon_{F,\chi}'^\dagger & := \varepsilon'_{F,\chi} \cdot \frac{1}{|\Omega_F|} \sum_{\omega \in \Omega_F} \tau'_{\omega} \\
    & = \frac{1}{\sum_{w \in {\W}_F^\dagger} q^{\ell(w)}} \cdot
    \begin{cases}
    \sum_{w \in {\W}_F^\dagger} \tau'_w & \text{if $\chi = \chi_{triv}$}\\
    \sum_{w \in {\W}_F^\dagger} q^{\ell(w_F) - \ell(w)} (-1)^{\ell(w)} \tau'_w & \text{if $\chi = \chi_{sign}$}
    \end{cases}
\end{align*}
is a central idempotent in $\Hh'^\dagger_F$ such that the projective $\Hh'^\dagger_F$-module $\Hh'^\dagger_F\, \varepsilon_{F,\chi}'^\dagger$ realizes the character $\chi'| \Hh'^\dagger_F$.

Because of our assumption i. and Proposition \ref{Hecke-Frob}.iii the $k$-algebra $\Hh'^\dagger_F$ is a Frobenius algebra. Hence any projective $\Hh'^\dagger_F$-module is also injective.
\end{proof}

\begin{prop}\label{prop:equal-to-d}
Suppose that $\Gp$ is semisimple and that $\sum_{w\in\tilde\W_F^\dagger} q^{\ell(w)} \neq 0$ in $k$ for all facets $F\in \overline C$.
Then, as left and  right $\Hh$-modules, the characters  $\chi_{triv}$ and $\chi_{sign}$  have projective  dimension equal to $d$.
\end{prop}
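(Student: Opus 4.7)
The overall plan is to upgrade the Gorenstein-projective resolution \eqref{f:m-resolution} applied to $\mathfrak{m}=\chi$ into a genuine projective resolution of length $d$, which gives the upper bound $\leq d$ on the projective dimension, and then to invoke Corollary \ref{characters}.i to obtain the matching lower bound.

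The key step is to verify that each term $\Hh(\epsilon_F)\otimes_{\Hh_F^\dagger}\chi$ of \eqref{f:m-resolution} is projective as a left $\Hh$-module. I would rewrite it as $\Hh\otimes_{\Hh_F^\dagger}(\epsilon_F)\chi$, where $(\epsilon_F)\chi$ denotes $\chi|_{\Hh_F^\dagger}$ with action precomposed by the algebra automorphism $j_F$. Our hypothesis that $\sum_{w\in\tilde\W_F^\dagger} q^{\ell(w)}\neq 0$ in $k$ for every $F\subseteq\overline C$ is precisely condition ii.\ of Proposition \ref{prop:char-proj}, so that proposition applies to each $F\in\mathscr{F}_i$ and yields that $\chi|_{\Hh_F^\dagger}$ is projective. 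Since pullback along an algebra automorphism sends projectives to projectives, $(\epsilon_F)\chi$ is also projective over $\Hh_F^\dagger$. Freeness of $\Hh$ as a right $\Hh_F^\dagger$-module (Proposition \ref{prop:free}.i) then ensures that $\Hh\otimes_{\Hh_F^\dagger}(\epsilon_F)\chi$ is a direct summand of a free left $\Hh$-module, hence projective. Thus \eqref{f:m-resolution} is a projective resolution of $\chi$ of length $d$, so the projective dimension is at most $d$.

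For the lower bound, Corollary \ref{characters}.i gives $\Ext^d_\Hh(\chi_{triv},\Hh)\cong\chi_{sign}\circ j_C$ and $\Ext^d_\Hh(\chi_{sign},\Hh)\cong\chi_{triv}\circ j_C$; both are nonzero since they are one-dimensional characters. A projective resolution of $\chi$ of length strictly less than $d$ would force $\Ext^d_\Hh(\chi,\Hh)=0$, a contradiction, so the left projective dimension equals $d$. For the right-module statement I would run the mirror argument: using Remark \ref{rema:sym} to write each term as $\Hh\otimes_{\Hh_F^\dagger}(\epsilon_F)\Hh$ and tensoring on the left by $\chi$ produces a projective resolution of the right $\Hh$-module $\chi$, invoking the right-module half of Proposition \ref{prop:char-proj} and the freeness of $\Hh$ as a left $\Hh_F^\dagger$-module; the symmetric analogue of Corollary \ref{characters}.i then supplies the matching lower bound. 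I do not expect a real obstacle, since the substantive input—projectivity of $\chi$ over each $\Hh_F^\dagger$ under the arithmetic hypothesis on $q$—is already isolated in Proposition \ref{prop:char-proj}, and the rest is functorial bookkeeping with the bimodule resolution and the duality computation.
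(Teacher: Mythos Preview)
Your proposal is correct and follows essentially the same approach as the paper: use Proposition~\ref{prop:char-proj} to see that the Gorenstein-projective resolution $Gpr_\bullet(\chi)\to\chi$ is in fact a projective resolution of length $d$, giving the upper bound, and then invoke Corollary~\ref{characters}.i for the lower bound. The paper's proof is just a terser version of yours, appealing to ``symmetry'' for the right-module case rather than spelling out the mirror argument.
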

\begin{proof}
Let $\chi \in \{\chi_{triv} , \chi_{sign}\}$. By symmetry it suffices to treat the case of $\chi$ as a left $\Hh$-module. Proposition \ref{prop:char-proj} implies that $Gpr_\bullet (\chi) \longrightarrow \chi$ is an exact projective resolution of the $\Hh$-module $\chi$ (see \eqref{notation:Gpr}). Hence the projective dimension of $\chi$ is $\leq d$. Equality follows from Corollary \ref{characters}.i.
\end{proof}

\begin{rema}\label{rema:proj-dim-chi'}
With no hypothesis on $k$ and $\Gp$, consider the characters   $\chi'_{triv}$ and $\chi'_{sign}$ of $\Hh'$ given by the formulas  \eqref{chi-iwahori}.   Let $\chi'\in \{\chi'_{triv}, \chi'_{sign}\}$.
Under the assumption that  $\Gp$ is semisimple and   $\sum_{w\in\W_F^\dagger} q^{\ell(w)} \neq 0$ in $k$ for all facets $F\subseteq\overline C$, we establish  in the proof of Proposition \ref{prop:char-proj}  that the  restriction of $\chi'$  to $\Hh_F'^\dagger$ is  projective  as a left and as a right $\Hh_F'^\dagger$-module. Therefore,
the exact resolution  \eqref{f:m'-resolution} of $\chi'$  is  a resolution by projective modules and  $\chi'$ has projective dimension $\leq d$ as a left $\Hh'$-module. From Remark  \ref{rema:iwahori-5}, we obtain that the projective dimension  of $\chi'$ as a left $\Hh'$-module is equal to $d$. By symmetry, the same holds for $\chi'$ as a right $\Hh'$-module.
\end{rema}

We remark that it is a general fact for Gorenstein rings $A$ that any $A$-module of finite projective dimension also has finite injective dimension and that, in this case, both these dimensions are bounded above by the self-injective dimension of $A$ (cf.\ \cite{EJ} 9.1.10). In the situation of the above Proposition \ref{prop:equal-to-d} it also follows directly from Proposition \ref{injdim}, Theorem \ref{theo:freeresolution}, and Proposition \ref{prop:char-proj} that the two $\Hh$-modules $\chi_{triv}$ and $\chi_{sign}$ have injective dimension $\leq d$.

\begin{rema}\label{rema:nonsplit}
Suppose that the assumptions of Proposition \ref{prop:char-proj} hold true for all zero dimensional facets in $\overline{C}$. Then the resolution \eqref{f:H-H-bimod} is not split as a complex of $\Hh$-bimodules.
Otherwise, any left $\Hh$-module $\m$ would be a direct summand of  $\bigoplus_{F \in \mathscr{F}_0} \Hh (\epsilon_F) \otimes_{\Hh_F^\dagger} \m$ and there would be an $F \in \mathscr{F}_0$ such that $\Hom_\Hh(\m, \Hh (\epsilon_F) \otimes_{\Hh_F^\dagger} \m)\neq 0.$ Choose $\m$ to be the trivial or the sign character of $\Hh$. Then by Proposition \ref{prop:char-proj}, the restriction of $\m$ to $\Hh_F^\dagger$ is projective so that $\m$ is isomorphic to a direct summand of $\Hh_F^\dagger$
and $\Hh (\epsilon_F) \otimes_{\Hh_F^\dagger} \m$ injects in $\Hh$ as a left $\Hh$-module. This  contradicts the fact (Corollary \ref{vanishing}) that $\Hom_{\Hh}(\m,\Hh)=0$.
\end{rema}

\section{\label{subsec:projdim}Global dimension of the (pro-$p$) Iwahori-Hecke algebra in characteristic $p$}

If $k$ has characteristic zero then by a theorem of Bernstein (compare the argument in \cite[Prop.\ 37]{Vi}) the Hecke algebras $\Hh$ and $\Hh'$ are regular, i.\ e.,  they have finite global dimension, which is bounded above by the rank of the group $\Gp$.
We will show that this is no longer true in characteristic $p$.

In this paragraph, we assume that $k$ has characteristic $p$, so that one can define idempotents in $\Hh$ as mentioned in  \ref{subsec:ourtheorem}. In particular,
$\Hh'$ (resp.\ ${{\Hh}'_F}^{\!\!\!\dagger}$, $\Hh'_F$) can  be identified with the algebra $\varepsilon_{\bf 1}\Hh$ (resp.\ $\varepsilon_{\bf 1}\Hh^\dagger_F$, $\varepsilon_{\bf 1}\Hh_F$) with unit
$\varepsilon_{\bf 1}$. It implies that the projective dimension of $\Hh$ (resp.\ $\Hh^\dagger_F$, $\Hh_F$) is bounded below by the one of $\Hh'$ (resp.\  ${{\Hh}'_F}^{\!\!\!\dagger}$, $\Hh'_F$). We will use this argument several times without repeating it.

\begin{prop}\label{prop:infinite}
Suppose that $k$ has characteristic $p$.
\begin{itemize}
\item[i.] The algebra $\Hh$ has  finite global dimension if and only if $\Hh^\dagger_F$ has finite global dimension for all facets $F \subseteq \overline C$. Otherwise, $\Hh$  has a simple module of infinite projective dimension.
\item[ii.] The algebra $\Hh'$ has finite global dimension if and only if ${{\Hh}'_F}^{\!\!\!\dagger}$ has finite global dimension for all facets $F \subseteq \overline C$. Otherwise, $\Hh$ and $\Hh'$ both have a simple module of infinite projective dimension.
\end{itemize}
\end{prop}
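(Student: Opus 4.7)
The plan is to transfer (in)finiteness of global dimension between $\Hh$ and the subalgebras $\Hh_F^\dagger$ by applying the general results of Section \ref{sec:gene} to the bimodule resolution \eqref{f:H-H-bimod}, whose defining hypotheses 1., 2., 3.\ have been verified by Proposition \ref{F-injdim} (noetherianness and finite self-injective dimension) and Proposition \ref{prop:free} (freeness, together with the fact that $\Hh_F^\dagger$ is a direct summand of $\Hh$ as an $(\Hh_F^\dagger,\Hh_F^\dagger)$-bimodule). The parallel assertions for the pair $({{\Hh}'_F}^{\!\!\!\dagger}, \Hh')$ and \eqref{f:H'-H'-bimod} hold by Remark \ref{rema:iwahori-3}.

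For the equivalence in part i., first assume that every $\Hh_F^\dagger$ for $F\subseteq\overline C$ has finite global dimension, and let $r$ denote the supremum. Any left $\Hh$-module $M$ then has projective dimension $\leq r$ over each $\Hh_F^\dagger$, so Proposition \ref{injdim}.iii applied to \eqref{f:H-H-bimod} bounds its projective dimension over $\Hh$ by $d+r$. Conversely, if some $\Hh_{F_0}^\dagger$ has infinite global dimension, pick an $\Hh_{F_0}^\dagger$-module $N$ of infinite projective dimension; Lemma \ref{induced}.iii then shows that $\Hh\otimes_{\Hh_{F_0}^\dagger}N$ has infinite projective dimension as an $\Hh$-module. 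The same argument with \eqref{f:H'-H'-bimod} establishes the analogous equivalence for $\Hh'$.

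To produce a simple witness I proceed in two steps. Using the description $\Hh_{F_0}^\dagger = \Hh_{F_0}\otimes_{k[\Tp^0/\Tp^1]}k[\tilde\Omega_{F_0}]$ (as a twisted tensor product) from Lemma \ref{HF+HF}, I exhibit a simple $\Hh_{F_0}^\dagger$-module $\sigma$ of infinite projective dimension. When $\Gp$ is semisimple this is immediate, since $\Omega_{F_0}$ is then finite, $\Hh_{F_0}^\dagger$ is finite-dimensional, and a finite-dimensional algebra has infinite global dimension if and only if some simple module has infinite projective dimension; in the general case one specializes the central part of $k[\tilde\Omega_{F_0}]$ at a suitable character to reduce to the finite-dimensional setting. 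The induced $\Hh$-module $M := \Hh\otimes_{\Hh_{F_0}^\dagger}\sigma$ is finitely generated and has infinite projective dimension by Lemma \ref{induced}.iii; a noetherian induction on its finitely generated submodules, combined with the long exact sequence of $\Ext_\Hh^\ast(-,\Hh)$, then yields a simple subquotient of $M$ with infinite projective dimension. The identical construction inside $\Hh'$ supplies the required simple $\Hh'$-module for part ii.

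For the last clause of part ii., when ${{\Hh}'_F}^{\!\!\!\dagger}$ has infinite global dimension the central-idempotent decomposition $\Hh_F^\dagger = {{\Hh}'_F}^{\!\!\!\dagger}\oplus(1-\varepsilon_{\bf 1})\Hh_F^\dagger$, available in characteristic $p$, forces $\Hh_F^\dagger$ itself to have infinite global dimension, so part i.\ then supplies a simple $\Hh$-module of infinite projective dimension. The main obstacle will be the construction of the simple $\Hh_{F_0}^\dagger$-witness in the non-semisimple case, and the subsequent extraction of a simple subquotient with infinite projective dimension from the finitely generated $\Hh$-module $M$: both rely on delicate long-exact-sequence and noetherianness arguments, made possible by the structural description in Lemma \ref{HF+HF}.
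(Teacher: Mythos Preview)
Your proof of the equivalence in i.\ and ii.\ is correct and essentially coincides with the paper's: one direction via Proposition~\ref{injdim}.iii applied to the resolution \eqref{f:H-H-bimod} (resp.\ \eqref{f:H'-H'-bimod}), the other via Lemma~\ref{induced}.iii together with Proposition~\ref{prop:free}.ii. Your treatment of the last clause of ii.\ through the idempotent $\varepsilon_{\bf 1}$ also matches the paper's remark at the start of the section.

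The gap is in your construction of the simple witness. The ``noetherian induction'' you sketch does not go through: a finitely generated module over a noetherian ring need not have a simple submodule (think of $\mathbb{Z}$ over itself), so even after passing to a maximal quotient of infinite projective dimension you cannot in general extract a simple subquotient by elementary long-exact-sequence arguments. Likewise, your reduction to a simple $\Hh_{F_0}^\dagger$-module of infinite projective dimension in the non-semisimple case (``specialize the central part at a suitable character'') is not justified and is in any case unnecessary.

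The paper bypasses both difficulties by a structural argument. From \cite[Theorem~4]{Vig} and \cite[Cor.~13.1.13(iii), Cor.~13.6.6(iii)]{MC}, the algebra $\Hh$ (and similarly $\Hh'$) is a fully bounded noetherian ring. Rainwater's theorem \cite{Rai} says that for an FBN ring the global dimension equals the supremum of the projective dimensions of its simple modules. Since $\Hh$ is Gorenstein of self-injective dimension $\leq d^1$ (Theorem~\ref{theo:Gorenstein}), \cite[Thm.~9.1.10]{EJ} forces every module of finite projective dimension to have projective dimension $\leq d^1$. Hence if all simples had finite projective dimension the global dimension would be $\leq d^1$; contrapositively, infinite global dimension produces a simple module of infinite projective dimension directly, with no inductive extraction needed.
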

\begin{proof}
It follows from \cite[Theorem 4]{Vig} and \cite[Cor.\ 13.1.13(iii) and Cor.\ 13.6.6(iii)]{MC} that $\Hh$ is a fully bounded noetherian ring. For such a ring it is shown in \cite{Rai} that its global dimension is the supremum of the projective dimensions of its simple modules. But according to \cite[Theorem 9.1.10]{EJ} the projective dimension of any $\Hh$-module, provided that it is finite, is bounded above by the self-injective dimension $\leq d^1$ of $\Hh$ (cf.\ Theorem \ref{theo:Gorenstein}). It follows that if $\Hh$ has infinite global dimension, then there must exist a simple $\Hh$-module of infinite projective dimension.
The statement is valid with $\Hh'$ instead of $\Hh$ since $\Hh'$ is also a fully bounded noetherian ring (\cite[Theorem 2]{Vign}) with finite self-injective dimension by Theorem \ref{theo:Gorenstein-iwahori}.

It remains to show the equivalences. For $\Hh$,
combining Proposition \ref{prop:free} and Lemma \ref{induced}.iii gives that $\Hh$ has infinite global dimension as soon as there is a facet $F \subseteq \overline C$ with $\Hh^\dagger_F$ of infinite global dimension.
Now suppose that $\Hh^\dagger_F$ has finite global  dimension for all facets $F$ with dimension $\leq d$ contained in $\overline C$. Then using the resolution \eqref{f:H-H-bimod} together with
Proposition \ref{injdim}.iii., we obtain that any $\Hh$-module has finite projective dimension. We have proved i. By Remark \ref{rema:iwahori-3} and using the resolution \eqref{f:H'-H'-bimod}, the
same argument works for $\Hh'$ and we obtain ii.
\end{proof}

\begin{coro} \label{coro:infinite}
Suppose that $k$ has characteristic $p$.
\begin{itemize}
\item[i.] If there is a facet $F \subseteq \overline C$ such that $\Hh_F$ is not semisimple, then each of $\Hh_F$, $\Hh^\dagger_F$, and $\Hh$ has a simple module of infinite projective dimension.
\item[ii.] If there is a facet $F \subseteq \overline C$ such that $\Hh'_F$ is not semisimple, then each of $\Hh'_F$, $\Hh_F$, ${{\Hh}'_F}^{\!\!\!\dagger}$, $\Hh^\dagger_F$, $\Hh'$, and $\Hh$ has a simple module of infinite projective dimension.
\end{itemize}
\end{coro}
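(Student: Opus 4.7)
My strategy is to first establish the result for $\Hh_F$ itself by exploiting its Frobenius (hence self-injective) structure, and then propagate simple modules of infinite projective dimension through the chain $\Hh_F \subseteq \Hh_F^\dagger \subseteq \Hh$ using Lemma \ref{induced}.iii together with the fully bounded noetherian (FBN) property of the larger algebras.

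For part (i), I would begin with the fact that $\Hh_F$ is a finite-dimensional Frobenius algebra (by \cite[Prop.\ 3.7]{Tin} and \cite[Thm.\ 2.4]{Saw}), hence self-injective. The key observation is that over any self-injective ring, a module of finite projective dimension is already projective: given a finite projective resolution $0 \to P_n \to \cdots \to P_0 \to M \to 0$ of length $n \geq 1$, the injectivity of $P_n$ forces $P_n \hookrightarrow P_{n-1}$ to split and the resolution shortens, and induction on $n$ yields $n = 0$. So non-semisimplicity of $\Hh_F$ produces a non-projective simple $\Hh_F$-module $M$, which automatically has infinite projective dimension. I would then propagate $M$ along $\Hh_F \subseteq \Hh_F^\dagger \subseteq \Hh$: Lemma \ref{HF+HF} and Proposition \ref{prop:free} show that each inclusion is a bimodule direct summand with freeness on both sides, so Lemma \ref{induced}.iii applies and the induced modules $\Hh_F^\dagger \otimes_{\Hh_F} M$ and $\Hh \otimes_{\Hh_F} M$ have infinite projective dimension over $\Hh_F^\dagger$ and $\Hh$, respectively. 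To upgrade to simple modules I would invoke the FBN property: for $\Hh$ this is recorded in the proof of Proposition \ref{prop:infinite} via \cite{Vig} and \cite{MC}, and for $\Hh_F^\dagger$ it follows from Proposition \ref{Hecke-Frob}.ii, which exhibits $\Hh_F^\dagger$ as a finitely generated free module over the central Laurent polynomial ring $k[Z_0]$, making it a noetherian PI algebra. Rainwater's theorem \cite{Rai} then supplies simple modules of infinite projective dimension for both $\Hh_F^\dagger$ and $\Hh$; the module $M$ itself works on $\Hh_F$.

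For part (ii), I would run the identical three-step scheme on the Iwahori chain $\Hh'_F \subseteq {\Hh'_F}^{\!\!\!\dagger} \subseteq \Hh'$, substituting Remark \ref{rema:iwahori-3} for Lemma \ref{HF+HF} and Proposition \ref{prop:free}, Proposition \ref{Hecke-Frob}.iii for Proposition \ref{Hecke-Frob}.ii, and Proposition \ref{prop:infinite}.ii for Proposition \ref{prop:infinite}.i. To also obtain the three pro-$p$ conclusions from a hypothesis on $\Hh'_F$, I would use the central idempotent $\varepsilon_1 \in \Hh$, which is available because $q-1 \equiv -1 \pmod p$ is invertible in $k$: the identifications $\Hh'_F \cong \varepsilon_1 \Hh_F$, ${\Hh'_F}^{\!\!\!\dagger} \cong \varepsilon_1 \Hh_F^\dagger$, and $\Hh' \cong \varepsilon_1 \Hh$ realize each Iwahori algebra as a direct-factor $k$-algebra of its pro-$p$ counterpart; since a finite direct product is semisimple iff each factor is, $\Hh'_F$ non-semisimple forces $\Hh_F$ non-semisimple, and part (i) supplies the remaining pro-$p$ conclusions. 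I do not foresee a real obstacle: the conceptual content lies in the self-injective/syzygy argument for $\Hh_F$, and the remainder is a straightforward concatenation of Lemma \ref{induced}.iii, the Frobenius-extension data of Section \ref{sec:Frobenius}, Rainwater's theorem, and the $\varepsilon_1$-decomposition.
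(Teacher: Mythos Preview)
Your proposal is correct and follows essentially the same route as the paper: deduce a simple $\Hh_F$-module of infinite projective dimension from the Frobenius (self-injective) structure, propagate it upward via Lemma \ref{induced}.iii together with the bimodule direct-summand statements of Lemma \ref{HF+HF}.ii and Proposition \ref{prop:free}.ii, and then invoke the FBN property plus Rainwater to obtain simple modules over $\Hh_F^\dagger$ and $\Hh$; part ii is handled the same way via Remark \ref{rema:iwahori-3} and the $\varepsilon_1$-decomposition. The only cosmetic difference is that the paper cites Auslander \cite{Aus} for the dichotomy ``Frobenius $\Rightarrow$ semisimple or infinite global dimension'' whereas you spell out the standard self-injective splitting argument, and the paper refers to ``the proof of Proposition \ref{F-injdim}'' for the FBN property of $\Hh_F^\dagger$ where you cite Proposition \ref{Hecke-Frob}.ii directly.
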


\begin{proof}
First of all we recall (cf.\ \cite[Thm. 1]{Aus}) that an algebra  is semisimple if and only if it has global dimension equal to zero.
Then (cf.\ \cite[Cor.\ 11, Prop.\ 14, Prop.\ 15]{Aus}): a Frobenius $k$-algebra  either is semisimple or has infinite global dimension.  In the latter case there exists a simple module of infinite projective dimension. In section \ref{sec:Frobenius} we recalled that $\Hh_F$ and $\Hh'_F$  are always  Frobenius algebras.

Suppose that $F\subseteq \overline C$ is such that $\Hh_F$ is not semisimple. Then combining Lemmas \ref{HF+HF}.ii and  \ref{induced}.iii, we see that  $\Hh_F^\dagger$ has a module of infinite projective dimension, and by the proof of Proposition \ref{F-injdim}, which shows that $\Hh^\dagger_F$ is fully bounded noetherian, it has a simple module of infinite projective dimension. We conclude the proof of i. using Proposition \ref{prop:infinite}.

The assertion ii. is proved by the same arguments (see Remark \ref{rema:iwahori-3}) and using the observation at the beginning of this paragraph.
\end{proof}

Since $k$ has characteristic $p$, the algebras $\Hh_C = k[\Tp^0 / \Tp^1]$ and $\Hh'_C=k$ are   semisimple.   As for the algebras $\Hh^\dagger_C = k[\Pp_C^\dagger / \I]$  and $\Hh'^\dagger_C = k[\Pp_C^\dagger / \I']$,   they  are semisimple if and only if $p$ does not divide the order of $\Pp_C^\dagger / \I' = \Omega$. On the other hand, as noticed in  \cite[Theorem 5]{Cab},  the Jacobson radical of $\Hh_F \otimes_k \overline{k}$ (resp.\ $\Hh_F' \otimes_k \overline{k}$), where $\overline{k}$ is an algebraic closure of $k$, contains all the commutators in $\Hh_F \otimes_k \overline{k}$ (resp.\  $\Hh_F' \otimes_k \overline{k}$) because the simple $\Hh_F \otimes_k \overline{k}$-modules (resp.\  $\Hh_F' \otimes_k \overline{k}$-modules) are one dimensional. Since $\Hh_F / \mathrm{Jac}(\Hh_F)$ maps injectively to $\Hh_F \otimes_k \overline{k} / \mathrm{Jac}(\Hh_F \otimes_k \overline{k})$ and correspondingly for $\Hh'_F$ (cf.\ \cite[Thm.\ 5.14]{Lam0}) this statement remains true for $\Hh_F$ and $\Hh'_F$. Therefore, $\Hh_F$ (resp.\ $\Hh_F'$) is not semisimple and has a simple module of infinite projective dimension whenever it is not commutative. This observation gives part of the following criterion.

\begin{lemm}\label{lemma:HF-infinitedim}
Suppose $k$ has characteristic $p$. Assume $d > 0$, and let $F$ be a facet $F \subsetneq \overline C$ of dimension $< d$. We have:
\begin{itemize}
  \item[a.]  Suppose that the root system $\Phi_F$ is not of type $A_1 \times\ldots\times A_1$. Then  $\Hh'_F$ and $\Hh_F$ are not semisimple.
  \item[b.]  Suppose that $q \neq 2$.
  Then $\Hh_F$ is not semisimple.
\end{itemize}
\end{lemm}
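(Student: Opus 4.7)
My plan is to exploit the observation recorded immediately before the lemma: non-commutativity of $\Hh_F$ (or $\Hh'_F$) already forces non-semisimplicity. This will handle part (a) uniformly. For part (b), however, $\Hh_F$ can be genuinely commutative (e.g.\ for $\mathbf G={\rm SL}_2$ with $q=3$ and $F=\{x_0\}$, one checks that $s$ acts trivially on $\Tp^0/\Tp^1=\mathbb F_3^\times$), so I will need to construct an explicit nonzero nilpotent inside $\Hh_F$ by hand.

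\textbf{Part (a).} Since $\Phi_F$ is not of type $A_1^{\times n}$, the Coxeter system $(\W_F,S_F)$ contains two simple reflections $s,s'\in S_F$ with $m(s,s')\geq 3$, and in particular $ss'\neq s's$ in $\W_F$. I would pick any lifts $\tilde s,\tilde s'\in\tilde\W_F$; then $\tilde s\tilde s'$ and $\tilde s'\tilde s$ project to the distinct elements $ss'$ and $s's$ of $\W_F$, hence are themselves distinct in $\tilde\W_F$. The braid relations \eqref{braid}, applicable because $\ell(ss')=2=\ell(s)+\ell(s')$, will yield
\[
\tau_{\tilde s}\tau_{\tilde s'}=\tau_{\tilde s\tilde s'}\neq\tau_{\tilde s'\tilde s}=\tau_{\tilde s'}\tau_{\tilde s},
\]
showing that $\Hh_F$ is not commutative. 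The identical argument with \eqref{braid-iwahori} in place of \eqref{braid} handles $\Hh'_F$, and the pre-lemma observation finishes both cases.

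\textbf{Part (b).} I would fix any $s\in S_F$ (the set is non-empty since $F$ is a proper face of $C$) and set $H:=\Tp_s(\mathbb F_q)$, which by section \ref{subsec:defining-relations} is cyclic of order $q-1$. For $q\neq 2$ this order is $>1$ and coprime to $p$, so invertible in $k$; thus $e_H:=|H|^{-1}\theta_s\in k[\Tp^0/\Tp^1]\subseteq\Hh_F$ is a well-defined idempotent. Inspecting its expansion in the standard basis of $k[\Tp^0/\Tp^1]$ shows $e_H\neq 1$, so $e:=1-e_H$ is a nonzero idempotent. The crucial point is that $s$ acts on $\Tp_s$ by inversion (since $s\cdot\coroot=-\coroot$), hence preserves $H$ setwise and fixes $\theta_s$; consequently $e$ is $s$-invariant and commutes with $\tau_{\tilde s}$. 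Because $x\mapsto\tau_{\tilde s}x$ is injective on $k[\Tp^0/\Tp^1]$ (by the braid relations its image has basis $\{\tau_{\tilde s t}:t\in\Tp^0/\Tp^1\}$), the element $a:=\tau_{\tilde s}e$ will be nonzero. The quadratic relation \eqref{quadratic} in characteristic $p$ degenerates to $\tau_{\tilde s}^{\,2}=\tau_{\tilde s}\theta_s=|H|\,\tau_{\tilde s}e_H$, whence
\[
a^2=\tau_{\tilde s}\,e\,\tau_{\tilde s}\,e=\tau_{\tilde s}^{\,2}\,e=|H|\,\tau_{\tilde s}\,e_H(1-e_H)=0.
\]
This exhibits $a$ as a nonzero nilpotent in $\Hh_F$, forcing $\mathrm{Jac}(\Hh_F)\neq 0$ and hence non-semisimplicity of $\Hh_F$.

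\textbf{Main obstacle.} The delicate point in part (b) is precisely that one cannot reduce it to a non-commutativity argument, and that a naive construction using character idempotents $e_\chi\in\bar k[\Tp^0/\Tp^1]$ would only supply a nilpotent in $\Hh_F\otimes_k\bar k$; the descent of non-semisimplicity from $\Hh_F\otimes_k\bar k$ back to $\Hh_F$ is subtle in positive characteristic. The trick is to use the manifestly $k$-rational, $s$-invariant idempotent $e=1-|H|^{-1}\theta_s$, built from the Weyl-symmetric element $\theta_s$, which lets one exhibit the nilpotent directly inside $\Hh_F$.
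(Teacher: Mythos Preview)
Your proof is correct. Part (a) matches the paper's argument exactly. For part (b), the paper splits into two cases: when $q\neq 2,3$ it picks $t\in\Tp_s(\mathbb F_q)$ with $t^2\neq 1$ and uses $n_s t n_s^{-1}=t^{-1}\neq t$ to exhibit $\tau_{\tilde s}\tau_t\neq\tau_t\tau_{\tilde s}$, falling back on the non-commutativity criterion; only for $q=3$ (where every $t$ satisfies $t^2=1$ and this fails) does the paper construct the nilpotent $(1-\tfrac12\theta_s)\tau_{\tilde s}$. Your argument instead runs the nilpotent construction uniformly for all $q\neq 2$, observing that $e_H=(q-1)^{-1}\theta_s$ is a well-defined idempotent whenever $q>2$, and that $\tau_{\tilde s}(1-e_H)$ is a nonzero square-zero element. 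This is exactly the paper's $q=3$ case (where $q-1=2$) extended to general $q$; it is a cleaner route since it avoids the case split, and your remark about the ${\rm SL}_2$, $q=3$ example correctly identifies why the non-commutativity shortcut cannot cover part (b) in full.
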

\begin{proof} a. It follows from the classification of root systems that $S_F$  contains two elements $s$ and $s'$  that do not commute (\cite[Ch VI, 1.3]{Bki-LA}), so that we have $\tau'_{s}\tau'_{s'} \neq  \tau'_{s'} \tau'_{s}$ in $\Hh'_F$. It implies that $\Hh'_F$ and hence $\Hh_F$ are not commutative. b. Let $s\in S_F$.
First suppose that $q \neq 2, 3$. Then there is an element $t \in \mathbb F_q^* \cong \Tp_s(\mathbb F_q)$ whose square is not trivial.
($\Tp_s(\mathbb F_q)$  was defined in Section \ref{sec:Frobenius} before Remark  \ref{rema:invertible} and identifies with a subgroup of $\Tp^0/\Tp^1$.) By definition of $n_s$, we have  $n_s t n_s^{-1} t^{-1} = t^{-2} \neq 1$ from which we deduce that $n_s t \neq  t  n_s  \bmod \Tp^1$. In $\Hh_F$, we therefore have $\tau_{\tilde s} \tau_t \neq \tau_t \tau_{\tilde s}$. It remains to consider the case $q = 3$. Then $\frac{1}{2} \theta_s$ is a nonzero idempotent in $k[\Tp^0/\Tp^1]$. Since $\tau_{\tilde s}$ and $\theta_s$ commute we obtain
\begin{equation*}
    \big[(1 - \frac{1}{2} \theta_s) \tau_{\tilde s} \big]^2 = (1 - \frac{1}{2} \theta_s) \tau_{\tilde s}^2 = (1 - \frac{1}{2} \theta_s) \tau_{\tilde s} \theta_s = 0 \ .
\end{equation*}
It follows that $(1 - \frac{1}{2} \theta_s) \tau_{\tilde s}$ is a nonzero nilpotent element in $\Hh_F$. Since $\Hh_F$ modulo its Jacobson radical is a product of fields we see that this Jacobson radical has to be nonzero.
\end{proof}

\begin{rema}
If the root system $\Phi_F$ is  of type $A_1 \times\ldots\times A_1$, then $\Hh_F'$ is  semisimple.
\end{rema}

\begin{examp}\label{rema:p=2}
Suppose that $k$ has characteristic $p$ and that $q=2$. Then $\Hh = \Hh'$, and we have:
\begin{itemize}
\item[1.] Let $\Gp = {\rm SL}_2(\Corps)$. Then the algebras $\Hh_C^\dagger = \Hh_C = k$ and $\Hh_x^\dagger = \Hh_x \cong k \times k$, for any vertex $x \in \overline C$, are semisimple. It therefore follows from the resolution \eqref{f:H-H-bimod}, Proposition \ref{injdim}.iii and Corollary \ref{characters}.i that $\Hh$ has global dimension $1$.  In this case the resolution \eqref{f:M-resolution} is a projective resolution of any $\Hh$-module $\mathfrak{m}$.
\item[2.] Let $\Gp = {\rm PGL}_2(\Corps)$. Then the algebra $\Hh_C^\dagger = k[\Omega] \cong k[\mathbb{Z}/2\mathbb{Z}]$ is not semisimple (but Frobenius). By Proposition \ref{prop:infinite},  there exists a simple $\Hh$-module of infinite projective dimension.
\item[3.] Let $\Gp = {\rm GL}_2(\Corps)$. Then the algebra $\Hh_C^\dagger \cong k[\Z]$ has global dimension $1$. Moreover, $\Hh_{x_0} \cong k \times k$ so that $\Hh_{x_0}^\dagger = \Hh_{x_0}[\Z]$  has global dimension $1$ as well. It follows from the resolution  \eqref{f:H-H-bimod} and Proposition \ref{injdim}.iii that $\Hh$ has global dimension $\leq 2$.
\end{itemize}
\end{examp}

\begin{examp}
Suppose that $k$ has characteristic $p \neq 2$, and let $\Gp = {\rm PGL}_2(\Corps)$. Then ${{\Hh}'_C}^{\!\!\!\dagger} = k[\Omega] \cong k[\mathbb{Z}/2\mathbb{Z}]$ and ${{\Hh}'_{x_0}}^{\!\!\!\dagger} = \Hh'_{x_0} \cong k \times k$ are semisimple. It follows that $\Hh'$ has global dimension $1$ (whereas $\Hh$ has infinite global dimension).
\end{examp}

\begin{coro}\label{infiniteGL2}
Let $\Gp = {\rm PGL}_2(\mathbb{Q}_p)$, and let $k$ be algebraically closed of characteristic $p$. The category of smooth $k$-representations of $\Gp$  generated by their $\I$-invariant vectors has infinite global  dimension.
\end{coro}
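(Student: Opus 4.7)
The plan is to reduce the statement to the infinite global dimension of $\Hh$ itself, which is already within reach of the technology developed in the paper, and then transport this conclusion across the categorical equivalence of \cite{Inv}.

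First I would invoke the main theorem of \cite{Inv}: for $\Gp = {\rm PGL}_2(\mathbb{Q}_p)$ and $k$ algebraically closed of characteristic $p$, the functor of $\I$-invariants
\begin{equation*}
V \longmapsto V^\I
\end{equation*}
restricts to an equivalence between the full subcategory of smooth $k$-representations of $\Gp$ generated by their $\I$-invariant vectors and the category $\mathrm{Mod}(\Hh)$ of all left $\Hh$-modules. Such an equivalence of abelian categories preserves projective resolutions and hence projective (equivalently, global) dimensions of all objects. Consequently it is enough to prove that the ring $\Hh$ has infinite global dimension.

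Next I would establish the latter by splitting on the residue characteristic, using Proposition \ref{prop:infinite} together with the structural results of Section \ref{subsec:projdim}. For $\Gp = {\rm PGL}_2(\mathbb{Q}_p)$ the semisimple rank is $d=1$ and $\Omega \cong \mathbb{Z}/2\mathbb{Z}$. If $p = 2$, then $q=2$ and the calculation of Example \ref{rema:p=2}(2) shows that the finite-dimensional subalgebra $\Hh_C^\dagger = k[\Omega] \cong k[\mathbb{Z}/2\mathbb{Z}]$ fails to be semisimple, so Proposition \ref{prop:infinite}.i produces a simple $\Hh$-module of infinite projective dimension. If $p \neq 2$, then $q = p \neq 2$; for either vertex $F$ in $\overline C$ (necessarily of dimension strictly less than $d = 1$) Lemma \ref{lemma:HF-infinitedim}.b guarantees that $\Hh_F$ is not semisimple, and Corollary \ref{coro:infinite}.i again yields a simple $\Hh$-module of infinite projective dimension. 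In both cases $\Hh$ has infinite global dimension, and the categorical equivalence then transfers this conclusion to the representation-theoretic side.

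The only external ingredient here is the equivalence of categories of \cite{Inv}, which is where all the genuinely representation-theoretic work lies; the bulk of the paper has already reduced the Hecke-algebraic statement to the non-semisimplicity of the finite algebras $\Hh_F^\dagger$ and $\Hh_F$ attached to facets of $\overline{C}$, so no further calculation is needed once the equivalence is invoked.
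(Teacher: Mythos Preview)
Your proof is correct and follows essentially the same approach as the paper's: both invoke the equivalence of categories from \cite{Inv} to reduce to the infinite global dimension of $\Hh$, and then appeal to Lemma \ref{lemma:HF-infinitedim}.b, Example \ref{rema:p=2}.2, and Corollary \ref{coro:infinite} (with Proposition \ref{prop:infinite} behind it) to handle the cases $q \neq 2$ and $q = 2$ respectively. The only difference is that you spell out the case split on $p$ more explicitly than the paper does.
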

\begin{proof}
 The category of $\Hh$-modules is  equivalent to the category of smooth representations of $\Gp= {\rm PGL}_2(\mathbb{Q}_p)$  generated by their $\I$-invariant vectors   (\cite{Inv}). Conclude applying  Lemma \ref{lemma:HF-infinitedim}b., Remark \ref{rema:p=2}.2 and  Corollary  \ref{coro:infinite}.
\end{proof}

\section{Graded Hecke algebras\label{sec:graded}}

The elements $\tau_w$, for $w \in \tilde\W$, form a $k$-basis of $\Hh$. It is immediate from the defining relations \eqref{braid} and \eqref{quadratic} that
\begin{equation*}
    F_n \Hh := \sum_{\ell(w) \leq n} k \tau_w \qquad\text{for $n \geq 0$}
\end{equation*}
defines an increasing, discrete (\i.\ e., nonnegative), and exhaustive ring filtration of $\Hh$. We have $F_0 \Hh = k[\tilde\Omega]$. In this final section we investigate the associated graded algebra $gr_\bullet \Hh$.

For any $w \in \tilde\W$ we let $\bar{\tau}_w$ denote the principal symbol of $\tau_w$ in the graded ring $gr_\bullet \Hh$. It is clear that, for any $n \geq 0$, the set $\{\bar{\tau}_w : \ell(w) = n \}$ is a $k$-basis of $gr_n \Hh$. The multiplication in $gr_\bullet \Hh$ is determined by the rule
\begin{equation}\label{f:nil}
    \bar{\tau}_v \bar{\tau}_w =
    \begin{cases}
    \bar{\tau}_{vw} & \text{if $\ell(vw) = \ell(v) + \ell(w)$}, \\
    0 & \text{otherwise}.
    \end{cases}
\end{equation}

For any facet $F$ of $\mathscr{X}$ we equip the subalgebra $\Hh_F^\dagger$ with the induced filtration.

\begin{lemm}\phantomsection\label{filt-free}
\begin{itemize}
  \item[i.] $\Hh$ is filt-free as a left as well as a right $\Hh_F^\dagger$-module.
  \item[ii.] $gr_\bullet \Hh$ is gr-free as a left as well as a right $gr_\bullet \Hh_F^\dagger$-module.
\end{itemize}
\end{lemm}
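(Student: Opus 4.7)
The plan is to exhibit, for each $d\in\Dd_F^\dagger$, an explicit filt-basis element of degree $\ell(d)$ and check that the filtration on $\Hh$ splits accordingly over such a basis. From Proposition \ref{prop:free}.i (together with Lemma \ref{HF+HF}.i) we already know that $(\tau_{\tilde d})_{d\in\Dd_F^\dagger}$ is a basis of $\Hh$ as a right $\Hh_F^\dagger$-module, and that $(\tau_{\tilde d^{-1}})_{d\in\Dd_F^\dagger}$ is a basis on the left. The crucial additional input is the length additivity $\ell(dw)=\ell(d)+\ell(w)$ for $d\in\Dd_F^\dagger$ and $w\in\tilde\W_F^\dagger$, which is part of Lemma \ref{DF+}.i (combined with the fact, recalled in Section \ref{pro-p}, that the length function on $\tilde\W$ is pulled back from $\W$). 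From this and the braid relation \eqref{braid} we get $\tau_{\tilde d}\tau_w=\tau_{\tilde d w}$ with $\ell(\tilde d w)=\ell(d)+\ell(w)$.

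I would then argue that the multiplication map
\[
    \Dd_F^\dagger \times \tilde\W_F^\dagger \longrightarrow \tilde\W, \qquad (d,w)\longmapsto \tilde d w,
\]
is a bijection that preserves lengths additively. Combined with the fact that $\{\tau_w\}_{w\in\tilde\W}$ is a $k$-basis of $\Hh$ compatible with the length filtration, this yields for every $n\geq 0$ the direct sum decomposition
\[
    F_n \Hh \;=\; \bigoplus_{d\in\Dd_F^\dagger}\,\tau_{\tilde d}\cdot F_{n-\ell(d)}\Hh_F^\dagger,
\]
with the convention $F_m=0$ for $m<0$. This is exactly what it means for $\Hh$ to be filt-free on the right over $\Hh_F^\dagger$, with basis $(\tau_{\tilde d})_{d\in\Dd_F^\dagger}$ placed in degrees $(\ell(d))_{d\in\Dd_F^\dagger}$. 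The argument on the left is symmetric, using the basis $(\tau_{\tilde d^{-1}})_{d\in\Dd_F^\dagger}$ (note $\ell(\tilde d^{-1})=\ell(\tilde d)=\ell(d)$) and the identity $\tau_w\tau_{\tilde d^{-1}}=\tau_{w\tilde d^{-1}}$ for $w\in\tilde\W_F^\dagger$, which again follows from length additivity.

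For part ii, I would deduce gr-freeness formally from i. The splitting above shows that the associated graded passes through the direct sum:
\[
    gr_n\Hh \;=\; \bigoplus_{d\in\Dd_F^\dagger}\,\bar\tau_{\tilde d}\cdot gr_{n-\ell(d)}\Hh_F^\dagger,
\]
where $\bar\tau_{\tilde d}\in gr_{\ell(d)}\Hh$ is the principal symbol. Using \eqref{f:nil} together with the length additivity, one checks that the right action of $gr_\bullet\Hh_F^\dagger$ on $\bar\tau_{\tilde d}$ is free, so $(\bar\tau_{\tilde d})_{d\in\Dd_F^\dagger}$ is a gr-basis of $gr_\bullet\Hh$ over $gr_\bullet\Hh_F^\dagger$. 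The same applies on the left.

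The only genuine obstacle is making sure the length function on $\tilde\W$ behaves properly under the decomposition $\tilde\W\simeq \Dd_F^\dagger\times\tilde\W_F^\dagger$; once Lemma \ref{DF+}.i is in hand, everything else is a direct unwinding of the braid relation \eqref{braid} and the definitions of the filtration and of filt/gr-freeness, so no subtle issue arises beyond the careful bookkeeping.
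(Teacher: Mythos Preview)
Your proof is correct and follows essentially the same approach as the paper: the paper's argument also invokes Lemma \ref{DF+}.i to obtain the decomposition $F_n \Hh = \bigoplus_{d \in \Dd_F^\dagger} \tau_{\tilde d}\, F_{n-\ell(d)} \Hh_F^\dagger = \bigoplus_{d \in \Dd_F^\dagger} F_{n-\ell(d)} \Hh_F^\dagger\, \tau_{\tilde d^{-1}}$ (with $\tau_{\tilde d}, \tau_{\tilde d^{-1}} \in F_{\ell(d)} \Hh \setminus F_{\ell(d)-1} \Hh$), and then notes that ii.\ follows from i. Your version simply spells out in more detail the bijection $\Dd_F^\dagger \times \tilde\W_F^\dagger \to \tilde\W$ and the role of the braid relation \eqref{braid}, which the paper leaves implicit.
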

\begin{proof}
(Compare \cite[Def. I.6.1 and p.\ 28]{LO} for the definitions of filt-free and gr-free modules.) i. As a consequence of Lemma \ref{DF+}.i we have
\begin{equation*}
    F_n \Hh = \oplus_{d \in \Dd_F^\dagger} \bar{\tau}_{\tilde d} F_{n-\ell(d)} \Hh_F^\dagger = \oplus_{d \in \Dd_F^\dagger} F_{n-\ell(d)} \Hh_F^\dagger \bar{\tau}_{{\tilde d}^{-1}} \qquad\text{and}\qquad \bar{\tau}_{\tilde d}, \bar{\tau}_{{\tilde d}^{-1}} \in F_{\ell(d)} \Hh \setminus F_{\ell(d)-1} \Hh \ .
\end{equation*}
ii. This follows from i.
\end{proof}

The involution $j_F$ of $\Hh_F^\dagger$ obviously respects the filtration. Hence the tensor product filtration on $\Hh(\epsilon_F) \otimes_{\Hh_F^\dagger} \Hh$ is well defined and makes it a filtered $(\Hh,\Hh)$-bimodule. It is clear from Proposition \ref{differential} that \eqref{f:H-H-bimod} is a complex of filtered bimodules. Alternatively the subsequent proof will explain a more conceptual reason for this fact.

\begin{theo}\label{strict-exact}
For any $n \geq 0$ the subcomplex
\begin{equation*}
    0 \longrightarrow \bigoplus_{F \in \mathscr{F}_d} F_n (\Hh (\epsilon_F) \otimes_{\Hh_F^\dagger} \Hh) \longrightarrow \ldots \longrightarrow \bigoplus_{F \in \mathscr{F}_0} F_n (\Hh (\epsilon_F) \otimes_{\Hh_F^\dagger} \Hh) \longrightarrow F_n \Hh \longrightarrow 0
\end{equation*}
of \eqref{f:H-H-bimod} is exact. In other words \eqref{f:H-H-bimod} is strict-exact.
\end{theo}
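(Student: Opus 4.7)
The plan is to prove strict-exactness by transferring the filtration to the apartment chain complex and imitating the argument of Theorem \ref{I-resolutionV}, inductively peeling off one gallery-distance stratum at a time.

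By Lemma \ref{filt-free}.i, $\Hh$ is filt-free as a left $\Hh_F^\dagger$-module with basis $\tau_{\tilde{d}^{-1}}$ for $d \in \Dd_F^\dagger$, and each $\tau_{\tilde{d}^{-1}}$ lies in principal filtration degree $\ell(d)$. The tensor-product filtration therefore unwinds as
\begin{equation*}
    F_n\bigl(\Hh(\epsilon_F) \otimes_{\Hh_F^\dagger} \Hh\bigr) = \bigoplus_{d \in \Dd_F^\dagger} F_{n-\ell(d)}\Hh(\epsilon_F) \otimes \tau_{\tilde{d}^{-1}}
\end{equation*}
(with $F_m\Hh := 0$ for $m < 0$). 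Since the differentials of Proposition \ref{differential} only involve the length-zero elements $\tau_{\tilde\omega}$ for $\omega \in \Omega$, they preserve the filtration, so $F_n$ of \eqref{f:H-H-bimod} is a well-defined subcomplex.

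Next, via Proposition \ref{prop:isocomplex} I transfer the filtered complex to the apartment chain complex $C_c^{or}(\mathscr{A}_{(\bullet)}, \cX^\I) \to \Hh$. Combining the displayed formula with the identities $\ell(d) = \d(C, dC) = \d(C, C(dF))$ from \eqref{eq:distance} and Proposition \ref{prop:closest}, the subcomplex $F_n$ corresponds to the $\I$-invariant chains whose support lies in $\mathscr{A}(n)$ and whose value $\chi(dF) \in \mathbf{X}^{\I_{dC}}$ at each facet $dF \in \mathscr{A}$ (for $d \in \Dd_F^\dagger$) lies in the shifted filtration piece $\hat{\tilde{d}}\bigl(F_{n-\ell(d)}\Hh\bigr) \cdot \mathrm{char}_\I$ of $\mathbf{X}^{\I_{dC}}$.

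I then prove exactness of $F_n$ of \eqref{f:H-H-bimod} by induction on $n$. For $n = 0$ only the length-zero elements of $\Dd_F^\dagger$ contribute, and the subcomplex becomes the augmented oriented chain complex of $\overline C = \mathscr{A}(0)$ with constant coefficients $k[\tilde\Omega]$; its exactness follows from the contractibility of $\overline C$, as in the first part of the proof of Theorem \ref{I-resolutionV}. For the inductive step, the short exact sequence $0 \to F_{n-1} \to F_n \to F_n/F_{n-1} \to 0$ reduces the matter to exactness of the quotient complex. By Lemma \ref{disjoint}, this quotient decomposes as $\bigoplus_{D \in Ch(n)}$ of the relative oriented chain complex of $(\overline{D} \cup \mathscr{A}(n-1), \mathscr{A}(n-1))$, with coefficients constant on $\overline D \setminus \mathscr{A}(n-1)$ by the local-constancy argument used in the proof of Theorem \ref{I-resolutionV} (cf. formula \eqref{f:local}). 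Each such summand is then exact by contractibility of $\mathscr{A}(n-1)$ and $\overline{D} \cup \mathscr{A}(n-1)$ (Proposition \ref{prop:contractible}), exactly as in that proof.

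The main obstacle I foresee is the geometric identification in the second paragraph: one must chase through the isomorphisms of Lemma \ref{lemma:isom} and Proposition \ref{prop:isocomplex} to confirm that the algebraic tensor-product filtration on $\Hh(\epsilon_F) \otimes_{\Hh_F^\dagger} \Hh$ precisely matches the ``facet-distance plus shifted coefficient filtration'' on the apartment side, and in particular that local constancy of $\cX^\I$ on the strata $\overline D \setminus \mathscr{A}(n-1)$ is preserved by the shift by $\hat{\tilde d}$.
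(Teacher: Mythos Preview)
Your overall strategy---transfer to the apartment complex, identify the tensor-product filtration with a gallery-distance-shifted filtration on the coefficient system, then use the contractibility stratification---is exactly the paper's approach, and your identification of the main technical point (matching the two filtrations through the isomorphisms of \S\ref{subsubsec:iso} and Proposition \ref{prop:isocomplex}) is correct.

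However, the inductive scheme in your third paragraph has a genuine gap. The quotient $F_n/F_{n-1}$ is \emph{not} supported only on the shell of chambers at distance $n$. At a facet $dF$ with $\ell(d)=m<n$, the coefficient in $F_n$ is $\hat{\tilde d}\,F_{n-m}\Hh$ and in $F_{n-1}$ it is $\hat{\tilde d}\,F_{n-1-m}\Hh$, so the quotient there is $\hat{\tilde d}\,gr_{n-m}\Hh\neq 0$. Hence $F_n/F_{n-1}$ has nonzero values throughout $\mathscr{A}(n)$, not just on $\bigsqcup_{D\in Ch(n)}\overline D\setminus\mathscr{A}(n-1)$, and Lemma \ref{disjoint} does not give the decomposition you claim. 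You have conflated the algebraic filtration level $n$ with the geometric gallery distance; they are linked (the support of $F_n\cX^\I$ lies in $\mathscr{A}(n)$) but passing from $F_{n-1}$ to $F_n$ changes the coefficients everywhere, not just on the new shell.

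The fix, which is what the paper does, is not to induct on $n$ at all. For each fixed $n$, introduce the coefficient subsystem $F_n\cX^\I$ on $\mathscr{A}$ defined by $(F_n\cX^\I)(F'):=F_{n-\d(C,C(F'))}(\cX^\I(F'))$ (this is your second paragraph), observe that it is supported on $\mathscr{A}(n)$ and still has the crucial local-constancy property on each $\overline D\setminus\mathscr{A}(m-1)$ for $D\in Ch(m)$ and every $m\le n$, and then run the argument of Theorem \ref{I-resolutionV} verbatim: an inner induction on $m=0,1,\ldots,n$ peeling off one shell at a time. The relative complex at step $m$ then genuinely decomposes over $Ch(m)$ with constant coefficients $\hat{\tilde d}\,F_{n-m}\Hh$, and Proposition \ref{prop:contractible} finishes as before.
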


\begin{proof}
We will, in fact, prove the equivalent assertion for the isomorphic oriented chain complex
\begin{equation}\label{f:apart-complex}
    0 \longrightarrow C_c^{or} (\Aa_{(d)}, \cX^\I) \xrightarrow{\;\partial\;} \ldots \xrightarrow{\;\partial\;} C_c^{or} (\Aa_{(0)}, \cX^\I) \xrightarrow{\;\epsilon_\Aa\;} \Hh \longrightarrow 0 \ .
\end{equation}
We recall that the coefficient system of right $\Hh$-modules $\cX^\I$ on the apartment $\Aa$  is given by
\begin{equation*}
    F\longmapsto \cX^\I(F) := \mathbf{X}^{\I_{C(F)}} \qquad\textrm{for any facet $F$ in $\Aa$}
\end{equation*}
with transition maps
\begin{align*}
t_{F'}^F    :\:\:\mathbf{X}^{\I_{C(F)}} & \longrightarrow \mathbf{X}^{\I_{C(F')}}  \qquad\qquad\qquad\text{whenever $F' \subseteq \overline{F}$} \ . \\
    x & \longmapsto \sum_{g \in (\I \cap \Pp_{F'}^\dagger)/(\I \cap \Pp_F^\dagger)} gx\, .
\end{align*}
If $F \subseteq \overline{C}$ then $C(F) = C$ so that $\cX^\I(F) = \Hh$ in this case. Since $\W_{aff}$ acts transitively on the chambers in $\Aa$ we find, by Proposition \ref{representants}.i, for any facet $F'$ in $\Aa$ a facet $F \subseteq \overline{C}$ and an element $d \in \Dd_F$ such that $F' = dF$. Moreover, Proposition \ref{prop:closest}.i implies $C(F') = dC$ and hence
\begin{equation*}
    \cX^\I(F') = \mathbf{X}^{\I_{dC}} = \mathbf{X}^{\hat{d}\I \hat{d}^{-1}} = \hat{d} (\mathbf{X}^\I) = \hat{d} \Hh
\end{equation*}
(inside $\mathbf{X}$). Similarly as before, we fix here and in the further proof, for any $v \in \W$, a lifting $\tilde{v} \in \tilde\W$ of $v$ as well as a lifting $\hat{v} \in N_\Gp(\Tp)$ of $\tilde{v}$. The above right $\Hh$-module becomes a filtered $\Hh$-module through
\begin{equation*}
    F_n (\cX^\I (F')) := \hat{d} F_n \Hh \qquad\text{for $n \geq 0$}.
\end{equation*}
To see that this is well defined let $F' = d_1 F_1 = d_2 F_2$ with $F_i \subseteq \overline{C}$ and $d_i \in \Dd_{F_i}$. Write $d_2^{-1} d_1 = w_0 \omega$ with $w_0 \in \W_{aff}$ and $\omega \in \Omega$. Then $w_0 \omega F_1 = F_2$. Since $\omega F_1, F_2 \subseteq \overline{C}$ it follows from (\cite[V.3.2 property (I)]{Bki-LA} that $\omega F_1 = F_2$ (cf.\ Remark \ref{rema:transitivity}). We obtain $d_1 F_1 = d_2 \omega F_1$. Using that $d_2 \in \Dd_{F_2}$ one checks that $d_2 \omega$ has minimal length in the coset $d_2 \omega \W_{F_1} = d_2 \W_{F_2} \omega$. Hence $d_2 \omega \in \Dd_{F_1}$, and therefore $d_2 \omega = d_1$. We conclude that $\hat{d_1} F_n \Hh = \hat{d_2} \hat{\omega} F_n \Hh = \hat{d_2} \tau_{\tilde\omega} F_n \Hh = \hat{d_2} F_n \Hh$. The transition maps are maps of filtered right $\Hh$-modules of a certain degree. More precisely, we claim that
\begin{equation}\label{f:trans-filt}
    t_{F_0'}^{F'} (F_n (\cX^\I (F'))) \subseteq F_{n+\d(C,C(F')) - \d(C,C(F_0'))} (\cX^\I (F_0')) \qquad\text{whenever $F_0' \subseteq \overline{F'}$}
\end{equation}
holds true. Note that $\d(C,C(F_0')) \leq \d(C,C(F'))$. As above, we let $F \subseteq \overline{C}$ and $d \in \Dd_F$ be such that $F' = dF$. We put $F_0 := d^{-1}F_0' \subseteq \overline{F}$, and we choose $d_0 \in \Dd_{F_0}$ such that $F_0' = d_0 F_0$. By Lemma \ref{bruhat-finite} we have $d = d_0 v_0$ for some $v_0 \in \W_{F_0}^\dagger$, and \eqref{additive0} implies $\ell(d) = \ell(d_0) + \ell(v_0)$. From Proposition \ref{prop:closest}.i and \eqref{eq:distance} we deduce $\d(C,C(F')) = \d(C,C(dF)) = \d(C,dC) = \ell(d)$ and similarly $\d(C,C(F_0')) = \ell(d_0)$. We therefore have to show that
\begin{equation*}
    t_{F_0'}^{F'} (\hat{d} F_n \Hh) \subseteq \hat{d_0} F_{n + \ell(v_0)} \Hh \ .
\end{equation*}
Since the transition maps are maps of right $\Hh$-modules this reduces to the claim that
\begin{equation*}
    \sum_{g \in (\I \cap \Pp_{F_0'}^\dagger)/(\I \cap \Pp_{F'}^\dagger)} g \hat{d}\tau_1 \in \hat{d_0} F_{\ell(v_0)} \Hh
\end{equation*}
which follows from the more precise statement that
\begin{equation*}
    \hat{d_0}^{-1} (\I \cap \Pp_{F_0'}^\dagger) \hat{d} \Tp^0 \I = \hat{d_0}^{-1} (\I \cap \Pp_{F_0'}^\dagger) \hat{d_0} \hat{v_0} \Tp^0 \I  \subseteq \I \hat{v_0} \Tp^0 \I \ .
\end{equation*}
But we conclude from Proposition \ref{prop:closest} that $\I \cap \Pp_{F_0'}^\dagger = \I \cap \Pp_{d_0 F_0}^\dagger = \I_{C(d_0 F_0)} = \I_{d_0 C}$ and hence that $\hat{d_0}^{-1} (\I \cap \Pp_{F_0'}^\dagger) \hat{d_0} \subseteq \I$.

We point out that in case $C(F') = C(F'_0)$ we have $\ell(v_0) = 0$ and the transition map $F_n (\cX^\I (F')) \longrightarrow F_n (\cX^\I (F'_0))$ is the identity.

In $\mathscr{X}$, resp.\ $\Aa$, we have, for any $n \geq 0$, the subcomplex $\mathscr{X}(n)$, resp.\ $\Aa(n)$, of all facets $F$ such that $C(F)$ is of distance $\leq n$ from $C$. By $\Aa(n)_i \subseteq \mathscr{X}(n)_i$ we denote the sets of $i$-dimensional facet in these subcomplexes.

As a consequence of \eqref{f:trans-filt} we may introduce the increasing sequence $F_0 \cX^\I \subseteq \ldots \subseteq F_n \cX^\I \subseteq \cX^\I$ of coefficient subsystems defined by
\begin{equation*}
    (F_n \cX^\I)(F') :=  F_{n-\d(C,C(F'))} (\cX^\I(F'))
\end{equation*}
for all facets $F'$ in $\Aa$ (with the convention that $F_m(.) = 0$ if $m < 0$). Obviously $F_n \cX^\I$ is supported on $\Aa(n)$. It is easy to see that each right $\Hh$-module $C_c^{or} (\Aa_{(i)}, \cX^\I)$ is a filtered module with respect to the induced increasing discrete filtration
\begin{equation*}
    F_n C_c^{or} (\Aa_{(i)}, \cX^\I) := C_c^{or} (\Aa_{(i)}, F_n \cX^\I) = C_c^{or} (\Aa(n)_{(i)}, F_n \cX^\I) \ ,
\end{equation*}
which, moreover, is exhaustive. The complex \eqref{f:apart-complex} in this way becomes a filtered complex.

We claim that the isomorphism between \eqref{f:apart-complex} and \eqref{f:H-H-bimod} established in Proposition \ref{prop:isocomplex} and Theorem \ref{theo:freeresolution} is a filtered isomorphism. As a piece of notation we introduce the characteristic functions $\mathrm{char}_U$ of any compact open subset $U \subseteq \Gp$. Let $F \subseteq \overline{C}$ be an $i$-dimensional facet. Under the identification $\Hh = \ind_{\Pp_F^\dagger}^\Gp (\mathbf{X}_F^\dagger)^\I$, coming from the transitivity of induction, the element $\tau_v \in \Hh$, for $v \in \tilde\W$, corresponds to the function $\phi_v : \Gp \longrightarrow \mathbf{X}_F^\dagger$ which is left $\I$-invariant, is supported on $\I \hat{v} \Pp_F^\dagger$, and has value $\phi_v(\hat{v}) = \mathrm{char} _{\hat{v}^{-1} \I \hat{v} \I \cap \Pp_F^\dagger}$. Hence under the embedding $\Hh(\epsilon_F) \otimes_{\Hh_F^\dagger} \Hh \hookrightarrow C_c^{or} (\mathscr{X}_{(i)}, \cX)^\I$, which depends on the choice of an orientation $(F,c_F)$ of $F$, the element $\tau_v \otimes 1$ is mapped to the $\I$-invariant chain $f_v \in C_c^{or} (\mathscr{X}_{(i)}, \cX)$ defined by
\begin{align*}
    f'_v & := \sum_{g \in \Gp / \Pp_F^\dagger} g \big(\text{chain supported on $\{F\}$ with value $\epsilon_F \mathrm{char}_{g^{-1} \I \hat{v} \I \cap \Pp_F^\dagger}$ in $(F,c_F)$} \big) \\
    & = \sum_{g \in \I \hat{v} \Pp_F^\dagger / \Pp_F^\dagger} \text{chain supported on $\{gF\}$ with value $\epsilon_F(g^{-1}.) \mathrm{char}_{\I \hat{v} \I \cap g \mathscr{P}_F^\dagger}$ in $(gF,g c_F)$} \ .
\end{align*}
It follows from Lemma \ref{I-orient} that $\epsilon_F | \hat{v}^{-1} \I \hat{v} \I \cap \Pp_F^\dagger = 1$. Therefore we may characterize $f'_v$ as being the unique oriented chain such that
\begin{itemize}
  \item[--] $f'_v$ is $\I$-invariant,
  \item[--] $f'_v$ is supported on the $\I$-orbit of the facet $vF$, and
  \item[--] $f'_v((vF, vc_F)) = \mathrm{char}_{\I \hat{v} \I \cap \hat{v} \Pp_F^\dagger}$.
\end{itemize}
Since the above embedding is right $\Hh$-equivariant we obtain more generally that, for any two $v,w \in \tilde\W$, the element $\tau_v \otimes \tau_w \in \Hh(\epsilon_F) \otimes_{\Hh_F^\dagger} \Hh$ is mapped to the unique oriented chain $f'_{v,w} \in C_c^{or} (\mathscr{X}_{(i)}, \cX)$ such that
\begin{itemize}
  \item[--] $f'_{v,w}$ is $\I$-invariant,
  \item[--] $f'_{v,w}$ is supported on the $\I$-orbit of the facet $vF$, and
  \item[--] $f'_{v,w}((vF, vc_F)) = \tau_w( \mathrm{char}_{\I \hat{v} \I \cap \hat{v} \Pp_F^\dagger})$.
\end{itemize}
According to Proposition \ref{prop:free}.i the elements $\tau_{\tilde{d}}$, for $d \in \Dd_F^\dagger$, form a basis of $\Hh(\epsilon_F)$ as a right $\Hh_F^\dagger$-module. Hence $\{ \tau_{\tilde{d}} \otimes \tau_w \}_{(d,w) \in \Dd_F^\dagger \times \tilde\W}$ is a $k$-basis of $\Hh(\epsilon_F) \otimes_{\Hh_F^\dagger} \Hh$. By Proposition \ref{prop:closest} and \eqref{eq:distance} we have
\begin{equation*}
    \d(C,C(dF)) = \d(C,dC) = \ell(d) \qquad\text{and}\qquad \I \cap \Pp_{dF}^\dagger \subseteq \I_{C(dF)} = \I_{dC}
\end{equation*}
for any $d \in \Dd_F^\dagger$. The former says that $dF \in \mathscr{X}(\ell(d))_i \setminus \mathscr{X}(\ell(d)-1)_i$ and the latter that $\hat{d}^{-1} \I \hat{d} \cap \Pp_F^\dagger \subseteq \I$ and hence that $\I \hat{d} \I \cap \hat{d} \Pp_F^\dagger = \hat{d} \I$. It follows that
\begin{equation*}
    f'_{\tilde{d},w}((dF,dc_F)) = \tau_w(\mathrm{char}_{\hat{d} \I}) = \mathrm{char}_{\hat{d} \I w \I} = \hat{d} \tau_w \ .
\end{equation*}
Since the isomorphism $C_c^{or} (\mathscr{X}_{(i)}, \cX)^\I \cong C_c^{or} (\Aa_{(i)}, \cX^\I)$ is given by restricting chains we obtain that the composed embedding $\Hh(\epsilon_F) \otimes_{\Hh_F^\dagger} \Hh \hookrightarrow C_c^{or} (\Aa_{(i)}, \cX^\I)$ sends $\tau_{\tilde{d}} \otimes \tau_w$ to the unique oriented chain $f_{\tilde{d},w} \in C_c^{or} (\Aa_{(i)}, \cX^\I)$ supported on $\{dF\} \subseteq \Aa(\ell(d))_i \setminus \Aa(\ell(d)-1)_i$ with value $f_{\tilde{d},w}((dF,dc_F)) = \hat{d} \tau_w \in (F_{\ell(w)} \cX^\I)(dF)$. In particular, we have $f_{\tilde{d},w} \in F_{\ell(d) + \ell(w)} C_c^{or} (\Aa_{(i)}, \cX^\I)$. Vice versa, let $f \in F_n C_c^{or} (\Aa_{(i)}, \cX^\I)$ be an arbitrary chain. We may assume that $f$ is supported on a single facet $\{F'\}$. We choose a facet $F \subseteq \overline{C}$ and an element $d \in \Dd_F$ such that $F' = dF$. Then $\d(C,C(dF)) = \ell(d)$ and $f((F',c')) \in F_{n-\ell(d)} \cX^\I (F') = \hat{d} F_{n-\ell(d)} \Hh$. By a further decomposition we therefore may assume that $f((F',c')) = \hat{d} \tau_w$ for some $w \in \tilde\W$ with $\ell(w) \leq n - \ell(d)$. We see that $f$ up to sign  is the image of $\tau_{\tilde{d}} \otimes \tau_w \in F_n (\Hh (\epsilon_F) \otimes_{\Hh_F^\dagger} \Hh)$. This establishes our claim.

To show our assertion we therefore may equivalently prove that the complexes
\begin{equation*}
    0 \longrightarrow C_c^{or} (\Aa(n)_{(d)}, F_n \cX^\I) \longrightarrow \ldots \longrightarrow C_c^{or} (\Aa(n)_{(0)}, F_n \cX^\I) \longrightarrow F_n \Hh \longrightarrow 0
\end{equation*}
are exact for all $n\geq 0$. But this follows by exactly the same reasoning as in the proof of Theorem \ref{I-resolutionV}. The coefficient systems $F_n \cX^\I$ still have the property that, for any chamber $D$ in $\Aa$ of distance $m$ from $C$, they have constant values with identity transition maps on all facet in $\overline{D} \setminus \Aa(m-1)$.
\end{proof}

\begin{coro}\label{gr-exact}
The complex
\begin{equation}\label{f:grH-grH-bimod}
    0 \rightarrow \bigoplus_{F \in \mathscr{F}_d} gr_\bullet (\Hh (gr_\bullet(\epsilon_F)) \otimes_{gr_\bullet \Hh_F^\dagger} gr_\bullet \Hh \rightarrow \ldots \rightarrow \bigoplus_{F \in \mathscr{F}_0} gr_\bullet \Hh (gr_\bullet(\epsilon_F)) \otimes_{gr_\bullet \Hh_F^\dagger} gr_\bullet \Hh \rightarrow gr_\bullet \Hh \rightarrow 0
\end{equation}
derived from \eqref{f:H-H-bimod} is an exact sequence of $(gr_\bullet \Hh, gr_\bullet \Hh)$-bimodules and is a gr-free resolution of $gr_\bullet \Hh$ as a left as well as a right graded $gr_\bullet \Hh$-module.
\end{coro}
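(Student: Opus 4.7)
The plan is to derive the corollary directly from Theorem \ref{strict-exact} by invoking the standard principle that passing to the associated graded of a strict-exact filtered complex yields an exact graded complex. Concretely, for any morphism $f : M \to N$ of filtered modules which is strict (i.e.\ $f(F_n M) = f(M) \cap F_n N$), one has $gr_\bullet (\ker f) = \ker(gr_\bullet f)$ and $gr_\bullet(\operatorname{im} f) = \operatorname{im}(gr_\bullet f)$; applied termwise to a strict-exact complex this gives exactness of $gr_\bullet$ of it. Theorem \ref{strict-exact} provides exactly what is needed: it states that for every $n \geq 0$ the filtration-$n$ subcomplex of \eqref{f:H-H-bimod} is exact, which is equivalent to strict-exactness of the filtered complex \eqref{f:H-H-bimod}.

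The first step is thus to observe that the differentials in \eqref{f:H-H-bimod} are filtered maps of filtered $(\Hh,\Hh)$-bimodules (each $\Hh(\epsilon_F) \otimes_{\Hh_F^\dagger} \Hh$ carries its tensor-product filtration, which is well defined because the involution $j_F$ of $\Hh_F^\dagger$ respects the filtration, since $j_F(\tau_w) = \epsilon_F(\hat{w}) \tau_w$ does not change the length). Applying $gr_\bullet$ termwise produces the complex \eqref{f:grH-grH-bimod}, whose terms are naturally $(gr_\bullet \Hh, gr_\bullet \Hh)$-bimodules.

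The second step is to conclude exactness. Theorem \ref{strict-exact} says each filtration piece $F_n$ of \eqref{f:H-H-bimod} is exact; taking the quotients $F_n / F_{n-1}$ term by term (which is exact on the filtered complex because each $F_{n-1}$ sits as a subcomplex of $F_n$ with exact quotient sequence obtained by the nine-lemma or the snake argument applied to two exact rows) yields exactness of $gr_n$ of the complex for every $n$, hence exactness of \eqref{f:grH-grH-bimod}.

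Finally, the gr-freeness assertion follows from Lemma \ref{filt-free}.ii: $gr_\bullet \Hh$ is gr-free as a left and right $gr_\bullet \Hh_F^\dagger$-module, so each term $gr_\bullet \Hh (gr_\bullet(\epsilon_F)) \otimes_{gr_\bullet \Hh_F^\dagger} gr_\bullet \Hh$ is a gr-free graded $(gr_\bullet \Hh, gr_\bullet \Hh)$-bimodule (the twist $gr_\bullet(\epsilon_F)$ is again an automorphism of degree zero of $gr_\bullet \Hh_F^\dagger$, so it does not affect gr-freeness). I expect no real obstacle: the entire content of the corollary has been packaged into the strict-exactness result of the previous theorem, and what remains is essentially bookkeeping verifying that the formation of $gr_\bullet$ commutes with the tensor constructions appearing in \eqref{f:H-H-bimod}, which is immediate from Lemma \ref{filt-free}.ii and the flatness of gr-free modules.
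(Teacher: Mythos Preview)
Your proof is correct and follows essentially the same route as the paper: deduce exactness of the graded complex from the strict-exactness of Theorem \ref{strict-exact}, and obtain gr-freeness from Lemma \ref{filt-free}.ii. The only cosmetic difference is that the paper invokes Lemma \ref{filt-free}.i together with \cite[Lemma I.6.14]{LO} to identify $gr_\bullet(\Hh(\epsilon_F) \otimes_{\Hh_F^\dagger} \Hh)$ with $gr_\bullet \Hh(gr_\bullet(\epsilon_F)) \otimes_{gr_\bullet \Hh_F^\dagger} gr_\bullet \Hh$, whereas you treat this as a direct consequence of gr-freeness and flatness; these are the same argument.
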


\begin{proof}
It follows from Theorem \ref{strict-exact} that
\begin{equation*}
    0 \longrightarrow \bigoplus_{F \in \mathscr{F}_d} gr_\bullet(\Hh (\epsilon_F) \otimes_{\Hh_F^\dagger} \Hh) \longrightarrow \ldots \longrightarrow \bigoplus_{F \in \mathscr{F}_0} gr_\bullet (\Hh (\epsilon_F) \otimes_{\Hh_F^\dagger} \Hh) \longrightarrow gr_\bullet \Hh \longrightarrow 0
\end{equation*}
is exact. Moreover, by Lemma \ref{filt-free}.i and  \cite[Lemma I.6.14]{LO}, we have
\begin{equation*}
    gr_\bullet(\Hh (\epsilon_F) \otimes_{\Hh_F^\dagger} \Hh) = gr_\bullet (\Hh (gr_\bullet(\epsilon_F)) \otimes_{gr_\bullet \Hh_F^\dagger} gr_\bullet \Hh \ .
\end{equation*}
The freeness assertion follows from Lemma \ref{filt-free}.ii.
\end{proof}

\begin{lemm}\label{gr-F-injdim}
For any facet $F$ facet of $\mathscr{X}$ the $k$-algebra $gr_\bullet \Hh_F^\dagger$ is left and right noetherian and has self-injective dimension $r$.
\end{lemm}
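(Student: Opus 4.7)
The plan is to mirror the proof of Proposition \ref{F-injdim} at the graded level. The strategy is to establish the graded analogue of Proposition \ref{Hecke-Frob}.ii, namely that $k[\mathrm{Z}_0] \subseteq gr_\bullet \Hh_F^\dagger$ is a free $\id_{k[\mathrm{Z}_0]}$-Frobenius extension. Once this is in hand, the same reasoning as in Proposition \ref{F-injdim} finishes the argument: $k[\mathrm{Z}_0]$ is a Laurent polynomial algebra in $r$ variables, hence noetherian and regular of global dimension $r$ and therefore of self-injective dimension $r$, and Lemma \ref{Frob-injdim} transfers noetherianity and the self-injective dimension $r$ to $gr_\bullet \Hh_F^\dagger$. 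As usual I may assume $F \subseteq \overline C$.

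The first step is to exhibit $k[\mathrm{Z}_0]$ as a subalgebra of $gr_0 \Hh_F^\dagger$. For this I will check that elements of $\mathrm{Z}_0 \subseteq \mathrm{Z}\mathrm{T}^0/\mathrm{T}^1$ have length zero in $\tilde\W$: indeed $\mathrm{Z}$ acts trivially on $\mathscr{A} = \R \otimes_\Z X_*(\mathrm{T})/X_*(\mathrm{Z})$, hence fixes $C$ pointwise and maps into $\Omega$. Then \eqref{f:nil} gives $\bar\tau_z \bar\tau_{z'} = \bar\tau_{zz'}$ for $z, z' \in \mathrm{Z}_0$, providing the embedding.

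Next I would adapt the proof of Proposition \ref{Hecke-Frob}.i to the graded setting. The relevant map is
\begin{align*}
\bar\theta : gr_\bullet \Hh_F^\dagger & \longrightarrow k[\mathrm{Z}\mathrm{T}^0/\mathrm{T}^1], \\
\sum_{w \in \tilde\W_F^\dagger} a_w \bar\tau_w & \longmapsto \sum_{\xi \in \mathrm{Z}\mathrm{T}^0/\mathrm{T}^1} a_{\xi w_0} \xi,
\end{align*}
which, using that $\mathrm{Z}\mathrm{T}^0/\mathrm{T}^1$ consists of length-zero elements and that $\ell$ is $\tilde\Omega_F$-bi-invariant, is a $(k[\mathrm{Z}\mathrm{T}^0/\mathrm{T}^1], k[\mathrm{Z}\mathrm{T}^0/\mathrm{T}^1])$-bimodule map via \eqref{f:nil}. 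The families $(\bar\tau_{\tilde v})_{v \in [\W_F^\dagger]}$ and $(\bar\tau_{\tilde v^{-1} w_0})_{v \in [\W_F^\dagger]}$ remain left and right $k[\mathrm{Z}\mathrm{T}^0/\mathrm{T}^1]$-bases of $gr_\bullet \Hh_F^\dagger$, by the same argument used in Lemma \ref{filt-free}. Invoking Lemma \ref{free-Frob} then reduces the Frobenius property to the invertibility of the matrix $(\bar\theta(\bar\tau_{\tilde v} \bar\tau_{\tilde w^{-1} w_0}))_{v,w \in [\W_F^\dagger]}$. Finally, using that $k[\mathrm{Z}_0] \subseteq k[\mathrm{Z}\mathrm{T}^0/\mathrm{T}^1]$ is a free $\id$-Frobenius extension (finite index subgroup of an abelian group, as in the example following the definition of Frobenius extension) and applying transitivity of Frobenius extensions \cite[Prop.~1.3]{BF} will yield the desired extension $k[\mathrm{Z}_0] \subseteq gr_\bullet \Hh_F^\dagger$.

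The only nontrivial point is the triangularity check for the Frobenius matrix, but here the graded setting is actually \emph{simpler} than that of Proposition \ref{Hecke-Frob}.i: by \eqref{f:nil} the product $\bar\tau_{\tilde v} \bar\tau_{\tilde w^{-1} w_0}$ equals $\bar\tau_{\tilde v \tilde w^{-1} w_0}$ when the lengths add and vanishes otherwise, so all ``lower-order'' contributions coming from the quadratic relations \eqref{quadratic} in the ungraded case disappear. The diagonal entries $\bar\theta(\bar\tau_{\tilde w}\bar\tau_{\tilde w^{-1} w_0}) = \bar\theta(\bar\tau_{w_0}) = 1$ are computed via $\ell(\tilde w^{-1} w_0) = \ell(w_0) - \ell(\tilde w)$ exactly as in \emph{loc.cit.}, while an off-diagonal entry $\bar\theta(\bar\tau_{\tilde v}\bar\tau_{\tilde w^{-1} w_0})$ can only be nonzero if $\tilde v \tilde w^{-1} w_0 \in (\mathrm{Z}\mathrm{T}^0/\mathrm{T}^1) w_0$ and the lengths add, which by \cite[2.4.9]{Born} forces $w \leq v$ in the Bruhat order. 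Thus the matrix is lower triangular with invertible diagonal, completing the argument; I expect no genuine obstacle beyond this bookkeeping.
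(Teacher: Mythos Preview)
Your proposal is correct and is precisely the approach the paper intends: its proof simply refers back to Proposition~\ref{F-injdim}, noting that the argument goes through verbatim and is in fact simpler because the nil relations~\eqref{f:nil} eliminate the lower-order terms in the triangularity check. You have spelled out exactly those details, including the simplification that the paper alludes to.
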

\begin{proof}
This is proved in exactly the same way as Proposition \ref{F-injdim}. In fact, due to the simplified relations \eqref{f:nil} the argument becomes somewhat simpler.
\end{proof}

\begin{prop}\phantomsection\label{gr-Gor}
\begin{itemize}
  \item[i.] $gr_\bullet \Hh$ is finitely generated as a left and right module over a commutative subring which is a finitely generated $k$-algebra. In particular, $gr_\bullet \Hh$ is left and right noetherian.
  \item[ii.] $gr_\bullet \Hh$ is a Gorenstein ring of self-injective dimension bounded above by the rank of the group $\Gp$.
\end{itemize}
\end{prop}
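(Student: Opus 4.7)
I will derive (ii) from (i) via the abstract framework of Section~\ref{sec:gene} applied to the graded bimodule resolution of Corollary~\ref{gr-exact}, and construct the commutative subring for (i) from translation symbols. Noetherianity of $gr_\bullet\Hh$ then comes for free in (i), since any ring that is module-finite over a noetherian commutative subring is left and right noetherian.

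\textbf{For (ii), assuming (i).} Apply Proposition~\ref{injdim}.i to the exact sequence \eqref{f:grH-grH-bimod} with $A = gr_\bullet\Hh$ and, for each summand, $A_0 = gr_\bullet\Hh_F^\dagger$. Properties 1 and 2 (noetherianity and self-injective dimension $r$) come from Lemma~\ref{gr-F-injdim}, while property 3 (freeness as a left as well as a right module) is Lemma~\ref{filt-free}.ii. The proposition then yields the self-injective dimension bound $d + r = d^1$, which is the rank of $\Gp$.

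\textbf{For (i).} Let $B \subseteq gr_\bullet\Hh$ denote the $k$-span of $\{\bar\tau_{t_\lambda} : \lambda \in \Lambda\}$. Using the length formula $\ell(t_\nu) = \sum_{\alpha \in \Phi^+}|\langle \nu, \alpha\rangle|$, the product $\bar\tau_{t_\lambda}\bar\tau_{t_\mu}$ equals $\bar\tau_{t_{\lambda+\mu}}$ precisely when $\langle \lambda, \alpha\rangle$ and $\langle \mu, \alpha\rangle$ have consistent signs for every positive root $\alpha$, and vanishes otherwise by \eqref{f:nil}. Since this compatibility is symmetric, $B$ is commutative. Decomposing $B$ along the closed Weyl chambers of $\Lambda \otimes \mathbb{R}$, each piece $B_w$ (indexed by $w \in \W_0$) is isomorphic to the monoid algebra of the finitely generated monoid $\Lambda \cap w\overline{\Lambda^+}$, so $B$ is itself a finitely generated commutative $k$-algebra by Gordan's lemma. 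It remains to show that $gr_\bullet\Hh$ is finitely generated as a left and right $B$-module, for which I need the combinatorial statement: every $w \in \tilde{\W}$ admits a factorization $w = y \cdot t_\lambda$ with $y$ drawn from a fixed finite subset $\Sigma \subseteq \tilde{\W}$, $\lambda \in \Lambda$, and $\ell(w) = \ell(y) + \ell(t_\lambda)$, so that $\bar\tau_w = \bar\tau_y \bar\tau_{t_\lambda} \in \bar\tau_y \cdot B$. This is derived from the Iwahori--Matsumoto-type formula for $\ell(yt_\nu)$ with $y \in \tilde{\W}_f := \Tp^0/\Tp^1 \rtimes \W_0$ and $\nu \in \Lambda$: for each coset of $\tilde{\W}_f$ in $\tilde{\W}$ one identifies the effective dominance cone on $\nu$ ensuring additivity, and rewrites residual cases through the relation $y t_\nu = t_{y\nu} y$ together with the symmetric left--right analysis.

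\textbf{Main obstacle.} The technical core is the decomposition lemma establishing finite generation of $gr_\bullet\Hh$ as a $B$-module. In contrast to the classical Iwahori--Hecke setting, where Bernstein's invertible translation elements immediately deliver finite generation over $k[\Lambda]$, in the nil graded algebra length-additivity is required literally, so both $B$ and the generating set $\Sigma$ must be tailored to the combinatorics of the affine Coxeter complex. Obtaining such a $\Sigma$ uniformly in the type of $\Gp$ and verifying that the same $\Sigma$ works on both sides constitutes the bulk of the remaining work.
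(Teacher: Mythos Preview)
Your approach is essentially the same as the paper's. Part (ii) is handled identically. For part (i) the paper builds the commutative subring from translation symbols over $\tilde\Lambda := \Tp/\Tp^1$ rather than your $\Lambda$, but this is immaterial since $\tilde\Lambda/\Lambda \cong \Tp^0/\Tp^1$ is finite; the commutativity and the finite-generation of the subring via Gordan's lemma on Weyl chambers are argued exactly as you outline.

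The one place where your proposal is incomplete is precisely the point you flag as the main obstacle: producing the finite set $\Sigma$ so that every basis element $\bar\tau_w$ factors length-additively through some $\bar\tau_y$ with $y\in\Sigma$ and a translation symbol. The paper does not reinvent this combinatorics; it cites \cite[(1.6.3)]{Vign} (see also \cite[Lemma 3.3.6]{Born}), which gives, for each $w\in\W_0$, finitely many $y_1,\ldots,y_r\in\tilde\Lambda$ such that every $x\in\tilde\Lambda$ satisfies $\ell(x\tilde w)=\ell(xy_i^{-1})+\ell(y_i\tilde w)$ for some $i$. This yields $\bar\tau_{x\tilde w}=\bar\tau_{xy_i^{-1}}\bar\tau_{y_i\tilde w}$, so $gr_\bullet\Hh=\bigoplus_{w\in\W_0}\mathbb{A}(w)$ with each $\mathbb{A}(w)$ generated over the translation subalgebra by $\bar\tau_{y_1\tilde w},\ldots,\bar\tau_{y_r\tilde w}$. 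In other words, the ``bulk of the remaining work'' you anticipate is already available in the literature and need not be redone from scratch. (Note also that the paper writes the factorization with the translation part on the \emph{left}, which aligns directly with the decomposition $\tilde\W=\tilde\Lambda\cdot\W_0$ and avoids the extra ``rewrites residual cases'' step you mention.)
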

\begin{proof}
i. Let $\tilde\Lambda := \Tp / \Tp^1$. It is immediate from the relations \eqref{f:nil} that
\begin{equation*}
    \mathbb{A} := \sum_{x \in \tilde\Lambda} k \bar{\tau}_x
\end{equation*}
is a commutative graded subalgebra of $gr_\bullet \Hh$. In $\tilde\Lambda$ we have the dominant (closed) Weyl chamber
\begin{equation*}
    \tilde\Lambda_{dom} := \{ x \in \tilde\Lambda :\; < x,\alpha >\; \geq 0\ \text{for any $\alpha \in \Phi^+$}\}.
\end{equation*}
Its translates $w(\tilde\Lambda_{dom})$ for $w \in \W_0$, are the (closed) Weyl chambers in $\tilde\Lambda$. According to \cite[1.4(a)]{Lu} the length function $\ell$ restricted to $\tilde\Lambda$ can be computed explicitly by
\begin{equation*}
    \ell(x) = \sum_{\alpha \in \Phi^+} |< x,\alpha >| \qquad\text{for any $x \in \tilde\Lambda$}.
\end{equation*}
We claim:
\begin{itemize}
  \item[(a)] $\ell | \tilde\Lambda$ is $\W_0$-invariant, which is easy.
  \item[(b)] For $x,x' \in \tilde\Lambda$ we have $\ell(x+x') = \ell(x) + \ell(x')$ if and only if $x$ and $x'$ lie in a common Weyl chamber.
\end{itemize}
Obviously $\ell$ is additive on the dominant Weyl chamber. Since $\W_0$ acts transitively on the set of all Weyl chambers it follows from (a) that $\ell$ is additive on any Weyl chamber. Conversely let $x,x' \in \tilde\Lambda$ such that $\ell(x+x') = \ell(x) + \ell(x')$. By (a) we may assume that $x+x' \in \tilde\Lambda_{dom}$. Then
\begin{align*}
    \ell(x+x') & = \sum_{\alpha \in \Phi^+} |< x+x',\alpha >| \leq \sum_{\alpha \in \Phi^+} |< x,\alpha >| + \sum_{\alpha \in \Phi^+} |< x',\alpha >| \\
    & = \ell(x) + \ell(x') = \ell(x+x') \ .
\end{align*}
This easily implies that $x,x' \in \tilde\Lambda_{dom}$. (This argument in fact shows that two points $x,x' \in \tilde\Lambda$ which lie in some common Weyl chamber must lie in any Weyl chamber which contains $x+x'$.)

Any Weyl chamber $w(\tilde\Lambda_{dom})$ is a saturated subsemigroup of $\tilde\Lambda$ and hence is finitely generated by Gordon's lemma (\cite[p.~7]{KKMS}). This implies that $\mathbb{A}$ is a finitely generated $k$-algebra: If $x_1, \ldots, x_r$ generate the semigroup $\tilde\Lambda_{dom}$ then $\{\bar{\tau}_{w(x_i)}\}_{w \in \W_0, 1 \leq i \leq r}$ generate the $k$-algebra $\mathbb{A}$.

As a consequence of \eqref{f:nil} we have the decomposition
\begin{equation*}
    gr_\bullet \Hh = \oplus_{w \in \W_0} \mathbb{A}(w) \qquad\text{with}\quad \mathbb{A}(w) := \sum_{x \in \tilde\Lambda} k \bar{\tau}_{x\tilde{w}}
\end{equation*}
as left $\mathbb{A}$-modules. We claim that each $\mathbb{A}$-module $\mathbb{A}(w)$ is finitely generated. Fixing $w$, there exist, as a consequence of \cite[(1.6.3)]{Vign} (compare also \cite[Lemma 3.3.6]{Born}), finitely many elements $y_1, \ldots, y_r \in \tilde\Lambda$ such that for any $x \in \tilde\Lambda$ there is an $1 \leq i \leq r$ with $\ell(x \tilde{w}) = \ell(xy_i^{-1}) + \ell(y_i \tilde{w})$. It therefore follows from \eqref{f:nil} that $\mathbb{A}(w)$ is generated by $\bar{\tau}_{y_1\tilde{w}}, \ldots, \bar{\tau}_{y_r\tilde{w}}$. The reasoning for $gr_\bullet \Hh$ as a right $\mathbb{A}$-module is analogous.

ii. The bound on the self-injective dimension follows from Lemma \ref{filt-free}.ii, Corollary \ref{gr-exact}, and Lemma \ref{gr-F-injdim} as in the proof of Theorem \ref{theo:Gorenstein}.
\end{proof}

\begin{prop}\label{gr-AG}
Suppose that the group $\Gp$ is semisimple. Then $gr_\bullet \Hh$ is Auslander-Gorenstein.
\end{prop}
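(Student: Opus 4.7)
The plan is to follow the strategy used for the analogous Theorem~\ref{AG}, verifying the criterion of Stafford--Zhang~\cite[Thm.~3.10, Lemma~4.3]{SZ}. Proposition~\ref{gr-Gor} already supplies the basic structural hypotheses: $gr_\bullet \Hh$ is a noetherian PI affine $k$-algebra of finite self-injective dimension, bounded above by the rank of $\Gp$; under the semisimplicity assumption $r=0$ this bound is $d$. What remains is to prove that $gr_\bullet \Hh$ is injectively smooth in the sense of~\cite[p.~990]{SZ}.

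To achieve this I would reproduce, in the graded setting, the duality argument of Section~\ref{subsec:duality}. The first step is a graded analog of Proposition~\ref{Hecke-Frob}.ii: since $r=0$ it suffices to show that $gr_\bullet \Hh_F^\dagger$ is a Frobenius $k$-algebra, and the Frobenius form attached to an element $w_F$ of maximal length in $\tilde{\W}_F^\dagger$ still works. In fact the triangularity of the associated matrix of pairings is immediate from~\eqref{f:nil}, so the argument is strictly simpler than in the ungraded case. Consequently, a graded analog of Lemma~\ref{Gorproj-res} turns the resolution~\eqref{f:grH-grH-bimod} into a resolution of $gr_\bullet \Hh$ by Gorenstein projective $(gr_\bullet \Hh, gr_\bullet \Hh)$-bimodules.

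With this resolution in hand, I would carry the computations of Section~\ref{subsec:duality} through to the graded setting. The canonical involution $\upiota$ preserves the filtration (each $\tau^\ast_{\tilde{s}}$ has leading term $\tau_{\tilde{s}^{-1}}$) and induces a graded involution $\overline{\upiota}$ on $gr_\bullet \Hh$ satisfying $\overline{\upiota}(\bar{\tau}_w) = (-1)^{\ell(w)} \bar{\tau}_{w^{-1}}$. The involution $j_C$ preserves each filtration piece and thus descends to $\overline{j_C}$ on $gr_\bullet \Hh$, so one can set $\overline{\iota_C} := \overline{\upiota}\circ \overline{j_C}$. Because $\theta_s$ lies in strictly lower filtration degree than $\tau_{\tilde{s}}$, the formula of Lemma~\ref{explicit-inverse} collapses on passage to the graded algebra, and the argument of Proposition~\ref{augment} then yields the graded duality
\[
\Ext^d_{gr_\bullet \Hh}(\mathfrak{m}, gr_\bullet \Hh) \cong \mathfrak{m}^d := \Hom_k(\overline{\iota_C}^{\,\ast} \mathfrak{m}, k)
\]
for any finite-length graded $gr_\bullet \Hh$-module $\mathfrak{m}$. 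Applying this to any character $\chi$ of $gr_\bullet \Hh$ (every such character factors through the quotient $gr_\bullet \Hh / gr_{>0}\Hh = k[\tilde{\Omega}]$) gives $\Ext^d_{gr_\bullet \Hh}(\chi, gr_\bullet \Hh)\neq 0$. Hence the self-injective dimension of $gr_\bullet \Hh$ is exactly $d$ and $gr_\bullet \Hh$ is injectively smooth. Invoking~\cite[Thm.~3.10 and Lemma~4.3]{SZ} then concludes the proof.

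The hard part will be the faithful translation of the differential and duality computations of Sections~\ref{subsec:differentials}--\ref{subsec:duality} into the graded setting: one must systematically replace each Hecke basis element by its principal symbol and verify that the resulting formulas, once simplified by~\eqref{f:nil}, still encode the differentials of~\eqref{f:grH-grH-bimod} and the duality pairings correctly. Since~\eqref{f:nil} is a strict simplification of the defining relations of $\Hh$, I expect the computations to proceed more cleanly than in the ungraded case, making this a technical rather than conceptual obstacle.
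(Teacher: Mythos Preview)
Your approach is correct in outline and would succeed, but it is considerably more laborious than what the paper does. You propose to rerun the entire duality machinery of Section~\ref{subsec:duality} in the graded setting in order to verify injective smoothness and then invoke \cite[Thm.~3.10, Lemma~4.3]{SZ}, exactly parallel to the proof of Theorem~\ref{AG}. This works: the Frobenius structure on $gr_\bullet \Hh_F^\dagger$, the Gorenstein-projective resolution, and the augmentation computation all simplify under~\eqref{f:nil} as you anticipate. One small point: your sentence about characters only shows the self-injective dimension equals $d$; injective smoothness requires the nonvanishing of $\Ext^d$ for \emph{all} simple modules, which your graded duality does supply once established for arbitrary finite-length $\mathfrak{m}$.

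The paper takes a much shorter route that exploits the grading directly. From Proposition~\ref{gr-Gor} and \cite{MC} one already knows $gr_\bullet \Hh$ is fully bounded noetherian of finite self-injective dimension. The semisimplicity hypothesis makes $\Omega$ finite, hence $gr_0 \Hh = k[\tilde\Omega]$ is artinian. A different Stafford--Zhang criterion (the paragraph after \cite[Cor.~6.3]{SZ}) then yields Auslander--Gorenstein immediately from the artinianity of the degree-zero piece, without any duality computation. The upshot: your strategy replicates the ungraded argument faithfully, while the paper recognises that in the graded case a structural shortcut is available, bypassing all of Section~\ref{subsec:duality}.
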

\begin{proof}
It follows from Proposition \ref{gr-Gor} and   \cite[Cor.\ 13.1.13(iii) and Cor.\ 13.6.6(iii)]{MC} that $gr_\bullet \Hh$ is a fully bounded noetherian ring of finite self-injective dimension. Since the center of $\Gp$ is finite by assumption the group $\Omega$ is finite so that $gr_0 \Hh = k[\tilde\Omega]$ is an artinian ring. Hence our assertion follows from \cite[paragraph after Cor.~6.3]{SZ}.
\end{proof}

Everything in this section remains valid for the analogously formed graded rings $gr_\bullet \Hh'$ of $\Hh'$ and $gr_\bullet \Hh_{aff}$ of the affine Hecke algebra $\Hh_{aff}$ of the Coxeter system $(\W_{aff}, S_{aff})$.

\bigskip

\bigskip

\bigskip

\def\enotesize{\normalsize}

\end{document}